\newtheorem{theorem}{Theorem}[section]
\newtheorem{proposition}[theorem]{Proposition}
\theoremstyle{definition}
\newtheorem{lemma}[theorem]{Lemma}
\newtheorem{claim*}[theorem]{claim*}
\newtheorem{definition}[theorem]{Definition}
\newtheorem{corollary}[theorem]{Corollary}
\newtheorem{remark}[theorem]{Remark}
\newtheorem{example}[theorem]{Example}
\newtheorem*{clm}{Claim}
\begin{document}
\def\cprime{$'$}

\title{The definable content of homological invariants I: 
$\mathrm{Ext}$ $\&$  $\mathrm{lim}^1$}

\author{Jeffrey Bergfalk}
\address{Departament de Matem\`{a}tiques i Inform\`{a}tica \\
Universitat de Barcelona \\
Gran Via de les Corts Catalanes, 585 \\ 08007 Barcelona, Catalonia}
\email{bergfalk@ub.edu}

\author{Martino Lupini}
\address{Dipartimento di Matematica\\
Universit\`{a} di Bologna\\
Piazza di Porta S. Donato, 5\\
40126 Bologna BO\\
Italy}
\email{lupini@tutanota.com}
\urladdr{http://www.lupini.org/}

\author{Aristotelis Panagiotopoulos}
\address{Kurt G\"odel Research Center\\ Faculty of Mathematics\\ University of Vienna\\ Kolingasse 14-16\\ 1090 Wien\\ Austria}
\email{aristotelis.panagiotopoulos@gmail.com}

\thanks{Part of this work was done during visits of J.B. and M.L at the
California Institute of Technology, and of A.P. at Victoria University of
Wellington. The authors gratefully acknowledge the hospitality and the
financial support of these institutions. J.B.\ was partially supported by the MSC grant CatT 101110452.  M.L.\ was partially supported by the
NSF Grant DMS-1600186, by a Research Establishment Grant from Victoria
University of Wellington, by the Marsden Fund Fast-Start
Grant VUW1816, by the Rutherford Discovery Fellowship VUW2002 from the Royal
Society of New Zealand, and by the Starting Grant 101077154 ``Definable
Algebraic Topology'' from the European Research Council.}

\subjclass[2010]{Primary 54H05, 18G10, 18G60;  Secondary 55N05, 55N07,  37A20}

\keywords{Polish group, group with a Polish cover, profinite group,
non-Archimedean Polish group, Ulam stability, cocycle superrigidity, derived
functor, $\mathrm{Ext}$, $\mathrm{lim}^{1}$, definable homomorphism, countable Borel equivalence
relation, Borel reduction, Property (T), finite rank abelian group}

\date{}

\begin{abstract}This is the first installment in a series of papers illustrating how classical invariants of homological algebra and algebraic topology may be enriched with  additional descriptive set theoretic information.   To effect this enrichment, we show that many of these invariants may be naturally regarded as functors to the category, introduced herein, of \emph{groups with a Polish cover}. The resulting \emph{definable invariants}  provide far stronger means of classification.

 In the present work we focus on the first derived functors of $\mathrm{Hom}(-,-)$  and $\mathrm{lim}(-)$. The resulting \emph{definable $\mathrm{Ext}(B,F)$} for pairs of countable abelian groups $B,F$ and  \emph{definable $\mathrm{lim}^{1}(\boldsymbol{A})$} for towers $\boldsymbol{A}$ of Polish abelian groups  substantially refine their classical counterparts. We show, for example, that the definable $\textrm{Ext}( -,\mathbb{Z}%
)$ is a fully faithful contravariant functor from the category of finite rank torsion-free abelian groups $\Lambda$ with no free summands; this contrasts with the fact that there are uncountably many non-isomorphic such groups $\Lambda$ with isomorphic classical invariants $\textrm{Ext}( \Lambda ,\mathbb{Z}) 
$. 
To facilitate our analysis, we introduce a general \emph{Ulam stability framework} for groups with a Polish cover; within this framework we prove several rigidity results for non-Archimedean abelian groups with a Polish cover.  A special case of our main result answers a question of Kanovei and Reeken regarding quotients of $p$-adic groups.
 Finally, using cocycle
superrigidity methods for profinite actions of property (T) groups, we obtain a hierarchy of
complexity degrees for the  problem $\mathcal{R}(\mathrm{Aut}( \Lambda) \curvearrowright \mathrm{Ext}( \Lambda ,\mathbb{Z%
}))$ of classifying all group extensions of $\Lambda$ by $\mathbb{Z}$ up to \emph{base-free isomorphism}, when $\Lambda =\mathbb{Z}[1/p]^{d}$ for prime numbers $p$ and $
d\geq 1$.
\end{abstract}

\maketitle







\tableofcontents
\section{Introduction}

Some of the best-known and most versatile invariants in mathematics arise as the (co)homology groups of (co)chain complexes. Such invariants are computed by 
 first associating to each object $X$ a chain complex 
\[C_{\bullet}:= \quad 0\overset{\partial_0}{\longleftarrow} C_0\overset{\partial_1}{\longleftarrow} C_1\overset{\partial_2}{\longleftarrow} C_2 \overset{\partial_3}{\longleftarrow} \cdots\]
of abelian groups encoding the relevant data about $X$.   The \emph{$n^{\mathrm{th}}$ homology group of $C_{\bullet}$} is then defined as the quotient
\[\mathrm{H}_n:=\mathrm{Z}_n/\mathrm{B}_n:= \mathrm{ker}(\partial_n)/\mathrm{ran}(\partial_{n+1}).\]
The overwhelming tendency is to regard these groups as discrete objects. This is despite the fact that the \emph{chain groups} $C_n$ often carry a natural topology, and even one encoding data about $X$ not captured by their group structures alone. It is also despite the \textquotedblleft \emph{trend%
}\textquotedblright\ which Dieudonn\'{e} 
recalls in the opposite direction \textquotedblleft \emph{that
was very popular until around 1950 (although later all but abandoned),
namely, to consider homology groups as topological groups for suitably
chosen topologies}\textquotedblright\ \cite[p.\ 67]{dieudonne_history_2009}. Dooming this approach seems mainly to have been the fact that the aforementioned natural topologies may badly fail to induce Hausdorff topologies on $\mathrm{H}_n$. A case of some midcentury prominence, for example, was that of the reduced $0$-dimensional Steenrod homology group $\tilde{\mathrm{H}}_0(S)$ of a dyadic solenoid $S$: here, not only is the boundary group $B_0$ not closed in the natural topology on $Z_0$; it is dense therein (see \cite{eilenberg_group_1942}).

\subsection{The definable content of homological invariants}
What follows is the first in a series of papers in which we show how to endow various homological invariants with a finer structure than the quotient topology, as well as some of the benefits of doing so \cite{BLPII, BLPIII,lupini_definable_2020}. A main resource for this refinement is the field of \emph{invariant descriptive set theory}, an area of mathematics which studies the Borel complexity of classification problems; see Section \ref{S:EquivalenceRelation} below. The critical contexts for that field are \emph{Polish spaces}, and the fundamental, initiating recognition for our work is the fact that many of the (co)homology groups from algebraic topology and homological algebra are naturally viewed as \emph{groups with a Polish cover}, i.e., as quotients $G/N$ of a Polish group $G$ by a Polishable subgroup $N$. Examples include:
\begin{enumerate}
\item the \emph{strong} or \emph{Steenrod homology groups} $\mathrm{H}_n(K)$ of a compact metrizable space $K$;
\item the \emph{\v{C}ech} or \emph{sheaf cohomology groups} $\mathrm{H}^n(L)$ of a locally compact metrizable space $L$;
\item the \emph{first derived group $\mathrm{Ext}(B,F)$ of the $\mathrm{Hom}(-,-)$-bifunctor}  applied to countable abelian groups $B,F$;
\item the \emph{first derived group $\underleftarrow{\mathrm{lim}}^{1}(\boldsymbol{A})$ of the $\underleftarrow{\mathrm{lim}}(-)$-functor}  applied to towers of  abelian Polish groups $\boldsymbol{A}$.
\end{enumerate}
In this first paper we restrict our attention to examples (3) and (4). These arise in purely algebraic settings and form the backbone of a variety of computations in algebraic topology --- typically by way of the Universal Coefficient Theorem \cite{eilenberg_group_1942} and Milnor's Exact Sequence \cite{milnor_axiomatic_1962}, respectively --- and, in particular, of many involving (1) and (2). Our results
 below will be applied in \cite{BLPII} to show that, in contrast to the classical \v{C}ech cohomology theory, \emph{definable \v{C}ech cohomology theory} provides complete homotopy invariants for mapping telescopes of $n$-spheres and $n$-tori, as well as for maps to spheres from the latter.

We view groups with a Polish cover, and hence all of the aforementioned invariants, as objects in the \emph{category of groups with a Polish cover}. Morphisms in this category are 
\emph{definable homomorphisms}. A \emph{definable homomorphism} between groups with a Polish cover $G/N$ and $G'/N'$  is any 
group homomorphism $f\colon G/N \to G'/N'$ which lifts to a Borel map $\hat{f}\colon G\to G'$. We do not require $\hat{f}$ to be a homomorphism from $G$ to $G'$. Definable homomorphisms may be thought of as those homomorphisms which can be described explicitly by a (potentially infinitary) formula, one making no essential appeal to the axiom of choice. For example, $\mathbb{R}/\mathbb{Q}$ admits $2^{2^{\aleph_0}}$ many endomorphisms as an abstract group, since it is a $\mathbb{Q}$-vector space of dimension $2^{\aleph_0}$.  However, only continuum many of these endomorphisms are actually definable; see \cite{kanovei_baire_2000}. For $B, F$, and $\mathbf{A}$ as above, the classical assignments $(B,F)\mapsto\mathrm{Ext}(B,F)$ and $\boldsymbol{A}\mapsto\underleftarrow{\mathrm{lim}}^{1}(\boldsymbol{A})$ factor through a bifunctor and functor, respectively,  which take values in the additive category of abelian groups with a Polish cover. 
The resulting \emph{definable $\mathrm{Ext}$} invariants and \emph{definable $\underleftarrow{\mathrm{lim}}^{1}$} invariants  record much more information than their purely algebraic counterparts.

Consider, for example, the problem of classifying all finite-rank torsion-free abelian groups $\Lambda$ up to isomorphism. Notice that the invariant $\mathrm{Hom}(\Lambda,\mathbb{Z})$ is a free abelian group whose rank coincides with the rank of the largest free direct summand of $\Lambda$.  By the following theorem, the discrete group $\mathrm{Hom}(\Lambda,\mathbb{Z})$ together with the \emph{definable $\mathrm{Ext}(\Lambda,\mathbb{Z})$ group} completely classify  all finite rank torsion-free abelian groups; see Corollary \ref{Corollary:TF-Ext}.

\begin{theorem}
\label{Theorem:main1}The functor $\mathrm{Ext}( -,\mathbb{Z}) $
is a fully faithful  functor from the category of finite rank
torsion-free abelian groups with no free summands to the category of groups with a Polish
cover.

In particular, $\mathrm{Ext}( \Lambda,\mathbb{Z})$, up to definable
isomorphism, together with the rank of $\mathrm{Hom}(\Lambda,\mathbb{Z})$, form a complete set of 
invariants for finite rank torsion-free abelian
groups up to isomorphism.
\end{theorem}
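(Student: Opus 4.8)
The plan is to establish the two assertions of Theorem~\ref{Theorem:main1} in tandem, since the second follows from the first together with standard structure theory. For the first assertion, we must show that for finite rank torsion-free abelian groups $\Lambda,\Lambda'$ with no free summands, every definable isomorphism $\mathrm{Ext}(\Lambda,\mathbb{Z})\to\mathrm{Ext}(\Lambda',\mathbb{Z})$ of groups with a Polish cover arises from a (unique) isomorphism $\Lambda'\to\Lambda$. Faithfulness should be the easier half: a nonzero map $\phi\colon\Lambda'\to\Lambda$ induces a map on $\mathrm{Ext}(-,\mathbb{Z})$ that one can detect to be nonzero by pairing against a suitable extension, using the long exact sequence for $\mathrm{Ext}(-,\mathbb{Z})$ and the fact that $\mathrm{Hom}(\Lambda,\mathbb{Z})$ and $\mathrm{Ext}(\Lambda,\mathbb{Z})$ together retain enough information to see $\Lambda$ rationally. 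Fullness is the crux: given a definable homomorphism $f\colon\mathrm{Ext}(\Lambda,\mathbb{Z})\to\mathrm{Ext}(\Lambda',\mathbb{Z})$, produce a homomorphism $\Lambda'\to\Lambda$ inducing it.

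First I would reduce to a concrete presentation. Writing $\Lambda$ as an increasing union of finitely generated free subgroups, or dually realizing $\Lambda\subseteq\mathbb{Q}^{n}$, one obtains a free resolution $0\to\mathbb{Z}^{(\omega)}\to\mathbb{Z}^{(\omega)}\to\Lambda\to 0$ (or a finite-rank analogue) whose application of $\mathrm{Hom}(-,\mathbb{Z})$ exhibits $\mathrm{Ext}(\Lambda,\mathbb{Z})$ explicitly as a group with a Polish cover $G/N$, where $G$ is a product of copies of $\mathbb{Z}$ (a Polish group) and $N$ is the image of the relevant transpose map, a Polishable subgroup. The definable homomorphism $f$ then lifts to a Borel map $\hat f\colon G\to G'$ between such products. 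The key step is to invoke the Ulam stability / rigidity machinery advertised in the introduction for non-Archimedean abelian groups with a Polish cover: a Borel lift of a homomorphism between these particular quotients must agree, modulo $N'$, with a continuous —indeed, by further rigidity, a $\mathbb{Z}$-linear— map $G\to G'$ on a large set, and hence the induced map on quotients is itself induced by an honest homomorphism of the resolutions. Chasing this homomorphism of resolutions back through the universal property of $\mathrm{Ext}$ yields the desired $\psi\colon\Lambda'\to\Lambda$, and one checks $\mathrm{Ext}(\psi,\mathbb{Z})=f$ by naturality.

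For the second assertion, suppose $\Lambda,\Lambda'$ are arbitrary finite rank torsion-free abelian groups with $\mathrm{rank}\,\mathrm{Hom}(\Lambda,\mathbb{Z})=\mathrm{rank}\,\mathrm{Hom}(\Lambda',\mathbb{Z})=r$ and with $\mathrm{Ext}(\Lambda,\mathbb{Z})\cong\mathrm{Ext}(\Lambda',\mathbb{Z})$ definably. Split off the maximal free summand: $\Lambda\cong\mathbb{Z}^{r}\oplus\Lambda_{0}$ and $\Lambda'\cong\mathbb{Z}^{r}\oplus\Lambda'_{0}$ with $\Lambda_{0},\Lambda'_{0}$ having no free summands; since $\mathrm{Ext}(\mathbb{Z}^{r},\mathbb{Z})=0$, the definable isomorphism restricts to one $\mathrm{Ext}(\Lambda_{0},\mathbb{Z})\cong\mathrm{Ext}(\Lambda'_{0},\mathbb{Z})$, whence $\Lambda_{0}\cong\Lambda'_{0}$ by the full faithfulness just proved (full faithfulness gives essential injectivity on isomorphism classes), and therefore $\Lambda\cong\Lambda'$. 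Conversely isomorphic groups manifestly have the same invariants, so this is a complete invariant.

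The main obstacle I anticipate is the fullness step: controlling Borel lifts of definable homomorphisms between these specific non-Archimedean quotients tightly enough to force them to be induced by linear maps of the resolutions. This is precisely where the Ulam stability framework and the rigidity results for non-Archimedean abelian groups with a Polish cover—stated in the introduction as among the paper's central technical contributions—must be brought fully to bear; faithfulness and the reduction of the second assertion to the first are comparatively routine homological algebra.
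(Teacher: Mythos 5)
Your overall shape is right and matches the paper's at the strategic level: present $\mathrm{Ext}(\Lambda,\mathbb{Z})$ concretely as a quotient $G/N$ with $G$ a non-Archimedean Polish group and $N$ dense, invoke an Ulam-stability rigidity theorem to promote Borel lifts to continuous homomorphisms, and then convert these back into homomorphisms $\Lambda'\to\Lambda$. Your reduction of the second assertion to the first, via splitting off the maximal free summand and using $\mathrm{Ext}(\mathbb{Z}^r,\mathbb{Z})=0$, is exactly what the paper does. But two of the steps you label ``comparatively routine'' are in fact where most of the work lies, and your sketch of them would not go through as written.

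First, faithfulness. You propose detecting a nonzero $\phi\colon\Lambda'\to\Lambda$ by pairing against a suitable extension via the long exact sequence, on the grounds that $\mathrm{Hom}$ and $\mathrm{Ext}$ ``retain enough information to see $\Lambda$ rationally.'' This is a purely algebraic argument about the abstract functor $\mathrm{Ext}(-,\mathbb{Z})$, and the whole point of the paper (and its appendix) is that the abstract functor is very lossy: continuum-many pairwise non-isomorphic $\Lambda$ with no free summands have isomorphic abstract $\mathrm{Ext}(\Lambda,\mathbb{Z})$. Faithfulness in the statement is faithfulness of the enriched functor into groups with a Polish cover, and the paper proves it by a Baire-category argument: if the induced definable map is zero, then after passing to the continuous lift on an inessential retract, Pettis's lemma forces it to vanish on a finite-index subgroup of $\mathbb{Z}^d$, and a finite-index subgroup has full rank, whence the original linear map is zero (this is the proof of Corollary~\ref{Corollary:lim1-injective}, applied through Theorem~\ref{Theorem:TF-Ext}). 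You need this descriptive-set-theoretic input; the purely homological pairing argument does not suffice.

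Second, and more seriously, the passage from a continuous lift to an honest homomorphism $\psi\colon\Lambda'\to\Lambda$ is not merely ``chasing back through the universal property.'' The Ulam theorem (Theorem~\ref{Theorem:rigid}) produces a continuous group homomorphism $\hat g\colon H\to G'$ on an inessential retract $H$ of the cover $G$. In the paper's normalized presentation, $G\cong\hat{\mathbb{Z}}^d_{\boldsymbol{\Lambda}}$ and $G'\cong\hat{\mathbb{Z}}^{d'}_{\boldsymbol{\Lambda}'}$ are profinite completions, and one needs to convert the continuous $\hat g$ into a group homomorphism $\Lambda'\to\Lambda$ in the \emph{opposite} direction of variance. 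This conversion is precisely what Lemma~\ref{Lemma:dual-perp} and the $\mathrm{Adj}$ functor (Lemma~\ref{Lemma:Adj}) accomplish, by passing through the annihilator duality $A\mapsto A_\perp$ with respect to the pairing on $\mathbb{Q}^d$ and the resulting equivalence of categories $\mathrm{Groups}_{+}(\mathbb{Z}^*,\mathbb{Q}^*)\simeq\mathrm{coFil}(\mathbb{Z}^*)\simeq\mathrm{Fil}(\mathbb{Z}^*)$ (Theorem~\ref{Theorem:MainProfinite}), followed by the naturality comparison in Theorem~\ref{Theorem:TF-Ext} which identifies the resulting composite with the $\mathrm{Ext}(-,\mathbb{Z})$ functor via Jensen's theorem. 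Your choice of cover, namely $\mathbb{Z}^\omega$ obtained from $\mathrm{Hom}$ of the telescope resolution, is a legitimate presentation (indeed Jensen's argument uses that resolution), but it pushes the duality bookkeeping into a different place without removing it: you would still have to show that the continuous map between countable products of finitely generated free groups produced by rigidity arises as a $\mathrm{Hom}(-,\mathbb{Z})$-dual of a morphism of resolutions, which is an instance of the same Pontryagin-type transpose argument. Without an explicit lemma of that kind, fullness is not established. So the ``key step'' you flag as the anticipated obstacle is indeed the obstacle, and your proposal does not yet overcome it.
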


Theorem \ref{Theorem:main1} should be contrasted with the fact that, as a discrete group, $
\mathrm{Ext}( \Lambda ,\mathbb{Z})$
records comparatively little about a given finite-rank torsion-free group $\Lambda$. Indeed, there is a size-continuum family of non-isomorphic such groups $\Lambda$ whose
corresponding invariants $\mathrm{Ext}( \Lambda ,\mathbb{Z}) $ are
isomorphic as abstract groups, as we show in Corollary \ref{Corollary:nonisomorphic-Ext} below. In the process of proving  Theorem \ref{Theorem:main1}, we develop a definable version of Jensen's theorem which relates the definable $\mathrm{Ext}$-functor  to the definable $\underleftarrow{\mathrm{lim}}^{1}$-functor. The definable content of the $\underleftarrow{\mathrm{lim}}^{1}$-functor is studied  in Section \ref{Section:towers}, where we record a further result in the spirit of Theorem \ref{Theorem:main1}; see Corollary \ref{Corollary:lim1-injective}. We also provide an explicit description of $\underleftarrow{\mathrm{lim}}^{1}(\boldsymbol{A})$ when   $\boldsymbol{A}$ is a monomorphic tower of abelian groups; see Theorem \ref{Theorem:lim1-injective}. For further applications of these analyses, see the second author's \cite{Lupini_TAG,Lupini_TfAG}, in which the definable $\mathrm{Ext}$ functor is applied to obtain new purely algebraic results; see also his recent \cite{Lupini_heart}, which underscores how canonical the framework introduced herein in fact is: in the latter, the category of groups with an abelian Polish cover is shown to form a minimal abelian extension, or more precisely \emph{heart}, of the category of Polish abelian groups.

\subsection{An Ulam stability framework} 
The main ingredient in the aforementioned theorems and corollaries is  a new rigidity result for non-Archimedian abelian groups with a Polish cover. 
To state and prove this result, we introduce a general \emph{Ulam stability} framework which specializes to the one appearing in \cite{kanovei_baire_2000,
kanovei_ulams_2000,kanovei_ulam_2001} when the cover $G$ of the relevant group $G/N$ is a connected Polish group. 

\emph{Ulam stability phenomena} may be studied in any setting where one has: (1) a notion of \emph{morphisms}; (2)  a notion of \emph{approximate morphisms}; and (3) a notion of closeness relating morphisms and approximate morphisms. Such phenomena have received
considerable attention over the last 50 years, especially in the settings of
groups, Boolean algebras, and C*-algebras; see \cite%
{kazhdan_varepsilon-representations_1982,hyers_stability_1941,farah_completely_1998,farah_liftings_2000,farah_analytic_2000,farah_approximate_1998,farah_approximate_2000,kalton_uniformly_1983,alekseev_approximations_1999,grove_jacobi_1974,shtern_almost_1998,shtern_rigidity_1999,ghasemi_isomorphisms_2015}.
An Ulam stability framework was introduced in \cite{kanovei_baire_2000} for studying definable homomorphisms $\mathbb{R}/N\to \mathbb{R}/N'$, when  $N$ and $N'$ are countable subgroups of $(\mathbb{R},+)$. 
There, approximate morphisms are Baire-measurable maps $\mathbb{R} \to \mathbb{R}$ which are lifts of homomorphisms $\mathbb{R}/N \to \mathbb{R}/N'$; morphisms are approximate morphisms of the form $x \mapsto cx$ for some $c \in \mathbb{R}$; and two morphisms are ``close'' to each other if they are lifts of the same homomorphisms $\mathbb{R}/N \to \mathbb{R}/N'$. The main theorem in \cite{kanovei_baire_2000} is that every approximate morphism is close to a morphism. Hence every definable homomorphism $\mathbb{R}/N \to \mathbb{R}/N'$ is of the form $x \mapsto cx$.

The main theorem in \cite{kanovei_baire_2000} is that every approximate morphism is close to a morphism. Hence every definable homomorphism $\mathbb{R}/N\to \mathbb{R}/N'$ is of the form $x\mapsto c\cdot x$. The question of whether similar Ulam stability phenomena exist for quotients of the $p$-adic groups appears in   \cite[Section 8]{kanovei_ulam_2001}. The next theorem answers this  in the more general context of quotients of arbitrary abelian pro-countable groups. For definitions of \emph{trivial}, \emph{approximately trivial}, and  \emph{approximately generically trivial} homomorphisms, as well as for the proof of the theorem, we refer the reader to Section \ref{S:Ulam}, where we develop the appropriate Ulam stability framework.

\begin{theorem}
\label{Theorem:main2} Let $f\colon G/N\to G'/N'$ be a definable homomorphism between groups with a Polish cover, where $G$ is abelian and non-Archimedean and $N$ is dense in $G$.
\begin{enumerate}
\item If $N'$ is countable then $f$ is trivial;
\item If $N'$ is locally profinite then $f$ is approximately trivial;
\item  If $N'$ is non-Archimedean in its Polish topology then $f$ is approximately generically trivial.
\end{enumerate}
\end{theorem}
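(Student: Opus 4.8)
First I would fix a Borel lift $\hat f\colon G\to G'$ of $f$ with $\hat f(0)=0$. Since $\hat f$ lifts the homomorphism $f\circ q\colon G\to G'/N'$, which vanishes on $N$, it maps $N$ into $N'$; and its \emph{defect} $\delta(g,h):=\hat f(g+h)-\hat f(g)-\hat f(h)$ is a Borel map $G\times G\to N'$, which by the Lusin--Souslin theorem is Borel into $N'$ equipped with its own Polish group topology $\tau'$. The elementary but crucial observation is that $f$ is \emph{trivial} exactly when $\delta$ is the coboundary $d\beta$ of a Borel map $\beta\colon G\to N'$: for then $\hat f-\beta$ is a Borel homomorphism $G\to G'$ (its defect is $\delta-d\beta=0$), it lifts $f$ since $\beta$ is $N'$-valued, and a Borel homomorphism between Polish groups is continuous, so $\hat f-\beta$ witnesses triviality; conversely any continuous lift $\varphi$ of $f$ gives $\beta=\hat f-\varphi$. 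In the same way $f$ being \emph{approximately} (resp.\ \emph{approximately generically}) trivial reduces, in the precise sense of Section~\ref{S:Ulam}, to solving $\delta=d\beta$ with $\beta$ taking values in $N'$ up to a $\tau'$-precompact error (resp.\ on a comeager subset of $G$). Thus all three parts are instances of a single coboundary problem for the Borel, $N'$-valued $2$-cocycle $\delta$ on the non-Archimedean group $G$.

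Two inputs feed the solution. A Pettis-type argument shows that the homomorphism $F:=f\circ q\colon G\to G'/N'$ is continuous: cover $G'$ by countably many translates $d_i+W$ of a symmetric open neighbourhood $W$ of $0$; one of the Borel sets $\hat f^{-1}(d_i+W)$ is non-meager, so its difference set is a neighbourhood of $0$, and it is contained in $F^{-1}(q'(W+W))$, which therefore contains an open subgroup of $G$ because $G$ is non-Archimedean. Since $F$ kills the dense subgroup $N$ it follows that $\hat f(G)\subseteq\overline{N'}$, so after replacing $G'$ by $\overline{N'}$ we may assume $N'$ is dense in $G'$. Secondly, fixing a decreasing neighbourhood basis of $0$ by open subgroups $G=U_0\supseteq U_1\supseteq\cdots$ with $\bigcap_n U_n=\{0\}$ exhibits $G$ as $\varprojlim_n G/U_n$ with each $G/U_n$ countable; density of $N$ makes $N\to G/U_n$ surjective, so coset representatives for the $U_n$ in $G$ may be chosen inside $N$, where $\hat f$ already takes values in $N'$.

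The heart of the proof, and the step I expect to be the main obstacle, is then to build $\beta$ by successive approximation along the tower $(U_n)$: at stage $n$ one corrects $\hat f$ on the countable group $G/U_n$, where the obstruction to being a homomorphism modulo $N'$ lies in an explicitly controlled cohomology group, and the delicate point is to bound the adjustment needed in passing from stage $n$ to stage $n+1$. This is exactly where the hypothesis on $N'$ enters, through the Ulam stability framework of Section~\ref{S:Ulam}. When $N'$ is countable it is discrete in $\tau'$, so there is no nontrivial small subgroup available to absorb an error: the corrections stabilize after finitely many stages at each point, and the limit is a genuine Borel map $\beta\colon G\to N'$ with $d\beta=\delta$, so $f$ is trivial. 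When $N'$ is locally profinite a fixed compact open subgroup of $(N',\tau')$ absorbs the accumulated corrections, and one obtains $\beta$ up to a precompact error, so $f$ is approximately trivial. When $N'$ is merely non-Archimedean, one can only arrange that the successive corrections fall into a prescribed vanishing sequence of $\tau'$-open subgroups of $N'$ on a comeager set of $G$, giving approximate generic triviality.

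Throughout, the density of $N$ is used to keep the approximations on $N$ --- where $\hat f$ is automatically $N'$-valued --- coherent with those on $G$, and it is what forces the conclusion to concern triviality rather than mere continuity. Specialising the theorem to $G=G'$ a $p$-adic group and $N=N'$ a dense copy of $\mathbb Z$ or $\mathbb Z[1/p]$ then recovers the answer to the question of Kanovei and Reeken on quotients of the $p$-adic groups.
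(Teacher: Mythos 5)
Your coboundary reformulation of ``trivial'' (the sufficient direction, $\delta = d\beta$ for Borel $\beta\colon G\to N'$ implies $f$ trivial, via Pettis on $\hat f - \beta$) is a fine way to frame the problem, and the Pettis/difference-set computation showing $\hat f(G)\subseteq\overline{N'}$ is correct, though the paper does not need that reduction. The two auxiliary observations --- density of $N$ letting you pick coset representatives in $N$, and the tower $G\cong\varprojlim G/U_n$ --- are also exactly the right tools, and are used by the paper in Lemmas \ref{Lemma:non-Arch} and \ref{Lemma:non-Arch2} and in the recursion proving Theorem \ref{Theorem:rigid2}.

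However, the heart of the argument is precisely the step you flag as ``the main obstacle,'' and what you write there is not yet a proof. The paper's key technical move, which your proposal omits, is Proposition \ref{Proposition:continuous-lift}: before any successive approximation, one upgrades the Borel lift $\hat f$ to a \emph{continuous} lift $\varphi\colon G\to G'$. This is done by restricting $\hat f$ to a dense $G_\delta$ set $U$ where it is continuous (Baire measurability), and then constructing $1$-Lipschitz maps $f_0, f_1\colon G\to U$ with $f_0(x)+f_1(x)=x$ via the ultrametric structure (Lemma \ref{Lemma:arisbobelis}); the map $\varphi(x)=\hat f(f_0(x))+\hat f(f_1(x))$ is then a continuous lift. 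This step genuinely changes what Baire category can deliver. With $\varphi$ continuous and $N'$ countable-discrete, the defect $C(x,y)=\varphi(x+y)-\varphi(x)-\varphi(y)$ is continuous into a discrete set, so a non-meager level set $C^{-1}(m)$ is closed and therefore has \emph{nonempty interior}; that open rectangle, pushed into $N\times N$ by density, yields an honest continuous homomorphism on a clopen subgroup, i.e.\ triviality. With your merely Borel defect $\delta$, a non-meager Borel level set $\delta^{-1}(m)$ need only be comeager in some open set (think of the irrationals), so running the same argument would give you at best a homomorphism identity on a comeager subset of $H\times H$ --- which is exactly ``approximately generically trivial,'' not ``trivial.'' In other words, without the continuity upgrade your successive-approximation scheme collapses the three conclusions (1)--(3) into the weakest one, and your assertion that ``the corrections stabilize after finitely many stages at each point'' in the countable case is unsupported. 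I also note in passing that your biconditional ``$f$ trivial $\iff$ $\delta = d\beta$'' overstates the case: the paper's notion of triviality permits the continuous homomorphism to live only on an inessential retract $H\subsetneq G$, and extending by a Borel retraction $r\colon G\to H$ does not obviously produce a coboundary on all of $G$ (the putative $\beta(x)=\psi(r(x)-x)$ is not even well-defined, since $r(x)-x\in N$ need not lie in $H$). Only the direction you actually use is correct, so this does not create a gap, but it should not be stated as an equivalence.
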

Results (1) and (2) above are optimal, as illustrated by  Examples \ref{Example:trivial} and \ref{Example:trivial2}.

\subsection{Cocycle superrigidity methods and Borel reducibility complexity} 

Lastly, we establish complexity bounds for various classification problems within the Borel reduction hierarchy. Many classification problems in mathematics naturally parametrize as pairs $(X,E)$, where $X$ is a Polish space and $E$ is a Borel equivalence relation.
Invariant descriptive set theory studies the ordering of the collection of all such classification problems according to their relative complexity, as measured by the Borel reduction relation $(X,E)\leq_B (Y,F)$; see Section \ref{S:EquivalenceRelation}.

One classification problem that naturally arises in homological computations pertinent, e.g., to shape theory \cite{mardesic_shape_1982}, is that of classifying objects of a pro-$\mathrm{Ho}(\mathrm{Top})$ subcategory up to isomorphism. As a biproduct of the proof of Theorem \ref{Theorem:main1} and the results from \cite{adams_linear_2000,hjorth_1999_rank2,thomas_classification_2003}, we establish that even for very simple full subcategories of  pro-$\mathrm{Ho}(\mathrm{Top})$, the complexity of the isomorphism relation $\simeq_{\mathrm{pro}}$ on objects can be highly non-trivial; see Corollary \ref{Corollary:proCategory}.

Let $\mathrm{Z}( \Lambda ,%
\mathbb{Z})/\mathrm{B}( \Lambda ,%
\mathbb{Z})$ be the presentation of   $\mathrm{Ext}( \Lambda,\mathbb{Z})$ as a group with a Polish cover. Since $\Lambda$ is a countable abelian group, the problem $\mathcal{R}(\mathrm{Ext}( \Lambda,\mathbb{Z}))$ of classifying all \emph{parametrized extensions} $x,y\in \mathrm{Z}( \Lambda,
\mathbb{Z})$ of $\Lambda$ by $\mathbb{Z}$ up to \emph{base-preserving isomorphism} (i.e., by whether $x-y\in\mathrm{B}( \Lambda ,
\mathbb{Z})$) is hyperfinite, and therefore of comparatively low complexity; see Section \ref{S:EquivalenceRelation}.  We show that the \emph{base-free isomorphism} classification problem   $\mathcal{R}(\mathrm{\mathrm{Aut}%
}( \Lambda) \curvearrowright \mathrm{Ext}( \Lambda ,\mathbb{
Z}))$ can, in contrast, be much more complex, where $\mathcal{R}(\mathrm{\mathrm{Aut}%
}( \Lambda) \curvearrowright \mathrm{Ext}( \Lambda ,\mathbb{
Z}))$ denotes the coarsening of $\mathcal{R}(\mathrm{Ext}( \Lambda ,\mathbb{
Z}))$ on $\mathrm{Z}( \Lambda,
\mathbb{Z})$ given by accounting for the natural action $\mathrm{\mathrm{Aut}
}( \Lambda)\curvearrowright\mathrm{Ext}( \Lambda,\mathbb{Z})$  provided by Theorem \ref{Theorem:main1}.

In particular, in the case of the
groups $\Lambda _{p}^{d}:=\mathbb{Z}[1/p]^{d}$ for a prime number $p$ and $
d\geq 1$, we prove that the orbit equivalence relations $E_{p}^{d}:=\mathcal{R}(\mathrm{\mathrm{Aut}}( \Lambda _{p}^{d}) \curvearrowright \mathrm{
Ext}( \Lambda _{p}^{d},\mathbb{Z}))$ array in the following
hierarchy.

\begin{theorem}
\label{Theorem:main3}Adopt the notations above. Fix $d,m\geq 1$ and primes $%
p,q$.

\begin{enumerate}
\item $E_{q}^{m}$ is not Borel reducible to $E_{p}^{d}$ if $m>d$;

\item $E_{q}^{m}$ is not Borel reducible to $E_{p}^{d}$ if $p$ and $q$ are
distinct and $m,d\geq 3$;

\item $E_{p}^{d}$ is hyperfinite and not smooth if $d=1$;

\item $E_{p}^{d}$ is not treeable if $d\geq 2$.
\end{enumerate}
\end{theorem}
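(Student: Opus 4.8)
\textit{Proof proposal.}

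The plan is to re-present the pair $(\mathrm{Ext}(\Lambda_{p}^{d},\mathbb{Z}),\ \mathrm{Aut}(\Lambda_{p}^{d}))$ as an action of a countable linear group on a $p$-adic space, and then to invoke the cocycle-superrigidity and Borel-reduction technology of Zimmer, Adams, Hjorth and Thomas. First I would record that the classical computation of $\mathrm{Ext}(\mathbb{Z}[1/p],\mathbb{Z})$, together with its definable refinement (which is already part of the analysis behind Theorem~\ref{Theorem:main1}), presents $\mathrm{Ext}(\Lambda_{p}^{d},\mathbb{Z})$, as a group with a Polish cover, as $\mathbb{Z}_{p}^{d}/\mathbb{Z}^{d}$ (equivalently $\mathbb{Q}_{p}^{d}/\mathbb{Z}[1/p]^{d}$), with $\mathrm{Aut}(\Lambda_{p}^{d})=\mathrm{GL}_{d}(\mathbb{Z}[1/p])$ acting through the embedding $\mathrm{GL}_{d}(\mathbb{Z}[1/p])\hookrightarrow\mathrm{GL}_{d}(\mathbb{Q}_{p})$. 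Hence $E_{p}^{d}$ is Borel isomorphic to the orbit equivalence relation of the affine action $\mathrm{GL}_{d}(\mathbb{Z}[1/p])\ltimes\mathbb{Z}[1/p]^{d}\curvearrowright\mathbb{Q}_{p}^{d}$, and, since every orbit meets the compact set $\mathbb{Z}_{p}^{d}$, also to the restriction of that relation to $\mathbb{Z}_{p}^{d}$. Two features of it will do all the work: $E_{p}^{d}$ contains the hyperfinite $\mathbb{Z}^{d}$-translation relation on $\mathbb{Z}_{p}^{d}$ equipped with Haar measure; and, restricting the group to $\mathrm{SL}_{d}(\mathbb{Z})$ and the space to the shell $Y=\mathbb{Z}_{p}^{d}\setminus p\mathbb{Z}_{p}^{d}$, it contains the orbit equivalence relation of $\mathrm{SL}_{d}(\mathbb{Z})\curvearrowright Y$, which by strong approximation is measure-preserving, ergodic and essentially free. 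These are precisely the actions studied in \cite{adams_linear_2000,hjorth_1999_rank2,thomas_classification_2003}.

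For parts (1) and (2) I would run the Adams--Kechris/Thomas cocycle-superrigidity argument for these ``$p$-local'' relations. Suppose $f$ Borel-reduces $E_{q}^{m}$ to $E_{p}^{d}$; restricting $f$ and inputting the ergodic, essentially free, measure-preserving action of a suitable higher-rank lattice --- $\mathrm{SL}_{m}(\mathbb{Z})$ acting on a shell of $\mathbb{Z}_{q}^{m}$ for part (1), respectively $\mathrm{SL}_{m}(\mathbb{Z}[1/q])$ acting on $\mathbb{Z}_{q}^{m}/\mathbb{Z}^{m}$ for part (2) --- yields a Borel cocycle into the countable group $\mathrm{GL}_{d}(\mathbb{Z}[1/p])\ltimes\mathbb{Z}[1/p]^{d}$ underlying $E_{p}^{d}$. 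For $m\ge 3$ these lattices have Kazhdan's property (T), and Zimmer's cocycle superrigidity theorem constrains this cocycle --- up to cohomology and a compact (hence amenable) correction --- to be induced by a continuous homomorphism of the ambient algebraic groups into the algebraic hull of the cocycle inside $\mathrm{GL}_{d}(\mathbb{Q}_{p})$. The two hypotheses of the theorem then force the cocycle to be trivial: when $m>d$, $\mathrm{SL}_{m}$ has no nontrivial rational representation of dimension less than $m$; and when $p\ne q$, there is no nontrivial continuous homomorphism from $\mathrm{SL}_{m}(\mathbb{R})\times\mathrm{SL}_{m}(\mathbb{Q}_{q})$ into $\mathrm{GL}_{d}(\mathbb{Q}_{p})$ (the real factor being connected and the target totally disconnected, and $\mathrm{SL}_{m}(\mathbb{Q}_{q})$ being almost simple but not $p$-adic analytic). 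Consequently the cocycle is amenable-valued, which forces $E_{q}^{m}$ to be hyperfinite on a non-trivial piece --- contradicting that an ergodic essentially free measure-preserving action of an infinite property-(T) group has a non-hyperfinite orbit equivalence relation. (The translation summands $\mathbb{Z}^{d}$ and $\mathbb{Z}[1/p]^{d}$ are amenable and do not affect this dichotomy, exactly as in \cite{thomas_classification_2003}; and the case $m>d=1$ of (1) follows instead from parts (3) and (4), since $E_{p}^{1}$ is hyperfinite while $E_{q}^{m}$ is not for $m\ge 2$.)

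For part (3), with $d=1$ the acting group is $\mathrm{GL}_{1}(\mathbb{Z}[1/p])\ltimes\mathbb{Z}[1/p]=\mathbb{Z}[1/p]^{\times}\ltimes\mathbb{Z}[1/p]$, a finite extension of the solvable Baumslag--Solitar group $\langle a,b\mid aba^{-1}=b^{p}\rangle$; this group is metabelian and hence amenable, so $E_{p}^{1}$ is hyperfinite --- either by a direct argument exploiting the structure of this particular action on $\mathbb{Q}_{p}$, or via the known hyperfiniteness results for Borel actions of such groups. It is not smooth because its restriction to $\mathbb{Z}_{p}$ contains the orbit equivalence relation of the translation action of $\mathbb{Z}$ on $\mathbb{Z}_{p}$, which is ergodic with respect to the (non-atomic, invariant) Haar probability measure and is therefore not smooth; and smoothness is inherited by subequivalence relations and by restrictions to Borel subsets.

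For part (4), when $d\ge 3$ the group $\mathrm{SL}_{d}(\mathbb{Z})$ has property (T) and acts ergodically, essentially freely and measure-preservingly on the shell $Y\subseteq\mathbb{Z}_{p}^{d}$; an ergodic measure-preserving treeable equivalence relation with property (T) is amenable, hence hyperfinite, which is incompatible with property (T) for an infinite group, so the orbit equivalence relation of $\mathrm{SL}_{d}(\mathbb{Z})\curvearrowright Y$ is not treeable, and therefore neither is $E_{p}^{d}$ --- treeability being inherited by subequivalence relations and by restrictions to Borel subsets. When $d=2$ property (T) is unavailable, and here I would invoke Hjorth's rank-two argument \cite{hjorth_1999_rank2}, which establishes non-treeability for precisely this type of $\mathrm{SL}_{2}$-action by substituting, for property (T), a rigidity of the action on profinite completions stemming from $\mathrm{SL}_{2}(\mathbb{Z}[1/p])$ being a lattice in $\mathrm{SL}_{2}(\mathbb{R})\times\mathrm{SL}_{2}(\mathbb{Q}_{p})$ (equivalently, property $(\tau)$ for its congruence subgroups). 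The step I expect to be most delicate is the initial identification of $E_{p}^{d}$ together with the verification that the resulting relations satisfy, on the nose, the ergodicity, essential-freeness, and ``negligible translation'' hypotheses under which the cited superrigidity and reduction theorems deliver parts (1), (2) and the $d\ge 3$ case of (4); the remaining genuine difficulty is the $d=2$ case of (4), where Hjorth's subtler argument must replace the clean property-(T) dichotomy.
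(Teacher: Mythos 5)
Your overall strategy --- rewrite $E_p^d$ as the orbit equivalence relation of the affine action $\mathrm{GL}_d(\mathbb{Z}[1/p])\ltimes\mathbb{Z}[1/p]^d\curvearrowright\mathbb{Q}_p^d$, restrict to $\mathbb{Z}_p^d$, and run a superrigidity argument --- matches the paper's plan (the identification itself is Theorem \ref{Theorem:TF-Ext} plus Corollary \ref{Corollary:trivial} and Lemma \ref{Lemma:BorelClasswiseIsomorphic}; the dichotomies are Theorem \ref{Theorem:affine-action-intro}). But the superrigidity theorem you invoke for parts (1) and (2) is not the one this setting will support. You propose Zimmer's cocycle superrigidity together with the notion of the algebraic hull and a ``compact correction.'' Zimmer's theorem is formulated for ergodic pmp actions of (lattices in) higher-rank semisimple groups, with cocycles taking values in \emph{algebraic} groups over local fields; its conclusion untwists the cocycle to a rational homomorphism modulo a compact group. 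Here the source action is the \emph{profinite} translation/affine action $\Gamma\ltimes A\curvearrowright\hat A$ on a compact group, not a homogeneous space of a semisimple Lie group, and the cocycle one actually extracts from a Borel reduction has values in a \emph{countable discrete} group $\Delta\ltimes\mathbb{Z}[1/p]^d$, not an algebraic group. The correct tool --- and what the paper uses in Proposition \ref{Corollary:profinite-Ioana} --- is Ioana's cocycle superrigidity for profinite actions of property (T) (or relative (T)) groups, which directly gives: after passing to a finite-index clopen piece, the cocycle is cohomologous to a group homomorphism into $\Delta$. From there, the paper runs the Thomas--Coskey dimension and pro-$p$/pro-$q$ dichotomies (Lemmas \ref{Lemma:thomas-nonembed-Q} and \ref{Lemma:homog2}) to force the homomorphism to have finite-index kernel, and then shows the original reduction would have to be $F$-homotopic to a constant, contradicting injectivity (not, as you phrase it, contradicting non-hyperfiniteness). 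This is a substantive difference, not just a cosmetic change of citation: Zimmer's theorem would not deliver the finite-index untwisting into a countable target that the argument needs.

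Two smaller points. For part (4) with $d=2$ you propose to fall back on Hjorth's rank-two machinery; the paper instead gives a single uniform argument for all $d\geq 2$ via the Haagerup property: since $\mathbb{Z}^2\leq\mathrm{SL}_2(\mathbb{Z})\ltimes\mathbb{Z}^2$ has relative property (T), the acting group does not have the Haagerup property, and treeable free pmp equivalence relations only come from Haagerup groups (Proposition \ref{Proposition:treeable}). For part (3), your observation that the acting group $\mathbb{Z}[1/p]^\times\ltimes\mathbb{Z}[1/p]$ is a finite extension of a Baumslag--Solitar group is actually a cleaner description than the paper's (whose appeal to nilpotence is questionable for this group, since $BS(1,p)$ is solvable but not nilpotent); either way both you and the paper ultimately lean on the same Borel-hyperfiniteness result for this class of groups and should be explicit about exactly which theorem covers it.
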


The proof of Theorem \ref{Theorem:main3} relies on the description of $%
E_{p}^{d}$ as the orbit equivalence relation of the affine action $\mathrm{GL%
}_{d}( \mathbb{Z}[1/p]) \ltimes \mathbb{Z}[1/p]^{d}%
\curvearrowright \mathbb{Q}_{p}^{d}$, where $\mathbb{Q}_{p}$ is the additive
group of the field of $p$-adic numbers, as well as Ioana's cocycle
superrigidity result for profinite actions \cite{ioana_cocycle_2011} for property (T) groups, and  
ideas of Coskey and Thomas \cite%
{thomas_classification_2003,coskey_borel_2010}.

\subsection{The structure of the paper} In Section \ref{S:Prelim}, we 
give a brief review of the necessary definitions and collect some standard facts which we are going to need from the literature. In Section \ref{S:GroupsWithPCover}, we introduce the category of groups with a Polish cover. In Section \ref{S:Ulam}, we develop a general Ulam stability framework for groups with a Polish cover and prove Theorem \ref{Theorem:main2}, which is our main technical tool for analyzing the definable content of $\mathrm{Ext}(-,-)$  and $\mathrm{lim}^{1}(-)$. In Section \ref{Section:towers}, we analyze the definable content of $\mathrm{lim}^{1}(-)$, a functor playing an essential bookkeeping role in many computations in algebraic topology and constructions in homological algebra. In Section \ref{Section:locally-profinite}, we apply the rigidity results from Sections \ref{S:Ulam} and \ref{Section:towers} in the special case of profinite completions of $\mathbb{Z}^d$. We also establish a lifting property for actions on the quotients of such completions,  which will play a crucial role in Section \ref{S:Ext}. In Section \ref{S:Ext},  we analyze  the definable content of $\mathrm{Ext}(-,-)$ and prove Theorem \ref{Theorem:main1}. We also compute the \emph{abstract} group-isomorphism-type of the quotient $\hat{\mathbb{Z}}^d/\mathbb{Z}^d$ for an arbitrary profinite completion $\hat{\mathbb{Z}}^d$ of $\mathbb{Z}^d$. This will show how much information may be lost by neglecting the definable data about $\hat{\mathbb{Z}}^d/\mathbb{Z}^d$ contained in its Polish cover. In Section \ref{S:ActionsCocycleRig}, we use cocycle superrigidity  methods to study the Borel  complexity of the action $\mathrm{\mathrm{Aut}
}( \Lambda)\curvearrowright\mathrm{Ext}( \Lambda,\mathbb{Z})$, and conclude with the proof of Theorem \ref{Theorem:main3}.

\subsubsection*{Acknowledgments}

We would like to thank Sam Coskey for enlightening conversations regarding superrigidity, and Alexander Kechris for his feedback on our work. We are also grateful  to Vladimir Kanovei for his comments on an earlier version of this work and for referring us to his joint paper with Reeken \cite{kanovei_ulams_2000}. And we would like to thank the paper's two referees for their valuable readings of an earlier draft.


\section{Preliminaries}\label{S:Prelim}

In this section we review the basic facts from invariant descriptive set theory and category theory that we will require below. Standard references are \cite{kechris_classical_1995, gao_invariant_2009} for the former  and \cite{mac_lane_categories_1998} for the later.

\subsection{Polish spaces}

A \emph{Polish space} is a second countable topological space whose topology is induced by a complete metric. Let $X$ be a Polish
space. The \emph{$\sigma $-algebra of Borel subsets of $X$} is the smallest $\sigma $%
-algebra of subsets of $X$ that contains all the open subsets of $X$. Subsets of $X$ are
\emph{Borel }if they belong to its Borel $\sigma $-algebra. A function $f:X\rightarrow Y$
between Polish spaces is \emph{Borel} if the $f$-preimages of Borel sets are all Borel. The image of a Borel subset of $X$ under such a Borel function $f$
need not be a Borel subset of $Y$ but it will be, by definition, \emph{analytic}. If $f$, though, is an \emph{injective }%
Borel function then $f$ will map Borel subsets of $X$ to Borel subsets of $Y$ 
\cite[Theorem 15.1]{kechris_classical_1995}. The subspace topology renders a subset $A$ of a Polish space $X$ Polish if and only if $A$ is a $G_{\delta }$ subset
of $X$, i.e., if and only if $A$ is the intersection of a countable family of open subsets
of $X$. Hence all closed subspaces of a Polish space are Polish. A Polish space $X$ is \emph{locally compact} if
every point of $X$ has an open neighborhood with compact closure. This is
equivalent (in the Polish setting) to the assertion that $X$ can be written as an increasing union of compact subsets.

A subset $A$ of a topological space $X$ is \emph{meager} it is contained in the
union of countably many closed nowhere dense sets. It is \emph{nonmeager} if it is not
meager and \emph{comeager} if its complement is meager.
We say that $A$ has the \emph{Baire property} if it is contained in the smallest $\sigma$-algebra on $X$ generated by the open subsets and the meager subsets of $X$. A function $f\colon X\to Y$ between topological spaces is \emph{Baire measurable} if $f^{-1}(U)$ has the Baire property for every open set $U\subseteq Y$.
 By the Baire Category
Theorem \cite[Theorem 8.4]{kechris_classical_1995}, every Polish space $X$
is a \emph{Baire space}, i.e. every nonempty open subset of such an $X$ is
nonmeager. Hence the intersection of countably many dense $G_{\delta}$ subsets of $X$ will again be dense $G_{\delta}$. Observe lastly
that a subset of $X$ is comeager if and only if it contains a dense $%
G_{\delta }$ subset of $X$.

\subsection{Polish groups}

A \emph{topological group} is a group endowed with a topology rendering the group operations $(x,y)\mapsto x\cdot y$ and $x\mapsto x^{-1}$ continuous. 
A \emph{Polish group} is a topological group whose topology is Polish.  A subgroup $H$ of a Polish group $G$ endowed with the subspace topology is a Polish
group if and only if it is closed in 
$G$; see \cite[Proposition 2.2.1]{gao_invariant_2009}.
  In this case the quotient
group $G/H$, endowed with the quotient topology, is also a Polish group; see \cite[Theorem 2.2.10]{gao_invariant_2009}.
By \emph{the category of Polish groups} we mean the category with Polish groups as objects and continuous
homomorphisms as morphisms. The following fact, known  as  \emph{Pettis' Lemma}, will be used repeatedly; see \cite[Theorem 2.3.2]{gao_invariant_2009} for a proof. 
\begin{lemma}[Pettis] \label{Lemma: Pettis}
If a nonmeager subset $A$ of a Polish group $G$ has the Baire property then $AA^{-1}$ contains an open neighborhood of the identity of $G$.
\end{lemma}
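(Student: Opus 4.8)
The plan is to run the classical Baire-category argument. One preliminary point: the statement must be read with $A$ non-meager — otherwise $A=\emptyset$ gives $AA^{-1}=\emptyset$, which contains no neighborhood of $1_{G}$ — and with $G$ a Baire space, which is automatic in every application since the relevant groups here are Polish. So assume $A$ is non-meager and has the Baire property. Using the Baire property I would write $A \triangle U = M$ with $U$ open and $M$ meager. Since $A$ is non-meager, $U \neq \emptyset$; and since $U \setminus A \subseteq M$, the set $A$ is comeager in $U$, i.e.\ $U \setminus A$ is meager.

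The key reduction is the following: it suffices to produce an open neighborhood $W$ of $1_{G}$ with $A \cap wA \neq \emptyset$ for every $w \in W$. Indeed, if $x \in A \cap wA$ then $x \in A$ and $x = wa$ for some $a \in A$, whence $w = x a^{-1} = x(w^{-1}x)^{-1} \in AA^{-1}$; so such a $W$ is contained in $AA^{-1}$, which is exactly what we want.

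To find such a $W$ I would use that left translations of $G$ are homeomorphisms, hence preserve meager sets. Fix any $g_{0} \in U$. The map $w \mapsto w^{-1}g_{0}$ is continuous and sends $1_{G}$ to $g_{0} \in U$, so its preimage of $U$ contains an open neighborhood $W$ of $1_{G}$; for $w \in W$ we then have $g_{0} \in wU$, so $g_{0} \in U \cap wU$ and in particular $U \cap wU$ is a nonempty open set. Applying the homeomorphism $x \mapsto wx$ to the fact ``$A$ is comeager in $U$'' shows $wA$ is comeager in $wU$, hence $A \cap wA$ is comeager in $U \cap wU$; being a nonempty open subset of the Baire space $G$, the set $U \cap wU$ is non-meager, so its comeager subset $A \cap wA$ is non-empty. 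This holds for all $w \in W$, which finishes the proof.

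I do not expect a genuine obstacle: the content is bookkeeping about which sets are comeager in which open sets, together with the two standard facts that translations preserve meagerness and that nonempty open subsets of a Baire space are non-meager. The only point requiring real care is the hypothesis adjustment (non-meagerness of $A$, Baire-ness of $G$) noted at the outset, without which the displayed statement is literally false.
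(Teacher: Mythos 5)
Your proof is correct, and it is the standard Baire-category argument for Pettis's lemma. The paper itself does not prove this statement; it cites Gao's textbook (\emph{Invariant Descriptive Set Theory}, Theorem~2.3.2) for the proof, so there is no alternative approach in the paper to compare against. Your observation at the outset is also right: the lemma as written implicitly assumes $A$ is non-meager (and $G$ a Baire space, automatic for Polish groups), as one sees from taking $A=\emptyset$ or $A$ a singleton in $\mathbb{R}$; this is the customary reading and is how the lemma is in fact used throughout the paper. One minor stylistic remark: your reduction step ``it suffices that $A\cap wA\neq\emptyset$ for all $w$ in some open $W\ni 1_G$'' is exactly the right pivot, and the computation $w = x a^{-1}\in AA^{-1}$ for $x\in A\cap wA$, $x=wa$, is clean; the remaining bookkeeping about comeagerness in $U\cap wU$ is handled correctly, including the appeal to the Baire property of $G$ to conclude a comeager subset of the nonempty open $U\cap wU$ is nonempty.
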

\begin{corollary}\label{Corollary: Pettis}
If $\varphi :G\rightarrow H$ is a Baire-measurable group homomorphism between Polish groups, then $\varphi $ is continuous.
\end{corollary}
  Hence the morphisms of the aforementioned category might equivalently be taken to be the Baire measurable or  ``definable" homomorphisms between Polish groups. 

In the following, we will
regard any countable group as a topologically discrete, and therefore locally compact, non-Archimedean, Polish
group. Recall that 
a Polish group $G$ is \emph{locally compact} if it is a  locally compact
topological space and is \emph{non-Archimedean} if its
identity element admits a neighborhood basis consisting of open subgroups.  The main focus below will be on non-Archimedean abelian Polish groups. An abelian Polish group is non-Archimedean if and only if it is \emph{procountable}, i.e., is an inverse limit of a sequence of countable
groups \cite[Lemma 2]{malicki_abelian_2016}. Recall that a Polish group is \emph{profinite} if it is compact and non-Archimedean or, equivalently, it is isomorphic to the inverse limit of a tower of finite groups. We say that $G$ is \emph{locally profinite} if it admits a basis of neighborhoods of the identity consisting of profinite groups.
 Let $G$ be a non-Archimedean abelian Polish group and let $(V_n)$ be a decreasing sequence of open subgroups, beginning with $V_0=G$, whose intersection is the identity. It is easy to check that letting $d(g,h)=2^{-n}$ if and only if $n$ is the largest natural number with $gh^{-1}\in V_n$ defines a metric $d\leq 1$  on $G$ compatible with its topology such that:
\begin{enumerate}
\item $d$ is both \emph{left and right invariant}, i.e., $d(fg,fh)=d(g,h)$ and $d(hg,hf)=d(g,h)$, for all $f,g,h\in G$;
\item $d$ is an \emph{ultrametric}, i.e.,  $d(f,h)\leq \max\{d(f,g),d(g,h)\}$, for all $f,g,h\in G$.
\end{enumerate} 
Moreover, as is straightforward to verify, this metric is also complete; see \cite[Corollary 2.2.2]{gao_invariant_2009}.

A \emph{standard Borel structure} on a set $X$ is a collection $\mathrm{B}$
of subsets of $X$ forming the $\sigma $-algebra of Borel sets with
respect to some Polish topology on $X$. A \emph{standard Borel space }$\left( X,\mathrm{B}\right) $ is a set $X$ endowed with a Borel structure $%
\mathrm{B}$. If $\left( X,\mathrm{B}\right) $ and $\left( X^{\prime },%
\mathrm{B}^{\prime }\right) $ are standard Borel spaces, then a \emph{Borel
function from $\left( X,\mathrm{B}\right) $ to $\left( X^{\prime },\mathrm{B}^{\prime }\right)$} is, as above, one that is
measurable with respect to $\mathrm{B}$ and $\mathrm{B}^{\prime }$. 
A 
\emph{standard Borel group} is a group endowed with a standard Borel
structure with the property that all the group operations are Borel. A
standard Borel group is \emph{Polishable} if there exists a Polish topology
that induces its Borel structure \emph{and} renders it a Polish group. It follows from Pettis' Lemma that such a
topology is always unique. 
The image of any continuous group homomorphism from one Polish group to another is a Polishable standard Borel
group. This fact forms part of our next lemma, which we state after recalling a few more definitions.

If $X$ is a standard Borel space and 
$E$ is an equivalence relation on $X$ then a Borel \emph{selector} $s$ for $%
E$ is a Borel function $s:X\rightarrow X$ with the property that, for every $%
x,y\in X$, $x\,E\,s( x) $, and $x\, E\, y$ if and only if $s( x)
=s( y) $ (thus $s$ selects, in a Borel manner, a point from each equivalence class).
Any subgroup $H$ of a Polish group $G$ induces a canonical equivalence relation on $G$, namely that whose classes are the cosets of $H$; this equivalence relation admits
a Borel selector if and only if $H$ is a closed subgroup of $G$ \cite[Theorem 12.17]
{kechris_classical_1995}.
Suppose
that $X$ and $Y$ are standard Borel spaces and $f:X\rightarrow Y$ is a Borel function
such that $f( X) $ is a Borel subset of $Y$. Then a Borel \emph{section}
for $f:X\rightarrow Y$ is a Borel function $g:f( X) \rightarrow X$ such
that $f\circ g$ is the identity of $f( X) $. An important special
case is when $X=Y\times Z$ for some standard Borel space $Z$ and $f$ is the
first-coordinate projection map. The following lemma is a corollary
of the existence of Borel selectors for closed coset equivalence relations.

\begin{lemma}
\label{Lemma:Borel-inverse}Let  $A,B$ be Polish groups and  let $\pi
:A\rightarrow B$ be a continuous homomorphism. Then $\mathrm{ran}(\pi )$ is
a Polishable Borel subgroup of $B$, and $\pi :A\rightarrow \mathrm{ran}(\pi) 
$ has a Borel section.
\end{lemma}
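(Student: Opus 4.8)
The plan is to deduce Lemma \ref{Lemma:Borel-inverse} from the existence of Borel selectors for closed coset equivalence relations, as the surrounding text suggests. First I would decompose $\pi$ through its kernel. Set $K = \ker(\pi)$, a closed (hence Polish) subgroup of $A$. The coset equivalence relation $E_K$ on $A$ admits a Borel selector $s\colon A\to A$ by \cite[Theorem 12.17]{kechris_classical_1995}; write $T = s(A)$ for its (Borel) transversal. Then $\pi$ restricted to $T$ is a Borel injection onto $\mathrm{ran}(\pi)$, so by Lusin--Souslin \cite[Theorem 15.1]{kechris_classical_1995} its image $\mathrm{ran}(\pi)$ is Borel in $B$ and $(\pi\restriction T)^{-1}\colon \mathrm{ran}(\pi)\to T\subseteq A$ is Borel; composing with the inclusion $T\hookrightarrow A$ gives the desired Borel section $g$ of $\pi$, since $\pi(g(b)) = b$ for every $b\in\mathrm{ran}(\pi)$ by construction.

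It remains to see that $\mathrm{ran}(\pi)$ is \emph{Polishable}, i.e.\ carries a Polish group topology inducing its Borel structure. The natural candidate is the topology transported from $A/K$ via the continuous algebraic isomorphism $\bar\pi\colon A/K\to \mathrm{ran}(\pi)$ induced by $\pi$. Concretely, $A/K$ is a Polish group (quotient of a Polish group by a closed subgroup), and $\bar\pi$ is a continuous injective homomorphism; declare $U\subseteq\mathrm{ran}(\pi)$ open precisely when $\bar\pi^{-1}(U)$ is open in $A/K$. This makes $\bar\pi$ a homeomorphism, so $\mathrm{ran}(\pi)$ with this topology is a Polish group. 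One then checks that this Polish topology induces the subspace Borel structure of $\mathrm{ran}(\pi)\subseteq B$: the Borel sets of this Polish topology are exactly the $\bar\pi$-images of Borel subsets of $A/K$, and since $\bar\pi$ is a Borel injection into the Polish group $B$ (it equals $\pi$ composed with the Borel section $A/K\to A$ of the quotient map, which exists because $K$ is closed, then $\pi$ itself), Lusin--Souslin again guarantees that $\bar\pi$ carries Borel sets to Borel sets and reflects them. Hence the two $\sigma$-algebras on $\mathrm{ran}(\pi)$ agree, and uniqueness of the Polishable topology follows from Pettis' Lemma as noted in the text.

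I would present the argument in the order: (i) reduce to a Borel transversal $T$ of the kernel coset relation; (ii) apply Lusin--Souslin to get Borelness of $\mathrm{ran}(\pi)$ and the Borel section; (iii) transport the Polish topology from $A/K$ and verify it induces the correct Borel structure.

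The main obstacle is step (iii): verifying that the transported topology induces precisely the inherited Borel structure on $\mathrm{ran}(\pi)\subseteq B$, rather than some finer or coarser one. This is where one must invoke Lusin--Souslin carefully — the point being that a continuous injection of Polish spaces is a Borel isomorphism onto its (Borel) image, so the ``intrinsic'' Borel structure coming from $A/K$ and the ``extrinsic'' one inherited from $B$ coincide. Everything else (existence of the Borel selector, the topological quotient being Polish, uniqueness via Pettis) is either cited or routine.
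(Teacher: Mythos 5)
Your proposal is correct and follows essentially the same route as the paper: reduce via $\ker(\pi)$, use the Borel selector for the closed coset relation together with Lusin--Souslin to get Borelness of $\mathrm{ran}(\pi)$ and a Borel section, then push forward the Polish topology from $A/\ker(\pi)$. Working through the Borel transversal $T = s(A)$ rather than directly through the induced map $\hat\pi\colon A/\ker(\pi)\to B$ is a cosmetic difference, and your more explicit verification in step (iii) that the transported topology yields the inherited Borel structure is a fine (if slightly more laborious) substitute for the paper's one-line appeal to the existence of the Borel section.
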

\begin{proof}
Observe that $\pi $ induces an injective Borel map $\hat{\pi}\colon A/
\mathrm{ker}( \pi) \rightarrow B$ whose image is $\textrm{ran}\left( \pi \right) $. 
 Hence $\mathrm{ran}( \pi) $ is a Borel subgroup of $B$. Let 
$\rho \colon A\rightarrow A$ be the Borel selector for the coset equivalence
relation of $\mathrm{ker}(\pi )$ in $A$. Since $\rho $ is constant on cosets
of $\mathrm{ker}(\pi )$, it induces Borel map $\hat{\rho}\colon A/\mathrm{ker%
}(\pi )\rightarrow A$. Thus $\hat{\rho}\circ (\hat{\pi})^{-1}$ is a Borel
section of $\pi :A\rightarrow \mathrm{ran}( \pi) $.
Finally, since $\mathrm{ker}(\pi)$ is closed in $A$ the group $A/\mathrm{ker}( \pi)$ is Polish in the quotient topology; see  \cite[Proposition 2.2.1]{gao_invariant_2009}. Hence, $\mathrm{ran}(\pi )$ admits a Polish topology; namely,  the push-forward topology of $A/\mathrm{ker}( \pi)$ under the injective map  $\hat{\pi}$. The fact that this new topology induces the same Borel structure on $\mathrm{ran}(\pi )$ follows from the existence of the Borel section.
\end{proof}

A more general version of Lemma \ref{Lemma:Borel-inverse} is the following
result, which is a particular instance of \cite[Lemma 3.8]{kechris_borel_2016}.
\begin{lemma}
\label{Lemma:Borel-inverse2}

Let  $A,B$ be Polish groups, let $A_{0}\subseteq A$ and $B_{0}\subseteq B$ Polishable subgroups, and  let $
C$ be a Borel subgroup of $A$. If  $f:C\rightarrow B$ is a
Borel function with $f^{-1}( B_{0}) =A_{0}\cap C$, $f(
C) +B_{0}=B$, and $f( x)\,f( y)\,f( xy)
^{-1}\in B_{0}$ for every $x,y\in C$, then $f$ has a Borel section $g$
which furthermore satisfies $g( x)\,g( y)\, g(
xy) ^{-1}\in A_{0}$ for every $x,y\in B$.
\end{lemma}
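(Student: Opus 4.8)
The plan is to reduce the statement to the production of a single Borel lift, after which the claimed cocycle condition will be automatic. Write $\pi_B\colon B\to B/B_0$ for the quotient map. The hypothesis $f(x)\,f(y)\,f(xy)^{-1}\in B_0$ for all $x,y\in C$ says precisely that $h:=\pi_B\circ f\colon C\to B/B_0$ is a group homomorphism; its kernel is $f^{-1}(B_0)=A_0\cap C$, and it is surjective because $f(C)+B_0=B$. Thus $f$ descends to a group isomorphism $C/(A_0\cap C)\xrightarrow{\ \sim\ }B/B_0$. I will use $\pi_B$ and $h$ only as bookkeeping devices for relations among elements of $C$ and $B$, so nothing here requires endowing the (non-standard) quotient $B/B_0$ with a Borel structure.

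Next I would isolate what remains. Read ``Borel section of $f$'' in the appropriate sense here: a Borel map $g\colon B\to C$ with $f(g(b))\,b^{-1}\in B_0$ for every $b\in B$, equivalently $h\circ g=\pi_B$, i.e. $g$ is a Borel lift of $\pi_B$ through $h$. For any such $g$ and all $x,y\in B$ one computes $h\bigl(g(x)\,g(y)\,g(xy)^{-1}\bigr)=\pi_B(x)\,\pi_B(y)\,\pi_B(xy)^{-1}=e$, since $\pi_B$ is a homomorphism; hence $g(x)\,g(y)\,g(xy)^{-1}\in\ker h=A_0\cap C\subseteq A_0$. So the conclusion of the lemma holds as soon as we exhibit one such Borel $g$, and this is exactly the situation of \cite[Lemma~3.8]{kechris_borel_2016}: a Borel lift through a surjective Borel homomorphism out of a Borel subgroup $C$ of a Polish group, onto a quotient of a Polish group by a Polishable subgroup.

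To produce $g$ directly, I would consider the set $R:=\{(b,c)\in B\times C:\ f(c)\,b^{-1}\in B_0\}$, which is Borel (the preimage of the Borel set $B_0$ under the Borel map $(b,c)\mapsto f(c)\,b^{-1}$); using the cocycle hypothesis together with $f^{-1}(B_0)=A_0\cap C$, each nonempty vertical section $R_b$ is exactly a coset of $A_0\cap C$ in $C$, and $R_b\neq\varnothing$ for every $b$ because $f(C)+B_0=B$. The Jankov--von Neumann uniformization theorem then yields a uniformizing map $g_0\colon B\to C$ for $R$ that is measurable with respect to the $\sigma$-algebra generated by the analytic sets, hence Baire measurable. \textbf{The main obstacle is the upgrade from Baire measurable to genuinely Borel}: since $A_0$ and $B_0$ are Polishable but typically not closed, the coset equivalence relations $E_{A_0\cap C}$ on $C$ and $E_{B_0}$ on $B$ need not be smooth, so one cannot simply select cosets along a Borel transversal. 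The remedy --- which is the heart of \cite[Lemma~3.8]{kechris_borel_2016} --- is to pass to the finer Polish group topologies on $A_0$ and on $B_0$ furnished by their Polishability, restrict $g_0$ to a comeager set on which it is continuous, and run a Pettis-style translation argument (here one uses that $g_0$ is a lift of a homomorphism, so its defect $g_0(x)\,g_0(y)\,g_0(xy)^{-1}$ already lies in $A_0\cap C$, whose own Polish topology is what makes the correction possible) to replace $g_0$ by a genuinely Borel lift $g$. Concretely, in the write-up I would verify that the present hypotheses place us in the scope of \cite[Lemma~3.8]{kechris_borel_2016} and quote it, the discussion above recording both why it applies and where essentially all the difficulty is concentrated.
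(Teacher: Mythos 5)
Your proposal is correct and takes essentially the same route as the paper, which simply cites \cite[Lemma 3.8]{kechris_borel_2016} without further elaboration. Your preliminary observation that the conclusion $g(x)\,g(y)\,g(xy)^{-1}\in A_0$ follows automatically once $g$ is a Borel lift of $\pi_B$ through $h$ is a useful clarification of why the citation suffices, as is your correct identification of the Baire-versus-Borel upgrade as the part of the argument that the Kechris--Macdonald lemma is really doing.
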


\subsection{Definable equivalence relations}\label{S:EquivalenceRelation}

Here we review the basic notions of the Borel complexity
theory of classification problems. Formally, a \emph{classification problem} is a pair $
\left( X,E\right)$ where $X$ is a Polish space and $E$ is an equivalence
relation on $X$ which is analytic (and, in many cases, Borel) as a subset
of $X\times X$. 
 A \emph{Borel homomorphism }(respectively, a \emph{Borel
reduction}) from $\left( X,E\right) $ to $\left( X^{\prime },E^{\prime
}\right) $ is a  function (respectively, an injective function) $%
X/E\rightarrow X^{\prime }/E^{\prime }$ induced by some Borel \emph{lift} $%
X\rightarrow X^{\prime }$.  In the standard reading, the existence of a
Borel reduction from $\left( X,E\right) $ to $\left( X^{\prime },E^{\prime}\right) $ is tantamount to the assertion that the classification problem $(X,E)$ is at most as hard as the classification problem $(X'
E^{\prime})$.  If such a Borel reduction exists, we say that $E$ is \emph{Borel reducible} to $E^{\prime }$ and write $E\leq _{B}E^{\prime }$.  If additionally $E^{\prime }\leq _{B}E$ then $E$ and $E^{\prime }$ are \emph{bireducible}; if not, then we write $E<_{B}E^{\prime
}$. Some of the Borel reductions appearing below will reflect yet stronger relations between classification problems. Following \cite%
{motto_ros_descriptive_2013,motto_ros_complexity_2012,camerlo_isometry_2018}
we say that $E$ and $E^{\prime }$ are\emph{\ classwise Borel isomorphic} if
there is a bijection $f:X/E\rightarrow X^{\prime }/E^{\prime }$ such that $f$
is induced by a Borel map $X\rightarrow X^{\prime }$, and $f^{-1}$ is
induced by a Borel map $X^{\prime }\rightarrow X$. Such an $f$ is called a \emph{%
classwise Borel isomorphism} from $E$ to $E^{\prime }$.

\begin{example}\label{Example:Rank_d}
For each $d\in\omega$, let $(\mathcal{R}(d),\simeq_{\mathrm{iso}})$ be the problem of classifying all rank $d$ torsion-free abelian  groups up to isomorphism. $\mathcal{R}(d)$ readily identifies with a closed subset of the product $\{0,1\}^{\mathbb{Q}^d}$ and thereby admits a natural Polish structure. It is not hard then to see that  $\simeq_{\mathrm{iso}}$ is a Borel equivalence relation on $\mathcal{R}(d)$. While  Baer's analysis of rank $1$ torsion-free abelian groups provides concrete invariants completely classifying $(\mathcal{R}(1),\simeq_{\mathrm{iso}})$, no satisfactory\footnote{The complete invariants originating from the work of Derry \cite{Der}, Mal'cev \cite{Mal},  and  Kurosch \cite{Kur} are for all practical purposes  as  complicated  as the original classification problem is; see \cite{fuchs_abelian_2015}.}  complete invariants are known for $(\mathcal{R}(d),\simeq_{\mathrm{iso}})$ when $d$ is greater than $1$.
 The framework of Borel complexity theory helps to explain this fact: by the cumulative work of several authors \cite{adams_linear_2000,hjorth_1999_rank2,thomas_classification_2003} 
we now know that the problems $(\mathcal{R}(d),\simeq_{\mathrm{iso}})$ form a strictly increasing chain in the Borel reducibility order:
\[(\mathcal{R}(1),\simeq_{\mathrm{iso}})<_B (\mathcal{R}(2),\simeq_{\mathrm{iso}})<_B (\mathcal{R}(3),\simeq_{\mathrm{iso}})<_B \cdots\]
\end{example}

Within this framework, a number of complexity classes of equivalence relations are of a sufficient importance to merit separate names. The lower part of this hierarchy is stratified  by the following complexity classes:
\[\mathrm{smooth}\subsetneq\mathrm{essentially\; hyperfinite}\subsetneq\mathrm{essentially\; treeable}\subsetneq\mathrm{essentially\; countable},\]
where a Borel equivalence relation $(X,E)$ is:

\begin{itemize}
\item \emph{smooth} if it is Borel reducible to the relation $\left( \mathbb{%
R},=_{\mathbb{R}}\right) $ of equality on the real numbers (or,
equivalently, to the relation of equality on any other uncountable Polish space);

\item \emph{hyperfinite} if it can be written as an increasing union of
equivalence relations with finite classes or, equivalently, if it is Borel
reducible to the orbit equivalence relation of a Borel action of the
additive group of integers $\mathbb{Z}$ \cite[Proposition 1.2]%
{jackson_countable_2002}; see also \cite[Theorem 5.1]%
{dougherty_structure_1994};

\item \emph{treeable }if it is countable and there exists a Borel relation $%
R\subseteq X\times X$ such that $\left( X,R\right) $ is an acyclic graph,
and the connected components of $\left( X,R\right) $ are precisely its
equivalence classes or, equivalently, it is Borel reducible to the orbit
equivalence relation induced by a \emph{free }action of a free countable
group \cite[Section 3]{jackson_countable_2002};

\item \emph{countable }if each one of its equivalence classes is countable;

\item \emph{essentially }countable/hyperfinite/treeable, respectively, if it
is Borel reducible to an equivalence relation that is
countable/hyperfinite/treeable, respectively.
\end{itemize}
Among the non-smooth Borel equivalence relations, there is a least one up to
Borel bireducibility, which is the relation $\left( \left\{ 0,1\right\}
^{\omega },E_{0}\right),$ of eventual equality of binary sequences: $xE_{0}y \iff \exists m\in\omega \; \forall n >m \; x(n)=y(n)$.
This is
a hyperfinite Borel equivalence relation. Any other non-smooth
hyperfinite Borel equivalence relation is Borel bireducible with $E_{0}$ 
\cite[Theorem 7.1]{dougherty_structure_1994}.  

Many classification problems are induced by continuous actions of Polish group actions. 

\begin{definition}Corresponding to any Borel action $G\curvearrowright X$ of a Polish group $G$ on a standard Borel space $X$ is a classification problem $(X,\mathcal{R}(
G\curvearrowright X))$, where $\mathcal{R}(
G\curvearrowright X) $ is the associated \emph{orbit equivalence
relation}, i.e., the equivalence relation given by $( x,y) \in \mathcal{R}(
G\curvearrowright X) $ if and only if $g\cdot x=y$ for some $g\in G$.
\end{definition}
For example, $E_0$ is simply the orbit equivalence relation of the action of the countable group $\oplus_{n\in\omega}\mathbb{Z}/2\mathbb{Z}$ on  $\prod_{n\in\omega}\mathbb{Z}/2\mathbb{Z}$ by addition and the relation $\simeq_{\mathrm{iso}}$ on $\mathcal{R}(d)$ is induced by an action of $\mathrm{GL}_{d}(\mathbb{Q})$ on $\mathcal{R}(d)$.

\subsection{Categories}

Let  $F\colon \mathcal{C}\to \mathcal{D}$ be a functor  between categories, and for every two objects $x,y$ from $\mathcal{C}$ let $F_{x,y}$ be the associated map  from $\mathrm{Hom}(x,y)$  to $\mathrm{Hom}(F(x),F(y))$. The functor $F$ is called \emph{full} (respectively: \emph{faithful}, or \emph{fully faithful}), if $F_{x,y}$ is surjective (respectively: injective, or bijective) for every $x,y\in\mathcal{C}$. It is called \emph{essentially surjective}, if for every object $d\in\mathcal{D}$ there is an object $x\in \mathcal{C}$ so that $F(x)$ and $d$ are isomorphic in $\mathcal{D}$. The categories $\mathcal{C}$ and $\mathcal{D}$ are \emph{equivalent} if there are functors $F\colon \mathcal{C}\to \mathcal{D}$ and $G\colon \mathcal{D}\to \mathcal{C}$ so that both $G\circ F$ and $F\circ G$ are naturally isomorphic to the corresponding identity functors. Note that $\mathcal{C}$ and $\mathcal{D}$  are equivalent if and only if there is a fully faithful and essentially surjective functor $F\colon \mathcal{C}\to \mathcal{D}$; see \cite{mac_lane_categories_1998}.

Recall that an $\mathbf{Ab}$-category is a category such that every hom-set
is an abelian group, and such that composition with any given arrow defines
a group homomorphism (i.e., composition distributes over addition) \cite[Section 1]{weibel_introduction_1994}. A functor between $\mathbf{Ab}$-categories is \emph{additive} if it induces group homomorphisms at the level of
the hom-sets. A \emph{zero object} in a category is an object that is both
initial and terminal. An \emph{additive category} is an $\mathbf{Ab}$-category possessing a zero object and finite products. Abelian Polish groups form an
additive category, in which the sum of two continuous group homomorphisms is just
their pointwise sum. Here the zero object is the zero group, and the product of a
sequence $\left( G_{n}\right) $ of Polish group is the product group $\prod_{n}G_{n}$ endowed with the product topology. Notice that the
category of abelian Polish groups is not an abelian category. Indeed, in this category the kernel of a continuous group homomorphism $\varphi :G\rightarrow H$ is $\{ x\in G:\varphi (x)=0\} $, while its cokernel is the quotient of $H$ by the \emph{closure} of the image of $\varphi $. In an abelian category, an arrow whose kernel and cokernel are both zero must be an isomorphism. This fails in the category of abelian Polish groups, as for instance, if $i$ is
the inclusion of $\mathbb{Q}$ in $\mathbb{R}$ then $i$ is has trivial kernel and cokernel, but it is not an isomorphism.

\section{Groups with a Polish cover}\label{S:GroupsWithPCover}

The central objects in all that follows are \emph{groups with a Polish cover}. These are topological groups augmented with a Polish group extension (i.e., a \emph{cover}) which serves, in practice, as the formal setting for definability analyses. We describe in this section the \emph{definable homomorphisms} which together with these objects comprise the \emph{category of groups with a Polish cover}. We will see in later sections that the aforementioned larger groups or covers arise naturally throughout algebraic topology and homological algebra, and that systematic attention to them can substantially refine many of the classical invariants of both fields.
 
 \begin{definition}
A \emph{group with  Polish cover} is a pair $\mathcal{G}=(N,G)$ where $G$ is a Polish group and $N\subseteq G$ is a Polishable normal subgroup.  We  often represent a group with a Polish cover $\mathcal{G}=\left(N,G\right) 
$ simply by its \emph{quotient} $G/N$.
\end{definition}

Let $\mathcal{G}=(N,G)$ and $\mathcal{G}^{\prime }=(N^{\prime },G^{\prime })$
be groups with a Polish cover and let $f\colon G/N\rightarrow G^{\prime
}/N^{\prime }$ be a group homomorphism. A function $\hat{f}:G\rightarrow
G^{\prime }$ is a \emph{lift of $f$} if $f(xN)=\hat{f}(x)N^{\prime }$, for
every $x\in G$. Notice that a lift $\hat{f}:G\rightarrow G^{\prime }$ of $f$
is not necessarily a group homomorphism. Necessary and sufficient conditions
for a function $\varphi \colon G\rightarrow G^{\prime }$ to be a lift of
some homomorphism $G/N\rightarrow G^{\prime }/N^{\prime }$ are for it to
satisfy $\varphi (N)\subseteq N^{\prime }$ and $\varphi (xy)^{-1}\,\varphi
(x)\,\varphi (y)\in N^{\prime }$ for all $x,y\in G$. Indeed, if $\varphi $
is a lift of a group homomorphism $f$ then, since $f$ maps the trivial
element to the trivial element, we must have $\varphi \left( N\right)
\subseteq N^{\prime }$. Furthermore, for $x,y\in G$, $f\left( xy\right)
=f\left( x\right) f\left( y\right) $, whence $\varphi \left( xy\right)
N^{\prime }=\varphi \left( x\right) \varphi \left( y\right) N^{\prime }$,
and $\varphi \left( xy\right) ^{-1}\,\varphi \left( x\right) \,\varphi
\left( y\right) \in N^{\prime }$. Conversely, if $\varphi :G\rightarrow
G^{\prime }$ satisfies $\varphi (N)\subseteq N^{\prime }$ and, for $%
x,x^{\prime }\in G$, $\,\varphi (xy)^{-1}{}\varphi (x)\,\varphi (y)\,\in
N^{\prime }$ and in particular $\varphi \left( x^{-1}\right) {}\varphi
\left( x\right) \in N^{\prime }$, then we can define $f:G/N\rightarrow
G^{\prime }/N^{\prime }$ by setting $f\left( xN\right) :=\varphi \left(
x\right) N^{\prime }$. This is well-defined since whenever $x,x^{\prime }\in
G$ are such that $x^{-1}x^{\prime }\in N$ then $\varphi \left(
x^{-1}x^{\prime }\right) \in N^{\prime }$ and $\varphi \left( x\right)
N^{\prime }=\varphi \left( x\right) \varphi \left( x^{-1}x^{\prime }\right)
N=\varphi \left( x^{\prime }\right) N$. Furthermore, it is a homomorphism
since for $x,y\in G$ we have that $\varphi \left( xy\right) N^{\prime
}=\varphi \left( xy\right) \left( \varphi (xy)^{-1}{}\varphi (x)\,\varphi
(y)\right) N^{\prime }=\varphi \left( x\right) \varphi \left( y\right)
N^{\prime }$.

\begin{definition}\label{Def:definable}
Let $\mathcal{G}=(N,G)$  and $\mathcal{G}'=(N',G')$ be groups with a Polish cover.
\begin{itemize}
\item A \emph{Borel-definable homomorphism}, or more succinctly a \emph{definable homomorphism}, from $\mathcal{G}$ to $\mathcal{G}'$  is  a  group homomorphism $f\colon G/N \to G'/N'$ which admits a Borel function $\hat{f}\colon G\to G'$ as a lift. 
\item A \emph{Borel-definable isomorphism}, or more succinctly a \emph{definable isomorphism}, from $\mathcal{G}$ to $\mathcal{G}'$  is  a  group isomorphism $f\colon G/N \to G'/N'$ which admits a Borel function $\hat{f}\colon G\to G'$ as a lift. 
\end{itemize}
The \emph{category of groups with a Polish cover} is the category whose objects are groups with a Polish cover and whose morphisms are the definable homomorphisms.
\end{definition}
\begin{remark}By Lemma \ref{Lemma:Borel-inverse2}, a group homomorphism  $f\colon G/N \to G'/N'$ is a definable isomorphism from $\mathcal{G}$ to $\mathcal{G}'$ if and only if it is an isomorphism in the category of groups with a Polish cover.
\end{remark}

We will occasionally also consider \emph{topological homomorphisms} from $\mathcal{G}$ to $\mathcal{G}'$; these are group homomorphisms $f\colon G/N \to G'/N'$ which admit a \emph{continuous} topological group homomorphism $\hat{f}\colon G\to G'$ as a lift.
Notice that the collection of all topological homomorphisms between groups with a Polish cover forms a subcategory of the category from Definition \ref{Def:definable}; we term this category the \emph{topological category of groups with a Polish cover}. 

\begin{example}\label{Example:Kanovei}
Consider the group with a Polish cover $(\mathbb{Q},\mathbb{R})$. When $\mathbb{R}/\mathbb{Q}$ is viewed as an abstract group then it is isomorphic to a $\mathbb{Q}$-vector space of dimension $2^{\aleph_0}$. Hence there are exactly $2^{2^{\aleph_0}}$ abstract group homomorphisms from   $\mathbb{R}/\mathbb{Q}$ to $\mathbb{R}/\mathbb{Q}$. Of course, the majority of these homomorphisms are not ``definable" by any concrete formula but exist, rather, as essentially abstract effects of the axiom of choice.
One may also view $\mathbb{R}/\mathbb{Q}$ as a topological group with the quotient topology induced from $\mathbb{R}$. This is a decidedly uninformative topology, since its only open sets are $\emptyset$ and $\mathbb{R}$. In contrast, viewing $\mathbb{R}/\mathbb{Q}$ as a \emph{group with a Polish cover} affords us access to the ``definable" subsets of $\mathbb{R}/\mathbb{Q}$: these are precisely the $\mathbb{Q}$-invariant Borel subsets of $\mathbb{R}$.
Note that since $\mathbb{Q}$ is dense in $\mathbb{R}$, such sets are always meager or comeager in $\mathbb{R}$; see \cite[Proposition 6.1.9]{gao_invariant_2009}. Moreover, the definable homomorphisms from $\mathbb{R}/\mathbb{Q}$ to $\mathbb{R}/\mathbb{Q}$ turn out to be exactly those which are induced by maps from $\mathbb{R}$ to $\mathbb{R}$ of the form  $x\mapsto c \cdot x$ for some $c\in\mathbb{R}$; see  \cite{kanovei_baire_2000}.
\end{example}

\begin{example}
Suppose that $\mathcal{I}$ and $\mathcal{J}$ are Polishable ideals of $\mathcal{P}(
\omega)$; for definitions, see \cite{SOLECKI199951}.  The symmetric difference operation $\triangle$ then endows $\mathcal{I}$, $\mathcal{J}$, and $\mathcal{P} ( \omega) $ with Polish abelian group structures. Furthermore, $\mathcal{P}
( \omega) /\mathcal{I}$ and $\mathcal{P}
( \omega) /\mathcal{J}$ are then groups with  Polish cover, and an
injective definable homomorphism from $\mathcal{P} ( \omega ) /\mathcal{I}$
to $\mathcal{P} ( \omega ) /\mathcal{J}$ is the same as a Borel $( 
\mathcal{I},\mathcal{J}) $-approximate $\triangle $-homomorphism
in the sense of \cite[Section 3.2]{kanovei_borel_2008}.
\end{example}

We identify any Polish group $G$ with its corresponding \emph{group with a} trivial \emph{Polish cover} $G=G/N$ where $N$ denotes its trivial subgroup. Since, as noted above, every definable homomorphism between Polish groups is a topological homomorphism this identification realizes the category of Polish groups as a full
subcategory of the category of groups with  Polish cover.
To each group with  Polish cover $\mathcal{G}$ we assign the following two groups with  Polish cover $\mathcal{G}^{\mathrm{w}},\mathcal{G}^{\infty }$ which measure in some sense how far $G/N$ is from being Polish:
\begin{itemize}
\item The \emph{weak} \emph{group} of $\mathcal{G}$ is the Polish group $
\mathcal{G}^{\mathrm{w}}:=G/\overline{N}$, where $\overline{N}$
is the closure of $N$ inside $G$.
\item The \emph{asymptotic group} of $\mathcal{G}$ is the group with a
Polish cover $\mathcal{G}^{\infty}:=\overline{N}/N$.
\end{itemize}
Clearly the inclusion map $\overline{N}/N \rightarrow G/N$
is an injective topological homomorphism $\mathcal{G}^{\infty }\rightarrow 
\mathcal{G}$, while the quotient map $G/N \rightarrow G/\overline{N}$ is a surjective topological homomorphism $\mathcal{G}\rightarrow 
\mathcal{G}^{\mathrm{w}}$.

\begin{lemma}
The assignments $\mathcal{G}\mapsto \mathcal{G}^{\infty }$ and  $\mathcal{G}\mapsto \mathcal{G}^{\mathrm{w}}$ are functors from
the topological category of homomorphisms between groups with a Polish cover to itself and  $\mathcal{G}^{\mathrm{w}}$ is always a Polish group.
\end{lemma}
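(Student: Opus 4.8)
The plan is to verify directly that each assignment is functorial on the topological category, which amounts to three routine checks per assignment: that the object map is well-defined (produces a genuine group with a Polish cover), that it has a sensible action on morphisms, and that this action respects identities and composition. The statement that $\mathcal{G}^{\mathrm{w}}$ is always a Polish group is the only point requiring a small external input, namely that the quotient of a Polish group by a \emph{closed} normal subgroup is Polish; this is cited in the preliminaries via \cite[Proposition 2.2.1, Theorem 2.2.10]{gao_invariant_2009}.

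First I would handle $\mathcal{G}^{\mathrm{w}}$. Given $\mathcal{G}=(N,G)$, the closure $\overline{N}$ is a closed normal subgroup of $G$, so $G/\overline{N}$ is a Polish group; identifying a Polish group with its trivial Polish cover, $\mathcal{G}^{\mathrm{w}}$ is an object of the category. For a topological homomorphism $f\colon \mathcal{G}\to\mathcal{G}'$ with continuous lift $\hat{f}\colon G\to G'$, continuity gives $\hat{f}(\overline{N})\subseteq\overline{\hat{f}(N)}\subseteq\overline{N'}$, so $\hat{f}$ descends to a continuous homomorphism $G/\overline{N}\to G'/\overline{N'}$; this is the required lift of the induced map $f^{\mathrm{w}}\colon\mathcal{G}^{\mathrm{w}}\to\mathcal{G}'^{\mathrm{w}}$, which is a topological homomorphism. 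Functoriality is immediate: the identity lift $\mathrm{id}_G$ descends to $\mathrm{id}_{G/\overline{N}}$, and if $\widehat{g\circ f}$ can be taken to be $\hat{g}\circ\hat{f}$ then the descents compose correspondingly, so $(g\circ f)^{\mathrm{w}}=g^{\mathrm{w}}\circ f^{\mathrm{w}}$. (One should note that the induced map on quotients depends only on $f$, not on the choice of continuous lift, since any two continuous lifts differ by a map into $\overline{N'}$.)

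Next I would treat $\mathcal{G}^{\infty}=(N,\overline{N})$. Here $N$ is a Polishable normal subgroup of the Polish group $\overline{N}$ — Polishable with the same witnessing topology it carries inside $G$, and normal in $\overline{N}$ since it is normal in $G$ — so $\mathcal{G}^{\infty}$ is an object of the category. For a topological homomorphism $f$ with continuous lift $\hat{f}$, as above $\hat{f}(\overline{N})\subseteq\overline{N'}$, so the restriction $\hat{f}\!\restriction_{\overline{N}}\colon\overline{N}\to\overline{N'}$ is a continuous homomorphism carrying $N$ into $N'$, hence is the continuous lift of an induced topological homomorphism $f^{\infty}\colon\mathcal{G}^{\infty}\to\mathcal{G}'^{\infty}$. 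Preservation of identities and composition follows exactly as before, since restriction of maps commutes with composition. Again the induced morphism depends only on $f$: two continuous lifts of $f$ differ by a continuous map $G\to N'$, whose restriction to $\overline{N}$ still lands in $N'$, so the induced maps $\overline{N}/N\to\overline{N'}/N'$ agree.

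I do not anticipate a genuine obstacle here — the content is entirely bookkeeping — but the one point deserving care is the choice of topology making $N$ Polishable as a subgroup of $\overline{N}$ rather than of $G$: one must observe that the Polish group topology on $N$ witnessing Polishability in $G$ refines the subspace topology from $G$, hence also refines the subspace topology from $\overline{N}$, and induces the same Borel structure, so it witnesses Polishability of $N\le\overline{N}$ as well; uniqueness of this topology (via Pettis' Lemma, as recorded before Lemma \ref{Lemma:Borel-inverse}) then makes the construction canonical. With that remark in place, all three verifications in each case are mechanical.
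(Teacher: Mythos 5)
Your proof is correct and follows essentially the same route as the paper's: use the continuous lift $\hat{f}$, note that continuity forces $\hat{f}(\overline{N})\subseteq\overline{N'}$, and deduce both induced morphisms, with the Polishness of $\mathcal{G}^{\mathrm{w}}$ coming from the quotient-by-closed-subgroup fact cited from Gao. You spell out the functoriality bookkeeping and the point that $N$ remains Polishable as a subgroup of $\overline{N}$ more explicitly than the paper does, but the underlying argument is the same.
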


\begin{proof}
By definition, a topological homomorphism $f:G/N\rightarrow G'/N'$ admits a lift to a
continuous homomorphism $\hat{f}:G\rightarrow G'$ such that $\hat{f}
( N) \subseteq N'$. Since $\hat{f}$ is continuous, it follows that $\hat{f}(\overline{N})\subseteq \overline{N'}$, which in
turn implies that $\hat{f}$ induces a topological homomorphism $\mathcal{G}^{\infty }\rightarrow (\mathcal{G}')^{\infty }$ and a continuous homomorphism $\mathcal{G}^{\mathrm{w}}\rightarrow (\mathcal{G}')^{\mathrm{w}}$. The last claim follows from \cite[Proposition 2.2.10]{gao_invariant_2009}.
\end{proof}

To each group with a Polish cover one may assign a classification problem in the sense of Section \ref{S:EquivalenceRelation}:

\begin{definition}
To each group  with a Polish cover $\mathcal{G}=G/N$ we associate the classification problem $(G,\mathcal{R}( \mathcal{G}))$, where  $\mathcal{R}( \mathcal{G}) $ is the \emph{coset equivalence relation} of $%
N$ inside $G$: for  $x,y\in G$ set $\left( x,y\right) \in \mathcal{R}( \mathcal{G})$ if and only if  $N
x= N y$.
\end{definition}

Notice that  $\mathcal{R}( \mathcal{G}) $ may be viewed as the orbit equivalence relation $\mathcal{R}(N\curvearrowright G )$ of the action of the Polish group $N$ on $G$ by translation $(h,g)\mapsto h\cdot g$. It turns out that, at least for abelian groups, a necessary condition for the coset equivalence relation of a Borel subgroup $N$ of some Polish group $G$ to be Borel reducible to an orbit equivalence relation $\mathcal{R}(H\curvearrowright Y)$ of a continuous Polish group action $H\curvearrowright Y$, is that $N$ is Polishable; see \cite{Sol}. In analogy with the terminology from Section \ref{S:EquivalenceRelation}, we say that a group with a Polish cover $\mathcal{G}=G/N$ is \emph{smooth}
if $N$ is a closed subgroup of $G$, in which case the quotient topology renders $G/N$ a
Polish group. Notice that if $\mathcal{G}=(N,G)$ is smooth then it is definably isomorphic to the group with the trivial Polish cover $(1_{G/N},G/N)$. In fact, the following is true:

\begin{lemma}
If  $g$ is a definable isomorphism between the groups with a Polish cover  $\mathcal{G}= (N,G)$ and $\mathcal{G}' = (N',G')$ and either $ G/N$ or $G'/N'$ is a smooth group with a Polish cover, then so is the other.
\end{lemma}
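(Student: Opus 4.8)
The plan is to establish the implication ``$N'$ closed in $G'$ $\Longrightarrow$ $N$ closed in $G$''; the reverse implication is then the same argument with the roles of $(N,G)$ and $(N',G')$ interchanged and $g$ replaced by the inverse isomorphism $g^{-1}$, which is again a definable isomorphism --- in particular admits a Borel lift --- by Lemma~\ref{Lemma:Borel-inverse2} (cf.\ the remark following Definition~\ref{Def:definable}). By \cite[Theorem~12.17]{kechris_classical_1995}, $N$ is closed in $G$ if and only if the coset equivalence relation $\mathcal{R}(\mathcal{G})$ admits a Borel selector, so the whole problem reduces to producing such a selector from the available data: a Borel lift $\hat g\colon G\to G'$ of $g$, a Borel lift $\hat k\colon G'\to G$ of $g^{-1}$, and --- since $N'$ is closed --- a Borel selector $\sigma'\colon G'\to G'$ for the coset equivalence relation of $N'$ in $G'$, again via \cite[Theorem~12.17]{kechris_classical_1995}.

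The selector I would write down is simply the sandwich
\[
\sigma \;:=\; \hat k\circ\sigma'\circ\hat g\colon\; G\longrightarrow G,
\]
which is Borel as a composite of Borel maps. Two observations make it a selector for $\mathcal{R}(\mathcal{G})$, and it is worth noting in advance that neither uses injectivity of the lifts (which they need not possess). First, $\sigma$ is constant on $N$-cosets: since $\hat g$ is a lift of $g$, the $N'$-coset $\hat g(x)N'$ equals $g(xN)$ and so depends only on $xN$; as $\sigma'$ is constant on $N'$-cosets, $\sigma'(\hat g(x))$, hence $\sigma(x)=\hat k(\sigma'(\hat g(x)))$, depends only on $xN$. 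Second, $\sigma(x)\in xN$: since $\hat k$ is a lift of $g^{-1}$, the $N$-coset $\hat k(z)N$ equals $g^{-1}(zN')$, and taking $z=\sigma'(\hat g(x))\in\hat g(x)N'=g(xN)$ gives $\sigma(x)N=g^{-1}(g(xN))=xN$. From these two facts the defining properties of a Borel selector follow at once: $x\,\mathcal{R}(\mathcal{G})\,\sigma(x)$ by the second observation, and $\sigma(x)=\sigma(y)\iff xN=yN$ because $xN=yN$ forces equality of the selector values by the first observation, while $\sigma(x)=\sigma(y)$ forces $xN=\sigma(x)N=\sigma(y)N=yN$ by the second.

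I expect the only real obstacle to be conceptual rather than technical: resisting the temptation to argue via some abstract preservation of ``smoothness'' under the classwise Borel isomorphism induced by $g$, and instead recognizing that a Borel selector can be transported along the two Borel lifts directly. Once that is seen, the sole point requiring attention is the verification just sketched --- that the sandwiched map remains coset-invariant (using the lift property of $\hat g$ together with coset-invariance of $\sigma'$) and still selects a representative from the correct coset (using the lift property of $\hat k$) --- and, as noted, it is precisely the fact that this needs no injectivity of the lifts that makes the construction go through with nothing more than Borelness.
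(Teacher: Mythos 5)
Your proof is correct but proceeds by a genuinely different route than the paper's. The paper first normalizes the smooth side: since $N'$ is closed, one may replace $G'/N'$ by the trivial cover $(G'/N')/\{e\}$, after which \emph{any} Borel lift of $g$ is literally the group homomorphism $g\circ\pi$; Pettis's lemma then makes it continuous and $N=\ker(\hat g)$ is closed. Your argument instead works directly with the characterization of closedness by Borel selectors (the same \cite[Theorem~12.17]{kechris_classical_1995} the paper invokes elsewhere), transporting a selector for the $N'$-cosets along the two Borel lifts via the sandwich $\hat k\circ\sigma'\circ\hat g$. The verification you give --- constancy on $N$-cosets from the lift property of $\hat g$ together with constancy of $\sigma'$ on $N'$-cosets, and $\sigma(x)\in xN$ from the lift property of $\hat k$ --- is complete, and as you observe, never needs the lifts to be injective or homomorphic. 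The paper's route buys brevity and reuses the ``trivial Polish cover'' normal form that recurs throughout the article; yours buys independence from Pettis's lemma and makes the selector-transport mechanism explicit, which is conceptually closer to the ``classwise Borel isomorphism'' viewpoint the paper later exploits in Section~\ref{S:ActionsCocycleRig}. Both legitimately reduce the two-sided statement to one implication by swapping $g$ and $g^{-1}$, and your citation of Lemma~\ref{Lemma:Borel-inverse2} (via the remark following Definition~\ref{Def:definable}) for the definability of $g^{-1}$ is exactly what is needed there.
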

\begin{proof}
It will suffice by symmetry to assume that $\mathcal{G}'$ is a smooth group with a 
Polish cover. We may further assume that $\mathcal{G}'$ is the trivial 
Polish cover, i.e.,  $N'$ is the trivial subgroup $1_{G'}$ of $G'$. By assumption,  there a group isomorphism $g\colon G/N \to G'$ and a  Borel function $\widehat{g}:G\rightarrow G'$ so that $g \circ \pi =\widehat{g}$, where $\pi :G\rightarrow G/N$ is the quotient map. So $\widehat{g}$ is a homomorphism. By Corollary \ref{Corollary: Pettis}, $\widehat{g}$ is a continuous homomorphism and therefore $N= 
\mathrm{ker}( \widehat{g})$ is closed.
\end{proof}

Notice that the category of groups with a Polish cover admits an object that is both initial and  terminal; namely, the \emph{identity} group with a Polish cover $G/N$, where both $G$ and $N$ are trivial. It is moreover closed under countable products. The \emph{product} of any  given countable collection $G_n/N_n$ of groups with a Polish cover is the group with a Polish cover $G/N$,
where
\begin{equation*}
N=\prod_{n\in \omega }N_n \subseteq \prod_{n\in
\omega }G_n=G,
\end{equation*}%

By an \emph{abelian group with a Polish cover} we mean a group with a Polish cover $\mathcal{G}=(N, G)$ so that $G/N$ is an abelian group. Note that this  does not necessarily imply that $G$ is abelian. For example, if $A$ is a separable stable continuous-trace C*-algebra, $\mathrm{Aut}(A)$ is the group of automorphisms of $A$, and $\mathrm{Inn}(A)$ is the Polishable subgroup of $\emph{Aut}(A)$ consisting of automorphisms of $A$ that are \emph{inner}, then $\emph{Aut}(A)$ is in general not commutative. However, the group with a Polish cover $\emph{Out}(A):=\emph{Aut}(A)/\emph{Inn}(A)$ is abelian, being isomorphic to the second \v{C}ech cohomology group with integer coefficients of the spectrum of $A$ by the main theorem of \cite{phillips_automorphisms_1980}.

 Suppose that $G/N$ and $G'/N'$ are abelian
(additively denoted) groups with a Polish cover, where $G$ and $G'$
are additively denoted as well (but not necessarily abelian). Suppose that $f ,g :G/N\rightarrow G'/N'$ are definable homomorphisms.
Then $f +g :G/N\rightarrow G'/N'$ is the definable
homomorphism defined by $( f +g) ( x)
=f( x) +g( x) $. Notice that $f +g 
$ is indeed a definable homomorphism. If $\hat{f}$ and $\hat{g}$ are Borel lifts of $f 
$ and $g$, respectively, then the function $G\rightarrow G'$, $
x\mapsto \hat{f}( x) +\hat{g}( x) $ is a Borel lift of $f
+g$. This turns the $\mathrm{Hom}(G/N,G'/N')$-set of all definable  homomorphisms from $G/N$ to $G'/N'$ into an abelian group. Hence, the category of  \emph{abelian groups with a Polish cover} is an additive
category. So too is its subcategory the \emph{topological category of  abelian groups with a Polish cover}.

\section{Ulam stability of non-Archimedean abelian groups}\label{S:Ulam}

In this section we prove several \emph{Ulam stability} results for definable homomorphisms $f\colon G/N\to G'/N'$ between groups with a Polish cover, where $N$ is a dense subgroup of a non-Archimedean abelian Polish group. Our main result is Theorem \ref{Theorem:rigid}; it handles cases in which $N'$ is countable. In Theorem \ref{Theorem:rigid2} we attain similar results under either of the weaker assumptions that $N'$ is locally profinite in its Polish topology, or that $N'$ is non-Archimedean in its Polish topology.
In all this section's paragraphs except the next, we adopt an additive notation for group operations.

Ulam stability phenomena were first considered in \cite{ulam_problems_1964,mauldin_mathematical_1987} in the context of homomorphisms  between metric groups. A map $f\colon G\to  G$ from a metric group $(G,d)$ to itself is  an \emph{$\varepsilon$-automorphism} if  $d(f(x)\cdot f(y),f(x\cdot y))<\varepsilon$ for all $x,y\in G$. Ulam asked for which such metric groups $(G,d)$ and  $\varepsilon>0$ is there some $\delta>0$ so that each  $\varepsilon$-automorphism is $\delta$-close to an actual  homomorphism.
More generally, \emph{Ulam stability} phenomena may be considered in any context where one has notions of \emph{morphisms} and \emph{approximate morphisms}, as well
as a way to measure how far apart a given morphism and a given approximate morphism are
from each other, as described in our introduction.
The series \cite{kanovei_baire_2000,
kanovei_ulams_2000,kanovei_ulam_2001} may be regarded as an exploration of Ulam stability in the setting of groups with a Polish cover. As mentioned, for example, in \cite{kanovei_baire_2000} it is shown that if $N,N'$ are subgroups of the Polish group $(\mathbb{R},+)$ and $N'$ is countable, then every definable homomorphism $f\colon\mathbb{R}/N\to \mathbb{R}/N'$ lifts to a map of the form $x\mapsto c\cdot x$. 
 In this example, a \emph{morphism} is any continuous group homomorphism  $\psi\colon \mathbb{R}\to \mathbb{R}$ with $\psi(N)\subseteq N'$, an \emph{approximate morphism} is any Borel map $\varphi\colon \mathbb{R}\to \mathbb{R}$ that is a lift of a (definable) group homomorphism $f\colon \mathbb{R}/N\to \mathbb{R}/N'$, and $\varphi,\psi\colon\mathbb{R}\to \mathbb{R}$ are considered to be close to each other if they are lifts of the same  group homomorphism.  The question of whether similar rigidity phenomena hold in the case of the \emph{$p$-adic
groups} is posed in \cite[Section 8]{kanovei_ulam_2001}.  Theorem \ref{Theorem:rigid} below settles the more general case where the cover $G$ of $G/N$ is an arbitrary non-Archimedean abelian Polish group. Some implications of this theorem for the case of the $p$-adic groups  are discussed separately in Section \ref{Section:locally-profinite}.

We begin by describing a framework for studying Ulam stability phenomena surrounding definable homomorphisms between arbitrary groups with a Polish cover. 
Let $G/N$ be a group with a Polish cover. An \emph{essential retract} of $G/N$ is a clopen subgroup $H$ of $G$ with the property that $H$ intersects every $N$-coset in $G$. Notice that $H/(H\cap N)$ is a group with a Polish cover and  the map $i^*\colon H/(H\cap N)\to G/N$, induced by the inclusion $i\colon H\to G,$ is a definable isomorphism.  Essential retracts satisfy the following extension property, showing that an essential retract of $G/N$ is `essentially' the same group with a Polish cover as $G/N$ up to an isomorphism induced by the inclusion map.

\begin{lemma}\label{L:RetractProperty}
Let $H$ be an essential retract of the group with a Polish cover  $G/N$. Then the definable isomorphism  $(i^*)^{-1}\colon G/N \to H/(H\cap N)$ lifts to a Borel map $r\colon G\to H$ with $ r \circ i=\mathrm{id}_H$. In particular, if   $\hat{f}$ is a Borel lift of  a definable homomorphism $f\colon H/(H\cap N) \to G'/N'$ then $\hat{f}$ extends to a Borel lift of $f\circ (i^*)^{-1}\colon G/N\to G'/N'$.
\end{lemma}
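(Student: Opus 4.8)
The plan is to build the Borel map $r$ by hand from the coset structure of the open subgroup $H$, exploiting that $H$, being open in the second countable group $G$, has only countably many cosets; the final sentence then follows by precomposing an arbitrary Borel lift with $r$. Recall that by construction $i^*$ is a definable isomorphism, so $(i^*)^{-1}\colon G/N\to H/(H\cap N)$ is a well-defined homomorphism; what we must produce is a Borel lift of it that restricts to the identity on $H$.

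First I would fix a transversal $(g_j)_{j\in J}$ for the cosets $Hg_j$ of $H$ in $G$, chosen so that $g_0=e$, so that $G=\bigsqcup_{j\in J}Hg_j$ is a partition into clopen pieces and $g\mapsto j$ is locally constant. The inessential retract hypothesis says precisely that $H$ meets every $N$-coset, so for each $j$ I can pick $h_j\in H\cap g_jN$, taking $h_0=e$ (legitimate since $e\in H\cap N$); note $h_jN=g_jN$ for all $j$. I would then define $r\colon G\to H$ by
\[
r(g):=g\,g_j^{-1}h_j\qquad\text{whenever }g\in Hg_j.
\]
On each clopen piece $Hg_j$ this is $g\mapsto (g g_j^{-1})h_j$, a right-translation landing in $H$ (as $gg_j^{-1}\in H$), so $r$ is Borel (in fact continuous) with $r(G)\subseteq H$.

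Next I would verify the two required properties. For $g\in Hg_j$ set $h:=gg_j^{-1}\in H$, so $r(g)=h\,h_j$; then $r(g)N=h\,h_jN=h\,g_jN=(hg_j)N=gN$ using $h_jN=g_jN$. Thus $r(g)\in H\cap gN$, which by the definition of $i^*$ is exactly the statement $r(g)(H\cap N)=(i^*)^{-1}(gN)$; hence $r$ is a Borel lift of $(i^*)^{-1}$. If moreover $g\in H$, then $g$ lies in the coset $Hg_0=H$, whence $r(g)=g g_0^{-1}h_0=g$, i.e. $r\circ i=\operatorname{id}_H$. For the last sentence: given a definable $f\colon H/(H\cap N)\to G'/N'$ with Borel lift $\hat f\colon H\to G'$, the composite $\hat f\circ r\colon G\to G'$ is Borel, satisfies $\big(f\circ (i^*)^{-1}\big)(gN)=f\big(r(g)(H\cap N)\big)=\hat f(r(g))N'$ for all $g$, hence lifts $f\circ(i^*)^{-1}$, and restricts to $\hat f$ on $H$ since $r|_H=\operatorname{id}_H$.

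The only subtle point — and the one that dictates the shape of the argument — is reconciling Borelness of $r$ with the requirement $r\circ i=\operatorname{id}_H$. A more uniform construction, taking $r$ to be (a Borel lift composed with) a section of the quotient map $H\to H/(H\cap N)$, would need a Borel transversal for the $(H\cap N)$-cosets in $H$, which need not exist, since $N$ — hence $H\cap N$ — need not be closed. Treating the cosets of $H$ one at a time with the normalization $g_0=h_0=e$ circumvents this, and the remaining verifications are routine.
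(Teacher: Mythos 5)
Your proof is correct, and it takes a genuinely different route from the paper's. The paper constructs $r$ by applying the Kuratowski--Ryll-Nardzewski selection theorem to the Borel map sending $x \in G$ to the set $H\cap xN$ (a coset of $H\cap N$, which is closed in the Polish topology on $N$), then normalizing the selector so that $r|_H=\mathrm{id}_H$; the abstract selection machinery only yields a Borel $r$. You instead exploit the concrete structure: $H$ is clopen in the Polish (hence Lindel\"of) group $G$, so there are only countably many cosets $Hg_j$, and you build $r$ piecewise by right-translation on each clopen coset. This avoids the Effros Borel space entirely, makes the normalization $r|_H=\mathrm{id}_H$ automatic (taking $g_0=h_0=e$), and actually delivers a stronger conclusion: your $r$ is continuous, not merely Borel, since it is a right-translation on each member of a countable clopen partition. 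Your closing remark correctly identifies why the naive route through a Borel transversal for $(H\cap N)$-cosets in $H$ would fail ($H\cap N$ need not be closed in $G$); the paper sidesteps this the same way you do in spirit, by working with $N$'s own Polish topology rather than the subspace topology, but your coset-by-coset construction is the more elementary resolution.
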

\begin{proof}

The Effros (standard) Borel space $\mathcal{F}(X)$ of closed subspaces of a
Polish space $X$ has the closed subsets of $X$ as points, and it is endowed
with the Borel structure generated by the families $\{F\in \mathcal{F}%
(X):F\cap U\neq \emptyset \}$, where $U$ varies among the open subsets of $X$ 
\cite[Section 12.C]{kechris_classical_1995}. Notice that, for every $g\in G$%
, $N\cap g^{-1}H$ is a closed subset of $N$ with respect to its Polish
group topology. Furthermore, the assignment $H\rightarrow \mathcal{F}\left(
N\right) $, $x\mapsto N\cap x^{-1}H$ is Borel. Indeed, let $\left\{
W_{n}:n\in \omega \right\} $ be a basis of open subsets of $N$. We use the
notation for the Vaught transform in reference to the continuous right
action of $N$ on $G$ by right translation \cite[Section 16.B]%
{kechris_classical_1995}. If $U$ is an open subset of $N$, then $N\cap
x^{-1}H\cap U$ is nonempty if and only if there exists $n\in \omega $ such
that $W_{n}\subseteq x^{-1}H$. For $n\in \omega $, we have that, since $H$ is closed in $G$, $W_{n}\subseteq x^{-1}H\Leftrightarrow xW_{n}\subseteq H\Leftrightarrow
\forall ^{\ast }w\in W_{n}$, $xw\in H\Leftrightarrow x\in H^{\ast W_{n}}$. This concludes the proof that the assignment $H\rightarrow \mathcal{F}%
\left( N\right) $, $x\mapsto N\cap x^{-1}H$ is Borel. The
Kuratowski--Ryll-Nardzewski theorem \cite[Theorem 12.13]%
{kechris_classical_1995} implies that there is a Borel function $\sigma :G\rightarrow N$
such that $\sigma (g)\in N\cap g^{-1}H$ for every $g\in G$ and $\sigma(g)=g$ for
all $g\in H$. Define then $r(g):=g\sigma (g)$ for every $g\in G$. Finally,
notice that $\hat{f}\circ r$ is the desired lift of $f\circ (i^{\ast })^{-1}$%
.
\end{proof}

\begin{definition}
\label{Definition:trivial}  Let  $f\colon G/N\to G'/N'$ 
be a definable homomorphism between groups with a Polish cover. 
We say that  $f$ is \emph{trivial} if it admits a Borel lift $\hat{f}\colon G\to G'$ whose restriction $(\hat{f}\upharpoonright H) \colon H\to G'$ is a continuous group homomorphism on some essential retract $H$ of $G/N$. We say that a
 group with a Polish cover  $G/N$ is \emph{Ulam stable} or \emph{rigid}, if every definable automorphism $\varphi\colon G/N\to G/N $ is trivial.
\end{definition}

Notice that when $G$ is connected, then $H=G$ for any  clopen non-empty $H\subseteq G$. In consequence, in the aforementioned Ulam stability results from \cite{kanovei_baire_2000} a lift of $f\colon\mathbb{R}/N\to \mathbb{R}/N'$ is trivial precisely when it is of the form $x\mapsto c\cdot x$. Indeed, the framework that we develop here subsumes other notions of rigidity previously considered in the literature \cite{kanovei_baire_2000,
kanovei_ulams_2000,kanovei_ulam_2001}.
The following theorem is the main result of this section.  It can be viewed as an Ulam stability result in which the \emph{approximate morphisms} are Borel lifts $G\to G'$ of some definable homomorphism $G/N\to G'/N'$ and the \emph{morphisms} are Borel lifts which are continuous homomorphisms after passing to an essential retract.

\begin{theorem}
\label{Theorem:rigid} Let $f: G/N \rightarrow G^{\prime }/N ^{\prime }$ be a definable homomorphism between
 groups with a Polish cover where $G$ is a
non-Archimedean abelian Polish group and $N$ is dense in $G$. If $N ^{\prime }$ is countable, then  $f$ is
trivial.
\end{theorem}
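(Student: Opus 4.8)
The plan is to show that a Borel lift $\hat f\colon G\to G'$ of $f$ is, after restriction to a suitable clopen subgroup of $G$, a continuous homomorphism. The key structural input is that $G$, being non-Archimedean, carries a decreasing neighborhood basis $G=V_0\supseteq V_1\supseteq V_2\supseteq\cdots$ of the identity consisting of open (hence clopen) subgroups with $\bigcap_n V_n=\{0\}$, and that the associated metric $d$ is a complete left- and right-invariant ultrametric. Since $N$ is dense in $G$, each $V_n$ intersects every $N$-coset, so each $V_n$ is an inessential retract of $G/N$; by Lemma \ref{L:RetractProperty} it therefore suffices to produce \emph{one} index $n$ for which $\hat f$ agrees on $V_n$ with a continuous homomorphism $V_n\to G'$, possibly after replacing $\hat f$ by another Borel lift. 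Replacing $G/N$ by $V_n/(V_n\cap N)$ is harmless by that lemma, so the real content is a local statement near the identity.

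First I would fix a Borel lift $\hat f$ and consider the Borel map $c\colon G\times G\to N'$ measuring the failure of $\hat f$ to be a homomorphism,
\[
c(x,y)=\hat f(x)+\hat f(y)-\hat f(x+y),
\]
which indeed lands in $N'$ because $f$ is a homomorphism. (Here I use additive notation and the fact, recorded in the excerpt, that a function is a lift of a homomorphism precisely when it maps $N$ into $N'$ and its ``coboundary'' lands in $N'$ on $N$; density of $N$ together with a genericity argument will upgrade this to all of $G$.) Because $N'$ is \emph{countable}, the sets $\{(x,y): c(x,y)=a\}$ for $a\in N'$ partition $G\times G$ into countably many Borel pieces, so by Baire category one of them — say with value $a_0$ — is nonmeager, hence by the Kuratowski--Ulam theorem comeager in a product of basic open sets, i.e. comeager in $(g_0+V_m)\times(h_0+V_m)$ for suitable $g_0,h_0$ and $m$. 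Translating by $-g_0,-h_0$ using a modified lift (adjust $\hat f$ on a coset by the appropriate element of $G'$; this does not change definability), I would arrange that $c$ is \emph{constant} on a comeager subset of $V_m\times V_m$, and then a standard cocycle-straightening computation — replacing $\hat f$ by $\hat f(x)-\hat f(0)$ and averaging the constant away using $c(0,0)$ — forces that constant to be $0$. So: on a comeager subset $D\subseteq V_m\times V_m$, the relation $\hat f(x+y)=\hat f(x)+\hat f(y)$ holds.

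The main obstacle is passing from ``homomorphism on a comeager set of pairs'' to ``genuine continuous homomorphism on a clopen subgroup''. This is exactly the point where Pettis' Lemma (Lemma \ref{Lemma: Pettis}) enters, in the style of the Weil--Banach argument: for $x,y\in V_m$, both $(x, z)$ and $(y, z)$ and $(x+y, -z)$-type pairs lie in $D$ for comeagerly many $z$, and since $V_m$ is a Baire group the intersection of these comeager sets is nonempty; chasing the resulting identities shows $\hat f(x+y)=\hat f(x)+\hat f(y)$ for \emph{all} $x,y\in V_m$, so $\hat f\!\upharpoonright\! V_m$ is an honest group homomorphism. It is then Borel, hence Baire measurable, hence — by the standard consequence of Pettis cited after Lemma \ref{Lemma: Pettis} — continuous as a homomorphism between the Polish groups $V_m$ and $G'$. (A small wrinkle: one must check $\hat f(V_m)$ sits inside a subgroup on which this works, but $\hat f(V_m\cap N)\subseteq N'$ and $V_m\cap N$ dense in $V_m$ handle the bookkeeping.) Since $V_m$ is a clopen subgroup and $N$ is dense, $V_m$ is an inessential retract of $G/N$, so by Lemma \ref{L:RetractProperty} this continuous homomorphism on $V_m$ is exactly the witness required by Definition \ref{Definition:trivial} that $f$ is trivial. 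I expect the genericity/translation bookkeeping in the middle step and the precise Pettis-style closure argument at the end to be the places demanding care; everything else is routine given the ultrametric structure of $G$ and the countability of $N'$.
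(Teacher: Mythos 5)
Your proposal departs from the paper's route in one crucial place: you try to run the entire argument with a merely \emph{Borel} lift $\hat f$, whereas the paper first upgrades $\hat f$ to a \emph{continuous} lift via Proposition \ref{Proposition:continuous-lift} (a nontrivial result in its own right, resting on the $1$-Lipschitz/ultrametric construction in Lemma \ref{Lemma:arisbobelis}), and only then applies Baire category (Lemma \ref{Lemma:non-Arch}). This matters, and it is exactly where your sketch has a genuine gap.

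With a continuous lift, the cocycle $C(x,y)=\varphi(x+y)-\varphi(x)-\varphi(y)$ is a \emph{continuous} map into $G'$, so each level set $\{(x,y):C(x,y)=m\}$ is \emph{closed}; a nonmeager closed set has nonempty interior, and one obtains a full clopen box $(H\times H)+(x_0,y_0)$ on which $C$ is literally constant. From there the algebra in Lemma \ref{Lemma:non-Arch} gives an honest homomorphism. In your version the cocycle $c$ is only Borel, so a nonmeager level set is merely \emph{comeager in} some box, not equal to it. The Pettis/Kuratowski--Ulam step you invoke to close that gap does not deliver what you claim: Kuratowski--Ulam guarantees that $D_x=\{z:(x,z)\in D\}$ is comeager only for \emph{comeagerly many} $x$, not for every fixed $x,y\in V_m$, so the sentence ``for $x,y\in V_m$, both $(x,z)$ and $(y,z)$ and $(x+y,-z)$-type pairs lie in $D$ for comeagerly many $z$'' fails for the specific (non-generic) $x,y$ you need it for. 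Moreover, even the target conclusion ``$\hat f\!\upharpoonright\! V_m$ is an honest group homomorphism'' is too strong: one may alter a Borel lift on a meager set without changing $f$, so $c$ can vanish comeagerly while $\hat f$ itself is nowhere near a homomorphism; at best one can hope for a \emph{different} lift that is a homomorphism on $V_m$, and producing it requires extending the partial additivity off a comeager set, which is essentially what Proposition \ref{Proposition:continuous-lift} is doing for you. You flag this step as ``demanding care''; it is in fact the heart of the theorem, and as written it does not close. The fix is to insert the continuous-lift proposition before the Baire category argument, which then proceeds as in Lemma \ref{Lemma:non-Arch}.
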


The following is an immediate consequence of Theorem \ref{Theorem:rigid}.

\begin{corollary}
\label{Corollary:rigid} If $N$ is a countable dense subgroup of a
non-Archimedean abelian Polish group  $G$ then the group with a Polish cover $G/N$ is Ulam stable, namely every definable automorphism of $G/N$ has a lift that is a continuous group homomorphism on some open subgroup of $G$.
\end{corollary}

The arguments used in the proof  of Theorem \ref{Theorem:rigid} can be adjusted to prove weaker rigidity results under the weaker assumption that $N ^{\prime }$ is  locally profinite in its Polish topology, or that $N ^{\prime }$ is non-Archimedean in its Polish topology. In both cases, $N'$ has a basis of neighborhoods of the identity consisting of open subgroups. Let  $f\colon G/N\to G'/N'$ 
be a definable homomorphism between groups with a Polish cover. We say that  $f$ is \emph{approximately trivial} if it admits a Borel lift $\hat{f}\colon G\to G'$ with the property that for every open subgroup $V$  of $N'$ (in its Polish topology) there is an essential retract $H$ of $G/N$ so that $(\hat{f}\upharpoonright H)$ is continuous and $\hat{f}(x+y)-\hat{f}(x)-\hat{f}(y)\in V$, for all $x,y\in H$. We   say that  $f$ is  \emph{approximately generically trivial} if for every $V$ as above, the same statement holds, but only for a comeager collection of pairs $(x,y)\in H\times H$.

\begin{theorem}\label{Theorem:rigid2}
Let $f: G/N \rightarrow G^{\prime }/N ^{\prime }$ be a definable homomorphism between
 groups with a Polish cover where $G$ is a
non-Archimedean abelian Polish group and $N$ is dense in $G$. 
\begin{enumerate}
\item If $N'$ is locally profinite in its Polish topology, then  $f$ is approximately trivial.
\item If $N'$ is non-Archimedean in its Polish topology, then  $f$ is approximately generically trivial.
\end{enumerate}
\end{theorem}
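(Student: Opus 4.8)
\emph{Strategy.} The two parts of Theorem~\ref{Theorem:rigid2} are obtained by rerunning the proof of Theorem~\ref{Theorem:rigid} and adjusting at the two places where that proof uses that $N'$ is countable, hence discrete. Two features of the argument carry over verbatim. First, since $G$ is abelian and $N$ is dense in $G$, \emph{every} clopen subgroup $H$ of $G$ is an inessential retract of $G/N$: for $H$ open and $N$ dense one has $H+N=G$, so $H$ meets every $N$-coset; by Lemma~\ref{L:RetractProperty} one may therefore freely restrict attention to such an $H$. Second, a Borel lift $\hat f\colon G\to G'$ of $f$ produces a Borel coboundary
\[
c\colon G\times G\to N',\qquad c(x,y)=\hat f(x)+\hat f(y)-\hat f(x+y),
\]
which is Borel into $N'$ with its Polish topology (since $N'$ is Polishable) and vanishes at $(0,0)$. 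Where Theorem~\ref{Theorem:rigid} shows that, after passing to an inessential retract, $c$ can be made to vanish identically, here we show that for each open subgroup $V\le N'$ it can be made to take values in $V$ on some inessential retract $H$ on which, moreover, $\hat f$ is continuous — for all pairs in case~(1), and for comeager many pairs in case~(2) — with one and the same Borel lift $\hat f$ serving every $V$.

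\emph{Case (1).} By local profiniteness of $N'$, every open subgroup of $N'$ contains a \emph{compact} open subgroup; and since enlarging $V$ only weakens the target condition, it suffices to treat a compact open $V\le N'$ (an $H$ that works for such a $V$ works a fortiori for any larger open subgroup, and the lift $\hat f$ will not depend on $V$). Now repeat the proof of Theorem~\ref{Theorem:rigid}. Its first step yields, after replacing $\hat f$ by another Borel lift and passing to a clopen subgroup of $G$, a lift that is genuinely continuous on a clopen subgroup $W\le G$; in that step, countability of $N'$ is used only to promote a Baire-measurable lift — continuous on a comeager set by Pettis' Lemma~\ref{Lemma: Pettis} — to one continuous on a clopen subgroup, and this survives once the relevant subgroup $V$ of $N'$ is compact, compactness permitting a continuous choice of coset representatives in $G'$ for the cosets $\hat f(x)+N'$ (via Lemma~\ref{Lemma:Borel-inverse2} applied to $G'\to G'/N'$ together with an averaging over $V$). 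Its second step is then applied with the countable discrete group $N'/V$ — discrete because $V$ is open in the Polish group $N'$ — in place of $N'$, and with the composite of $c$ with the quotient map $N'\to N'/V$ in place of $c$; it furnishes a clopen subgroup $H\le W$ on which this composite is constantly the trivial coset, i.e. on which $c$ is $V$-valued. Since $H$ is an inessential retract and $\hat f\upharpoonright H$ is continuous, this is exactly approximate triviality.

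\emph{Case (2).} One argues as in case~(1), but an open subgroup $V\le N'$ need no longer be compact, so the correction step of the first half is unavailable: one keeps only the generic continuity of $\hat f$ and of $c$ furnished by Baire measurability. Running the relevant part of the proof of Theorem~\ref{Theorem:rigid} with this weaker input — combining Pettis' Lemma with the Kuratowski--Ulam theorem as there — one obtains a clopen subgroup $H\le G$ and a comeager subset of $G$ on which $\hat f$ is continuous, such that the composite of $c$ with $N'\to N'/V$ is the trivial coset off a meager subset of $H\times H$; that is, $c(x,y)\in V$ for comeager many $(x,y)\in H\times H$. As $H$ is an inessential retract, this is approximate generic triviality.

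\emph{The main obstacle} is precisely the correction step isolated in case~(1): promoting the generic continuity of a Borel lift, which is automatic from Baire measurability, to honest continuity on a clopen subgroup of $G$, and correspondingly promoting $c(x,y)\in V$ from comeager-many pairs to all pairs. This is the sole point at which one needs the open subgroups of $N'$ to be compact, and it is exactly where the gap between ``$N'$ locally profinite'' and ``$N'$ merely non-Archimedean'' resides; absent compactness, one is confined to the generic statement, which is why the conclusion weakens from approximate triviality to approximate generic triviality.
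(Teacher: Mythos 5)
The definitions of \emph{approximately trivial} and \emph{approximately generically trivial} require a \emph{single} Borel lift $\hat{f}\colon G\to G'$ that works uniformly for every open subgroup $V$ of $N'$: the quantifier order is $\exists\hat{f}\ \forall V\ \exists H$. Your argument produces, for each fixed $V$, a modified lift $\psi_V$ (arising from a shift by $x_0$, $y_0$, $m$ depending on $V$) and a clopen $H_V$ on which $\psi_V$ has $V$-valued cocycle. You then assert that ``the lift $\hat f$ will not depend on $V$,'' but nothing in your argument supports this: the corrections are $V$-dependent, and shrinking $V$ forces a new correction. What your proof actually shows is $\forall V\ \exists \hat{f}_V\ \exists H_V$, a strictly weaker statement. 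The paper's proof closes precisely this gap: after obtaining a genuinely continuous lift $\varphi$ on all of $G$ (Proposition \ref{Proposition:continuous-lift}), it runs the correction lemma (Lemma \ref{Lemma:non-Arch2}) recursively along a decreasing neighbourhood basis $(V_k)$ of compact open subgroups, records the partial shifts $z_k=(x_0+y_0)+\cdots+(x_k+y_k)$, shows they converge to some $z_\infty\in G$ using that $H_k\subseteq W_k$ shrink, and then verifies that the \emph{single} limit lift $\varphi_\infty(z)=\varphi(z+z_\infty)-\varphi(z_\infty)$ satisfies the required $V_k$-condition on $H_k$ for every $k$. That telescoping passage to a limit is the real content of the theorem, and it is absent from your proposal.

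Two secondary misattributions are worth flagging, since they suggest a misreading of how the proof of Theorem \ref{Theorem:rigid} is structured. First, Proposition \ref{Proposition:continuous-lift} makes \emph{no} use of any hypothesis on $N'$; it produces a lift continuous on all of $G$ (not merely on a clopen subgroup) using only the non-Archimedean ultrametric on $G$ and a $1$-Lipschitz correction scheme. Your claim that ``countability of $N'$ is used only to promote a Baire-measurable lift\ldots to one continuous on a clopen subgroup'' in that step is incorrect. Second, the place where compactness of $V$ actually enters is not in producing a continuous lift or in choosing coset representatives; it is in Lemma \ref{Lemma:non-Arch2}, where compactness (hence closedness in $G'$) of $V$ makes the set $A=\{(x,y): C(x,y)=m+V\}$ \emph{closed}, so that ``nonmeager'' upgrades to ``contains a translate of an open product subgroup,'' i.e.\ the condition holds for \emph{all} $(x,y)\in H\times H$ rather than merely comeager-many. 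In case (2), $V$ is only open (not closed), $A$ is merely Borel and nonmeager, and one is confined to a comeager set, which is why the conclusion weakens. Your ``averaging over $V$'' and ``continuous choice of coset representatives'' language does not reflect what happens.
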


The proofs of Theorem \ref{Theorem:rigid} and Theorem \ref{Theorem:rigid2} will span the rest of this section.  We start by showing that whenever
$G$  is a non-Archimedean abelian Polish group and $N$ is a Polishable subgroup then any definable homomorphism from $G/N$ to any group with a Polish cover has a continuous lift.

Recall that every non-Archimedean abelian group $G$ admits a compatible complete invariant metric $d$ with $d\leq 1$.  For every $x\in G$ we denote by $B(x,\varepsilon) $ the corresponding ball of center $x$ and radius $\varepsilon$. A map 
$\gamma\colon G\to G$ is  $1$-Lipschitz if $d(\gamma(x),\gamma(y))\leq d(x,y)$ for all $x,y\in G$.


\begin{proposition}
\label{Proposition:continuous-lift} Let $f\colon G/N\rightarrow G^{\prime
}/N^{\prime }$ be a definable homomorphism between two groups with a Polish
cover. If $G$ is a non-Archimedean abelian Polish group then $f$ lifts to a
continuous function $\varphi \colon G\rightarrow G^{\prime }$.
\end{proposition}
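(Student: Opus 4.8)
The plan is to exploit the ultrametric structure on $G$ coming from the chosen decreasing sequence $G = V_0 \supseteq V_1 \supseteq V_2 \supseteq \cdots$ of open subgroups with $\bigcap_n V_n = \{1\}$, together with the existence of Borel selectors for the (closed) coset equivalence relations of each $V_n$. First, fix a Borel lift $\hat f\colon G \to G'$ of $f$, which exists by definition of a definable homomorphism. The goal is to correct $\hat f$, coset by coset along the filtration $(V_n)$, into a continuous lift $\varphi$. Concretely, I would build $\varphi$ as a pointwise limit of Borel maps $\varphi_n$ that are constant on each coset of $V_n$: set $\varphi_0$ to be constant (say with value $\hat f(0)$, or better $0'$ after absorbing it), and having defined $\varphi_n$, define $\varphi_{n+1}$ on each coset $x + V_n$ of $V_n$ so that it is constant on the finitely-or-countably-many cosets of $V_{n+1}$ inside $x + V_n$, choosing the value on the coset $y + V_{n+1}$ (for $y$ picked by a Borel selector for $V_{n+1}$) to be $\varphi_n(x) + \big(\hat f(y) - \hat f(x)\big)$ — i.e.\ a "telescoping" correction that keeps track of the increments of $\hat f$ between representatives at successive scales.

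The key points to verify are: (i) each $\varphi_n$ is Borel and constant on $V_n$-cosets — this follows because the selectors are Borel and the construction only combines Borel data at finitely many stages; (ii) the differences $\varphi_{n+1}(x) - \varphi_n(x)$ all lie in $N'$, so that the sequence $(\varphi_n(x)\bmod N')$ is constant and equals $f(x + N)$ — this is where one uses that $\hat f(a) - \hat f(b) \in N'$ whenever $a - b \in N$; since $N$ is dense in $G$, the representatives chosen inside smaller and smaller cosets of $V_n$ get $d$-close, and one needs the increments $\hat f(y) - \hat f(x)$ to be controlled modulo $N'$ — and since $f$ is a homomorphism, modulo $N'$ the value is forced; and (iii) the sequence $\varphi_n(x)$ actually converges in $G'$, not merely modulo $N'$. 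For (iii) the honest route is to make the corrections small: because $N$ is dense, inside any coset $x + V_n$ one can choose the $V_{n+1}$-coset representatives to be $d$-close to $x$, but that controls distance in $G$, not in $G'$; so instead I would invoke a Baire-category / continuity-improvement argument — by Pettis's Lemma (Lemma~\ref{Lemma: Pettis}) applied on a comeager set where $\hat f$ is continuous, one arranges that on a suitable clopen subgroup (an inessential retract) the increments $\hat f(y) - \hat f(x)$ are themselves small in $G'$, so the telescoping sum converges.

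The main obstacle, and the part that needs the most care, is exactly (iii): manufacturing a lift whose correcting increments converge in $G'$ rather than just stabilizing modulo $N'$. Stabilization modulo $N'$ is automatic and gives a lift that is "continuous into $G'/N'$", but promoting this to an honest continuous map $G \to G'$ requires either that $N'$ be nicely Polishable so that $\hat f$ can be chosen continuous on a dense $G_\delta$ (via Baire measurability of the original lift, after shrinking) and then propagated to a clopen subgroup using non-Archimedeanness of $G$, or a direct appeal to Lemma~\ref{Lemma:Borel-inverse2} to absorb the cocycle-type error $\hat f(x)\hat f(y)\hat f(x+y)^{-1} \in N'$ into a Borel section with values controlled in $N'$. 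I expect the cleanest argument packages this as: restrict attention to a clopen subgroup $V_n$ on which $\hat f$ is continuous (exists since a Borel, hence Baire-measurable, map is continuous on a comeager set, and $G$ non-Archimedean lets us find a clopen subgroup inside it), extend back to $G$ by the inessential-retract machinery of Lemma~\ref{L:RetractProperty}, and on $V_n$ run the telescoping construction with increments now genuinely tending to $0$ in $G'$ by continuity of $\hat f$ and $d(y,x)\to 0$. The resulting $\varphi$ is then continuous, and it lifts $f$ because at every finite stage it agrees with $\hat f$ modulo $N'$.
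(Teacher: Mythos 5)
Your proposal correctly locates the crux of the matter — point (iii), that one must promote stabilization modulo $N'$ to honest convergence in $G'$ — but the fix you offer for it does not work, and the gap is fatal to your construction.

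The step that fails is the claim that, because a Borel map is continuous when restricted to some dense $G_\delta$ set $U$, one can ``restrict attention to a clopen subgroup $V_n$ on which $\hat f$ is continuous.'' A dense $G_\delta$ set in a Polish space typically has empty interior; there is no reason for $U$ to contain any nonempty open set, let alone a clopen subgroup. (For example, the indicator function of $\mathbb{Q}$ in $\mathbb{R}$ is Borel and continuous on the irrationals, a dense $G_\delta$, yet is continuous on no nonempty open interval.) Pettis' Lemma also does not help here: applied to the set where $\hat f$ is ``small,'' it yields an open neighborhood of $0$ inside a difference set $A - A$, not continuity of $\hat f$ on any open set. So the telescoping increments $\hat f(y) - \hat f(x)$ are only controlled modulo $N'$, which — as you yourself observe — is not enough for your pointwise limits to converge in $G'$.

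The paper's proof circumvents this obstruction by a rather different device. Instead of trying to correct $\hat f$ to be continuous on an open set, it keeps $U$ as an (interior-empty) dense $G_\delta$ and constructs two $1$-Lipschitz (hence continuous) maps $f_0, f_1\colon G\to U$ with $f_0(x)+f_1(x)=x$ for all $x$. This is done by a recursive construction of $1$-Lipschitz perturbations $\gamma_n$ with $d(\gamma_n(x),0)\le 2^{-n}$, chosen so that both $x + \gamma_0(x) + \cdots + \gamma_n(x)$ and $-(\gamma_0(x) + \cdots + \gamma_n(x))$ land in $U_n$ (the $n$th open set whose intersection is $U$); the non-Archimedean ultrametric is what makes the partial sums converge and keeps the limits $1$-Lipschitz. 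Setting $\varphi := \psi\circ f_0 + \psi\circ f_1$ then gives a composition of continuous maps (because $\psi|_U$ is continuous), and $\varphi$ is a lift of $f$ because $\psi$ is a lift of a homomorphism, so $\psi(f_0(x)) + \psi(f_1(x)) \equiv \psi(f_0(x)+f_1(x)) = \psi(x) \pmod{N'}$. No clopen subgroup of continuity is ever needed, and the inessential-retract machinery (which does appear elsewhere in the paper, e.g.\ in Lemma~\ref{Lemma:non-Arch}) is not used at this stage. If you want to salvage your approach, the missing idea is precisely this: do not try to make $\hat f$ continuous on an open set, but instead push the \emph{argument} of $\hat f$ continuously into the comeager set where $\hat f$ is already continuous, in two pieces whose sum recovers the identity.
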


\begin{proof}
Let $\psi :G\rightarrow G^{\prime }$ be a Borel function that is a lift of $f
$. Since $\psi $ is Borel, there exists a decreasing sequence $\left(
U_{i}\right) _{i\in \omega }$ of dense open subsets of $G$ such that the
restriction of $\psi $ to $U:=\bigcap_{i\in \omega }U_{i}$ is continuous 
\cite[Section 2.3]{gao_invariant_2009}. Fix a compatible invariant
ultrametric $d$ on $G$ with $d\leq 1$. We will define $1$-Lipschitz
functions $\gamma _{n}:G\rightarrow G$ and functions $\rho _{n}:G\rightarrow
(0,1]$ so that:

\begin{enumerate}
\item $\gamma _{n}(x)\in B(0,\rho _{n}\left( x\right) )$ and $\rho
_{n+1}\left( x\right) \leq \rho _{n}\left( x\right) \leq 2^{-n}$, for all $%
n\in \mathbb{N}$ and all $x\in G$;

\item $B(x+\gamma _{0}(x)+\cdots +\gamma _{n}(x),\rho _{n}(x))\subseteq U_{n}
$, for all $n\in \mathbb{N}$ and all $x\in G$;

\item $B(-(\gamma _{0}(x)+\cdots +\gamma _{n}(x)),\rho _{n}(x))\subseteq
U_{n}$, for all $n\in \mathbb{N}$ and all $x\in G$.
\end{enumerate}

Assuming now that we have defined the sequence $(\gamma _{n})$ so that it
satisfies the above. By (1), the sequences 
\begin{equation*}
(x+\gamma _{0}(x)+\cdots +\gamma _{n}(x))_{n\in \omega }\quad \text{ and }%
\quad (-(\gamma _{0}(x)+\cdots +\gamma _{n}(x)))_{n\in \omega }
\end{equation*}%
converge for every $x\in G$. We can therefore define the maps $%
f_{0},f_{1}\colon G\rightarrow G$ by setting: 
\begin{equation*}
f_{0}(x):=\mathrm{lim}_{n\rightarrow \infty }\,(x+\gamma _{0}(x)+\cdots
+\gamma _{n}(x))\quad \text{ and }\quad f_{1}(x):=\mathrm{lim}_{n\rightarrow
\infty }\,(-(\gamma _{0}(x)+\cdots +\gamma _{n}(x)))\text{.}
\end{equation*}%
Notice that the space of all $1$-Lipschitz maps is closed under taking
pointwise limits. Since $d$ is an ultrametric, it is also closed under
finite sums. It follows that the functions $f_{0}$ and $f_{1}$ are $1$%
-Lipschitz and hence continuous. 

Fix $n\in \omega $. For $k\geq n$ we have that%
\begin{equation*}
\gamma _{n+1}\left( x\right) +\cdots +\gamma _{k}\left( x\right) \in B\left(
0,\rho _{n}\left( x\right) \right) 
\end{equation*}%
and hence%
\begin{equation*}
x+\gamma _{0}\left( x\right) +\cdots +\gamma _{k}\left( x\right) \in B\left(
x+\gamma _{0}(x)+\cdots +\gamma _{n}(x),\rho _{n}\left( x\right) \right) 
\end{equation*}%
and%
\begin{equation*}
-(\gamma _{0}(x)+\cdots +\gamma _{k}(x))\in B\left( -\left( \gamma
_{0}(x)+\cdots +\gamma _{n}(x)\right) ,\rho _{n}\left( x\right) \right) 
\end{equation*}%
As this holds for every $k\geq n$, we have%
\begin{equation*}
f_{0}\left( x\right) \in B\left( x+\gamma _{0}(x)+\cdots +\gamma
_{n}(x),\rho _{n}\left( x\right) \right) \subseteq U_{n}
\end{equation*}%
and%
\begin{equation*}
f_{1}\left( x\right) \in B\left( -\left( \gamma _{0}(x)+\cdots +\gamma
_{n}(x)\right) ,\rho _{n}\left( x\right) \right) \subseteq U_{n}\text{.}
\end{equation*}%
As $n$ was arbitrary, we have that%
\begin{equation*}
f_{0}\left( x\right) \in \bigcap_{n\in \omega }U_{n}=U
\end{equation*}%
and%
\begin{equation*}
f_{1}\left( x\right) \in \bigcap_{n\in \omega }U_{n}=U\text{.}
\end{equation*}

Since $\psi |_{U}:U\rightarrow G^{\prime }$ is continuous, by (2), (3) we
then have that the map $\psi \colon G\rightarrow G^{\prime }$ defined 
\begin{equation*}
\varphi (x):=\psi (f_{0}(x))+\psi (f_{1}(x))\text{.}
\end{equation*}%
is continuous. Since $x=f_{0}(x)+f_{1}(x)$ for every $x\in G$ we have that $%
\varphi $ is the desired map. Indeed: 
\begin{eqnarray*}
\varphi (x)+N^{\prime } &=&\psi (f_{0}(x))+\psi (f_{1}(x))+N^{\prime } \\
&=&\psi (f_{0}(x)+f_{1}(x))+N^{\prime } \\
&=&\psi (x)+N^{\prime }\text{.}
\end{eqnarray*}

To conclude the proof, we need to define a sequence $(\gamma_n)$ of $1$%
-Lipschitz maps from $G$ to itself which satisfy properties (1), (2), (3)
above. Set $\gamma_{-1}\colon G\to G$ be the map that is constantly $0$, set 
$\rho_{-1}\colon G\to (0,1]$ be the map that is constantly equal to $1$, and
set $\mathcal{A}_{-1}=\{G\}$. By recursion we define for each $n\in \omega
$ a $1$-Lipschitz map $\gamma_n\colon G\to G$, a function $\rho_n\colon G\to
(0,1]$ and a partition $\mathcal{A}$ of $G$ into $d$-balls, so that for all $%
n\in\omega $ we have that

\begin{itemize}
\item $\mathcal{A}_{n}$ is a refinement of $\mathcal{A}_{n-1}$;

\item $\rho _{n}$ and $\gamma _{n}$ are both constant on elements of $%
\mathcal{A}_{n}$; and

\item for every $x\in G$, $\rho _{n}(x)\leq \min \left\{ 2^{-n},\rho _{n-1}\left( x\right)
\right\} $, $\gamma _{n}(x)\in B(0,\rho _{n}(x))$, and the
entire balls $B(x+\gamma _{0}(x)+\cdots +\gamma _{n}(x),\rho _{n}(x))$ and $%
B(-(\gamma _{0}(x)+\cdots +\gamma _{n}(x)),\rho _{n}(x))$ are subsets
of $U_{n}$.
\end{itemize}

Assume that for some $n\in\omega $ we have defined $\mathcal{A}%
_{0},\ldots ,\mathcal{A}_{n-1}$, $\gamma _{0},\ldots ,\gamma _{n-1}$, and $%
\rho _{0},\ldots ,\rho _{n-1}$ as above. We attain $(\mathcal{A}%
_{n},\gamma_n,\rho_n)$ in the form of $(\mathcal{A}^{\prime },\gamma^{\prime
},\rho^{\prime })$ by invoking the next lemma with $(\mathcal{A}%
,\gamma,\rho):=(\mathcal{A}_{n-1},\gamma_{n-1},\rho_{n-1})$.
\end{proof}

\begin{lemma}
\label{Lemma:arisbobelis} Let $G$ be a non-Archimedean abelian Polish group
and let $d$ be a compatible invariant ultrametric on $G$ with values in 
$\left[ 0,1\right] $. Let $\mathcal{A}$ be a partition of $G$ into $d$-balls
and assume that $\gamma:G\rightarrow G$ and $\rho :G\rightarrow (0,1]$ are
maps such that $\gamma$ is $1$-Lipschitz and both $\gamma$ and $\rho$ are
constant on elements of $\mathcal{A}$. Let $U$ be a dense open subset of $G$
and fix $r>0$. Then there exist:

\begin{itemize}
\item a partition $\mathcal{A}^{\prime }$ of $G$ into balls that refines $%
\mathcal{A}$,

\item a function $\gamma^{\prime }:G\rightarrow G$, and

\item a function $\rho ^{\prime }:G\rightarrow (0,1]$
\end{itemize}

which together satisfy the following:

\begin{enumerate}
\item $\gamma^{\prime }( x) \in B( 0,\rho ( x)) $ for $x\in G$;

\item $\rho ^{\prime }\left( x\right) <\min \left\{ \rho \left( x\right)
,r\right\} $ for $x\in G$;

\item $\gamma^{\prime }$ is $1$-Lipschitz;

\item $\gamma^{\prime }$ and $\rho ^{\prime }$ are constant on elements of $%
\mathcal{A}^{\prime }$;

\item for every $x\in G$, the balls $B(x+\gamma
(x)+\gamma ^{\prime }(x), \rho ^{\prime }(x))$ and $B(-(\gamma (x)+\gamma ^{\prime }(x)),\rho ^{\prime }(x))$ are subsets of $U$.
\end{enumerate}
\end{lemma}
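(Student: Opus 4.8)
The plan is to construct $\mathcal{A}'$, $\gamma'$, and $\rho'$ on each element $A \in \mathcal{A}$ separately, since everything is required to refine $\mathcal{A}$ and be constant on the pieces of $\mathcal{A}'$; the union of these piecewise constructions will be the desired global data. So fix $A \in \mathcal{A}$ and let $c := \rho(A) \in (0,1]$ and $v := \gamma(A) \in B(0,c)$ be the constant values there. First I would observe that because $U$ is dense open and $G$ carries a (left-)invariant ultrametric, for each point $x \in A$ the two points $x+v$ and $-v$ have arbitrarily small balls around them meeting $U$; more precisely, since $U$ is open and dense and the balls $B(z,\varepsilon)$ form a neighborhood basis at any $z$ that are themselves open, one can choose for each $x$ a radius $\varepsilon_x>0$ with $\varepsilon_x < \min\{c,r,2^{-n}\}$ (taking $n$ as in the intended application, or just using $r$) such that $B(x+v,\varepsilon_x) \subseteq U$ and $B(-v,\varepsilon_x)\subseteq U$. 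Here one must be slightly careful: a single ball around the fixed point $-v$ will not in general lie in $U$, so instead I would first translate — note $B(-(v),\varepsilon) = B(-v,\varepsilon)$ is independent of $x$, so if I insist on the piece containing $-v$ in $U$ I can just pick one small $\varepsilon$ with $B(-v,\varepsilon)\subseteq U$, and then refine $A$ into balls of radius $\le \varepsilon$; on each such sub-ball $B$ with representative $x_B$ I want $B(x_B+v,\rho') \subseteq U$, which is arranged by further shrinking.

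The key steps, then, are: (i) choose $\varepsilon_0 < \min\{c,r\}$ with $B(-v,\varepsilon_0) \subseteq U$ (possible by density/openness of $U$ and the ultrametric neighborhood basis); (ii) partition $A$ into $d$-balls of radius $\varepsilon_0$ — by the ultrametric property any two balls of the same radius are equal or disjoint, so the balls of radius $\varepsilon_0$ contained in $A$ form a genuine partition of $A$; (iii) for each such ball $B$, pick a point $x_B \in B$ and, using again that $U$ is dense open, choose $\rho'(B) \in (0, \varepsilon_0)$ so small that $B(x_B+v,\rho'(B)) \subseteq U$ — crucially, because $d$ is an \emph{ultrametric}, $B(x_B+v,\rho'(B)) = B(x+v,\rho'(B))$ for every $x \in B$ (as $\rho'(B) \le \varepsilon_0$ and $x,x_B$ lie in a common ball of radius $\varepsilon_0$, and translation by $v$ preserves distances), so the choice does not depend on the representative; (iv) set $\gamma'(x) := 0$ for all $x$, which instantly gives $\gamma'(x) \in B(0,\rho(x))$ (property (1)), makes $\gamma'$ $1$-Lipschitz (property (3)), and constant on the $\mathcal{A}'$-pieces (property (4) for $\gamma'$). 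Then property (2) holds by $\rho'(B) < \varepsilon_0 < \min\{c,r\}=\min\{\rho(A),r\}$; property (5) holds for the $x_B+v$ ball by construction in (iii) and for the $-(v) = -(\gamma(x)+\gamma'(x))$ ball by (i) together with $\rho'(B) < \varepsilon_0$ and the ultrametric (a sub-ball of $B(-v,\varepsilon_0)$ is again inside $U$).

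I expect the main obstacle to be purely bookkeeping: verifying that the choices in step (iii) are independent of the chosen representative $x_B$, which is exactly where the ultrametric (not merely metric) hypothesis is essential — in a general metric this would fail. A secondary subtlety is that in the intended application (inside Proposition \ref{Proposition:continuous-lift}) one wants $\gamma'$ to be nonzero and to satisfy the finer constraint $\gamma'(x) \in B(0,\rho_n(x))$ with $\rho_n \le 2^{-n}$; but the lemma as stated only asks for $\gamma'(x)\in B(0,\rho(x))$, and setting $\gamma' \equiv 0$ trivially satisfies everything demanded here, so no genuine difficulty arises — the "movement" happens through the iteration in the proposition, not through a single application of the lemma. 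Thus the entire content is: refine into small enough balls that translates land inside the dense open $U$, and let the ultrametric guarantee well-definedness of the refinement.
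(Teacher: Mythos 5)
There is a fatal misconception in your proposal: you set $\gamma' \equiv 0$ and then, in steps (i) and (iii), claim that since $U$ is open and dense one can find small balls $B(-v,\varepsilon_0)\subseteq U$ and $B(x_B+v,\rho'(B))\subseteq U$. Density of $U$ means only that $U$ \emph{intersects} every nonempty open set; it does not mean that $U$ contains a ball around every prescribed point. If, say, $-v\notin U$ (which is perfectly possible --- take $U = G\setminus\{-v\}$ in any perfect Polish group), then \emph{no} ball centered at $-v$ lies in $U$, and step (i) collapses; the same objection applies to $x_B+v$ in step (iii). Property (5) is precisely the clause that forces $\gamma'$ to be a genuine nonzero perturbation: its entire purpose is to nudge the points $x+\gamma(x)$ and $-\gamma(x)$ \emph{into} $U$. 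Your closing remark that ``the movement happens through the iteration in the proposition, not through a single application of the lemma'' has it exactly backwards --- each application of the lemma must move the relevant points into the next $U_n$, and setting all $\gamma_n\equiv 0$ would leave the iterate stuck at $x$ and at $0$, which need not lie in any of the $U_n$.

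The paper's proof does exactly what is missing from yours. Working on a ball $C=B(x,\rho(x))$, it uses that $U$ is dense open to conclude that the \emph{translated} set $\bigl(U-(x+\gamma(x))\bigr)\cap\bigl(U+\gamma(x)\bigr)$ is dense open, and then picks a point
\[
g_B \in \bigl(U-(x+\gamma(x))\bigr)\cap\bigl(U+\gamma(x)\bigr)\cap B(0,R_C),
\]
which simultaneously forces $x+\gamma(x)+g_B\in U$ and $-(\gamma(x)+g_B)\in U$ while keeping $g_B$ small. One then declares $\gamma'(x):=g_B$. This is where density is correctly used (as ``nonempty intersection with every open ball'' rather than as ``contains a ball around every point''), and it is also where the $1$-Lipschitz verification becomes nontrivial: since $\gamma'$ is no longer identically zero, one must check --- using the ultrametric inequality and the fact that $x\mapsto x+\gamma(x)$ is $1$-Lipschitz --- that the choices $g_B$ on neighboring balls cannot differ too much. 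So the two places your sketch calls ``trivial'' (choosing $\gamma'$ and verifying (3) and (5)) are in fact the entire content of the lemma.
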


\begin{proof}
Without loss of generality we may assume that $\rho ( x) $ is less than or
equal to the diameter of that ball $A\in\mathcal{A}$ which contains $x$.
Define $\mathcal{C}$ to be the partition of $G$ consisting of balls of the
form $B( x,\rho ( x)) $ for $x\in G$. By our assumptions on $\rho ( x)$, the
partition $\mathcal{C}$ refines $\mathcal{A}$. Notice that if $C$ is the
element $B( x,\rho ( x) ) $ of the partition $\mathcal{C}$ then 
\begin{equation*}
\gamma ( x) +C+B( -\gamma ( x) ,\rho ( x)) =C\text{.}
\end{equation*}%
For each such $C\in \mathcal{C}$, since $\rho $ and $\gamma $ are constant
on elements of $\mathcal{A}$ and hence on elements of $C$, we may choose $%
R_{C}<\rho ( x) $ small enough that $U\cap ( \gamma( x) +C) $ contains a
ball of radius $R_{C}$. Define then $\mathcal{A}^{\prime }$ to be the
partition of $G$ consisting of balls $B( x,R_{C})$ for $x\in C$, and $C\in 
\mathcal{C}$. Since $R_{C}<\rho ( x) $ for any $x\in C$ and $C\in \mathcal{C}
$, the partition $\mathcal{A}^{\prime }$ refines $\mathcal{C}$, and it
therefore refines $\mathcal{A}$ as well.

Fix $B=B( x,R_{C}) \in \mathcal{A}^{\prime }$ for $x\in C$ with $C\in 
\mathcal{C}$, and $A\in \mathcal{A}$ with $B\subseteq A$. Since $U$ is open
dense, we have that 
\begin{equation*}
( U-( x+\gamma ( x) ) ) \cap ( U+\gamma ( x ) )
\end{equation*}
is open dense. Thus there exists a $g_{B}$ such that 
\begin{equation*}
g_{B}\in ( U-( x+\gamma ( x) ) ) \cap ( U+\gamma ( x) ) \cap B( 0,R_{C}) ,
\end{equation*}
and hence 
\begin{equation*}
x+\gamma ( x ) +g_{B}\in U\text{,}
\end{equation*}%
\begin{equation*}
-( \gamma ( x) +g_{B}) \in U\text{,}
\end{equation*}
and 
\begin{equation*}
g_{B}\in B( 0,R_{C}) \subseteq A\text{.}
\end{equation*}
Since $U$ is open, there also exists an $R_{B}<\min \{ \rho ( A) ,r\} $ such
that 
\begin{equation*}
B( x+\gamma ( x) +g_{B},R_{B}) \subseteq U
\end{equation*}
and 
\begin{equation*}
B( -( \gamma ( x) +g_{B}) ,R_{B}) \subseteq U\text{.}
\end{equation*}
Define then%
\begin{equation*}
\gamma^{\prime }( x) :=g_{B}
\end{equation*}
and 
\begin{equation*}
\rho ^{\prime }( x) :=R_{B}
\end{equation*}%
for each $x\in B$ and $B\in \mathcal{A}^{\prime }$. In particular, $%
\gamma^{\prime }$ and $\rho ^{\prime }$ are constant on elements of $%
\mathcal{A}^{\prime }$; the terms in question in fact satisfy (1), (2), (4),
(5). It remains to prove that $\gamma^{\prime }$ is $1$-Lipschitz.

Suppose now that $x$ and $y$ are elements of $G$. Since $d$ is an
ultrametric and $\gamma $ is $1$-Lipschitz, the map $x\mapsto x+\gamma
\left( x\right) $ is $1$-Lipschitz as well. It follows that if $x,y\in G$
and $C,D\in \mathcal{C}$ are such that $x\in C$ and $y\in D$, then%
\begin{eqnarray*}
&&d( \gamma^{\prime }( x) ,\gamma^{\prime }( y) ) \\
&=&d( x+\gamma ( x) +\gamma^{\prime }( x) ,x+\gamma ( x) +\gamma^{\prime }(
y) ) \\
&\leq &\max \{ d( x+\gamma ( x) +\gamma^{\prime }( x) ,y+\gamma ( y) ) ,d(
y+\gamma ( y) ,x+\gamma ( x) +\gamma^{\prime }( y) ) \} \\
&\leq &\max \{ d( \gamma^{\prime }( x) ,0) ,d( x+\gamma ( x) ,y+\gamma ( y)
) ,d( \gamma^{\prime }( y) ,0) ,d( y+\gamma ( y) ,x+\gamma ( x) ) \} \\
&\leq &\max \{ R_{C},R_{D},d( x,y) \} \text{.}
\end{eqnarray*}%
If $d( x,y) <\min \{ R_{C},R_{D}\} $ then $C=D$ and $\gamma^{\prime }( x)
=\gamma^{\prime }( y) $. Hence, by the above inequality  $\gamma^{\prime }$
is $1$-Lipschitz.
\end{proof}

The following is the second ingredient that goes into the proof of Theorem \ref{Theorem:rigid}.

\begin{lemma}
\label{Lemma:non-Arch} 
Let $f: G/N \rightarrow G^{\prime }/N ^{\prime }$ be a  homomorphism between
 groups with a Polish cover, which lifts to a continuous map $G\to G'$. If  $G$  is abelian and non-Archimedean, $N$ is dense in $G$, and  $N ^{\prime }$ is a countable, then $f$ is trivial.
\end{lemma}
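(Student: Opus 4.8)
We are given a homomorphism $f\colon G/N\to G'/N'$ that lifts to a \emph{continuous} map $\varphi\colon G\to G'$, with $G$ abelian non-Archimedean, $N$ dense in $G$, and $N'$ countable; we must produce a Borel lift which is a continuous homomorphism on some inessential retract of $G/N$. The first step is to exploit continuity of $\varphi$ together with countability of $N'$. For each pair $(x,y)\in G\times G$ the element $c(x,y):=\varphi(x+y)-\varphi(x)-\varphi(y)$ lies in $N'$, because $\varphi$ lifts a homomorphism; so $c\colon G\times G\to N'$ is a continuous map into a \emph{countable} (hence discrete) Polish group. A continuous map from the connected-free but Baire space $G\times G$ into a countable discrete space is constant on each piece of a clopen partition; in particular, since $G\times G$ is a Baire space, $c$ is constant on a comeager set, in fact locally constant. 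The key is that $c(0,0)=-\varphi(0)\in N'$, and by translating we may assume $\varphi(0)=0$, whence $c$ vanishes on a clopen neighborhood of $(0,0)$, i.e.\ there is a clopen subgroup $V\le G$ with $c(x,y)=0$ for all $x,y\in V$ — equivalently $\varphi$ restricted to $V$ is already a (continuous) homomorphism $V\to G'$.

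**The retract.** The second step is to upgrade the clopen subgroup $V$ on which $\varphi$ is a homomorphism to an \emph{inessential retract} $H$ of $G/N$, i.e.\ a clopen normal subgroup meeting every $N$-coset. Since $G$ is abelian, normality is automatic; the content is that $H$ should hit every coset $x+N$. Here is where density of $N$ in $G$ enters decisively: because $N$ is dense and $V$ is open, $N$ meets every translate $x+V$, so $(x+V)\cap N\ne\varnothing$, which says exactly $x\in N+V$, i.e.\ $N+V=G$. Thus $V$ itself is already an inessential retract: it is clopen, normal, and $V+N=G$ means $V$ meets every $N$-coset. So take $H:=V$ (after the harmless translation normalizing $\varphi(0)=0$; one should double-check that this translation can be arranged while keeping the map a lift of $f$, which it is since we may replace $\varphi$ by $x\mapsto\varphi(x)-n_0$ for a fixed $n_0\in N'$ with $\varphi(0)\in n_0+N'$, i.e.\ $n_0=\varphi(0)$).

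**Assembling the Borel lift.** The third step is to extend the continuous homomorphism $\varphi\restriction H\colon H\to G'$ to a Borel lift of $f$ on all of $G$, using the retract property. By Lemma~\ref{L:RetractProperty}, since $H$ is an inessential retract of $G/N$, the definable isomorphism $(i^*)^{-1}\colon G/N\to H/(H\cap N)$ lifts to a Borel retraction $r\colon G\to H$ with $r\circ i=\mathrm{id}_H$. Now $\hat f:=(\varphi\restriction H)\circ r\colon G\to G'$ is Borel, and I claim it is a lift of $f$: for $x\in G$, $r(x)\in H\cap(x+N)$, so $r(x)+N=x+N$, hence $\hat f(x)+N'=\varphi(r(x))+N'=f(r(x)+N)=f(x+N)$, using that $\varphi$ lifts $f$. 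Finally $\hat f\restriction H=\varphi\restriction H$ is a continuous group homomorphism on the inessential retract $H$. This is precisely the definition of $f$ being trivial, completing the proof.

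**Main obstacle.** The only real subtlety is the first step: extracting from continuity of $c\colon G\times G\to N'$ and countability of $N'$ a \emph{clopen subgroup} (not merely a clopen set) on which $\varphi$ is a homomorphism. The cleanest route is: $\{(x,y): c(x,y)=0\}$ is clopen (preimage of the clopen point $0$ under continuous $c$, since $N'$ is discrete), contains $(0,0)$ after normalizing $\varphi(0)=0$, so contains $W\times W$ for some clopen $W\ni 0$; then one passes to the clopen subgroup $V:=\bigcap_{n}nW$ generated appropriately — or more simply, uses that $G$ non-Archimedean has a neighborhood basis at $0$ of clopen \emph{subgroups}, picks such a $V\subseteq W$ with $V+V\subseteq W$, and checks $c$ vanishes on $V\times V$ by a short computation exploiting that $c$ is a $2$-cocycle-like expression. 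One should also verify the translation normalizing $\varphi(0)=0$ is legitimate and that the resulting $\varphi$ is still continuous — both routine. Everything else is bookkeeping with Lemma~\ref{L:RetractProperty} and the density hypothesis.
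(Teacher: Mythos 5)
The proposal has a genuine gap in its first step, and it is the step that carries all the weight. You assert that $c(x,y):=\varphi(x+y)-\varphi(x)-\varphi(y)$ is ``a continuous map into a countable (hence discrete) Polish group,'' so that $c^{-1}(0)$ is clopen and, after normalizing $\varphi(0)=0$, contains a neighborhood of $(0,0)$. But the continuity of $c$ is continuity into $G'$, not into $N'$ equipped with its own (discrete) Polish topology. The subspace topology on $N'$ inherited from $G'$ is typically \emph{not} discrete --- indeed in the cases of interest $N'$ is dense in $G'$ --- so the singleton $\{0\}$ is closed but not open in $G'$, and all you can conclude is that each fiber $c^{-1}(m)$ is \emph{closed}. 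Your claim that $c$ vanishes on a neighborhood of $(0,0)$ is simply false in general. Concretely, take $G=G'=\mathbb{Z}_{\boldsymbol 2}$, $N=N'=\mathbb{Z}$, $f=\mathrm{id}$, and lift $f$ by $\varphi(x)=x+n(x)$, where $n(0)=0$ and $n(x)=2^{k(x)}$ with $k(x)$ the position of the first nonzero binary digit of $x\neq 0$. Then $\varphi$ is a continuous lift with $\varphi(0)=0$ and $c(x,y)=n(x+y)-n(x)-n(y)$, yet $c(2^{k},2^{k+1})=-2^{k+1}\neq 0$ for every $k$, while $(2^{k},2^{k+1})\to(0,0)$; so $c$ does not vanish on any neighborhood of $(0,0)$, and there is no clopen subgroup $V$ with $c|_{V\times V}\equiv 0$.

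This is precisely the difficulty the paper's argument is built to overcome. Since the fibers $c^{-1}(m)$ are only closed, one invokes the Baire Category Theorem (here countability of $N'$ is used in an essential, not merely cosmetic, way) to produce \emph{some} $m\in N'$ for which $A:=c^{-1}(m)$ has nonempty interior; one then uses density of $N$ to center a clopen box inside $A$ at a point $(x_0,y_0)\in N\times N$; and one performs the algebraic replacement $\psi(z):=\varphi(x_0+y_0+z)-\varphi(x_0+y_0)$ (after absorbing $m$ into $\varphi$) to produce a new continuous lift that is a genuine homomorphism on a clopen subgroup $H$. The translation by $(x_0,y_0)\in N\times N$ is exactly what guarantees $\psi$ is still a lift of $f$. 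So the use of Baire category and the ensuing cocycle manipulations are not an overcomplication of a ``locally constant'' argument; they are forced by the fact that $c$ is not locally constant.

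Your later steps --- noting that any clopen subgroup of $G$ is automatically normal (abelian) and meets every $N$-coset (density of $N$), hence is an inessential retract, and then assembling a Borel lift via Lemma~\ref{L:RetractProperty} --- are fine, and indeed once a continuous lift $\psi$ defined on all of $G$ is in hand (as in the paper's proof) one does not even need the retraction: $\psi$ itself is the required lift, with $\psi|_{H}$ a continuous homomorphism on the inessential retract $H$. But the first step needs to be replaced by the Baire category argument together with the translation and cocycle bookkeeping.
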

\begin{proof}
Let $\varphi\colon G\to G$ be the continuous lift of $f$. 
Then the function $C:G\times G\rightarrow N' $, $(
x,y) \mapsto \varphi ( x+y) -\varphi ( x)
-\varphi ( y) $ is continuous. By the Baire Category Theorem, there exists an $%
m\in N ^{\prime }$ such that $A:=\left\{ ( x,y) \in G\times
G:C( x,y) =m\right\} $ has nonempty interior. As $N $ is
dense in $G$ and $G$ has a basis of neighborhoods of the identity
consisting of clopen subgroups, there exists an $( x_{0},y_{0})
\in N \times N $ and clopen  subgroup  $H$ of $G$ such that  $\left( H\times H\right)
+\left( x_{0},y_{0}\right) \subseteq A$. This implies that $\varphi ( x+y+x_{0}+y_{0})
=\varphi ( x+x_{0}) +\varphi ( y+y_{0}) +m$ for every $
x,y\in H$.

Notice that the function $\varphi ^{\prime }:G\rightarrow G^{\prime }$, $%
x\mapsto \varphi ( x) +m$ satisfies the same assumptions as $%
\varphi ^{\prime }$, and furthermore that $\varphi ^{\prime }( x)
-\varphi ( x) \in N ^{\prime }$ for every $x\in G$. Notice
also that for $x,y\in G$,%
\begin{eqnarray*}
\varphi ^{\prime }( x+y+x_{0}+y_{0}) &=&\varphi (
x+y+x_{0}+y_{0}) +m \\
&=&\varphi ( x+x_{0}) +\varphi ( y+y_{0}) +2m \\
&=&( \varphi ( x+x_{0}) +m) +( \varphi (
y+y_{0}) +m) \\
&=&\varphi ^{\prime }( x+x_{0}) +\varphi ^{\prime }(
y+y_{0}) \text{.}
\end{eqnarray*}%
Therefore, by replacing $\varphi $ with $\varphi ^{\prime }$ if necessary, we are free to assume that for every $x,y\in H$,%
\begin{equation*}
\varphi ( x_{0}+y_{0}+x+y) =\varphi ( x_{0}+x)
+\varphi ( y_{0}+y) \text{.}
\end{equation*}%
Define now%
\begin{equation*}
\psi ( z) :=\varphi ( x_{0}+y_{0}+z) -\varphi (
x_{0}+y_{0}) \text{.}
\end{equation*}%
Then we have that%
\begin{equation*}
\psi ( z) -\varphi ( z) \in N ^{\prime }
\end{equation*}%
for all $z$ in $H$. Furthermore, for $x,y\in H$,%
\begin{eqnarray*}
&&\psi ( x) +\psi ( y) \\
&=&( \varphi ( x_{0}+y_{0}+x) -\varphi (
x_{0}+y_{0}) ) +( \varphi ( x_{0}+y_{0}+y)
-\varphi ( x_{0}+y_{0}) ) \\
&=&\varphi ( x+x_{0}) +\varphi ( y_{0}) -\varphi
( x_{0}+y_{0}) +\varphi ( y+y_{0}) +\varphi (
x_{0}) -\varphi ( x_{0}+y_{0}) \\
&=&\varphi ( x+x_{0}) +\varphi ( y+y_{0}) -\varphi
( x_{0}+y_{0}) \\
&=&\varphi ( x+y+x_{0}+y_{0}) -\varphi ( x_{0}+y_{0})
\\
&=&\psi ( x+y) \text{.}
\end{eqnarray*}%
This concludes the proof.
\end{proof}

We may now conclude the proof of Theorem \ref{Theorem:rigid}.

\begin{proof}[Proof of Theorem \ref{Theorem:rigid}]
By Proposition \ref{Proposition:continuous-lift} $f$ admits a continuous lift. The rest follows from  Lemma \ref{Lemma:non-Arch}.
\end{proof}

For the proof of Theorem \ref{Theorem:rigid2}, we need the following variant of  Lemma \ref{Lemma:non-Arch}. Recall that the notation $\forall^* x \in X$ stands for ``there is a  comeager $C\subseteq X$ so that $\forall x \in C$".

\begin{lemma}
\label{Lemma:non-Arch2} 
Let $\varphi\colon G\to G'$ be a continuous map that is a lift of 
a homomorphism $f: G/N \rightarrow G^{\prime }/N ^{\prime }$ 
between groups with a Polish cover. If $G$  is  abelian and non-Archimedean, $N$ is dense in $G$, and  $N ^{\prime }$ is non-Archimedean in its Polish topology then for every clopen subgroup $V$ of $N ^{\prime }$  there exist:
\begin{itemize}
\item a clopen subgroup $H$ of $G$;
\item an element $m$ of $N'$;
\item $x_{0},y_{0}\in G$;
\end{itemize}
so that  if $\psi :G\rightarrow G^{\prime }$ is given by
$\psi \left( z\right) :=\varphi \left( x_{0}+y_{0}+z\right) -\varphi \left(
x_{0}+y_{0}\right) -m$,
then  $\forall^* x \in H \; \forall^* y\in H$, we have:
\begin{equation*}
\psi \left( x+y\right) -\psi \left( x\right) -\psi \left( y\right) \in V%
\text{.}
\end{equation*}
Moreover, if $N'$ is additionally locally profinite then the last identity holds for all $x$ and $y$ in $H$. 
\end{lemma}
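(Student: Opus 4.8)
The plan is to run the proof of Lemma~\ref{Lemma:non-Arch} \textquotedblleft modulo $V$\textquotedblright. Since $V$ is open in $N'$ and $N'$ is second countable, $N'/V$ is countable, so a Baire category argument still applies; the genuinely new difficulty is that it now produces a relation valid on a \emph{comeager} subset of a box rather than on the whole box. Concretely: $N'$ and $V$ are Polishable subgroups of $G'$, hence Borel in $G'$ (a Borel homomorphism between Polish groups is continuous by Pettis' Lemma, so the inclusion of $N'$ with its intrinsic Polish topology into $G'$ is continuous and injective, and $V$ is clopen for that topology); thus every coset of $V$ in $N'$ is Borel in $G'$, and there are only countably many of them. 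Putting $C(x,y):=\varphi(x+y)-\varphi(x)-\varphi(y)$, a continuous map $G\times G\to G'$ with values in $N'$, the sets $C^{-1}(m+V)$ then form a countable Borel partition of $G\times G$; by the Baire Category Theorem one of them is nonmeager, hence comeager in some nonempty open box. Shrinking this box and using the density of $N$ together with the basis of clopen subgroups of $G$, I obtain a clopen subgroup $H\le G$, elements $x_0,y_0\in N$, and an $m_1\in N'$ such that
\[
\Phi:=\bigl\{(s,t)\in H\times H:\ \varphi(x_0+y_0+s+t)-\varphi(x_0+s)-\varphi(y_0+t)\in m_1+V\bigr\}
\]
is comeager in $H\times H$.

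Write $w:=x_0+y_0$ and $D(x,y):=\varphi(w+x+y)-\varphi(w+x)-\varphi(w+y)+\varphi(w)$. A short computation using only that $f$ is a homomorphism shows $D(x,y)\in N'$ for all $x,y$, and for $\psi$ as in the statement one checks $\psi(x+y)-\psi(x)-\psi(y)=D(x,y)+m$. Hence the lemma reduces to the claim that \emph{$D(x,y)\bmod V$ takes a single value as $(x,y)$ ranges over a comeager subset of $H\times H$}: granting this, one chooses $m\in N'$ so that $-m$ represents that common value and applies the Kuratowski--Ulam theorem to obtain the asserted \textquotedblleft$\forall^* x\,\forall^* y$\textquotedblright\ statement.

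The claim is the step I expect to be the main obstacle: in Lemma~\ref{Lemma:non-Arch} the corresponding fact is obtained by plugging $0$ into one of the arguments of the $\Phi$-relation, whereas here the slices $H\times\{0\}$ and $\{0\}\times H$ are meager and need not meet $\Phi$. I would get around this with a \emph{generic helper} variable $r\in H$. For a triple $(x,y,r)$ with $(x+r,\,y-r)$, $(x+r,\,-r)$, and $(r,\,y-r)$ all in $\Phi$, substituting the $\Phi$-relation into the three non-constant terms of $D(x,y)$ gives, after the obvious cancellations,
\[
D(x,y)\ \equiv\ \varphi(w)-\varphi(x_0+r)-\varphi(y_0-r)-m_1 \pmod V,
\]
whose right-hand side depends on $r$ but not on $(x,y)$. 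The three maps $H^{3}\to H^{2}$ sending $(x,y,r)$ to $(x+r,y-r)$, to $(x+r,-r)$, and to $(r,y-r)$ are open, being in each case a continuous automorphism of the group $H^{3}$ followed by a coordinate projection; hence the set $S\subseteq H^{3}$ of admissible triples, being the intersection of the three preimages of $\Phi$ (and preimages of comeager sets under continuous open maps are comeager), is comeager. By Kuratowski--Ulam, for comeager many $(x,y)\in H\times H$ the section $\{r\in H:(x,y,r)\in S\}$ is comeager in $H$; since any two comeager subsets of the Baire group $H$ intersect, any two such generic pairs $(x_1,y_1),(x_2,y_2)$ admit a common admissible $r$, and the displayed congruence then forces $D(x_1,y_1)\equiv D(x_2,y_2)\pmod V$. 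This proves the claim.

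For the final, locally profinite, assertion I would use that a clopen subgroup $V$ of a locally profinite $N'$ contains a clopen \emph{profinite} subgroup $V_0$. Since the inclusion $N'\hookrightarrow G'$ is continuous (Pettis), $V_0$ is compact, hence closed, in $G'$; thus each coset of $V_0$ is closed in $G'$ and each $C^{-1}(m+V_0)$ is \emph{closed} in $G\times G$. Re-running the first paragraph with $V_0$ in place of $V$ then produces a box $(x_0+H)\times(y_0+H)$ on which the analogous relation holds \emph{everywhere}, so in the argument above the helper $r$ may be taken to be an arbitrary element of $H$ and every \textquotedblleft comeager many\textquotedblright\ becomes \textquotedblleft all\textquotedblright; the resulting $\psi$ then satisfies $\psi(x+y)-\psi(x)-\psi(y)\in V_0\subseteq V$ for all $x,y\in H$. (In effect this last case is the proof of Lemma~\ref{Lemma:non-Arch} performed modulo $V_0$.)
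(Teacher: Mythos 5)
Your proof is correct, and it fixes a genuine but unacknowledged gap in the paper's own proof, which after the Baire--Kuratowski--Ulam step simply declares ``the rest of the argument is the same.'' As you observe, the algebra in Lemma~\ref{Lemma:non-Arch} evaluates the relation obtained from Baire category at the points $(x,0)$, $(0,y)$ and $(0,0)$; once that relation is only available on a comeager $\Phi\subseteq H\times H$ rather than all of $H\times H$, those slices are meager and the verbatim carryover breaks down. Your computation
\begin{equation*}
D(x,y)\equiv\varphi(w)-\varphi(x_0+r)-\varphi(y_0-r)-m_1\pmod{V},
\end{equation*}
valid whenever $(x+r,y-r)$, $(x+r,-r)$, $(r,y-r)$ all lie in $\Phi$, together with the openness of the three surjective homomorphisms $H^3\to H^2$ and Kuratowski--Ulam, does show that $D\bmod V$ is generically constant, and taking $m$ a representative of $-c+V$ (with $c$ that constant, which lies in $N'$ since $D$ takes values in $N'$) finishes the argument. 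The locally profinite refinement, where you pass to a compact open $V_0\subseteq V$ and use continuity of the Pettis inclusion $N'\hookrightarrow G'$ to get $A$ closed and hence with nonempty interior, matches the paper. One slightly lighter route to the same claim --- possibly what the authors tacitly intended --- is to choose $(x_0,y_0)$ \emph{bi-generically}: take it in $A$ and such that both slices $\{u:(u,y_0)\in A\}$ and $\{v:(x_0,v)\in A\}$ are comeager (by Kuratowski--Ulam such points form a comeager subset of the box). Then $(0,0)\in\Phi$ and $\{x:(x,0)\in\Phi\}$, $\{y:(0,y)\in\Phi\}$ are comeager, so the algebra of Lemma~\ref{Lemma:non-Arch} runs through modulo $V$ directly on the comeager intersection, and one may even take $m=0$. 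Both routes work; yours avoids any special choice of the translation point.
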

\begin{proof}
The proof is essentially identical to the proof of Lemma \ref{Lemma:non-Arch}. We define $C\colon G\to N'/V$ by  $C(x,y):=(\varphi(x+y)-\varphi(x)-\varphi(y))+V$. Since $N'/V$ is countable, the Baire Category Theorem implies that there is $m\in N'$ so that $A=\{(x,y)\in G\times G \colon C(x,y)=m+V\}$ is non-meager. Since $A$ is Borel, we may find $x_0,y_0\in G$ and a  clopen subgroup $H$ of $G$ so that $\forall^* x \in H \;  \forall^* y\in H$ $(x+x_0,y+y_0)\in A$. The rest of the argument is the same. 

If $N'$ was additionally locally profinite then we can always arrange so that $V$ is compact in the Polish topology of  $N'$. But this implies that $V$ is a closed subgroup of $G'$. By continuity of $\varphi$ this implies that $A$ is closed. Hence, we may find $x_0,y_0\in G$ and a  clopen subgroup $H$ of $G$ so that $\forall x \in H \;  \forall y\in H$ $(x+x_0,y+y_0)\in A$.  
\end{proof}

\begin{proof}[Proof of Theorem \ref{Theorem:rigid2}]
We record the argument for (1). The proof of (2) is similar and is left to the reader.

By Proposition \ref{Proposition:continuous-lift}, we may fix some continuous lift  $\varphi\colon G\to G'$ of $f$.  Let $(V_k)_{k\in\omega}$ be a decreasing
sequence  of \emph{compact open} subgroups of $N ^{\prime }$ that forms a basis of
neighborhoods of the identity, and let  $\left(
W_{k}\right) _{k\in \omega }$ be  a decreasing sequence of open subgroups of $G$ that forms a basis of
neighborhoods of the identity. 

Set $H_{-1}=G$, $\varphi _{-1}=\varphi $, $%
V_{-1}=N ^{\prime }$. 
Applying Lemma \ref{Lemma:non-Arch2}, we define by recursion on $k\in \omega$:

\begin{itemize}
\item an open subgroup $H_{k}$ of $G$ such that $H_{k}\subseteq W_{k}$;
\item elements $x_{k},y_{k}\in H_{k-1}$,
\item elements $m_{k}\in V_{k-1}$,
\item a continuous function $\varphi _{k}:G\rightarrow G^{\prime }$,
\end{itemize}

such that, for every $k\in \omega $ one has that:

\begin{enumerate}
\item for every $x,y\in H_{k-1}$, 
\begin{equation*}
\varphi _{k-1}\left( x+y+x_{k}+y_{k}\right) +V_{k}=\varphi _{k-1}\left(
x+x_{k}\right) +\varphi \left( y+y_{k}\right) +m_{k}+V_{k}
\end{equation*}

\item for every $z\in G$,%
\begin{equation*}
\varphi _{k}\left( z\right) =\varphi _{k-1}\left( x_{k}+y_{k}+z\right)
-\varphi _{k-1}\left( x_{k}+y_{k}\right) -m_{k}\text{, and}
\end{equation*}%

\item for every $x,y\in H_{k}$,%
\begin{equation*}
\varphi _{k}\left( x+y\right) -\varphi _{k}\left( x\right) -\varphi
_{k}\left( y\right) \in V_{k}\text{.}
\end{equation*}
\end{enumerate}

Define now, for every $k\in \omega $,%
\begin{equation*}
z_{k}:=\left( x_{0}+y_{0}\right) +\cdots +\left( x_{k}+y_{k}\right) \text{.}
\end{equation*}%
We prove by induction on $k\in \omega $ that, for every $z\in G$,%
\begin{equation*}
\varphi _{k}\left( z\right) =\varphi \left( z_{k}+z\right) -\varphi \left(
z_{k}\right) -m_{k}\text{.}
\end{equation*}%
For $k=0$ this holds by definition of $\varphi _{0}$. Indeed, (2) we have
that%
\begin{eqnarray*}
\varphi _{0}\left( z\right) &=&\varphi \left( x_{0}+y_{0}+z\right) -\varphi
\left( x_{0}+y_{0}\right) -m_{0} \\
&=&\varphi \left( z_{0}+z\right) -\varphi \left( z_{0}\right) -m_{0}\text{.}
\end{eqnarray*}%
Suppose that the conclusion holds for $k-1$. Then we have that, by the
induction hypothesis,%
\begin{equation*}
\varphi _{k-1}\left( x_{k}+y_{k}+z\right) =\varphi \left(
z_{k-1}+x_{k}+y_{k}+z\right) -\varphi \left( z_{k-1}\right) -m_{k-1}
\end{equation*}%
and%
\begin{equation*}
\varphi _{k-1}\left( x_{k}+y_{k}\right) =\varphi \left(
z_{k-1}+x_{k}+y_{k}\right) -\varphi \left( z_{k-1}\right) -m_{k-1}
\end{equation*}%
Therefore, by definition%
\begin{eqnarray*}
\varphi _{k}\left( z\right) &=&\varphi _{k-1}\left( x_{k}+y_{k}+z\right)
-\varphi _{k-1}\left( x_{k}+y_{k}\right) -m_{k} \\
&=&\varphi \left( z_{k-1}+x_{k}+y_{k}+z\right) -\varphi \left(
z_{k-1}+x_{k}+y_{k}\right) -m_{k} \\
&=&\varphi \left( z_{k}+z\right) -\varphi \left( z_{k}\right) -m_{k}\text{.}
\end{eqnarray*}%
Since $H_{k}\subseteq W_{k}$ for every $k\in \omega $ and $\left(
W_{k}\right) $ is a basis of neighborhoods of the identity, we have that
the sequence $\left( x_{k}+y_{k}\right) _{k\in \omega }$ converges to $%
0 $, and hence the sequence $\left( z_{k}\right) _{k\in \omega }$ converges
to some element $z_{\infty }$ of $G$.

Now, we claim that, for every $k\in \omega $, $x,y\in H_{k}$, and $i\geq k$,%
\begin{equation*}
\varphi _{i}\left( x+y\right) -\varphi _{i}\left( x\right) -\varphi
_{i}\left( y\right) \in V_{k}\text{.}
\end{equation*}%
We prove this by induction on $i\geq k$. For $k=i$ this holds for (3).
Suppose it holds for $i-1\geq k$. Then we have that, for $x,y\in H_{k}$,%
\begin{equation*}
\varphi _{i}\left( x+y\right) =\varphi _{i-1}\left( x_{i}+y_{i}+x+y\right)
-\varphi _{i-1}\left( x_{i}+y_{i}\right) -m_{i}\text{,}
\end{equation*}%
\begin{equation*}
\varphi _{i}\left( x\right) =\varphi _{i-1}\left( x_{i}+y_{i}+x\right)
-\varphi _{i-1}\left( x_{k}+y_{k}\right) -m_{i}\text{,}
\end{equation*}%
\begin{equation*}
\varphi _{i}\left( y\right) =\varphi _{i-1}\left( x_{i}+y_{i}+y\right)
-\varphi _{i-1}\left( x_{k}+y_{k}\right) -m_{i}\text{,}
\end{equation*}%
Considering that $x_{i},y_{i}\in H_{i-1}\subseteq H_{k}$ and $m_{i}\in
V_{i-1}\subseteq V_{k}$, we have that by the inductive hypothesis%
\begin{eqnarray*}
\varphi _{i}\left( x+y\right) +V_{k} &=&\varphi _{i-1}\left(
x_{i}+y_{i}+x+y\right) -\varphi _{i-1}\left( x_{i}+y_{i}\right) -m_{i}+V_{k}
\\
&=&\varphi _{i-1}\left( x_{i}+y_{i}+x\right) -\varphi _{i-1}\left(
x_{k}+y_{k}\right) -m_{i}+V_{k} \\
&&+\varphi _{i-1}\left( x_{i}+y_{i}+y\right) -\varphi _{i-1}\left(
x_{k}+y_{k}\right) -m_{i}+V_{k} \\
&=&\varphi _{i}\left( x\right) +\varphi _{i}\left( y\right) +V_{k}\text{.}
\end{eqnarray*}%
This concludes the proof. Therefore, setting%
\begin{equation*}
\varphi _{\infty }\left( z\right) =\varphi \left( z+z_{\infty }\right)
-\varphi \left( z_{\infty }\right) =\mathrm{lim}_{i\rightarrow \infty
}\varphi \left( z+z_{i}\right) -\varphi \left( z_{i}\right) =\mathrm{lim}%
_{i\rightarrow \infty }\varphi _{i}\left( z\right) \text{,}
\end{equation*}%
we have that, for every $k\in \omega $ and $x,y\in H_{k}$, 
\begin{equation*}
\varphi _{\infty }\left( x+y\right) +V_{k}=\varphi _{\infty }\left( x\right)
+\varphi _{\infty }\left( y\right) +V_{k}\text{.}
\end{equation*}%
This concludes the proof.
\end{proof}

We close this section with some examples which demonstrate that in 
Theorem \ref{Theorem:rigid} and in 
Theorem \ref{Theorem:rigid2}(1) the conclusions cannot be strengthened any further. It is at present unclear if the same is true for  Theorem \ref{Theorem:rigid2}(2). 

\begin{example}
\label{Example:trivial} The group $\mathbb{Z}_{2}$ of all \emph{dyadic integers} is  attained as the inverse limit of the inverse system 
\[(\mathbb{Z}/2\mathbb{Z},+)\longleftarrow(\mathbb{Z}/4\mathbb{Z},+) \longleftarrow(\mathbb{Z}/8\mathbb{Z},+) \longleftarrow(\mathbb{Z}/16\mathbb{Z},+)\longleftarrow\cdots\]
where the bonding maps $\mathbb{Z}/2^k\mathbb{Z}\to\mathbb{Z}/2^{k-1}\mathbb{Z}$ are given by $a\mapsto (a\mod 2^{k-1})$. This is clearly a profinite abelian group and hence Polish in the topology that it inherits as a subgroup of $\prod_k \mathbb{Z}/2^k$, the latter being endowed  with the product topology.  Notice that the map  $k\mapsto \big( (k\mod 2), (k\mod 4), \ldots \big)$ realizes $\mathbb{Z}$ as a Polishable subgroup of $\mathbb{Z}_{2}$.
In the group $\mathbb{Z}_{2}/\mathbb{Z}$ with a Polish cover
every definable homomorphism $\mathbb{Z}_{2}\rightarrow \mathbb{Z}_{2}$ that maps $\mathbb{Z}$ into $\mathbb{Z}$ has the form $y\mapsto cy$ for
some $c\in \mathbb{Z}$. Hence the Borel homomorphism $\varphi :\mathbb{Z}_{2}/\mathbb{Z}\rightarrow \mathbb{Z}_{2}/\mathbb{Z}$, $x+\mathbb{Z}\mapsto
f_{1}( x) +\mathbb{Z}$, where $f_{1}( x) \in \mathbb{Z}_{2}$ is such that $2f_{1}( x) -x\in \mathbb{Z}$, does \emph{not}
have a lift to a continuous homomorphism on $\mathbb{Z}_{2}$. Nevertheless, $\varphi $ is trivial, as it is induced by the continuous homomorphism $2\mathbb{Z}_{2}\rightarrow \mathbb{Z}_{2}$, $y\mapsto \frac{1}{2}y$ defined
on the clopen subgroup $2\mathbb{Z}_{2}$ of $\mathbb{Z}_{2}$. This shows that one cannot strengthen the conclusions of Theorem \ref{Theorem:rigid} and obtain a lift that is a continuous group homomorphism $G\rightarrow G'$.
\end{example}

\begin{example}
\label{Example:trivial2} Consider the groups with a Polish cover $\mathbb{Z}_{2}/\mathbb{Z}$ and $\mathbb{Z}_{2}^{\omega }/\mathbb{Z}^{\omega }$, where $\mathbb{Z}^{\omega }$ and $\mathbb{Z}_{2}^{\omega }$ are each endowed with the product topology. Consider the Borel homomorphism $\varphi :\mathbb{Z}_{2}/%
\mathbb{Z}\rightarrow \mathbb{Z}_{2}^{\omega }/\mathbb{Z}^{\omega }$ defined
by $$x+\mathbb{Z}\mapsto (f_{0}(x),f_{1}( x) ,f_{2}( x)
,\ldots )+\mathbb{Z}^{\omega },$$ where $f_{n}( x) \in \mathbb{Z}_{2}$ is such that $2^{n}f_{n}( x) -x\in \mathbb{Z}$ for $n\in
\omega $. Any clopen subgroup of $\mathbb{Z}_{2}$ is of the form $2^{k}\mathbb{Z}$ for some $k\in \omega $. A continuous homomorphism from $2^{k}\mathbb{Z}$
to $\mathbb{Z}_{2}^{\omega }$ that maps $2^{k}\mathbb{Z}$ to $\mathbb{Z}^{\omega }$ is of the form $y\mapsto ( 2^{-k}c_{0}y,2^{-k}c_{1}y,\ldots
) $ for some $c_{0},c_{1},\ldots \in \mathbb{Z}$. Thus $\varphi $ is
not induced by a continuous homomorphism defined on a clopen subgroup of $\mathbb{Z}_{2}$. This shows that in Theorem \ref{Theorem:rigid2}(1) it is not sufficient to assume that $N'$ is non-Archimedean.
\end{example}

\section{The definable content of the $\mathrm{lim}^1$-functor\label{Section:towers}}

Many computations and constructions in algebraic topology and  homological algebra naturally give rise to groups  with a Polish cover $G/N$ which satisfy the assumptions of Theorem \ref{Theorem:rigid}.  Examples include the computations of \v{C}ech cohomology groups of mapping telescopes, the  computations of Steenrod homology groups of solenoids, and the constructions of the invariant $\mathrm{Ext}(\Lambda,\mathbb{Z})$ of all extensions of finite rank torsion-free abelian groups $\Lambda$ by $\mathbb{Z}$. 
We treat the first two of these examples in later installments of this series (see \cite{BLPII} for the first); the study of the ``definable content"   of  $\mathrm{Ext}(\Lambda,\mathbb{Z})$ will occupy Section \ref{S:Ext} below.  In all these examples, the computation of the pertinent group $G/\Lambda$ involves an application of the first derived functor $\mathrm{lim}^1(-)$ of the  $\mathrm{lim}$-functor on towers of countable abelian groups.

 In this section we develop the theory of the ``definable content" of the $\mathrm{lim}^1$-functor. In particular, we show that 
 when applied to the category of \emph{towers of abelian Polish groups} it takes values in the category  of groups with a Polish cover. We  use  Theorem \ref{Theorem:rigid} to deduce   that the $\mathrm{lim}^1$-functor is fully faithful when it is further restricted to the full subcategory of all  \emph{filtrations}, and we provide an explicit description of the objects in its image in terms of pro-countable completions of countable abelian groups; see Theorem \ref{Theorem:lim1-injective} and Corollary \ref{Corollary:lim1-injective}. It follows that for any two filtrations $\boldsymbol{A},\boldsymbol{B}$,  $\mathrm{lim}^1\boldsymbol{A}$ and  $\mathrm{lim}^1\boldsymbol{B}$ are definably isomorphic if and only if $\boldsymbol{A}$ and $\boldsymbol{B}$ are isomorphic as objects in the category of towers of countable abelian groups; see Corollary \ref{Corollary:ProfiniteDefinableIsomoTowers}.

\subsection{Towers of Polish groups} A \emph{tower of Polish groups} is an inverse sequence $\boldsymbol{G}%
=\left( G^{\left( m\right) },p^{\left( m,m+1\right) }\right) $ of Polish
groups and continuous homomorphisms $p^{\left( m,m+1\right) }:G^{\left(
m+1\right) }\rightarrow G^{\left( m\right) }$, where  $m$ ranges over $\omega$. These morphisms determine the more general family of morphisms $p^{( m,m^{\prime })
}=p^{( m,m+1) }\circ \cdots \circ p^{( m^{\prime
}-1,m^{\prime }) }:G^{(
m+1) }\rightarrow G^{( m) }$ for $m<m^{\prime }<\omega$, along with $p^{\left(
m,m\right) }=1_{G^{\left( m\right) }}$, for each $m\in\omega$. 
 Let  $\boldsymbol{G}=\left( G^{\left( m\right) },p^{\left( m,m+1\right)}\right) $ and $\boldsymbol{H}=\left( H^{\left( m\right) },p^{\left(
m,m+1\right) }\right) $ be towers of Polish groups.  By an  \emph{inv-map   from $\boldsymbol{G}$ to $\boldsymbol{H}$} we mean a sequence $\left(
m_{k},f^{\left( k\right) }\right) _{k\in \omega }$ in which:

\begin{itemize}
\item $\left( m_{k}\right) $ is an increasing sequence in $\omega $, and

\item for all $k\in \omega $, $f^{\left( k\right) }\colon  G^{\left( m_{k}\right)}\to H^{\left( k\right)} $ is a continuous
homomorphism so that 
 $p^{\left( k,k+1\right) }f^{\left( k+1\right) }=f^{\left( k\right)
}p^{(m_{k},m_{k+1})}$.
\end{itemize}
It follows that $p^{\left( k_{0},k_{1}\right) }f^{\left(k_{1}\right) }=f^{\left( k_{0}\right) }p^{(m_{k_{0}},m_{k_{1}})}$ for all $
k_{0}<k_{1}$. The collections of inv-maps forms a category with towers of Polish groups as objects.  The \emph{identity} inv-map $\mathrm{id}_{\boldsymbol{G}}$  of $\boldsymbol{G}$ is  $\left( m_{k},f^{\left( k\right) }\right)$, where $m_{k}=k$ and $
f^{\left( k\right) }=1_{G^{\left( k\right) }}$. The \emph{composition} $
\left( k_{t},g^{\left( t\right) }\right)\!\circ \!\left( m_{k},f^{\left(
k\right) }\right) $ of the inv-map   $\left( m_{k},f^{\left( k\right) }\right) $
from $\boldsymbol{G}$ to $\boldsymbol{H}$ and the inv-map  $\left(
k_{t},g^{\left( t\right) }\right) $ from $\boldsymbol{H}$ to $\boldsymbol{L}$
is the inv-map  $\left( m_{k_{t}},g^{\left( t\right) }f^{\left( k_{t}\right)
}\right) $ from $\boldsymbol{G}$ to $\boldsymbol{L}$. So defined, the collection of all inv-maps forms a category. However, many natural functors on this category view various inv-maps as equivalent and therefore fail to be faithful. To remedy this, we define a congruence on inv-maps and pass to the related homotopy category. 

Let  $\left(m_{k},f^{\left( k\right) }\right)$ and $\left( m_{k}^{\prime },f^{\prime
\left( k\right) }\right) $ be inv-maps from $\boldsymbol{G}$ to $
\boldsymbol{H}$. We say that $\left( m_{k},f^{\left( k\right) }\right) $
and $\left( m_{k}^{\prime },f^{\prime \left( k\right) }\right) $ are \emph{congruent} if there exists an increasing sequence $\left( \tilde{m}
_{k}\right)$ such that $\tilde{m}_{k}\geq \max \left\{ m_{k},m_{k}^{\prime
}\right\}$ for $k\in \omega $ and $f^{\left( k\right) }p^{(m_{k},\tilde{m}
_{k})}=f^{\prime \left( k\right) }p^{\left( m_{k}^{\prime },\tilde{m}%
_{k}\right) }$ for every $k\in \omega$; \cite[Section 1.1]%
{mardesic_strong_2000}. It is easy to check that this defines an equivalence relation among inv-maps
from $\boldsymbol{G}$ to $\boldsymbol{H}$. A \emph{pro-map  from $\boldsymbol{G}$ to $\boldsymbol{H}$}, or simply a \emph{map from $\boldsymbol{G}$ to $\boldsymbol{H}$},  is the congruence class $\left[m_{k},f^{\left( k\right) }\right]$ of some 
inv-map $\left( m_{k},f^{\left( k\right) }\right) $ from $\boldsymbol{G}$ to $
\boldsymbol{H}$. The \emph{identity map} of $\boldsymbol{G}$ is the congruence class of the
identity inv-map of $\boldsymbol{G}$. The \emph{composition} of inv-maps $\left( m_{k},f^{\left( k\right) }\right) $ from $%
\boldsymbol{G}$ to $\boldsymbol{H}$ and $\left(k_{t},g^{\left( t\right)
}\right) $ from $\boldsymbol{H}$ to $\boldsymbol{L}$ is given by setting:
\begin{equation*}
\left[ k_{t},g^{\left( t\right) }\right] \circ \left[ m_{k},f^{\left(
k\right) }\right] =\left[ \left( k_{t},g^{\left( t\right) }\right) \circ
\left( m_{k},f^{\left( k\right) }\right) \right]
\end{equation*}
 It is easy to verify
that this indeed defines a category that has towers of Polish groups as objects. 

\begin{definition}\label{def:CategoryOfTowers}
The maps between towers of Polish groups comprise the morphisms in the \emph{category of towers of Polish groups}. Prominent full subcategories of this category are those of \emph{towers of abelian Polish groups} and of \emph{towers of countable abelian Polish groups}.
\end{definition}
We have some obvious  closure properties for these categories. For example, the \emph{trivial tower} $\boldsymbol{G}$, in which each $G^{\left( k\right) }$ is
the trivial group, is an initial and terminal object in the
category of towers of Polish groups. If $\boldsymbol{G}$ and $\boldsymbol{H}$ are towers, then let $\boldsymbol{G}\times \boldsymbol{H}$ denote the tower $\left( G^{\left( k\right) }\times
H^{\left( k\right) }\right) _{k\in \omega }$ with bonding maps the products of the bonding maps of $\boldsymbol{G}$ and $\boldsymbol{H}$. $\boldsymbol{G}\times \boldsymbol{H}$ is the
product and coproduct of $\boldsymbol{G}$ and $\boldsymbol{H}$ in the
category of towers of Polish groups. Finally,   the  (additively denoted) category of towers of abelian Polish groups forms an additive category. To see this,
if $[m_{k},f^{\left( k\right) }]$ and $[m_{k}^{\prime },f^{\prime \left(
k\right) }]$ are  maps from $\boldsymbol{G}$ to $\boldsymbol{H}$ then we may assume without loss of generality that $m_{k}=m_{k}^{\prime }$ for
every $k\in \omega $. Hence,   we may define $[m_{k},f^{\left( k\right)
}]+[m_{k}^{\prime },f^{\prime \left( k\right) }]$ to be the map $%
[m_{k},f^{\left( k\right) }+f^{\prime \left( k\right) }]$.

\subsection{The $\mathrm{lim}^{1}$-functor}  We now restrict our discussion to the category of towers of abelian Polish groups. To each tower  $\boldsymbol{A}=\left(
A^{\left( m\right) },p^{\left( m,m+1\right) }\right) $ of  abelian Polish groups one associates the \emph{inverse limit} $\underleftarrow{\mathrm{lim}} \,\boldsymbol{A}$ of $\boldsymbol{A}$ given by
\begin{equation}\label{eq:lim}
\underleftarrow{\mathrm{lim}}\,\boldsymbol{A}:=\{\left( x_{m} \right)\in \prod_{m\in \omega } A^{(m)}:\forall m,m^{\prime }\in
\omega ,x_{m}=p^{( m,m^{\prime }) }( x_{m^{\prime }})
\}\text{.}
\end{equation}
Clearly, $\underleftarrow{\mathrm{lim}}\,\boldsymbol{A}$ is a closed, hence Polish, abelian subgroup of the Polish product group. Moreover,  given any inv-map
$(m_{k},f^{\left( k\right) }):\boldsymbol{A}\rightarrow 
\boldsymbol{B}$, we have a 
continuous homomorphism $\underleftarrow{\mathrm{lim}}\,(m_{k},f^{\left(
k\right) }):\underleftarrow{\mathrm{lim}}\boldsymbol{A}\rightarrow 
\underleftarrow{\mathrm{lim}}\boldsymbol{B}$, defined by setting%
\begin{equation*}
\left( \underleftarrow{\mathrm{lim}}(m_{k},f^{\left( k\right) })\right)
\left( \left( x_{m}\right) _{m\in \omega }\right) =\left( f^{\left( k\right)
}\left( x_{m_{k}}\right) \right) _{k\in \omega }\text{.}
\end{equation*}
This induces an assignment $[m_{k},f^{\left( k\right)
}]\mapsto \underleftarrow{\mathrm{lim}}\,[m_{k},f^{\left( k\right) }]$, since any two congruent inv-maps induce the same homomorphism $\underleftarrow{\mathrm{lim}}\boldsymbol{A}\rightarrow 
\underleftarrow{\mathrm{lim}}\boldsymbol{B}$. 
It is straightforward to verify that the assignments $\boldsymbol{A}\mapsto 
\underleftarrow{\mathrm{lim}}\boldsymbol{A}$ and $[m_{k},f^{\left( k\right)
}]\mapsto \underleftarrow{\mathrm{lim}}\,[m_{k},f^{\left( k\right) }]$ define
an additive functor from the category of towers of abelian Polish groups to
the category of abelian Polish groups. This functor however fails to be \emph{right exact}. That is, the image of a short exact sequence 
\[0\rightarrow \boldsymbol{A}\rightarrow \boldsymbol{B} \rightarrow \boldsymbol{C}\rightarrow 0\]
under the $\underleftarrow{\mathrm{lim}}$-functor will generally fail to give an epimorphism $\underleftarrow{\mathrm{lim}}\boldsymbol{B}\rightarrow 
\underleftarrow{\mathrm{lim}}\boldsymbol{C}$; see \cite[Example 11.23]{mardesic_strong_2000}. The failure of right exactness is measured by the sequence $(\underleftarrow{\mathrm{lim}}^n)_{n\in\omega}$ of the right derived functors of $\underleftarrow{\mathrm{lim}}$; see \cite%
{jensen_foncteurs_1972,weibel_introduction_1994,mardesic_strong_2000}. For every tower $\boldsymbol{A}$ as above, the group $\underleftarrow{\mathrm{lim}}^n\boldsymbol{A}$ is computed as the $n$-th cohomology group of a cochain complex
\begin{equation*}
C^{\bullet }( \boldsymbol{A}) :=0\rightarrow C^{0}( 
\boldsymbol{A}) \overset{\delta^1 }{\rightarrow} C^{1}( \boldsymbol{%
A}) \overset{\delta^2 }{\rightarrow} C^{2}( \boldsymbol{%
A}) \overset{\delta^3 }{\rightarrow} C^{3}( \boldsymbol{%
A}) \rightarrow\cdots
\end{equation*}
which associated to $\boldsymbol{A}$ as in \cite[Section 11.5]{mardesic_strong_2000}. It turns out that within the context of towers, these functors vanish for $n\geq 2$; see \cite[Section 11.6]{mardesic_strong_2000}.  We proceed to the definition of  $\underleftarrow{\mathrm{lim}}^{1}\boldsymbol{A}$.

 Let  $\boldsymbol{A}=\left(
A^{\left( m\right) },p^{\left( m,m+1\right) }\right)$ be a tower of \emph{abelian} Polish groups. 
We follow \cite[Section 11.5]{mardesic_strong_2000} with the only exception that our coboundary operators $\delta^0,\delta^1$ inherit the orientation from $(m_0\leq m_1\leq m_2)^{op}:= m_2\leq^{*} m_1 \leq^{*} m_0$ rather than from $m_0\leq m_1\leq m_2$.  This choice of orientation is more natural for some identifications later on.
Of course, the resulting groups $\mathrm{Z}(\boldsymbol{A}), \mathrm{B}(\boldsymbol{A})$, and $\underleftarrow{\mathrm{lim}}^{1}\boldsymbol{A}$ stay the same.

\begin{itemize}
\item $C^{0}( \boldsymbol{A}), C^{1}( \boldsymbol{A})$, and $C^{2}( \boldsymbol{A})$, are the following abelian Polish group  endowed with the product topology:
\[C^{0}( \boldsymbol{A}):=\prod_{m\in \omega
}A^{\left( m\right)}, \quad \quad  C^{1}( \boldsymbol{A}):=\prod_{\substack{ ( m_0,m_1) \in \omega^2  \\ %
m_0\leq m_1}}A^{\left( m_0\right)}, \quad \quad C^{2}( \boldsymbol{A}):=\prod_{\substack{ ( m_0,m_1,m_2) \in \omega^3  \\
m_0\leq m_1\leq m_2 }}A^{\left( m_0\right)}.
\]
\item $\delta^1\colon C^{0}( \boldsymbol{A}) \to C^{1}( \boldsymbol{A})$ and  $\delta^2\colon C^{1}( \boldsymbol{A}) \to C^{2}( \boldsymbol{A})$  are the continuous group homomorphisms given by 
\begin{equation*}
\big(\delta^1 (x) \big)_{m_0,m_1}  = x_{m_0} - p^{( m_0,m_1)
}(x_{m_1}),   \text{ for all  } x=(x_m) \in C^{0}( \boldsymbol{A}),   \text{ and}    
\end{equation*}
\begin{equation*}
\big(\delta^2(x)\big)_{m_0,m_1,m_2}=x_{m_0, m_1}-x_{m_0, m_2 }+p^{( m_0,m_1)
}(x_{ m_1,m_2   }),   \text{ for all  } x=(x_{m_0,m_1}) \in C^{1}( \boldsymbol{A}).
\end{equation*}
\item $\mathrm{Z}(\boldsymbol{A})$ is the closed, hence Polish, subgroup $\mathrm{ker}(\delta^2)$ of  $C^{1}( \boldsymbol{A})$. Explicitly:
\[\mathrm{Z}(\boldsymbol{A}):=\big\{ (x_{m_0,m_1}) \in C^{1}( \boldsymbol{A}) \colon  x_{m_{0},m_{2}}=x_{m_{0},m_{1}}+p^{( m_{0},m_{1}) }(
x_{m_{1},m_{2}}), \text{ for all } m_0\leq m_1\leq m_2
 \big\}.\]
\item $\mathrm{B}(\boldsymbol{A})$ is the Polishable subgroup  $(\delta^1)(C^{0}( \boldsymbol{A}))$ of  $C^{1}( \boldsymbol{A})$; see Lemma \ref{Lemma:Borel-inverse}. Explicitly:
\[\mathrm{B}(\boldsymbol{A}):=\big\{ (x_{m_0,m_1}) \in C^{1}( \boldsymbol{A}) \colon   x_{m_0,m_1}= z_{m_0}-p^{m_0,m_1}(z_{m_1}), \text{ for some }  (z_m) \in  C^{0}( \boldsymbol{A}), \text{ and all } m_0\leq m_1
\big\}.\]
\end{itemize}

\begin{definition}\label{Definition:lim1}
Let  $\boldsymbol{A}=\left(A^{\left( m\right) },p^{\left( m,m+1\right) }\right)$  be a tower of abelian Polish groups. We denote by $\underleftarrow{\mathrm{lim}}^{1}\,\boldsymbol{A}$  the following group 
with a Polish cover: 
\[\underleftarrow{\mathrm{lim}}^{1}\,\boldsymbol{A}:= \mathrm{Z}(\boldsymbol{A})/ \mathrm{B}(\boldsymbol{A})\]
\end{definition}

\begin{remark}\label{Remark:limConsecutive}
Notice that every element $x=\left( x_{m_0,m_1}\right) $ of $\mathrm{Z}( 
\boldsymbol{A}) $ is completely determined by the values $x_{m,m+1}$
for $m\in \omega $, as each $x_{m,m}$ necessarily equals zero and for   $m_0<m_1$ we have:
\begin{equation*}
x_{m_0,m_1}=x_{m_0,m_0+1}+p^{( m_0,m_0+1) }(
x_{m_0+1,m_0+2}) +\cdots +p^{( m_0,m_1-1) }(
x_{m_1-1,m_1})
\end{equation*}
We will sometimes tacitly apply this remark in what follows.
\end{remark}
As in the case of the $\underleftarrow{\mathrm{lim}}$-functor,
 a map $[m_{k},f^{\left( k\right) }]:\boldsymbol{A}%
\rightarrow \boldsymbol{B}$ of towers of abelian Polish groups induces a continuous homomorphism $\mathrm{Z}(\boldsymbol{A})\to \mathrm{Z}(\boldsymbol{B})$ which maps  $\mathrm{B}(\boldsymbol{A})$ into $\mathrm{B}(\boldsymbol{B})$. This  induces a  definable homomorphism 
$\underleftarrow{\mathrm{lim}}^{1}[m_{k},f^{%
\left( k\right) }]:\underleftarrow{\mathrm{lim}}^{1}\boldsymbol{A}%
\rightarrow \underleftarrow{\mathrm{lim}}^{1}\boldsymbol{B}$ between groups with a Polish cover given by:
\begin{equation*}
\left( \underleftarrow{\mathrm{lim}}^{1}[m_{k},f^{\left( k\right) }]\right)
(\left( x_{m,m^{\prime }}\right) _{m\leq m^{\prime }})=(f^{\left( k\right)
}(x_{m_{k},m_{k^{\prime }}}))_{k\leq k^{\prime }}\text{.}
\end{equation*}%
It is straightforward to verify the following statement.

\begin{proposition}\label{Prop:lim^1PolishCover}
The assignments $\boldsymbol{A}\mapsto 
\underleftarrow{\mathrm{lim}}^{1}\boldsymbol{A}$, $[m_{k},f^{\left(
k\right) }]\mapsto \underleftarrow{\mathrm{lim}}^{1}[m_{k},f^{\left(
k\right) }]$, define an additive functor from the category of towers of
abelian Polish groups to the   category of 
  groups with 
Polish cover.
\end{proposition}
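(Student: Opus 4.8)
The plan is to verify, in order, the following: (i) that $\underleftarrow{\mathrm{lim}}^{1}\boldsymbol{A}$ is genuinely a group with a Polish cover for each tower $\boldsymbol{A}$ of abelian Polish groups; (ii) that each map $[m_k,f^{(k)}]$ of towers yields a well-defined definable homomorphism $\underleftarrow{\mathrm{lim}}^{1}[m_k,f^{(k)}]$, i.e.\ that the displayed formula does not depend on the choice of inv-map representative; (iii) functoriality, i.e.\ compatibility with identities and composition; and (iv) additivity. Point (i) is essentially already assembled in the bullet list preceding Definition \ref{Definition:lim1}: $\mathrm{Z}(\boldsymbol{A})=\ker(\delta^2)$ is a closed, hence Polish, subgroup of the Polish product $C^1(\boldsymbol{A})$, while $\mathrm{B}(\boldsymbol{A})=\delta^1\big(C^0(\boldsymbol{A})\big)$ is, by Lemma \ref{Lemma:Borel-inverse} applied to the continuous homomorphism $\delta^1\colon C^0(\boldsymbol{A})\to C^1(\boldsymbol{A})$ (and the fact that $\mathrm{ran}(\delta^1)\subseteq \mathrm{Z}(\boldsymbol{A})$ since $\delta^2\circ\delta^1=0$), a Polishable Borel subgroup of $C^1(\boldsymbol{A})$, and clearly normal since everything is abelian. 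So $\big(\mathrm{B}(\boldsymbol{A}),\mathrm{Z}(\boldsymbol{A})\big)$ is a group with a Polish cover.

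For (ii), first fix an inv-map $(m_k,f^{(k)})\colon\boldsymbol{A}\to\boldsymbol{B}$ and check that the cochain-level map $\Phi\colon C^1(\boldsymbol{A})\to C^1(\boldsymbol{B})$, $\big(\Phi(x)\big)_{k_0,k_1}=f^{(k_0)}\big(x_{m_{k_0},m_{k_1}}\big)$, is a continuous homomorphism (it is built coordinatewise from the continuous $f^{(k_0)}$ and the projections) that carries $\mathrm{Z}(\boldsymbol{A})$ into $\mathrm{Z}(\boldsymbol{B})$ and $\mathrm{B}(\boldsymbol{A})$ into $\mathrm{B}(\boldsymbol{B})$. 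The first inclusion is a direct computation using the cocycle identity for $x$ together with the compatibility relation $p^{(k_0,k_1)}f^{(k_1)}=f^{(k_0)}p^{(m_{k_0},m_{k_1})}$; the second follows by applying $\Phi$ to $\delta^1(z)$ for $z=(z_m)\in C^0(\boldsymbol{A})$ and recognizing the result as $\delta^1$ of the tuple $\big(f^{(k)}(z_{m_k})\big)_k\in C^0(\boldsymbol{B})$, again using the compatibility relation. Since $\Phi$ is continuous hence Borel, and $\Phi(\mathrm{B}(\boldsymbol{A}))\subseteq \mathrm{B}(\boldsymbol{B})$, the induced quotient map $\underleftarrow{\mathrm{lim}}^{1}(m_k,f^{(k)})\colon \mathrm{Z}(\boldsymbol{A})/\mathrm{B}(\boldsymbol{A})\to \mathrm{Z}(\boldsymbol{B})/\mathrm{B}(\boldsymbol{B})$ is a definable homomorphism, with $\Phi$ itself serving as a (continuous, a fortiori Borel) lift. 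Independence of the congruence class is the one point requiring genuine verification: if $(m_k,f^{(k)})$ and $(m_k',f'^{(k)})$ are congruent via $(\tilde m_k)$, one computes $\Phi(x)-\Phi'(x)$ for $x\in\mathrm{Z}(\boldsymbol{A})$ and shows it lies in $\mathrm{B}(\boldsymbol{B})$ by exhibiting an explicit coboundary; the natural candidate is the $0$-cochain $z\in C^0(\boldsymbol{B})$ with $z_k = f^{(k)}\big(x_{m_k,\tilde m_k}\big)$ (well defined using Remark \ref{Remark:limConsecutive}), and the relation $f^{(k)}p^{(m_k,\tilde m_k)}=f'^{(k)}p^{(m_k',\tilde m_k)}$ should make $\Phi(x)-\Phi'(x)=\delta^1(z)$ fall out after expanding both sides via the cocycle identity for $x$.

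Once (ii) is in hand, (iii) is routine: the identity inv-map induces $\Phi=\mathrm{id}$ on $C^1$, hence the identity definable homomorphism; and for composable inv-maps one checks the cochain-level maps compose correctly, $\Phi_{g\circ f}=\Phi_g\circ\Phi_f$, which is immediate from the defining formula and the fact that $\underleftarrow{\mathrm{lim}}^1$ is defined on congruence classes (already handled in (ii)). Finally (iv), additivity, follows by the same reasoning used for the $\underleftarrow{\mathrm{lim}}$-functor: given two maps $\boldsymbol{A}\to\boldsymbol{B}$ one may, as in the discussion of the additive structure on towers, assume they are represented by inv-maps with the same indices $m_k$, whence $\Phi_{f+f'}=\Phi_f+\Phi_{f'}$ coordinatewise, and this descends to the quotients; and the sum of two Borel (here continuous) lifts is a Borel lift of the sum, as recorded in the discussion of abelian groups with a Polish cover in Section \ref{S:GroupsWithPCover}. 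I expect the main obstacle to be precisely the congruence-independence check in (ii): producing the explicit $0$-cochain witnessing $\Phi(x)-\Phi'(x)\in\mathrm{B}(\boldsymbol{B})$ and verifying the identity by a bookkeeping argument over the indices $m_k\le m_k'\le\tilde m_k$, where one must carefully use both the cocycle relation defining $\mathrm{Z}(\boldsymbol{A})$ and the compatibility of the $f^{(k)}$ with the bonding maps; everything else is a matter of unwinding definitions.
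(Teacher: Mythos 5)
Your four-step decomposition is exactly the right shape of verification, and the paper itself offers no proof beyond declaring the statement straightforward, so there is nothing in the paper to compare against; the question is only whether the details you sketch actually close. Steps (i), (iii), and (iv) do: $\mathrm{Z}(\boldsymbol{A})=\ker\delta^2$ is closed hence Polish, $\mathrm{B}(\boldsymbol{A})=\mathrm{ran}(\delta^1)$ is Polishable by Lemma~\ref{Lemma:Borel-inverse} (and lands inside $\mathrm{Z}(\boldsymbol{A})$ since $\delta^2\circ\delta^1=0$); the cochain-level maps compose correctly and agree with the identity when $m_k=k$, $f^{(k)}=\mathrm{id}$; and after passing to representatives with matching indices the additivity of $\Phi$ coordinatewise plus the ``sum of Borel lifts is a Borel lift'' observation from Section~\ref{S:GroupsWithPCover} gives additivity.

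The one place that does not quite close as written is the explicit witness you propose in (ii). The $0$-cochain $z_k=f^{(k)}(x_{m_k,\tilde m_k})$ does \emph{not} in general satisfy $\Phi(x)-\Phi'(x)=\delta^1(z)$. What it does satisfy --- and this falls out of exactly the computation you gesture at, using the cocycle identity for $x$ over the chains $m_{k_0}\le \tilde m_{k_0}\le \tilde m_{k_1}$ and $m_{k_0}\le m_{k_1}\le \tilde m_{k_1}$ together with the compatibility $p^{(k_0,k_1)}f^{(k_1)}=f^{(k_0)}p^{(m_{k_0},m_{k_1})}$ --- is $\Phi(x)-\tilde\Phi(x)=\delta^1(z)$, where $\tilde\Phi$ is the cochain map induced by the ``shifted'' representative $(\tilde m_k,\, f^{(k)}p^{(m_k,\tilde m_k)})$. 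To compare $\Phi$ with $\Phi'$ you must also subtract the analogous $z'_k=f'^{(k)}(x_{m'_k,\tilde m_k})$, which witnesses $\Phi'(x)-\tilde\Phi'(x)=\delta^1(z')$; the congruence relation $f^{(k)}p^{(m_k,\tilde m_k)}=f'^{(k)}p^{(m'_k,\tilde m_k)}$ says precisely that $\tilde\Phi=\tilde\Phi'$, so $\Phi(x)-\Phi'(x)=\delta^1(z-z')$. A quick sanity check that $z$ alone fails: take $\boldsymbol{A}=\boldsymbol{B}$, $(m_k,f^{(k)})=(k,\mathrm{id})$, $(m'_k,f'^{(k)})=(k+1,p^{(k,k+1)})$, and $\tilde m_k=k+2$; then $\delta^1(z)-(\Phi(x)-\Phi'(x))$ has $(k_0,k_1)$-entry $p^{(k_0,k_0+1)}(x_{k_0+1,k_0+2})-p^{(k_0,k_1+1)}(x_{k_1+1,k_1+2})$, which is not identically zero. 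So either use $z-z'$ directly, or (equivalently and more cleanly) reduce the general congruence to the special case $m'_k=\tilde m_k$, $f'^{(k)}=f^{(k)}p^{(m_k,\tilde m_k)}$ --- where $z'=0$ and your $z$ does work --- and invoke transitivity through the common dominating representative.
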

 The next lemma shows that  $\underleftarrow{\mathrm{lim}}^{1}\boldsymbol{A}$ fails to be Polish whenever it does not vanish. Hence, Proposition \ref{Prop:lim^1PolishCover} provides the right framework for studying the definable content of  the $\underleftarrow{\mathrm{lim}}^{1}$-functor.

\begin{lemma}
\label{Lemma:lim1-dense} For every  tower $\boldsymbol{A}=\left(A^{\left( m\right) },p^{\left( m,m+1\right) }
\right)$ of abelian Polish groups $\mathrm{B}(\boldsymbol{A}) $ is dense in $\mathrm{Z}(\boldsymbol{A})$.
\end{lemma}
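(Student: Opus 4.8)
The plan is to argue directly that every point of $\mathrm{Z}(\boldsymbol{A})$ lies in the closure of $\mathrm{B}(\boldsymbol{A})$ with respect to the subspace topology it inherits from the product topology on $C^{1}(\boldsymbol{A})=\prod_{m_0\leq m_1}A^{(m_0)}$. Since a basic open neighbourhood in this topology constrains only finitely many coordinates, it suffices to prove the following: for every $x=(x_{m_0,m_1})\in\mathrm{Z}(\boldsymbol{A})$ and every $n\in\omega$ there is $b\in\mathrm{B}(\boldsymbol{A})$ with $b_{m_0,m_1}=x_{m_0,m_1}$ for all pairs $m_0\leq m_1\leq n$; granting this, the elements $b$ obtained as $n\to\infty$ converge to $x$ in $\mathrm{Z}(\boldsymbol{A})$.

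First I would exhibit the candidate explicitly. Given $x$ and $n$, define $z=(z_m)_{m\in\omega}\in C^{0}(\boldsymbol{A})$ by $z_m:=x_{m,n}$ for $m\leq n$ and $z_m:=0$ for $m>n$ (this is well defined, as $x_{m,n}$ is a coordinate of $x$ whenever $m\leq n$, and $x_{n,n}=0$), and set $b:=\delta^{1}(z)\in\mathrm{B}(\boldsymbol{A})$. Then for all $m_0\leq m_1\leq n$,
\[
b_{m_0,m_1}=z_{m_0}-p^{(m_0,m_1)}(z_{m_1})=x_{m_0,n}-p^{(m_0,m_1)}(x_{m_1,n}),
\]
and the cocycle identity defining $\mathrm{Z}(\boldsymbol{A})$, applied to the chain $m_0\leq m_1\leq n$, yields $x_{m_0,n}=x_{m_0,m_1}+p^{(m_0,m_1)}(x_{m_1,n})$. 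Substituting gives $b_{m_0,m_1}=x_{m_0,m_1}$, as required. One could alternatively carry out the reduction to finitely many coordinates using Remark \ref{Remark:limConsecutive} and truncate at consecutive levels, but the truncation above is cleaner and gives exact agreement on a whole initial block of coordinates at once.

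I do not expect a genuine obstacle here: the entire content lies in choosing the right truncating $0$-cochain $z$, and the verification is a one-line application of the defining relation of $\mathrm{Z}(\boldsymbol{A})$ together with the functoriality $p^{(m_0,m_1)}\circ p^{(m_1,n)}=p^{(m_0,n)}$ of the bonding maps (already built into that relation). The only points needing a word of care are bookkeeping ones: that the subspace topology on $\mathrm{Z}(\boldsymbol{A})$ is indeed generated by constraints on finitely many coordinates, and that $\delta^{1}(z)\in\mathrm{B}(\boldsymbol{A})\subseteq\mathrm{Z}(\boldsymbol{A})$, which is immediate from $C^{\bullet}(\boldsymbol{A})$ being a cochain complex, i.e.\ from $\delta^{2}\circ\delta^{1}=0$.
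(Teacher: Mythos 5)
Your proof is correct and uses essentially the same idea as the paper: you truncate at level $n$ by choosing the $0$-cochain $z_m := x_{m,n}$ (for $m\le n$, and $0$ beyond) and verify $\delta^1(z)$ matches $x$ on all coordinates $(m_0,m_1)$ with $m_1\le n$, which is exactly what the paper does under the guise of showing that the ``eventually zero'' cocycles are coboundaries. Your version is marginally more explicit in that it builds the approximating coboundary directly rather than passing through the observation that eventually-zero cocycles are both dense and contained in $\mathrm{B}(\boldsymbol{A})$.
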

\begin{proof}
Suppose that $n_{0}\in \omega $ and $a\in \mathrm{Z}(\boldsymbol{A})$
is such that $a_{n,n+1}=0$ for $n\geq n_{0}$. Then setting%
\begin{equation*}
b_{k}:=a_{k,n_{0}}=a_{k,n}\text{ for }n\geq n_{0}
\end{equation*}%
one obtains $b=( b_{k}) \in C^{0}( \boldsymbol{A}) $
such that $\delta^1 ( b) =a$.
\end{proof}

In  \cite{mcgibbon_phantom_1995} it is observed that if $\underleftarrow{\mathrm{lim}}^{1}\boldsymbol{A}\neq 0$  then $\underleftarrow{\mathrm{lim}}^{1}$ is uncountable. One consequence of the above lemma is that, in the Polish context, the observation from \cite{mcgibbon_phantom_1995} can be strengthened to the following; see Section \ref{S:EquivalenceRelation} for definitions.

Observe that if $N$ is a comeager subgroup of a Polish group $G$, then $N=G$%
. Indeed, if $g\in G$, then $N\cap Ng\neq \varnothing $ and hence $g\in
N^{-1}N\subseteq N$.

\begin{corollary}
Let $\boldsymbol{A}$ be a tower of abelian Polish groups. If $%
\underleftarrow{\mathrm{lim}}^{1}\boldsymbol{A}\neq 0$ then $E_{0}\leq _{B}%
\mathcal{R}(C^{0}(\boldsymbol{A})\curvearrowright \mathrm{Z}(\boldsymbol{A}))
$.
\end{corollary}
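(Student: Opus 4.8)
The plan is to recognize $\mathcal{R}(C^0(\boldsymbol{A})\curvearrowright\mathrm{Z}(\boldsymbol{A}))$ as the coset equivalence relation of a dense, proper, Polishable subgroup, and then to read off $E_0\leq_B$ it from the Glimm--Effros dichotomy recalled in Section~\ref{S:EquivalenceRelation}. First I would spell out that the action of $C^0(\boldsymbol{A})$ on $\mathrm{Z}(\boldsymbol{A})$ in question is translation by coboundaries, $b\cdot z=z+\delta^1(b)$; this is well defined because $\delta^2\delta^1=0$ (so $\delta^1(b)\in\mathrm{Z}(\boldsymbol{A})$) and is continuous since $\delta^1$ and addition are. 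Its orbits are precisely the cosets $z+\mathrm{B}(\boldsymbol{A})$, so $\mathcal{R}(C^0(\boldsymbol{A})\curvearrowright\mathrm{Z}(\boldsymbol{A}))$ coincides with the coset equivalence relation $E$ of $\mathrm{B}(\boldsymbol{A})$ inside $\mathrm{Z}(\boldsymbol{A})$, i.e.\ with $\mathcal{R}(\underleftarrow{\mathrm{lim}}^{1}\boldsymbol{A})$.

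Next I would check the two ingredients needed to apply the dichotomy. For Borelness: $\mathrm{B}(\boldsymbol{A})=\delta^1(C^0(\boldsymbol{A}))$ is a Polishable \emph{Borel} subgroup of $C^1(\boldsymbol{A})$ by Lemma~\ref{Lemma:Borel-inverse}, so $E=\{(x,y)\in\mathrm{Z}(\boldsymbol{A})^2:x-y\in\mathrm{B}(\boldsymbol{A})\}$ is Borel. For non-smoothness: by Lemma~\ref{Lemma:lim1-dense} the subgroup $\mathrm{B}(\boldsymbol{A})$ is dense in $\mathrm{Z}(\boldsymbol{A})$, and the hypothesis $\underleftarrow{\mathrm{lim}}^{1}\boldsymbol{A}\neq 0$ says it is proper; a dense proper subgroup is not closed, so by \cite[Theorem~12.17]{kechris_classical_1995} the relation $E$ admits no Borel selector. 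On the other hand $E$ is the orbit equivalence relation of the \emph{continuous} translation action of the Polish group $(\mathrm{B}(\boldsymbol{A}),\tau')$ on $\mathrm{Z}(\boldsymbol{A})$, where $\tau'$ is the unique Polish group topology witnessing Polishability; for orbit equivalence relations of such actions, smoothness forces the existence of a Borel transversal, whose associated selection map has Borel graph and hence is a Borel selector. Since no Borel selector exists, $E$ is not smooth, and therefore $E_0\leq_B E$ by the Glimm--Effros dichotomy.

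The step I expect to be the main obstacle is the assertion that a smooth Borel orbit equivalence relation of a Polish group action admits a Borel transversal — equivalently, that the coset equivalence relation of a Polishable subgroup is smooth exactly when the subgroup is closed. This is a standard fact (it follows from the Becker--Kechris analysis of idealistic equivalence relations), but it is the one input not already contained in the excerpt, so I would either cite it explicitly or sidestep it by constructing a Borel reduction of $E_0$ into $E$ directly: using density of $\mathrm{B}(\boldsymbol{A})$ one would pick a sequence $(c_k)$ in $\mathrm{B}(\boldsymbol{A})$ tending to $0$ in $\mathrm{Z}(\boldsymbol{A})$ rapidly enough that $x\mapsto\sum_k x(k)c_k$ is a well-defined continuous map $2^\omega\to\mathrm{Z}(\boldsymbol{A})$, arranged so that every infinite signed subsum of the $c_k$ lies outside $\mathrm{B}(\boldsymbol{A})$; but ensuring this last property for all infinite index sets simultaneously is itself delicate (it would use that $\mathrm{B}(\boldsymbol{A})$, being a proper Borel subgroup, is meager), which is why the dichotomy route is the cleaner one.
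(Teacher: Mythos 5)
Your argument is correct, but it follows a genuinely different path than the paper's, which works topologically rather than via the Borel dichotomy. The paper observes that Pettis' Lemma (Lemma~\ref{Lemma: Pettis}) forces every $\mathrm{B}(\boldsymbol{A})$-coset in $\mathrm{Z}(\boldsymbol{A})$ to be meager: a non-meager coset would make $\mathrm{B}(\boldsymbol{A})$ contain an open neighbourhood of $0$, hence be clopen, hence (being dense by Lemma~\ref{Lemma:lim1-dense}) equal $\mathrm{Z}(\boldsymbol{A})$, contradicting $\underleftarrow{\mathrm{lim}}^{1}\boldsymbol{A}\neq 0$. Meager orbits together with a dense orbit (again from Lemma~\ref{Lemma:lim1-dense}) give generic ergodicity, and \cite[Theorem~6.2.1]{gao_invariant_2009} --- the continuous-Polish-group-action form of the Glimm--Effros dichotomy --- then yields $E_0\leq_B\mathcal{R}(C^0(\boldsymbol{A})\curvearrowright\mathrm{Z}(\boldsymbol{A}))$ in one step. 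Your route instead establishes non-smoothness of the Borel equivalence relation and invokes the Harrington--Kechris--Louveau dichotomy; this is sound, but, as you correctly flag, it needs the Becker--Kechris theorem that a smooth orbit equivalence relation of a Polish group action admits a Borel transversal --- an extra external input the paper never otherwise uses, whereas Pettis' Lemma is already in its toolkit. So the paper's route is more self-contained relative to its own infrastructure, while yours is more modular at the cost of a heavier import. Your proposed fallback --- sending $x\in 2^\omega$ to $\sum_k x(k)c_k$ for a rapidly decaying sequence $(c_k)$ in $\mathrm{B}(\boldsymbol{A})$ --- is in essence how \cite[Theorem~6.2.1]{gao_invariant_2009} is proved, and the delicate point you identify (keeping infinite subsums out of $\mathrm{B}(\boldsymbol{A})$) is precisely where meagerness of $\mathrm{B}(\boldsymbol{A})$, i.e.\ the Pettis step, does the work; so that fallback would in effect reconstruct the paper's argument.
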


\begin{proof}
If $\underleftarrow{\mathrm{lim}}^{1}\boldsymbol{A}\neq 0$, then $\mathrm{B}(%
\boldsymbol{A})$ is a proper dense subgroup of $\mathrm{Z}\left( \boldsymbol{%
A}\right) $. Whence, it is not $G_{\delta }$. Therefore, the action $C^{0}(%
\boldsymbol{A})\curvearrowright \mathrm{Z}(\boldsymbol{A})$ has no $%
G_{\delta }$ orbit. The conclusion thus follows from \cite[Theorem 6.2.2]%
{gao_invariant_2009}
\end{proof}
We now record a useful criterion for the vanishing of $\underleftarrow{\mathrm{lim}}^{1}\boldsymbol{A}$. 
A tower of abelian Polish groups $\boldsymbol{A}$ is
\emph{epimorphic} when all the bonding maps $p^{\left( m,m+1\right) }:A^{\left(
m+1\right) }\rightarrow A^{\left( m\right) }$ are surjective. When $%
\boldsymbol{A}$ is an epimorphic tower then $\underleftarrow{\mathrm{lim}}%
^{1}\boldsymbol{A}\ $vanishes. More generally, $\underleftarrow{\mathrm{lim}}%
^{1}\boldsymbol{A}$ vanishes if $\boldsymbol{A}$
satisfies the \emph{Mittag-Leffler condition} condition, i.e., if for every $m\in \omega $
the decreasing sequence $\left( p^{\left( m,k\right) }( A^{(
k) }) \right) _{k\geq m}$ of subgroups of $A^{\left( m\right) }$
is eventually constant.

\begin{lemma}
\label{Lemma:ML}Suppose that $\boldsymbol{A}$ is a tower of abelian Polish
groups and consider the following assertions:
\begin{enumerate}
\item $\boldsymbol{A}$ satisfies the Mittag--Leffler condition;

\item $\boldsymbol{A}$ is isomorphic to an epimorphic tower.
\item $\underleftarrow{\mathrm{lim}}%
^{1}\boldsymbol{A}=0$ 
\end{enumerate}
Then (1) and (2) are equivalent and imply (3). If each $A^{(m)}$ in $\boldsymbol{A}$ is countable, then (1), (2), and (3) are equivalent.
\end{lemma}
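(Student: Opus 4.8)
The plan is to establish the implications $(1)\Leftrightarrow(2)$, then $(2)\Rightarrow(3)$, and finally, under the countability hypothesis, $(3)\Rightarrow(1)$. For $(2)\Rightarrow(1)$ there is nothing to do, since an epimorphic tower trivially has eventually constant (indeed constant) images $p^{(m,k)}(A^{(k)})=A^{(m)}$, and the Mittag--Leffler condition is preserved under isomorphism of towers (two congruent inv-maps cofinally intertwine the bonding maps, so the images $p^{(m,k)}(A^{(k)})$ stabilize in $\boldsymbol A$ iff they do in any tower isomorphic to it). For $(1)\Rightarrow(2)$, given $\boldsymbol A$ satisfying Mittag--Leffler, I would pass to a cofinal subsequence $(m_k)$ along which, for each $k$, the image $p^{(m_k,m)}(A^{(m)})$ is already equal to its eventual value for all $m\ge m_{k+1}$; setting $B^{(k)}:=p^{(m_k,m_{k+1})}(A^{(m_{k+1})})\subseteq A^{(m_k)}$ with the subspace topology (these are Polishable Borel subgroups by Lemma \ref{Lemma:Borel-inverse}, being continuous images of Polish groups) and using the restricted bonding maps, one checks the new bonding maps $B^{(k+1)}\to B^{(k)}$ are surjective, and that the inclusion inv-map $(m_k, \mathrm{incl})$ together with the restriction $(m_k, p^{(m_k,m_k)})$ give mutually inverse maps in the category of towers of abelian Polish groups. (One must be slightly careful that $B^{(k)}$ carries its own Polish topology rather than the subspace topology, but for the purely algebraic/Borel identity $\underleftarrow{\mathrm{lim}}^1\boldsymbol A=0$ this does not matter; in the countable case every subgroup is discrete and the point is moot.)

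For $(2)\Rightarrow(3)$ — equivalently $(1)\Rightarrow(3)$ — I would give the standard argument directly from the cochain description preceding Definition \ref{Definition:lim1}. Let $\boldsymbol A$ be epimorphic and let $x=(x_{m_0,m_1})\in\mathrm{Z}(\boldsymbol A)$; by Remark \ref{Remark:limConsecutive} it suffices to solve $\delta^1(z)=x$, i.e.\ to find $(z_m)$ with $z_{m}-p^{(m,m+1)}(z_{m+1})=x_{m,m+1}$ for all $m$. Set $z_0:=0$ and define $z_{m+1}$ recursively: since $p^{(m,m+1)}$ is surjective, choose $z_{m+1}\in A^{(m+1)}$ with $p^{(m,m+1)}(z_{m+1})=z_m-x_{m,m+1}$. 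Then $z=(z_m)\in C^0(\boldsymbol A)$ and $\delta^1(z)=x$, so $x\in\mathrm{B}(\boldsymbol A)$ and $\underleftarrow{\mathrm{lim}}^1\boldsymbol A=0$. This uses no choice beyond countably many selections, which is unproblematic.

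The interesting direction is $(3)\Rightarrow(1)$ under the hypothesis that each $A^{(m)}$ is countable; this is where I expect the main obstacle, and it is where I would use Lemma \ref{Lemma:lim1-dense} together with the topological structure. Suppose $\boldsymbol A$ fails Mittag--Leffler: there is $m_0$ such that the decreasing sequence of subgroups $G_k:=p^{(m_0,k)}(A^{(k)})$ of the countable (hence discrete) group $A^{(m_0)}$ is strictly decreasing along a subsequence. The aim is to produce $x\in\mathrm{Z}(\boldsymbol A)\setminus\mathrm{B}(\boldsymbol A)$. I would argue by contradiction: if $\underleftarrow{\mathrm{lim}}^1\boldsymbol A=0$ then $\mathrm{B}(\boldsymbol A)=\mathrm{Z}(\boldsymbol A)$, so $\delta^1\colon C^0(\boldsymbol A)\to\mathrm{Z}(\boldsymbol A)$ is a continuous surjection between Polish groups and hence, by the open mapping theorem for Polish groups, is open; equivalently the short exact sequence $0\to \underleftarrow{\mathrm{lim}}\boldsymbol A\to C^0(\boldsymbol A)\to \mathrm{Z}(\boldsymbol A)\to 0$ shows $\mathrm{Z}(\boldsymbol A)\cong C^0(\boldsymbol A)/\underleftarrow{\mathrm{lim}}\boldsymbol A$ topologically. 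One then shows directly that failure of Mittag--Leffler produces an element of $\mathrm{Z}(\boldsymbol A)$ not in the (necessarily then closed, by vanishing) image $\mathrm{B}(\boldsymbol A)$: pick $a_{k}\in G_k$ witnessing $G_k\supsetneq G_{k+1}$ in the sense that $a_k\notin G_{k+1}$, lift each to $\tilde a_k\in A^{(k)}$ with $p^{(m_0,k)}(\tilde a_k)=a_k$, and assemble a cocycle whose consecutive entries $x_{k,k+1}$ encode the $\tilde a_k$'s so that any putative $z\in C^0(\boldsymbol A)$ with $\delta^1(z)=x$ would force $z_{m_0}\in\bigcap_k G_k$ while simultaneously $z_{m_0}\equiv a_k \pmod{G_{k+1}}$ for infinitely many $k$, a contradiction. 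The discreteness of the $A^{(m)}$ is essential here: it makes "eventually constant images" the exact obstruction and lets the diagonal-type argument close. I would double-check the bookkeeping of indices in the last step, as that is the one genuinely fiddly point; everything else is routine once the open mapping theorem is invoked.
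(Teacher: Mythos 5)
The paper itself does not prove Lemma \ref{Lemma:ML}; it simply cites \cite[Ch.\ II, Section 6.2]{mardesic_shape_1982}. Your proposal is therefore a self-contained argument for a result the paper takes as known, and its overall architecture --- $(1)\Leftrightarrow(2)$ by passing to the tower of stable images (flagging, correctly, that these carry their Polishable rather than subspace topology, which is harmless since Borel homomorphisms between Polish groups are continuous), $(2)\Rightarrow(3)$ by the telescoping recursion on consecutive coordinates using Remark \ref{Remark:limConsecutive}, and $(3)\Rightarrow(1)$ in the countable case via a Baire-category style obstruction --- is the standard one and is sound. Two small points deserve tightening. First, in $(2)\Rightarrow(3)$ you silently use that $\underleftarrow{\mathrm{lim}}^1$ is invariant under isomorphism of towers (Proposition \ref{Prop:lim^1PolishCover}) to reduce to the honestly epimorphic case; this should be said. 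Second, and more substantively, your direct "diagonal" construction for $(3)\Rightarrow(1)$ is not quite right as stated: picking a single sequence $a_k\in G_k\setminus G_{k+1}$ and assembling a cocycle $x$ from lifts $\tilde a_k$ does \emph{not} by itself prevent the partial sums from being realized by some $z_{m_0}\in A^{(m_0)}$, and the constraint "$z_{m_0}\in\bigcap_k G_k$" you invoke does not follow from $\delta^1(z)=x$. What closes the argument is a counting or Baire step: the cocycles constructed this way surject continuously onto the pro-countable completion $\hat{A}^{(m_0)}:=\underleftarrow{\mathrm{lim}}_k\,A^{(m_0)}/G_k$ (sending $x\mapsto(x_{m_0,k}+G_k)_k$), while coboundaries land in the countable image of $A^{(m_0)}$; failure of Mittag--Leffler at $m_0$ forces $\hat{A}^{(m_0)}$ to be uncountable (it biject with $\prod_k G_k/G_{k+1}$), so $\mathrm{B}(\boldsymbol A)\subsetneq\mathrm{Z}(\boldsymbol A)$. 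You gesture at this by invoking the open mapping theorem, but the write-up should commit to either the cardinality count or the Baire-category version rather than the underdetermined pointwise diagonalization.
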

\begin{proof}
For a proof see \cite[Ch. II, Section 6.2]{mardesic_shape_1982}.
\end{proof}
For an example of a tower of  abelian Polish groups where (3) does not imply (1) and (2), see \cite[Example 4.5]{mcgibbon_phantom_1995}.

\subsection{Monomorphic towers and filtrations}
For the remainder of this section we will restrict our attention to the category of towers of countable abelian Polish groups. A tower  $\boldsymbol{A}=\left(A^{\left( m\right) },p^{\left( m,m+1\right) }
\right)$ of countable abelian Polish groups  is  \emph{monomorphic} if  $p^{\left( m,m+1\right)}$ is an injection for all $m\in\omega$.  In this case, $\underleftarrow{\mathrm{lim}}^{1}%
\boldsymbol{A}$ admits general description as a group with a Polish cover; this is the content of Theorem \ref{Theorem:lim1-injective} below. 

\begin{definition}
Let $A$ be a countable abelian group. A \emph{filtration $\boldsymbol{A}=(A^{\left( m\right)})$ of $A$} is a tower $\left(A^{\left( m\right) },p^{\left( m,m+1\right)}
\right)$, so that  $A=A^{(0)}$,  $p^{\left( m,m+1\right)}$ is the inclusion map $A^{(m+1)}\subseteq A^{(m)}$, and $\bigcap_m A^{(m)}=0$.  The \emph{category of filtrations} is the  associated full subcategory of the category of towers of Polish groups. Its objects are all filtrations of all countable abelian groups. 
\end{definition}

Given any monomorphic tower $\boldsymbol{A}=\left(A^{\left( m\right) },p^{\left( m,m+1\right) }
\right)$, we can canonically assign to $\boldsymbol{A}$ a filtration $\boldsymbol{A}^{\mathrm{fil}}$. First, by replacing $\boldsymbol{A}$ with an isomorphic tower, we may always assume that $A^{\left( m+1\right) }\subseteq A^{\left( m\right) }$ for every $m\in \omega$ and that $p^{\left( m,m+1\right) }:A^{\left( m+1\right) }\rightarrow A^{\left(
m\right) }$ is the inclusion map. Let $A^{\left( \infty \right) }$  be
the intersection of these $A^{\left( m\right) }$ $(m\in \omega)$. Define $%
\boldsymbol{A}^{\mathrm{fil}}$ to be the tower of groups $\left( A^{\left(
m\right) }/A^{\left( \infty \right) },p^{\left( m,m+1\right) }\right) $
where $p^{\left( m,m+1\right) }:A^{\left( m+1\right) }/A^{\left( \infty
\right) }\rightarrow A^{\left( m\right) }/A^{\left( \infty \right) }$ is the
inclusion map. Notice that this defines a functor $\boldsymbol{A}\mapsto 
\boldsymbol{A}^{\mathrm{fil}}$ from the category of
monomorphic towers of countable abelian groups to the subcategory of filtrations. 

\begin{lemma}
\label{Lemma:reduce} If $\boldsymbol{A}$ is a monomorphic tower, then $\underleftarrow{\mathrm{lim}}^{1}\boldsymbol{A}$ and $\underleftarrow{\mathrm{lim}}^{1}\boldsymbol{A}%
^{\mathrm{fil}}$ are definably  isomorphic.
\end{lemma}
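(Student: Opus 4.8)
The plan is to exhibit the quotient inv-map $q\colon\boldsymbol{A}\to\boldsymbol{A}^{\mathrm{fil}}$ and show that the definable homomorphism $q_{*}:=\underleftarrow{\mathrm{lim}}^{1}[q]\colon\underleftarrow{\mathrm{lim}}^{1}\boldsymbol{A}\to\underleftarrow{\mathrm{lim}}^{1}\boldsymbol{A}^{\mathrm{fil}}$ supplied by Proposition \ref{Prop:lim^1PolishCover} is \emph{bijective}, and then conclude via Lemma \ref{Lemma:Borel-inverse2} (cf.\ the Remark following Definition \ref{Def:definable}) that $q_{*}$ is a definable isomorphism. First, since $\underleftarrow{\mathrm{lim}}^{1}$ is a functor and functors preserve isomorphisms, while isomorphisms in the category of groups with a Polish cover are precisely the definable isomorphisms, we may pass to an isomorphic tower and assume outright that $A^{(m+1)}\subseteq A^{(m)}$ with every bonding map equal to the inclusion; then $\boldsymbol{A}^{\mathrm{fil}}=(A^{(m)}/A^{(\infty)})$ with $A^{(\infty)}=\bigcap_{m}A^{(m)}$, and $q$ is the inv-map $(m_{k},f^{(k)})$ with $m_{k}=k$ and $f^{(k)}\colon A^{(k)}\to A^{(k)}/A^{(\infty)}$ the quotient map. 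By Proposition \ref{Prop:lim^1PolishCover}, $q_{*}$ lifts to the continuous homomorphism $\mathrm{Z}(\boldsymbol{A})\to\mathrm{Z}(\boldsymbol{A}^{\mathrm{fil}})$, $(x_{m_0,m_1})\mapsto(f^{(m_0)}(x_{m_0,m_1}))$, which carries $\mathrm{B}(\boldsymbol{A})$ into $\mathrm{B}(\boldsymbol{A}^{\mathrm{fil}})$.

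For surjectivity of $q_{*}$ I would argue on cocycles. Given $\bar{x}\in\mathrm{Z}(\boldsymbol{A}^{\mathrm{fil}})$, by Remark \ref{Remark:limConsecutive} it is determined by its consecutive values $\bar{x}_{m,m+1}\in A^{(m)}/A^{(\infty)}$; choose lifts $x_{m,m+1}\in A^{(m)}$ and put $x_{m_0,m_1}:=x_{m_0,m_0+1}+\cdots+x_{m_1-1,m_1}\in A^{(m_0)}$, a legitimate sum since $x_{j,j+1}\in A^{(j)}\subseteq A^{(m_0)}$ whenever $j\geq m_0$. The telescoping identity $x_{m_0,m_2}=x_{m_0,m_1}+x_{m_1,m_2}$ shows $x=(x_{m_0,m_1})\in\mathrm{Z}(\boldsymbol{A})$, and applying $f^{(m_0)}$ coordinatewise and invoking Remark \ref{Remark:limConsecutive} again yields $q_{*}(x)=\bar{x}$.

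For injectivity, suppose $x\in\mathrm{Z}(\boldsymbol{A})$ with $q_{*}(x)\in\mathrm{B}(\boldsymbol{A}^{\mathrm{fil}})$, witnessed by some $(\bar{z}_{m})\in C^{0}(\boldsymbol{A}^{\mathrm{fil}})$; lift each $\bar{z}_{m}$ to $z_{m}\in A^{(m)}$ and set $w:=x-\delta^{1}((z_{m}))\in\mathrm{Z}(\boldsymbol{A})$. Then a direct computation shows each entry $w_{m_0,m_1}$ lies in $\ker f^{(m_0)}=A^{(\infty)}$, so $w$ is in fact a cocycle for the constant tower $\boldsymbol{A}^{(\infty)}$ with value $A^{(\infty)}$ and identity bonding maps (here one uses that the bonding maps of $\boldsymbol{A}$ restrict to the identity on $A^{(\infty)}$). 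Since $\boldsymbol{A}^{(\infty)}$ is epimorphic, Lemma \ref{Lemma:ML} gives $\underleftarrow{\mathrm{lim}}^{1}\boldsymbol{A}^{(\infty)}=0$, so $w=\delta^{1}((u_{m}))$ for some $(u_{m})\in\prod_{m}A^{(\infty)}\subseteq C^{0}(\boldsymbol{A})$; hence $w\in\mathrm{B}(\boldsymbol{A})$ and therefore $x\in\mathrm{B}(\boldsymbol{A})$, i.e.\ $x$ represents $0$ in $\underleftarrow{\mathrm{lim}}^{1}\boldsymbol{A}$.

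Having shown $q_{*}$ is a bijective definable homomorphism, Lemma \ref{Lemma:Borel-inverse2} yields that $q_{*}$ is an isomorphism in the category of groups with a Polish cover, i.e.\ a definable isomorphism, which completes the proof. The one step deserving care is injectivity, where one must justify that $w$ descends to a cocycle over $\boldsymbol{A}^{(\infty)}$ and then invoke the vanishing of $\underleftarrow{\mathrm{lim}}^{1}$ for epimorphic towers; everything else is bookkeeping with Remark \ref{Remark:limConsecutive}. Alternatively, one can bypass the appeal to Lemma \ref{Lemma:Borel-inverse2} altogether: the cocycle-lifting construction in the surjectivity step is visibly continuous (all the groups in sight being discrete), so it furnishes an explicit Borel lift of the inverse group homomorphism $q_{*}^{-1}$, making $q_{*}^{-1}$ definable directly.
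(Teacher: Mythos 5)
Your proof is correct and follows the same overall strategy as the paper: exhibit the quotient inv-map $\pi\colon\boldsymbol{A}\to\boldsymbol{A}^{\mathrm{fil}}$, lift $\bar\pi=q_*$ to a continuous map on cocycles, and check bijectivity. You also spell out the surjectivity on cocycles, which the paper merely asserts; your lift-and-telescope construction is exactly right. Where you genuinely diverge is in the injectivity step. The paper, given $a$ with $\bar\pi(a)\in\mathrm{B}(\boldsymbol{A}^{\mathrm{fil}})$, extracts the error terms $r_m\in A^{(\infty)}$ and writes down the corrected witness $d_m:=c_m+r_0+\cdots+r_{m-1}$ directly, verifying $\delta^1(d)=a$ by a one-line computation. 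You instead isolate the error cocycle $w:=x-\delta^1(z)$, observe that all its entries land in $A^{(\infty)}$ and the bonding maps restrict to the identity there, recognize $w$ as a cocycle for the constant tower $\boldsymbol{A}^{(\infty)}$, and invoke the vanishing $\underleftarrow{\mathrm{lim}}^1\boldsymbol{A}^{(\infty)}=0$ via Lemma \ref{Lemma:ML}. Unwinding the vanishing argument for the constant tower produces the same partial-sum formula the paper writes out by hand, so the two proofs are computationally isomorphic; but your framing is more conceptual in that it pinpoints \emph{why} the error is a coboundary (the residual tower is epimorphic), whereas the paper's is more self-contained, avoiding an appeal to the Mittag-Leffler machinery. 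One minor terminological wrinkle in your closing paragraph: having shown $q_*$ is a bijective definable homomorphism, $q_*$ is already a definable isomorphism by Definition \ref{Def:definable}; the content of the Remark after that definition (via Lemma \ref{Lemma:Borel-inverse2}) is that $q_*^{-1}$ is then also definable, i.e.\ that $q_*$ is an isomorphism \emph{in the category}. You state the implication in the opposite direction, but the conclusion you need is available either way, so this does not affect the proof.
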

\begin{proof}
The quotient maps $A^{\left( m\right) }\rightarrow A^{\left( m\right)
}/A^{\left( \infty \right) }$ for $m\in \omega $ induce a morphism $\pi:\boldsymbol{A}\rightarrow \boldsymbol{A}^{\mathrm{fil}}$,
which induces in turn a surjective homomorphism $\bar{\pi}:\mathrm{\mathrm{lim}}^{1}\boldsymbol{A}\rightarrow \mathrm{\mathrm{lim}}
^{1}\boldsymbol{A}^{\mathrm{fil}}$ which lifts to a continuous homomorphism $\mathrm{Z}(\boldsymbol{A})\to\mathrm{Z}(\boldsymbol{A}^{\mathrm{fil}})$. Hence $\bar{\pi}$  is definable. We claim that it is also injective. Indeed, if $a\in \mathrm{Z}( \boldsymbol{A}) $ is such that 
$\bar{\pi} _{\boldsymbol{A}}( a) \in \mathrm{B}( 
\boldsymbol{A}^{\mathrm{fil}}) $, then there exists $c\in
C^{0}( \boldsymbol{A}) $ such that $a_{m,m+1}+A^{\left( \infty
\right) }=c_{m}-c_{m+1}+A^{\left( \infty \right) }$ for every $m\in \omega $%
. Hence there exist $r_{m}\in A^{\left( \infty \right) }$ such that $%
a_{m,m+1}+r_{m}=c_{m}-c_{m+1}$ for $m\in \omega $. Define then 
\begin{equation*}
d_{m}:=c_{m}+r_{0}+\cdots +r_{m-1}\in A^{\left( m\right) }
\end{equation*}%
for $m\in \omega $. Notice that%
\begin{eqnarray*}
d_{m}-d_{m+1} &=&c_{m}+r_{0}+\cdots +r_{m-1}-\left( c_{m+1}+r_{0}+\cdots
+r_{m}\right) \\
&=&c_{m}-c_{m+1}-r_{m}=a_{m}\text{.}
\end{eqnarray*}%
Hence $d$ witnesses that $a\in \mathrm{B}( \boldsymbol{A}) $. It follows that $\bar{\pi}$ is a definable isomorphism.
\end{proof}

\begin{remark}
It is easy to check that if we  set  $\pi _{
\boldsymbol{A}}:=\pi$ for the homomorphism
$\pi:\boldsymbol{A}\rightarrow \boldsymbol{A}^{\mathrm{fil}}$ defined in the above proof, then  $\boldsymbol{A}\mapsto \bar{\pi}_{\boldsymbol{A}}$ is a natural
transformation between the functors  $\boldsymbol{A}\mapsto \underleftarrow{\mathrm{lim}}^{1}\boldsymbol{A}$ and $\boldsymbol{A}\mapsto \underleftarrow{\mathrm{lim}}^{1}\boldsymbol{A}^{\mathrm{fil}}$. 
\end{remark}

By Lemma \ref{Lemma:reduce} we may restrict our study of monomorphic towers to the category of filtrations. In this category  $\underleftarrow{\mathrm{lim}}^{1}\boldsymbol{A}$ has a concrete description to which Theorem \ref{Theorem:rigid} directly applies, as we will see.

\subsection{Completions of topological groups and towers}

\begin{definition}
A topological group $G$ is \emph{pro-countable} if it is isomorphic to the inverse limit of a tower of
countable groups. 
\end{definition}

As we have noted, if $G$ is abelian then $G$ is pro-countable if and only if it is a non-Archimedean Polish group; see
 \cite[Lemma 2]{malicki_abelian_2016}.  

\begin{definition}
Let $\boldsymbol{A}$ be a  filtration of a countable abelian group $A$. Then $\boldsymbol{A}$ gives rise to  the pro-countable abelian group $\hat{A}$ which  is the inverse limit of the tower of
countable abelian groups $\left( A/A^{\left( m\right)}, f_*^{(m,m+1)} \right)$, where $f^{(m,m+1)}_*\colon A/A^{\left( m+1\right)}\to A/A^{\left( m\right)}$ is the epimorphism induced by the inclusion $f^{(m,m+1)}\colon A^{\left( m+1\right)} \to A^{\left( m\right)}$. 
We say that $\hat{A}$ is the \emph{completion of $A$ with respect to $\boldsymbol{A}$}.
\end{definition}
 Notice that the canonical homomorphism  $a\mapsto (a+A^{(0)},a+A^{(1)},\cdots)$  from $A$ to  $\hat{A}$  is injective, since the sequence $\left\{ A^{\left( m\right) }:m\in \omega \right\} $ has
trivial intersection. The resulting image of $A$ is clearly dense  in $\hat{A}$. This induces an assignment $\boldsymbol{A}
\mapsto \hat{A}/A$, which maps the tower $\boldsymbol{A}$ to the group with a Polish cover $\hat{A}/A$. This assignment  determines a functor from the category of filtrations to the
category of groups with a Polish cover. 
To see this first notice that if $\hat{A}_{m_0}$ is the completion of the subgroup $A^{(m_0)}\subseteq A$ with respect to the filtration $(A^{(m)}\colon m\geq m_0)$ then $\hat{A}_{m_0}$ is a clopen subgroup of $\hat{A}$. Since $A$ is dense in $\hat{A}$ we also have that $\hat{A}_{m_0}+A=\hat{A}$ and therefore $\hat{A}_{m_0}$ is an essential retract of $\hat{A}/A$; see the discussion preceding Definition \ref{Definition:trivial}. Notice now that if  $f=\left( m_{k},f^{\left( k\right)
}\right) $ is an inv-map from $\boldsymbol{A}$ to some filtration $\boldsymbol{B}$ then the homomorphism $f^{\left( 0\right) }:A^{(m_0)}\rightarrow B$
extends to a continuous homomorphism $\hat{f}:\hat{A}_{m_0}\rightarrow 
\hat{B}$ such that $\hat{f}( A^{(m_0)}) \subseteq B$. The resulting definable homomorphism $f\colon \hat{A}_{m_0}/A^{(m_0)}\to\hat{B}/B$ induces  a (trivial) definable homomorphism $f:\hat{A}/A\rightarrow \hat{B}/B$ by Lemma \ref{L:RetractProperty}.
It is easily verified that this defines an additive functor from the category of filtrations to the category of groups with a Polish cover. 

\subsection{$\underleftarrow{\mathrm{lim}}^{1}$ and completions}
 In  \cite[Section 4]{mcgibbon_phantom_1995} it is shown that, if $\boldsymbol{A}$ is a tower of countable abelian groups, then
 $\underleftarrow{\mathrm{lim}}^{1}\boldsymbol{A}$ vanishes if and only if $\hat{A}/A$ does as well. We have the following generalization, which was already known without the definability claim; see \cite[Remark 2.13]{GJ09}.

\begin{theorem}
\label{Theorem:lim1-injective}Suppose that $\hat{A}$ is the completion of a countable group $A$ with respect to a filtration $\boldsymbol{A}$ of $A$. Then there is a definable isomorphism between $\underleftarrow{\mathrm{lim}}^{1}\boldsymbol{A}$ and $\hat{A}/A$.
\end{theorem}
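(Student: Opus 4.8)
The plan is to build the desired definable isomorphism directly on the level of covers, sending a cocycle to the coherent system of its partial sums. Write $A^{(m)}$ for the groups of the filtration, so that $A^{(0)}=A$, each $p^{(m_0,m_1)}$ is the inclusion $A^{(m_1)}\subseteq A^{(m_0)}$, and $\bigcap_m A^{(m)}=0$; let $\iota\colon A\hookrightarrow\hat A$ denote the canonical dense embedding. Recall from Remark~\ref{Remark:limConsecutive} that each $x=(x_{m_0,m_1})\in\mathrm{Z}(\boldsymbol A)$ is determined by its consecutive entries $x_{m,m+1}\in A^{(m)}$, via $x_{m_0,m_1}=\sum_{m_0\le j<m_1}x_{j,j+1}$ (the inclusions being suppressed); in particular $x_{0,m}\in A^{(0)}=A$, and the cocycle identity for the triple $0\le m\le m+1$ gives $x_{0,m}-x_{0,m+1}=-x_{m,m+1}\in A^{(m)}$. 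Using this, I would define $\Theta\colon\mathrm{Z}(\boldsymbol A)\to\hat A$ by $\Theta(x)=\big(-x_{0,m}+A^{(m)}\big)_{m\in\omega}$. The displayed congruence $x_{0,m}-x_{0,m+1}\in A^{(m)}$ is exactly the statement that this system is coherent, so $\Theta(x)\in\hat A$; and since each coordinate of $\Theta$ is a coordinate projection $C^{1}(\boldsymbol A)\to A^{(0)}$ followed by the quotient map $A\to A/A^{(m)}$, the map $\Theta$ is a continuous group homomorphism.

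Next I would check that $\Theta$ carries $\mathrm{B}(\boldsymbol A)$ into $\iota(A)$: if $x=\delta^1(z)$ with $z=(z_m)\in C^{0}(\boldsymbol A)$, then $x_{0,m}=z_0-p^{(0,m)}(z_m)=z_0-z_m$, hence $\Theta(x)=(-z_0+z_m+A^{(m)})_m=(-z_0+A^{(m)})_m=\iota(-z_0)$ with $z_0\in A^{(0)}=A$. Therefore $\Theta$ descends to a group homomorphism $\bar\Theta\colon\underleftarrow{\mathrm{lim}}^{1}\boldsymbol A=\mathrm{Z}(\boldsymbol A)/\mathrm{B}(\boldsymbol A)\to\hat A/A$, which is lifted by the Borel (indeed continuous) map $\Theta$, so it is a definable homomorphism in the sense of Definition~\ref{Def:definable}. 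It remains to see that $\bar\Theta$ is a bijection. For injectivity, suppose $\Theta(x)=\iota(a)$ with $a\in A$; then $x_{0,m}+a\in A^{(m)}$ for every $m$, and setting $w_m:=-(x_{0,m}+a)\in A^{(m)}$ yields $w_m-w_{m+1}=x_{0,m+1}-x_{0,m}=x_{m,m+1}$, so $w\in C^{0}(\boldsymbol A)$ witnesses $x\in\mathrm{B}(\boldsymbol A)$. For surjectivity, fix $\hat a=(\xi_m)_m\in\hat A$; since $A^{(0)}=A$ forces $\xi_0\in A/A^{(0)}=0$, we may pick representatives $a_m\in A$ with $\xi_m=a_m+A^{(m)}$ and $a_0=0$, and coherence gives $a_m-a_{m+1}\in A^{(m)}$. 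By Remark~\ref{Remark:limConsecutive} there is a unique $x\in\mathrm{Z}(\boldsymbol A)$ with $x_{m,m+1}=a_m-a_{m+1}$; then $x_{0,m}=\sum_{j<m}(a_j-a_{j+1})=-a_m$, so $\Theta(x)=(a_m+A^{(m)})_m=\hat a$. Thus $\bar\Theta$ is a group isomorphism admitting the Borel lift $\Theta$, i.e.\ a definable isomorphism; if one also wants an explicit lift of $\bar\Theta^{-1}$, compose $\hat a\mapsto(s_m(\xi_m)-s_{m+1}(\xi_{m+1}))_m$ with the quotient map, where $s_m\colon A/A^{(m)}\to A$ are Borel sections with $s_0\equiv 0$.

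The only genuine content lies in the two matching computations of the previous paragraph — that $\Theta$ maps $\mathrm{B}(\boldsymbol A)$ \emph{onto} a subgroup of $\iota(A)$ and pulls $\iota(A)$ back to $\mathrm{B}(\boldsymbol A)$ — together with keeping the orientation conventions of Section~\ref{Section:towers} consistent; everything concerning definability is automatic here because $A$ and each $A^{(m)}$ carry the discrete topology, so $\Theta$ is in fact continuous. Conceptually, $\bar\Theta$ is nothing but the connecting isomorphism of the six-term $\underleftarrow{\mathrm{lim}}$–$\underleftarrow{\mathrm{lim}}^{1}$ exact sequence attached to the short exact sequence of towers $0\to(A^{(m)})\to(A)_{\mathrm{const}}\to(A/A^{(m)})\to 0$ (where $\underleftarrow{\mathrm{lim}}(A^{(m)})=\bigcap_m A^{(m)}=0$ and $\underleftarrow{\mathrm{lim}}^{1}(A)_{\mathrm{const}}=0$), so the abstract isomorphism is classical; the work above is just to exhibit a Borel lift and thereby upgrade it to a definable isomorphism.
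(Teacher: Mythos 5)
Your proposal is correct and takes essentially the same approach as the paper: up to an overall sign, your $\Theta$ is the paper's $\sigma(a)=\lim_{k\to\infty}a_{0,k}$, written out in inverse-limit coordinates (noting $a_{0,k}+A^{(m)}=a_{0,m}+A^{(m)}$ for $k\geq m$) rather than as a Cauchy-sequence limit in $\hat A$. The verifications that $\Theta$ maps $\mathrm{B}(\boldsymbol A)$ into $\iota(A)$, pulls $\iota(A)$ back into $\mathrm{B}(\boldsymbol A)$, and is surjective are the same computations as in the paper, and the continuity of the lift is observed in both.
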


\begin{proof}
Let $a\in\mathrm{Z}(\boldsymbol{A})$ and notice that the sequence  $k\mapsto a_{0,k}=a_{0,1}+a_{1,2}+\cdots+ a_{k-1,k}$ is Cauchy  in $\hat{A}$ since
\[(a_{0,k}- a_{0,l})\in A^{(k)}+A^{(k+1)}+\cdots+A^{(l-1)}=A^{(l-1)}\subseteq \hat{A}^{(l-1)}, \text{ for all } k< l.\]
So let $\sigma( a) =\mathrm{lim}_{k\rightarrow \infty }a_{0,k}$ be the limit of the above sequence in $\hat{A}$.
Notice that this defines a continuous homomorphism $\sigma: \mathrm{Z}(\boldsymbol{A}) \rightarrow \hat{A}$. Moreover, if $a\in \mathrm{B}(\boldsymbol{A})$, then there exists $b\in
C^{0}( \boldsymbol{A})$ such that $a_{k,k+1}=b_{k}-b_{k+1}$ for
every $k\in \omega$, and hence, $\sigma( a)
=b_{0}\in A$. This shows that $\sigma$ induces  a (trivial) definable homomorphism $\underleftarrow{\mathrm{lim}}^{1}\boldsymbol{A}\rightarrow \hat{A}_{0}/A_{0}$. We now show that such a homomorphism is surjective.

If $b\in \hat{A}^{\left( 0\right) }$, then $b$ is the limit of a Cauchy
sequence $\left( b_{n}\right) _{n\in \omega }$ from $A$. After passing to a
subsequence, we may assume that $b_{n}-b_{n+1}\in A^{\left( n\right)}$ for
every $n\in \omega $. Define $a\in\mathrm{Z}( \boldsymbol{A}) $
by setting $a_{0,1}=b$ and $a_{n,n+1}=b_{n}-b_{n+1}$ for $n\geq 1$. Observe
that%
\begin{equation*}
a_{0,n+1}=a_{0,1}+a_{1,2}+\cdots +a_{n,n+1}=b_{0}+\left( b_{1}-b_{0}\right)
+\left( b_{2}-b_{1}\right) +\cdots +\left( b_{n}-b_{n-1}\right) =b_{n}
\end{equation*}%
for every $n\in \omega $. Hence 
\begin{equation*}
\sigma ( a) =\mathrm{lim}_{n}a_{0,n+1}=\mathrm{lim}_{n}b_{n}=b%
\text{.}
\end{equation*}%
We now show that the definable homomorphism $\underleftarrow{\mathrm{lim}}
^{1}\boldsymbol{A}\rightarrow \hat{A}/A$ is injective. Suppose that $a\in
\mathrm{Z}( \boldsymbol{A}) $ is such that $\sigma( a) \in A$. We claim that $a\in \mathrm{B}( \boldsymbol{A})$. Let   $m\in \omega$ and notice that $\hat{A}^{\left( m\right) }\cap
A=A^{\left( m\right) }$, where  $\hat{A}^{( m)}$ is the clopen subgroup of $\hat{A}$ corresponding to the completion of $A^{(m)}$ with respect to the filtration $\{A^{(l)}\colon m\leq l \}$.
Define
\begin{equation*}
b_{m}:=\mathrm{lim}_{k\rightarrow \infty }a_{m,k}\in \hat{A}^{\left(
m\right) }\cap A=A^{\left( m\right) },
\end{equation*}%
and notice that $a_{m,m+1}=b_{m}-b_{m+1}$, for every $m\in \omega $.
Thus $a\in \mathrm{B}(\boldsymbol{A})$.\end{proof}

\begin{remark}
It is easy to check that if we  set  $\sigma _{
\boldsymbol{A}}:=\sigma$ for the homomorphism
$\sigma( a) =\mathrm{lim}_{k\rightarrow \infty }a_{0,k}$ defined in the above proof, then  $\boldsymbol{A}\mapsto \sigma _{\boldsymbol{A}}$ is a natural transformation between the functors  $\boldsymbol{A}\mapsto \underleftarrow{\mathrm{lim}}^{1}\boldsymbol{A}$ and $\boldsymbol{A}\mapsto \hat{A}/A$ defined on the category of all filtrations of countable groups. 
\end{remark}

We may now invoke Theorem \ref{Theorem:rigid} to get the following rigidity theorem for the $%
\underleftarrow{\mathrm{lim}}^{1}$ functor, restricted to the category of filtrations of
countable groups. Recall that a functor between categories is \emph{fully
faithful }if it is bijective when restricted to hom-sets.

\begin{corollary}
\label{Corollary:lim1-injective}The functor $\boldsymbol{A}\mapsto 
\underleftarrow{\mathrm{lim}}^{1}\boldsymbol{A}$ from the category of
filtrations of countable abelian groups to the category of groups with a Polish cover is fully faithful.
\end{corollary}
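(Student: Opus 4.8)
\emph{Proof proposal.}

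The plan is to reduce immediately to the concrete model of Theorem~\ref{Theorem:lim1-injective}. By that theorem and the Remark following it, the definable isomorphisms $\sigma_{\boldsymbol{A}}\colon\underleftarrow{\mathrm{lim}}^{1}\boldsymbol{A}\to\hat{A}/A$ constitute a natural isomorphism between the functors $\boldsymbol{A}\mapsto\underleftarrow{\mathrm{lim}}^{1}\boldsymbol{A}$ and $\boldsymbol{A}\mapsto\hat{A}/A$ on the category of filtrations of countable abelian groups; since naturally isomorphic functors have the same fullness and faithfulness properties, it suffices to show that $\boldsymbol{A}\mapsto\hat{A}/A$ is fully faithful. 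Throughout, write $\hat{A}^{(m)}$ for the closure of $A^{(m)}$ in $\hat{A}$; as recalled before Theorem~\ref{Theorem:lim1-injective}, these are clopen subgroups of $\hat{A}$ forming a neighbourhood basis of $0$, each is an inessential retract of $\hat{A}/A$, and $\hat{A}^{(m)}\cap A=A^{(m)}$. Also recall that an inv-map of filtrations $(m_{k},f^{(k)})\colon\boldsymbol{A}\to\boldsymbol{B}$ is the same data as a homomorphism $g:=f^{(0)}\colon A^{(m_{0})}\to B$ together with an increasing sequence $(m_{k})$ with $g(A^{(m_{k})})\subseteq B^{(k)}$, and that the associated definable homomorphism $\hat{A}/A\to\hat{B}/B$ restricts on the inessential retract $\hat{A}^{(m_{0})}/A^{(m_{0})}$ to $x+A^{(m_{0})}\mapsto \hat{g}(x)+B$, where $\hat{g}\colon\hat{A}^{(m_{0})}\to\hat{B}$ is the (unique) continuous extension of $g$.

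\emph{Faithfulness.} Suppose inv-maps $(m_{k},f^{(k)})$ and $(m_{k}',f'^{(k)})\colon\boldsymbol{A}\to\boldsymbol{B}$ induce the same definable homomorphism $\hat{A}/A\to\hat{B}/B$. By the compatibility conditions each $f^{(k)}$ is the restriction to $A^{(m_{k})}$ of $g:=f^{(0)}$, and likewise for $g':=f'^{(0)}$, so it is enough to show $g$ and $g'$ agree on some $A^{(m)}$. Put $n=\max\{m_{0},m_{0}'\}$; restricted to the inessential retract $\hat{A}^{(n)}/A^{(n)}$ the two induced homomorphisms are $x+A^{(n)}\mapsto\psi(x)+B$ and $x+A^{(n)}\mapsto\psi'(x)+B$ for the continuous homomorphisms $\psi,\psi'\colon\hat{A}^{(n)}\to\hat{B}$ extending $g|_{A^{(n)}}$ and $g'|_{A^{(n)}}$. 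Equality of the induced maps forces $\psi(x)-\psi'(x)\in B$ for every $x\in\hat{A}^{(n)}$, so $\theta:=\psi-\psi'$ is a continuous homomorphism from $\hat{A}^{(n)}$ into $\hat{B}$ with countable image. Hence $\hat{A}^{(n)}/\ker\theta$ is a countable Polish group, therefore discrete, so $\ker\theta$ is open and contains some $\hat{A}^{(m)}$; then $g=\psi|_{A^{(m)}}=\psi'|_{A^{(m)}}=g'$ on $A^{(m)}=\hat{A}^{(m)}\cap A$, which is exactly the congruence condition, so the two inv-maps represent the same morphism of filtrations.

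\emph{Fullness.} Let $\phi\colon\hat{A}/A\to\hat{B}/B$ be a definable homomorphism. Since $\hat{A}$ is non-Archimedean abelian Polish, $A$ is dense in $\hat{A}$, and $B$ is countable, Theorem~\ref{Theorem:rigid} gives a Borel lift $\hat{\phi}\colon\hat{A}\to\hat{B}$ which is a continuous homomorphism on some inessential retract $H$. Choosing $m_{\ast}$ with $\hat{A}^{(m_{\ast})}\subseteq H$ (possible as $H$ is an open subgroup containing $0$), the restriction $\psi:=\hat{\phi}\!\upharpoonright\!\hat{A}^{(m_{\ast})}$ is a continuous homomorphism with $\psi(A^{(m_{\ast})})\subseteq B$. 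Using continuity of $\psi$, pick an increasing sequence $(m_{k})$ with $m_{0}=m_{\ast}$ and $\psi(\hat{A}^{(m_{k})})\subseteq\hat{B}^{(k)}$; then $f^{(k)}:=\psi|_{A^{(m_{k})}}$ takes values in $\hat{B}^{(k)}\cap B=B^{(k)}$, so $(m_{k},f^{(k)})$ is an inv-map $\boldsymbol{A}\to\boldsymbol{B}$. Its image under $\boldsymbol{A}\mapsto\hat{A}/A$ restricts on $\hat{A}^{(m_{\ast})}/A^{(m_{\ast})}$ to the map induced by the continuous extension of $\psi|_{A^{(m_{\ast})}}$, which is $\psi$ itself; since $\hat{A}^{(m_{\ast})}$ meets every $A$-coset and $\psi=\hat{\phi}\!\upharpoonright\!\hat{A}^{(m_{\ast})}$, this image coincides with $\phi$ on all of $\hat{A}/A$.

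I expect the fullness step to carry the real weight, as it is the only place that genuinely uses the rigidity theorem; the delicate points there are the passage from an arbitrary trivializing retract $H$ to a standard one $\hat{A}^{(m_{\ast})}$, the extraction of the bonding data $(m_{k},f^{(k)})$ from continuity, and the bookkeeping with dense subgroups and inessential retracts needed to confirm that the resulting inv-map re-induces $\phi$. Faithfulness, by contrast, never leaves the topological category and reduces to the elementary fact that a continuous homomorphism from $\hat{A}^{(n)}$ into $\hat{B}$ with countable image must kill some $\hat{A}^{(m)}$.
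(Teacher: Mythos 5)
Your proof is correct and follows essentially the same route as the paper: reduce via Theorem~\ref{Theorem:lim1-injective} to showing that $\boldsymbol{A}\mapsto\hat{A}/A$ is fully faithful, prove faithfulness by noting that a continuous homomorphism from some $\hat{A}^{(n)}$ into $\hat{B}$ with image in the countable group $B$ must vanish on a smaller clopen $\hat{A}^{(m)}$, and prove fullness from Theorem~\ref{Theorem:rigid} together with continuity to extract the bonding sequence $(m_k)$. The only cosmetic differences are that the paper argues faithfulness by reducing to the zero morphism (equivalent in this additive setting to your direct comparison of two inv-maps) and justifies the vanishing step via Baire category plus Pettis' lemma rather than your appeal to the discreteness of countable Polish groups; these are the same underlying observation.
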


\begin{proof}
In view of Theorem \ref{Theorem:lim1-injective}, it suffices to show that the
additive functor $\boldsymbol{A}\mapsto \hat{A}/A$ from the category of
filtrations of countable abelian groups to the category of groups with a Polish cover is fully faithful.

We begin by showing that such a functor is faithful. Suppose that $
\boldsymbol{A}$ and $\boldsymbol{B}$ are filtrations of countable groups $A$
and $B$, respectively. Let $[m_{k},f^{\left( k\right) }]$ be a morphism from $\boldsymbol{A}$ to $\boldsymbol{B}$ and let $f\colon \hat{A}/A\to \hat{B}/B$ be the corresponding definable homomorphism. As in the paragraph preceding Theorem \ref{Theorem:lim1-injective}, $f$ is a trivial definable homomorphism and since it is induced by
the extension $\hat{f}^{\left( 0\right) }:\hat{A}^{\left( m_{0}\right)
}\rightarrow \hat{B}$, where $\hat{A}^{\left( m_{0}\right) }\subseteq \hat{A}
$ is the completion of $A^{\left( m_{0}\right) }$ with respect to $\left\{
A^{\left( k\right) }:k\geq m_{0}\right\} $. In particular, $f( x+A) =\hat{f}^{\left( 0\right) }( x) +A$ for $x\in 
\hat{A}^{( m_{0}) }$.

Suppose now that $f$ is the zero homomorphism $f(\hat{A})=0$ from $\hat{A}%
/A$ to $\hat{B}/B$. Then
$\hat{f}^{( 0) }( \hat{%
A}^{\left( m_{0}\right) }) \subseteq B$. As $B$ is countable, by the
Baire Category Theorem, there is $b_{0}\in B$ such that $G(
b_{0}) =\{x\in \hat{A}^{\left( m_{0}\right) }:\hat{f}^{\left( 0\right)
}\left( x\right) =b_{0}\}$ is nonmeager. By Pettis' Lemma, $G( b_{0})
-G( b_{0}) $ contains an open neighborhood of $0$.  It follows that 
\begin{equation*}
\hat{A}^{\left( n_{0}\right) }\subseteq G( b_{0}) -G(
b_{0}) \subseteq \{x\in \hat{A}^{\left( m_{0}\right) }:\hat{f}^{\left(
0\right) }( x) =0\}\text{,}
\end{equation*}
for some $n_{0}\geq m_{0}$. It follows that $f^{\left( 0\right) }|_{A^{\left( n_{0}\right)
}}=0$, and therefore $[m_{k},f^{\left( k\right) }]$ is the zero morphism
from $\boldsymbol{A}$ to $\boldsymbol{B}$.

We now show that the functor is full. Suppose as above that $\boldsymbol{A}$
and $\boldsymbol{B}$ are filtrations of countable groups $A$ and $B$,
respectively. Let $f$ be a definable homomorphism from $\hat{A}/A$ to $\hat{B}/B$. We claim that there exists a morphism $[m_{k},f^{\left( k\right)
}]$ from $\boldsymbol{A}$ to $\boldsymbol{B}$ such that $f$ is the
definable homomorphism induced by $[m_{k},f^{\left( k\right) }]$. By Theorem \ref{Theorem:rigid}, there exist $m_{0}\in \omega $ and a continuous homomorphism 
$g:\hat{A}^{\left( m_{0}\right) }\rightarrow \hat{B}$ such that $f( x+A) =g( x) +A$ for every $x\in \hat{A}^{\left(
m_{0}\right) }$. Since $g$ is continuous, there exists an increasing
sequence $\left( m_{k}\right) $ in $\omega $ such that $g$ maps $\hat{A}%
^{\left( m_{k}\right) }$ to $\hat{B}^{\left( k\right) }$ for every $k\in
\omega $. One can then define $f^{\left( k\right) }:A^{\left( m_{k}\right)
}\rightarrow B^{\left( k\right) }$ to be the restriction of $g$ to $%
A^{\left( m_{k}\right) }$ for $k\in \omega $. Then $[m_{k},f^{\left(
k\right) }]$ is a morphism from $\boldsymbol{A}$ to $\boldsymbol{B}$ that
induces the Borel homomorphism $f$. This concludes the proof that the
functor is full.
\end{proof}

\begin{remark}
Notice that the conclusion of Corollary \ref
{Corollary:lim1-injective} does not hold for monomorphic towers of 
the form $A_0\geq A_1\geq A_2\geq \cdots$, when  $\bigcap_m A^{(m)}\neq 0$. Similarly, it does not generally hold for towers that are not monomorphic, as the $%
\underleftarrow{\mathrm{lim}}^{1}$ of any epimorphic tower vanishes.
\end{remark}

\begin{corollary}
\label{Corollary:ProfiniteDefinableIsomoTowers} Let $\boldsymbol{A}$ and $\boldsymbol{B}$ be filtrations of the countable abelian groups $A$ and $B$. Let also $\hat{A}$ and $\hat{B}$ be the associated completions. Then, the following are equivalent:
\begin{enumerate}
\item $\boldsymbol{A}$ and $\boldsymbol{B}$ are isomorphic objects in the category of filtrations;
\item $\hat{A}/A$ and $\hat{B}/B$ are definably isomorphic;
\item $\underleftarrow{\mathrm{lim}}^{1}\boldsymbol{A}$ is definably isomorphic to $\underleftarrow{\mathrm{lim}}^{1}\boldsymbol{B}$.
\end{enumerate} 
\end{corollary}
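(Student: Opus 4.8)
The plan is to derive all three implications formally from two facts already in hand: the definable isomorphism $\underleftarrow{\mathrm{lim}}^{1}\boldsymbol{A}\cong\hat{A}/A$ of Theorem \ref{Theorem:lim1-injective}, natural in $\boldsymbol{A}$, and the full faithfulness, established in Corollary \ref{Corollary:lim1-injective}, of the functor $\boldsymbol{A}\mapsto\hat{A}/A$ from the category of filtrations of countable abelian groups to the category of groups with a Polish cover. I would also invoke the Remark following Definition \ref{Def:definable}, according to which ``definably isomorphic'' is synonymous with ``isomorphic in the category of groups with a Polish cover'', so that the inverse of a definable isomorphism is again a definable homomorphism; this is what lets one apply fullness to both $g$ and $g^{-1}$ below.

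First I would dispatch $(2)\Leftrightarrow(3)$, which is immediate: Theorem \ref{Theorem:lim1-injective} yields definable isomorphisms $\underleftarrow{\mathrm{lim}}^{1}\boldsymbol{A}\cong\hat{A}/A$ and $\underleftarrow{\mathrm{lim}}^{1}\boldsymbol{B}\cong\hat{B}/B$, so composing these with a putative definable isomorphism on either side transports it to the other. Next, $(1)\Rightarrow(2)$ is nothing more than the observation that $\boldsymbol{A}\mapsto\hat{A}/A$ is a functor, and functors carry isomorphisms to isomorphisms: an isomorphism $\boldsymbol{A}\to\boldsymbol{B}$ in the category of filtrations is sent to a definable isomorphism $\hat{A}/A\to\hat{B}/B$.

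The one implication that genuinely uses Corollary \ref{Corollary:lim1-injective} is $(2)\Rightarrow(1)$, and here the key step is the standard categorical principle that a fully faithful functor reflects isomorphisms. Spelled out: given a definable isomorphism $g\colon\hat{A}/A\to\hat{B}/B$, fullness produces morphisms $h\colon\boldsymbol{A}\to\boldsymbol{B}$ and $h'\colon\boldsymbol{B}\to\boldsymbol{A}$ of filtrations whose images under the functor are $g$ and $g^{-1}$ respectively; then $h'\circ h$ and $\mathrm{id}_{\boldsymbol{A}}$ both map to $g^{-1}\circ g=\mathrm{id}_{\hat{A}/A}$, so faithfulness forces $h'\circ h=\mathrm{id}_{\boldsymbol{A}}$, and symmetrically $h\circ h'=\mathrm{id}_{\boldsymbol{B}}$, exhibiting $h$ as an isomorphism in the category of filtrations. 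No genuine obstacle remains at this stage: the substantive work — the full faithfulness of Corollary \ref{Corollary:lim1-injective}, which ultimately rests on the Ulam stability result Theorem \ref{Theorem:rigid} — has already been carried out, and the present statement is a purely formal consequence of it together with Theorem \ref{Theorem:lim1-injective}.
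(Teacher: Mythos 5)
Your argument is correct and is precisely what the paper has in mind: the paper states the corollary as "an immediate corollary of Corollary \ref{Corollary:lim1-injective} and Theorem \ref{Theorem:lim1-injective}," and your write-up simply spells out the standard mechanics (functors preserve isomorphisms, fully faithful functors reflect them, and the Remark after Definition \ref{Def:definable} ensures the inverse of a definable isomorphism is again definable so that fullness applies to both $g$ and $g^{-1}$). Nothing to add.
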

\begin{proof}
This is an immediate corollary of Corollary \ref{Corollary:lim1-injective} and Theorem \ref{Theorem:lim1-injective}.
\end{proof}

We should observe here that the above three equivalent conditions do not necessarily imply that the groups $\hat{A}$ and $\hat{B}$  are isomorphic; e.g. see Example \ref{Example:DefinablyIsomoNotIsomo}.

\section{Locally profinite completions of $\mathbb{Z}^{d}$ and $\mathbb{Q}^{d}$}
\label{Section:locally-profinite}

Consider the assignment $\hat{A}\mapsto \hat{A}/A$ which maps the completion $\hat{A}$ of each abelian group $A$ with respect to some filtration $\boldsymbol{A}$ to the corresponding quotient group $\hat{A}/A$.  When  $\hat{A}/A$ is viewed as an \emph{abstract group up to isomorphism} then $\hat{A}/A$ remembers little of the group structure  $\hat{A}$. 
For example, if $\hat{\mathbb{Z}}^d$ denotes the profinite completion of $\mathbb{Z}^d$ with respect to the filtration $((m!)\cdot\mathbb{Z}^d)_{m\in\omega}$, then by \cite{Nikolov2007}, the group $\hat{\mathbb{Z}}^d$ has the same finite quotients as $\mathbb{Z}^d$, and therefore $\hat{\mathbb{Z}}^{d'}$ and $\hat{\mathbb{Z}}^d$ are non-isomorphic when $d\neq d'$. However,  $\hat{\mathbb{Z}}^d/\mathbb{Z}^d$ and $\hat{\mathbb{Z}}^{d'}/\mathbb{Z}^{d'}$ are isomorphic as abstract groups since they  both
are $\mathbb{Q}$-vector spaces of the same dimension; see Theorem \ref{Corollary:iso-classification-Ext} below. 
 By Corollary \ref{Corollary:ProfiniteDefinableIsomoTowers}, the assignment $\hat{A}\mapsto \hat{A}/A$  provides a much stronger invariant for classifying completions (or filtrations) of countable abelian groups, if  $\hat{A}/A$ is instead viewed as  a \emph{group with a Polish cover up to definable isomorphism}. 

In this section we refine our analysis of the information captured by a definable isomorphism in the context of profinite completions of $\mathbb{Z}^d$.
As a corollary of Theorem \ref{Theorem:MainProfinite} below,  we show that the existence of a definable isomorphism between quotients $\hat{\mathbb{Z}}^d_{\boldsymbol{\Lambda}}/\mathbb{Z}^d$ of
profinite completions $\hat{\mathbb{Z}}^d_{\boldsymbol{\Lambda}}$ of $\mathbb{Z}^d$, where $\boldsymbol{\Lambda}$ is a filtration of $\mathbb{Z}^d$ consisting of rank-$d$ subgroups,  is  equivalent to the existence of an isomorphism between certain finite rank torsion-free abelian groups which are functorially associated to these profinite completions. This theorem very concretely implies that, in classifying $\hat{\mathbb{Z}}^d_{\boldsymbol{\Lambda}}$ up to $\mathbb{Z}^d$-preserving isomorphism,  $\hat{\mathbb{Z}}^d_{\boldsymbol{\Lambda}}/\mathbb{Z}^d$ considered up to \emph{definable} isomorphism is a strong invariant, particularly in comparison to these same groups $\hat{\mathbb{Z}}^d_{\boldsymbol{\Lambda}}/\mathbb{Z}^d$
 considered up to abstract isomorphism; see Corollary \ref{Corollary:nonisomorphic-Ext} below. 
 
 In general, despite the fact that $\hat{\mathbb{Z}}^d_{\boldsymbol{\Lambda}}/\mathbb{Z}^d$ is Ulam stable for each filtration $\boldsymbol{\Lambda}$ by Corollary \ref{Corollary:rigid}, one can often find  definable automorphisms of $\hat{\mathbb{Z}}^d_{\boldsymbol{\Lambda}}/\mathbb{Z}^d$ which do not lift to topological group automorphisms of $\hat{\mathbb{Z}}^d_{\boldsymbol{\Lambda}}$. In
the second part of this section we show that
 $\hat{\mathbb{Z}}^d_{\boldsymbol{\Lambda}}/\hat{\mathbb{Z}}^d$ is definably isomorphic to a group with a Polish cover $\hat{\mathbb{Q}}^d_{\Lambda}/\Lambda$, where  $\hat{\mathbb{Q}}^d_{\Lambda}$ is a certain \emph{locally profinite completion} of $\mathbb{Q}^d$ and $\Lambda$ is a countable subgroup of $\mathbb{Q}^d$ canonically associated to $\boldsymbol{\Lambda}$. We then show that when $\boldsymbol{\Lambda}$ is ``symmetric" enough then every definable automorphism of $\hat{\mathbb{Z}}^d_{\boldsymbol{\Lambda}}/\mathbb{Z}^d$, when transferred  to $\hat{\mathbb{Q}}^d_{\Lambda}/\Lambda$, lifts to a continuous group automorphism of $\hat{\mathbb{Q}}^d_{\Lambda}$.
This will play a crucial role in Section \ref{S:ActionsCocycleRig}.

\subsection{Profinite completions of $\mathbb{Z}^d$}

Recall that the \emph{rank} of an abelian group is the cardinality of a maximal linearly independent (over $\mathbb{Z}$) subset. 
We say that the group $A$ \emph{has no free summand} if for every decomposition $A=A_0\oplus A_1$, if $A_0$ is free then   $A_0=0$.
By a \emph{rank $d$ filtration} we mean any filtration $\boldsymbol{\Lambda}=(\Lambda^{(m)})$ of  $\mathbb{Z}^d$ with the additional property that each $\Lambda^{(m)}$ is a rank $d$ subgroup of $\mathbb{Z}^d$. We denote by $\mathrm{Fil}(\mathbb{Z}^{*})$ the category of finite-rank filtrations, with the tower maps $[m_k,f^{(k)}]$ as morphisms. Notice that if $A\leq\mathbb{Z}^d$, then
\[ A\text{ is of finite index in }\mathbb{Z}^d \iff A \text{ is rank }d \iff  A\text{ is isomorphic to }\mathbb{Z}^d. \]
We can therefore associate to each object $\boldsymbol{\Lambda}$ from $\mathrm{Fil}(\mathbb{Z}^{*})$  the \emph{profinite completion} $\hat{\mathbb{Z}}^d_{\boldsymbol{\Lambda}}$ of $\mathbb{Z}^d$ given by
\[\hat{\mathbb{Z}}^d_{\boldsymbol{\Lambda}}:=  \underleftarrow{\mathrm{lim}} \;  \big(\mathbb{Z}^d/\Lambda^{(m)}\big).\]
As in Section \ref{Section:towers}, notice that $\hat{\mathbb{Z}}^d_{\boldsymbol{\Lambda}}$ contains a canonical copy of $\mathbb{Z}^d$ as a dense subgroup. Conversely, every profinite completion of $\mathbb{Z}^d$  is of the above form for appropriately chosen $\boldsymbol{\Lambda}$.

By a \emph{rank $d$ cofiltration} we mean any increasing sequence  $\boldsymbol{\Lambda}_{\mathrm{co}}=(\Lambda_{(m)})$ of rank $d$ subgroups of $\mathbb{Q}^d$ with $\Lambda_{(0)}=\mathbb{Z}^d$ and so that $\Lambda:=\bigcup_m \Lambda_{(m)}$ has no free summand. A \emph{cotower map}  $[m_k, f_{(k)}]\colon (A_{(m)})\to (B_{(m)})$ between cofiltrations, also known as a \emph{morphism of inductive sequences}, is the congruence class of a sequence of group homomorphisms $f_{(k)}\colon A_{(k)}\to B_{(m_k)}$ with $f_{(k+1)}\upharpoonright A_{(k)}=f_{(k)}$, where $(m_k, f_{(k)})$ is congruent to $(m'_k, f'_{(k)})$ if for all $k\in \omega$ we have that
 $f_{(k)}=f'_{(k)}$ as maps from $A_{(k)}$ to $B_{(\max\{m_k,m'_k\} )}$.  Composition of cotower maps is defined in analogy with the composition of tower maps given in Section \ref{Section:towers}. We denote by $\mathrm{coFil}(\mathbb{Z}^{*})$ the category of finite-rank cofiltrations, with the cotower maps $[m_k,f_{(k)}]$ as morphisms.

We lastly consider the category $\mathrm{Groups}_{+}(\mathbb{Z}^{*},\mathbb{Q}^{*})$ whose objects are groups having no free summands satisfying $\mathbb{Z}^d\leq \Lambda\leq \mathbb{Q}^d$ for some $d\in\omega$ and whose morphisms are simply the homomorphisms between any two such groups. We are going to show that the following three  categories are equivalent: 
\[\mathrm{Fil}(\mathbb{Z}^{*}),\quad  \mathrm{coFil}(\mathbb{Z}^{*}), \quad \mathrm{Groups}_{+}(\mathbb{Z}^{*},\mathbb{Q}^{*}).\]

\begin{lemma}\label{Lemma:limFunctorEquivalence}
The functor $\underrightarrow{\mathrm{lim}}\colon \mathrm{coFil}(\mathbb{Z}^{*}) \to \mathrm{Groups}_{+}(\mathbb{Z}^{*},\mathbb{Q}^{*})$ which maps  $[m_k, f_{(k)}]\colon (A_{(m)})\to (B_{(m)})$  to the homomorphism $\bigcup_k f_{(k)}\colon\bigcup_k A_{(k)}\to \bigcup_k B_{(k)}$, is  fully faithful and essentially surjective.
\end{lemma}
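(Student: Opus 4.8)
The plan is to check the three properties in turn; that $\underrightarrow{\mathrm{lim}}$ is a functor at all is a routine unwinding of the definitions, entirely parallel to the $\underleftarrow{\mathrm{lim}}$ case for towers: congruent cotower maps visibly have the same union, unions respect identities and composition, and $\bigcup_{k}A_{(k)}$ is a subgroup of $\mathbb{Q}^{d}$ containing $\mathbb{Z}^{d}$ with no free summand, hence an object of $\mathrm{Groups}_{+}(\mathbb{Z}^{*},\mathbb{Q}^{*})$. The one genuinely non-formal ingredient I will use is that each term $\Lambda_{(m)}$ of a cofiltration is \emph{finitely generated} (hence, being torsion-free of rank $d$, free of rank $d$); this is the only point where the argument is not pure bookkeeping, and it is precisely what powers fullness, so I regard it as the main obstacle.

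\emph{Essential surjectivity.} Given an object $\Lambda$ of $\mathrm{Groups}_{+}(\mathbb{Z}^{*},\mathbb{Q}^{*})$ with $\mathbb{Z}^{d}\leq\Lambda\leq\mathbb{Q}^{d}$ and no free summand, I fix an enumeration $\Lambda=\{a_{0},a_{1},\dots\}$ (possible since $\Lambda$ is countable) and put $\Lambda_{(m)}:=\mathbb{Z}^{d}+\sum_{i<m}\mathbb{Z}a_{i}$. Each $\Lambda_{(m)}$ is finitely generated with $\mathbb{Z}^{d}\leq\Lambda_{(m)}\leq\mathbb{Q}^{d}$, hence of rank $d$; the sequence is increasing, $\Lambda_{(0)}=\mathbb{Z}^{d}$, and $\bigcup_{m}\Lambda_{(m)}=\Lambda$ has no free summand. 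So $\boldsymbol{\Lambda}_{\mathrm{co}}:=(\Lambda_{(m)})$ is a rank $d$ cofiltration with $\underrightarrow{\mathrm{lim}}\,\boldsymbol{\Lambda}_{\mathrm{co}}=\Lambda$.

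\emph{Fullness.} Let $\boldsymbol{A}_{\mathrm{co}}=(A_{(m)})$ and $\boldsymbol{B}_{\mathrm{co}}=(B_{(m)})$ be cofiltrations with unions $A$ and $B$, and let $\phi\colon A\to B$ be an arbitrary homomorphism. For each $k$, $A_{(k)}$ is finitely generated, so $\phi(A_{(k)})$ is generated by finitely many elements of $B=\bigcup_{m}B_{(m)}$ and therefore lies in some $B_{(m)}$; choosing such an $m$ and enlarging it if necessary to $\max\{m_{0},\dots,m_{k}\}$ — harmless, since the $B_{(m)}$ increase — produces a non-decreasing sequence $(m_{k})$ with $\phi(A_{(k)})\subseteq B_{(m_{k})}$ for every $k$. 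Then $f_{(k)}:=\phi\!\upharpoonright\! A_{(k)}\colon A_{(k)}\to B_{(m_{k})}$ satisfies $f_{(k+1)}\!\upharpoonright\! A_{(k)}=f_{(k)}$, so $(m_{k},f_{(k)})$ represents a cotower map $\boldsymbol{A}_{\mathrm{co}}\to\boldsymbol{B}_{\mathrm{co}}$ with $\underrightarrow{\mathrm{lim}}\,[m_{k},f_{(k)}]=\bigcup_{k}f_{(k)}=\phi$.

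\emph{Faithfulness.} Suppose $[m_{k},f_{(k)}]$ and $[m_{k}',f_{(k)}']$ are cotower maps $\boldsymbol{A}_{\mathrm{co}}\to\boldsymbol{B}_{\mathrm{co}}$ with $\bigcup_{k}f_{(k)}=\bigcup_{k}f_{(k)}'$. Fix $k$ and $a\in A_{(k)}$: then $f_{(k)}(a)\in B_{(m_{k})}$ and $f_{(k)}'(a)\in B_{(m_{k}')}$ have equal image in $B$, and since the inclusions $B_{(m_{k})}\hookrightarrow B$ and $B_{(m_{k}')}\hookrightarrow B$ are injective and factor through $B_{(\max\{m_{k},m_{k}'\})}$, we get $f_{(k)}(a)=f_{(k)}'(a)$ already in $B_{(\max\{m_{k},m_{k}'\})}$. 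As $a$ and $k$ were arbitrary, $f_{(k)}=f_{(k)}'$ as maps $A_{(k)}\to B_{(\max\{m_{k},m_{k}'\})}$ for all $k$, i.e.\ $(m_{k},f_{(k)})$ and $(m_{k}',f_{(k)}')$ are congruent, whence $[m_{k},f_{(k)}]=[m_{k}',f_{(k)}']$.
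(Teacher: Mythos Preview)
Your proof is correct and follows essentially the same approach as the paper's: both use finite generation of each $A_{(k)}$ to obtain fullness, and both construct a cofiltration from an enumeration of $\Lambda$ to obtain essential surjectivity. The only cosmetic difference is in faithfulness, where the paper exploits additivity (checking only that $\bigcup_k f_{(k)}=0$ forces each $f_{(k)}=0$), while you verify congruence directly; your version is slightly more explicit but not genuinely different.
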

\begin{proof}
The functor $\underrightarrow{\mathrm{lim}}$  is full since for every homomorphism $f\colon \bigcup_k A_{(k)}\to \bigcup_k B_{(k)}$ and every $k\in\omega$ the group $A_{(k)}$ is finitely-generated, and hence $f(A_{(k)})$ is contained in some $B_{(m_k)}$, for large enough $m_k\in\omega$. It is also faithful, since $\bigcup_k f_{(k)}$ being the constant $0$ homomorphism implies the same for each $f_{(k)}$.  We lastly show  that it is also  essentially surjective. 
For any group $\Lambda$ in  $\mathrm{Groups}_{+}(\mathbb{Z}^{*},\mathbb{Q}^{*})$, with  $\mathbb{Z}^d \leq\Lambda\leq \mathbb{Q}^d$, let $\{l_1,l_2,\ldots\}$ be an enumeration of $\Lambda$ and let $\Lambda_{(m)}$ be the smallest subgroup of  $\Lambda$ which contains $\mathbb{Z}^{d}$, as well as $l_1,\ldots,l_m$. Clearly $(\Lambda_{(m)})$ is a rank $d$ cofiltration, with $\underrightarrow{\mathrm{lim}}(\Lambda_{(m)})=\Lambda,$ and therefore  $\underrightarrow{\mathrm{lim}}$ is surjective on objects.
\end{proof}

\begin{remark}\label{Remark:ChoiceOfInverse}
By the previous lemma, the functor $\underrightarrow{\mathrm{lim}}\colon \mathrm{coFil}(\mathbb{Z}^{*}) \to \mathrm{Groups}_{+}(\mathbb{Z}^{*},\mathbb{Q}^{*})$ is an equivalence of categories. As a consequence $\underrightarrow{\mathrm{lim}}$ admits a (non-unique) inverse up to isomorphism. We define an explicit  such inverse $(\underrightarrow{\mathrm{lim}})^{-1}\colon \mathrm{Groups}_{+}(\mathbb{Z}^{*},\mathbb{Q}^{*})\to \mathrm{coFil}(\mathbb{Z}^{*})$ here. Fix an enumeration of $\mathbb{Q}^{<\omega}:=\bigcup_{d\in\omega}\mathbb{Q}^d$. For each object $\Lambda$ of $\mathrm{Groups}_{+}(\mathbb{Z}^{*},\mathbb{Q}^{*})$ we let $(\underrightarrow{\mathrm{lim}})^{-1}(\Lambda)$ be the  cofiltration built as in the proof of Lemma \ref{Lemma:limFunctorEquivalence} but with the unique choice of enumeration $\{l_1,l_2,\cdots\}$ of $\Lambda$ which agrees with the global enumeration of $\mathbb{Q}^{<\omega}$. Having chosen the assignment $\Lambda\mapsto(\underrightarrow{\mathrm{lim}})^{-1}(\Lambda)$, there is a canonical way to extend this definition to the desired  functor 
\[(\underrightarrow{\mathrm{lim}})^{-1}\colon \mathrm{Groups}_{+}(\mathbb{Z}^{*},\mathbb{Q}^{*})\to \mathrm{coFil}(\mathbb{Z}^{*}).\]
 \end{remark}

Next we define a contravariant functor $\mathrm{Adj}\colon\mathrm{coFil}(\mathbb{Z}^{*})\to  \mathrm{Fil}(\mathbb{Z}^{*})$. We will need a few definitions.

\begin{definition}
\label{Definition:dual}Consider the pairings $\left\langle \,\cdot
\,,\,\cdot \,\right\rangle :\mathbb{R}^{d}\times \mathbb{R}^{d}\rightarrow 
\mathbb{R}$ and $\left\langle \,\cdot \,,\,\cdot \,\right\rangle _{_{\mathbb{%
R}^{d}}}:\mathbb{R}^{d}\times \mathbb{R}^{d}\rightarrow \mathbb{R}/\mathbb{Z}
$ on $\mathbb{R}^{d}$ defined by%
\begin{align*}
\left\langle \left( a_{1},\ldots ,a_{d}\right) ,\left( b_{1},\ldots
,b_{d}\right) \right\rangle =& \;a_{1}b_{1}+\cdots +a_{d}b_{d} \\
\left\langle \left( a_{1},\ldots ,a_{d}\right) ,\left( b_{1},\ldots
,b_{d}\right) \right\rangle _{_{\mathbb{R}^{d}}}=& \;a_{1}b_{1}+\cdots
+a_{d}b_{d}+\mathbb{Z}
\end{align*}%
for $a,b\in \mathbb{R}^{d}$. For any subgroup $A$ of $\mathbb{R}^{d}$, the 
\emph{annihilator} of $A$ is  the following closed
subgroup  of $\mathbb{R}^{d}$:
\begin{equation*}
A_{\bot }=\left\{ x\in \mathbb{R}^{d}:\forall y\in A\,\left\langle
x,y\right\rangle _{\mathbb{R}^{d}}=0\right\}.
\end{equation*}
\end{definition}
 Observe that the assignment $A\mapsto A_{\bot }$
defines an order-reversing permutation of the set of finitely-generated
subgroups of $\mathbb{Q}^{d}$ with fixed point $\mathbb{Z}_{\bot }^{d}=%
\mathbb{Z}^{d}$. Moreover, if $f\colon \Lambda'\to \Lambda$ is a homomorphism between two groups  $\Lambda'\leq \mathbb{Q}^{d'}$, $\Lambda\leq \mathbb{Q}^d$ and $\Lambda$ is of rank $d$, then $f$ extends uniquely to a $\mathbb{Q}$-linear map from $\mathbb{Q}^{d'}$ to $\mathbb{Q}^{d}$. We can therefore identify $f$ with an element $F$ of $\textrm{M}_{d\times
d^{\prime }}\left( \mathbb{Q}\right) $, i.e., with a $d\times d^{\prime }$
matrix with entries in $\mathbb{Q}$. Let $F^{\mathrm{T}}$  be the transpose matrix. As a linear transformation, $F^{\mathrm{T}}$ maps $\mathbb{Q}^{d}$ to $\mathbb{Q}^{d'}$ so that $F^{\mathrm{T}}(\Lambda_{\bot})\subseteq \Lambda'_{\bot}$. Indeed, if $x\in \Lambda_{\bot}$ and $y\in \Lambda'$, then
\begin{equation*}
\left\langle F^{\mathrm{T}}(x) ,y\right\rangle
=\left\langle x,F(y) \right\rangle =0.
\end{equation*}
We define $f^{\mathrm{T}}\colon \Lambda_{\bot}\to \Lambda'_{\bot}$ to be the restriction of the linear transformation $F^{\mathrm{T}}$ to $\Lambda_{\bot}$. Since  $(F \cdot F')^{\mathrm{T}}=(F')^{\mathrm{T}}\cdot (F)^{\mathrm{T}}$ for every two composable matrices, the assignment $f\mapsto f^{\mathrm{T}}$ is a functor from the category of all homomorphisms  $f\colon \Lambda'\to \Lambda$, where  $\Lambda\leq \mathbb{Q}^d$ is of rank $d$ and $\Lambda'\leq \mathbb{Q}^{d'}$ is of rank $d'$, to its opposite category.

\begin{lemma}\label{Lemma:dual-perp}

\begin{enumerate}
\item[]
\item If $A$ is a rank $d$ finitely-generated subgroup of $\mathbb{Q}^d$, then $A_{\bot}$ is  naturally isomorphic to $\mathrm{Hom}\left( A,\mathbb{Z}\right)$.
\item If $A$ is a rank $d$ finitely-generated subgroup of $\mathbb{Q}^d$, then 
$(A_{\bot })_{\bot}=A$.
\item  If $(A_{(m)})_{m\in\omega}$ is a rank $d$ cofiltration then $(A^{(m)})_{m\in\omega}$, where $A^{(m)}:=(A_{(m)})_{\bot}$,
 is a rank $d$ filtration.
\item  If $(A^{(m)})_{m\in\omega}$ is a rank $d$ filtration then $(A_{(m)})_{m\in\omega}$, where $A_{(m)}:=(A^{(m)})_{\bot}$,
 is a rank $d$ cofiltration.
\end{enumerate}
\end{lemma}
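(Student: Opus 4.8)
The plan is to prove each of the four statements in Lemma~\ref{Lemma:dual-perp} more or less in the order given, using the earlier ones as we go, since they build naturally on one another.

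\textbf{Part (1).} For a rank $d$ finitely-generated subgroup $A\leq\mathbb{Q}^d$, I would first observe that $A\cong\mathbb{Z}^d$ (as noted in the text, finite index $\iff$ rank $d$ $\iff$ free of rank $d$, at least after clearing denominators to sit inside $\mathbb{Z}^d$ — more precisely $A$ is free abelian of rank $d$ since it is a rank $d$ subgroup of the torsion-free divisible-hull $\mathbb{Q}^d$ and finitely generated). The pairing $\langle\,\cdot\,,\,\cdot\,\rangle$ on $\mathbb{Q}^d$ is nondegenerate, so any $\mathbb{Z}$-linear map $\phi\colon A\to\mathbb{Z}$ extends uniquely to a $\mathbb{Q}$-linear functional $\mathbb{Q}^d\to\mathbb{Q}$, hence is of the form $y\mapsto\langle x_\phi,y\rangle$ for a unique $x_\phi\in\mathbb{Q}^d$. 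The condition that $\phi$ take integer values on $A$ is exactly $\langle x_\phi,y\rangle\in\mathbb{Z}$ for all $y\in A$, i.e. $\langle x_\phi,y\rangle_{\mathbb{R}^d}=0$ for all $y\in A$, i.e. $x_\phi\in A_\bot$. Conversely every $x\in A_\bot$ lies in $\mathbb{Q}^d$ (this needs a small argument: $A$ contains a finite-index copy of $\mathbb{Z}^d$, so $\langle x,e_i\rangle\in\frac1N\mathbb{Z}$ for suitable $N$, forcing $x\in\mathbb{Q}^d$) and defines such a $\phi$. So $x\mapsto(y\mapsto\langle x,y\rangle)$ is a group isomorphism $A_\bot\to\mathrm{Hom}(A,\mathbb{Z})$, and it is visibly natural in $A$ (compatible with the transpose construction $f\mapsto f^{\mathrm{T}}$ described just above the lemma). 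This also shows $A_\bot$ is itself a rank $d$ finitely-generated subgroup of $\mathbb{Q}^d$, since $\mathrm{Hom}(\mathbb{Z}^d,\mathbb{Z})\cong\mathbb{Z}^d$.

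\textbf{Part (2).} The inclusion $A\subseteq(A_\bot)_\bot$ is immediate from symmetry of the pairing. For the reverse, I would argue by a dimension/index count: by part (1) applied twice, both $A$ and $(A_\bot)_\bot$ are rank $d$ finitely-generated subgroups of $\mathbb{Q}^d$, and $A\subseteq(A_\bot)_\bot$. It remains to rule out proper containment. One clean way: reduce to the case $A\supseteq\mathbb{Z}^d$ (multiply by a common denominator $N$; note $(NA)_\bot=\frac1N A_\bot$ so the double-annihilator commutes with scaling), in which case $A/\mathbb{Z}^d$ is a finite subgroup of $\mathbb{Q}^d/\mathbb{Z}^d=(\mathbb{R}^d/\mathbb{Z}^d)_{\mathrm{tors}}$ and Pontryagin-type duality for the perfect pairing $\langle\,\cdot\,,\,\cdot\,\rangle_{\mathbb{R}^d}$ between the finite groups $A/\mathbb{Z}^d$ and $\mathbb{Z}^d/A_\bot$ gives $|A/\mathbb{Z}^d|=|\mathbb{Z}^d/A_\bot|$; iterating, $[(A_\bot)_\bot:\mathbb{Z}^d]=[\mathbb{Z}^d:A_\bot]=[A:\mathbb{Z}^d]$, so the containment $A\subseteq(A_\bot)_\bot$ is an equality. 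Alternatively one can invoke that $A\mapsto A_\bot$ is an order-reversing involution on finitely-generated subgroups of $\mathbb{Q}^d$, which is essentially asserted in the text right after Definition~\ref{Definition:dual}; I would prefer to give the index argument to keep things self-contained.

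\textbf{Parts (3) and (4).} These are now bookkeeping consequences of (1) and (2) together with the functoriality of $(-)_\bot$ (order-reversal). For (3): given a rank $d$ cofiltration $(A_{(m)})$ with $A_{(0)}=\mathbb{Z}^d$, $A_{(m)}\subseteq A_{(m+1)}$, each of rank $d$, and $\Lambda=\bigcup_m A_{(m)}$ having no free summand, set $A^{(m)}=(A_{(m)})_\bot$. Order-reversal gives $A^{(0)}=\mathbb{Z}^d_\bot=\mathbb{Z}^d$, $A^{(m+1)}\subseteq A^{(m)}$, and by part (1) each $A^{(m)}$ is rank $d$ finitely generated inside $\mathbb{Q}^d$; since they all sit inside $A^{(0)}=\mathbb{Z}^d$ they are rank $d$ subgroups of $\mathbb{Z}^d$, which is what ``rank $d$ filtration'' requires — except one must still check $\bigcap_m A^{(m)}=0$. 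Here I would dualize once more: $x\in\bigcap_m A^{(m)}$ means $\langle x,y\rangle_{\mathbb{R}^d}=0$ for all $y\in\bigcup_m A_{(m)}=\Lambda$, i.e. $x\in\Lambda_\bot$ inside $\mathbb{R}^d$; but $\Lambda$ has rank $d$ so $\Lambda$ spans $\mathbb{Q}^d$ over $\mathbb{Q}$, hence over $\mathbb{R}$, forcing $\Lambda_\bot=0$. (Note the ``no free summand'' hypothesis on $\Lambda$ is not needed for (3) itself — it is carried along for the later equivalence of categories; I would remark on this.) For (4): given a rank $d$ filtration $(A^{(m)})$ of $\mathbb{Z}^d$ with $\bigcap_m A^{(m)}=0$, set $A_{(m)}=(A^{(m)})_\bot$; order-reversal and part (1) give $A_{(0)}=\mathbb{Z}^d$, $A_{(m)}\subseteq A_{(m+1)}$, each rank $d$ inside $\mathbb{Q}^d$. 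The one genuinely new point is that $\Lambda:=\bigcup_m A_{(m)}$ has no free summand. Suppose $\Lambda=\mathbb{Z} v\oplus\Lambda'$ with $v\neq0$; then projecting to the $\mathbb{Z} v$-summand gives a surjection $\Lambda\twoheadrightarrow\mathbb{Z}$, which by part (1) (taking direct limits: $\mathrm{Hom}(\Lambda,\mathbb{Z})=\varprojlim\mathrm{Hom}(A_{(m)},\mathbb{Z})=\varprojlim A^{(m)}=\bigcap_m A^{(m)}$, using compatibility of the isomorphisms of part (1) with inclusions) corresponds to a nonzero element of $\bigcap_m A^{(m)}=0$, contradiction. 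So $\Lambda$ has no free summand and $(A_{(m)})$ is a rank $d$ cofiltration.

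\textbf{Expected main obstacle.} The routine parts are the order-reversal and the extension-of-functionals arguments. The one step that needs a little care is Part~(2), where I must actually prove the double-annihilator is no bigger than $A$ rather than just citing the order-reversing-involution remark; I would handle this via the finite-group duality/index count after reducing to $A\supseteq\mathbb{Z}^d$ by scaling. A secondary subtlety is making the naturality in Part~(1) precise enough that it feeds correctly into the ``no free summand'' argument of Part~(4) — specifically that $A_\bot\cong\mathrm{Hom}(A,\mathbb{Z})$ is compatible with the transpose/restriction functor $f\mapsto f^{\mathrm{T}}$ defined just before the lemma, so that passing to the direct limit $\Lambda=\varinjlim A_{(m)}$ really does give $\mathrm{Hom}(\Lambda,\mathbb{Z})\cong\varprojlim A^{(m)}=\bigcap_m A^{(m)}$.
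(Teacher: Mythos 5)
Your Part~(1) is essentially the paper's argument (extend $\phi\colon A\to\mathbb{Z}$ uniquely to a $\mathbb{Q}$-linear functional, identify it with a vector in $A_\bot$; the paper is more explicit, working with a $\mathbb{Z}$-basis of $A$, but the content is the same). Your Part~(2) takes a genuinely different route: rather than invoking, as the paper does, the natural isomorphism $A\cong\mathrm{Hom}(\mathrm{Hom}(A,\mathbb{Z}),\mathbb{Z})$ for finitely-generated free abelian groups, you scale to the case $A\supseteq\mathbb{Z}^d$ and run an index count via the perfect $\mathbb{R}/\mathbb{Z}$-valued pairing between the finite groups $A/\mathbb{Z}^d$ and $\mathbb{Z}^d/A_\bot$. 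That is a correct and pleasantly elementary alternative, though it needs one more iteration than you wrote (apply the perfect-pairing count once more to $B:=A_\bot\subseteq\mathbb{Z}^d$ to get $|(A_\bot)_\bot/\mathbb{Z}^d|=|\mathbb{Z}^d/A_\bot|=|A/\mathbb{Z}^d|$, whence the inclusion $A\subseteq(A_\bot)_\bot$ is an equality). Your Part~(4) is correct and, as you say, the key point is the natural identification $\mathrm{Hom}(\Lambda,\mathbb{Z})=\varprojlim\mathrm{Hom}(A_{(m)},\mathbb{Z})\cong\varprojlim A^{(m)}=\bigcap_m A^{(m)}$.

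The genuine gap is in Part~(3). You claim that since $\Lambda=\bigcup_m A_{(m)}$ has rank $d$ it spans $\mathbb{R}^d$, ``forcing $\Lambda_\bot=0$,'' and you further assert that the ``no free summand'' hypothesis is not needed for (3). Both claims are false. The annihilator is taken with respect to the $\mathbb{R}/\mathbb{Z}$-valued pairing $\langle\,\cdot\,,\,\cdot\,\rangle_{\mathbb{R}^d}$, not the $\mathbb{R}$-valued one, so rank-$d$-hence-spanning does not give triviality: $\Lambda=\mathbb{Z}^d$ already has rank $d$ and $(\mathbb{Z}^d)_\bot=\mathbb{Z}^d$. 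More to the point, without the no-free-summand hypothesis the conclusion $\bigcap_m A^{(m)}=0$ simply fails. For instance, with $d=2$ take $A_{(m)}=\mathbb{Z}\oplus 2^{-m}\mathbb{Z}$; this is an increasing chain of rank-$2$ subgroups of $\mathbb{Q}^2$ with $A_{(0)}=\mathbb{Z}^2$, yet $A^{(m)}=(A_{(m)})_\bot=\mathbb{Z}\oplus 2^m\mathbb{Z}$ and $\bigcap_m A^{(m)}=\mathbb{Z}\oplus 0\neq 0$; here $\Lambda=\mathbb{Z}\oplus\mathbb{Z}[1/2]$ has a free summand, so the hypothesis is exactly what rules this out. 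The correct argument is the one you already deploy in Part~(4), run in reverse: identify $\bigcap_m A^{(m)}$ with $\mathrm{Hom}(\Lambda,\mathbb{Z})$ via Part~(1) and the direct-limit computation, and then observe that a finite-rank torsion-free $\Lambda$ with no free summand has $\mathrm{Hom}(\Lambda,\mathbb{Z})=0$ (the paper cites the structure theorem \cite[Theorem 8.47]{hofmann_structure_2013} decomposing $\Lambda=\Lambda_0\oplus\Lambda_1$ with $\Lambda_0$ free, $\Lambda_1$ having no free summand, and $\mathrm{Hom}(\Lambda,\mathbb{Z})=\mathrm{Hom}(\Lambda_0,\mathbb{Z})$; the cofiltration hypothesis forces $\Lambda_0=0$). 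You should delete the linear-algebra shortcut and the parenthetical remark, and use the $\mathrm{Hom}(\Lambda,\mathbb{Z})$ identification in (3) just as in (4).
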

\begin{proof}
For (1), notice that an element $b$ of $A_{\bot }$ defines an element $\varphi _{b}\in \mathrm{Hom%
}( A,\mathbb{Z}) $ given by $\varphi _{b}( a)
=\left\langle b,a\right\rangle $. The assignment $b\mapsto \varphi _{b}$
defines an injective group homomorphism $\Phi _{A}:A_{\bot }\rightarrow 
\mathrm{Hom}( A,\mathbb{Z}) $. We now show that $\Phi _{A}$ is
onto. Suppose that $f\in \mathrm{Hom}( A,\mathbb{Z}) $. Let $%
e_{1},\ldots ,e_{d}$ be the elements of the canonical basis of $\mathbb{Q}%
^{d}$, and fix a $\mathbb{Z}$-basis $v_{1},\ldots ,v_{d}$ of $A$. Fix $\psi
\in \mathrm{GL}_{d}( \mathbb{Q}) $ such that $\psi (
v_{i}) =e_{i}$ for $i\in \left\{ 1,2,\ldots ,d\right\} $. Define now $%
w=f( v_{1}) e_{1}+\cdots +f( v_{d}) e_{d}$, and $%
b:=\psi ^{\mathrm{T}}w\in \mathbb{Q}^{d}$. It follows that $f=\varphi _{b}$, since:
\begin{equation*}
\left\langle b,v_{i}\right\rangle =\left\langle \psi ^{\mathrm{T}%
}w,v_{i}\right\rangle =\left\langle w,\psi ( v_{i}) \right\rangle
=\left\langle w,e_{i}\right\rangle =f( v_{i}) \text{.}
\end{equation*}
For naturality one checks that if  $f\colon B\to A$ is a homomorphism from a rank $d'$ finitely-generated subgroup $B$ of $\mathbb{Q}^{d'}$ to  a rank $d$ finitely-generated subgroup   $A$ of $\mathbb{Q}^{d}$, then  for the induced $ \mathrm{Hom}(f,\mathbb{Z})\colon  \mathrm{Hom}(A,\mathbb{Z})\to  \mathrm{Hom}(B,\mathbb{Z})$ we have  $\mathrm{Hom}(f,\mathbb{Z}) \circ \Phi _{A}=\Phi _{B}\circ f^{\mathrm{T}}$.

The statement (2) follows from (1), since $a\mapsto (\varphi\mapsto\varphi(a))$ induces a natural isomorphism from $A$ to $\mathrm{Hom}(\mathrm{Hom}(A),\mathbb{Z})$.
For (3), notice first that by (1), each term $A^{(m)}$ is finitely-generated of rank $d$ since so is $\mathrm{Hom}(A^{(m)},\mathbb{Z})$. We also have that $A^{(0)}=\mathbb{Z}^d$ since $(\mathbb{Z}^d)_{\bot}=\mathbb{Z}^d$. Since $A\mapsto A_{\bot}$ is order reversing we have that $A^{(0)}\supseteq A^{(1)}\supseteq \cdots$. We are left to show that $\bigcap_{m}A^{(m)}=0$. Let  $\Lambda:=\bigcup_{m}A_{(m)}$. By
\cite[Theorem 8.47]{hofmann_structure_2013}, $\Lambda$ can be written as a direct sum $\Lambda_0\oplus\Lambda_1$, so that $\Lambda_0$ is finitely-generated, $\Lambda_1$ has no free summand, and $\mathrm{Hom}(\Lambda,\mathbb{Z})=\mathrm{Hom}(\Lambda_0,\mathbb{Z})$. Since  $(A_{(m)})_{m\in\omega}$ is a cofiltration we have that $\Lambda_0=0$, and therefore $\mathrm{Hom}(\Lambda_0,\mathbb{Z})=0$. But $\bigcap_m (A^{(m)})\subseteq (\Lambda_0)_{\bot}$, where $(\Lambda_0)_{\bot}=0$ by (1). A similar argument proves (4).
\end{proof}

\begin{lemma}\label{Lemma:Adj}
The assignment  $[m_k, f_{(k)}]\mapsto \mathrm{Adj}([m_k, f_{(k)}])$ which sends a  cotower map  $[m_k, f_{(k)}]$ between cofiltrations of finite rank to the tower map $[m_k, f^{(k)}]$, with $f^{(k)}:=f^{\mathrm{T}}_{(k)}$, is a fully faithful and essentially surjective contravariant functor from 
$\mathrm{coFil}(\mathbb{Z}^{*})$ to $\mathrm{Fil}(\mathbb{Z}^{*})$. \end{lemma}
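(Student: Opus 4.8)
The plan is to translate the combinatorial description of (co)tower maps between finite-rank (co)filtrations into a purely linear-algebraic one, under which $\mathrm{Adj}$ becomes matrix transposition; its bijectivity on the relevant classes of matrices will then follow from the annihilator duality recorded in Lemma \ref{Lemma:dual-perp}, and essential surjectivity will be immediate from the same lemma.

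First I would fix cofiltrations $\boldsymbol{A}=(A_{(m)})$ and $\boldsymbol{B}=(B_{(m)})$ of ranks $d$ and $d'$ and rephrase morphisms. Since its domain $A_{(k)}$ is of full rank $d$, each component $f_{(k)}\colon A_{(k)}\to B_{(m_k)}$ of a cotower map extends uniquely to a $\mathbb{Q}$-linear map $\mathbb{Q}^{d}\to\mathbb{Q}^{d'}$; the cotower identity $f_{(k+1)}\upharpoonright A_{(k)}=f_{(k)}$, together with the fact that $A_{(k)}$ spans $\mathbb{Q}^d$, forces all these extensions to agree with one linear map $F$. Conversely every linear $F$ with $F(A_{(k)})\subseteq B_{(m_k)}$ for some increasing sequence $(m_k)$ arises this way, and two cotower maps are congruent precisely when they produce the same $F$; this identifies $\mathrm{Hom}_{\mathrm{coFil}(\mathbb{Z}^{*})}(\boldsymbol{A},\boldsymbol{B})$ with $\{F: F(\Lambda_{\boldsymbol{A}})\subseteq\Lambda_{\boldsymbol{B}}\}$, where $\Lambda_{\boldsymbol{A}}=\bigcup_m A_{(m)}$, compatibly with composition (this is a reformulation of Lemma \ref{Lemma:limFunctorEquivalence}). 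The identical argument applied to the filtrations $\mathrm{Adj}(\boldsymbol{A})=(A^{(m)})$, $A^{(m)}:=(A_{(m)})_{\bot}$, which is a rank $d$ filtration by Lemma \ref{Lemma:dual-perp}(3), and $\mathrm{Adj}(\boldsymbol{B})=(B^{(m)})$, shows --- using that each $B^{(n_k)}$ has full rank $d'$ --- that every tower map $\mathrm{Adj}(\boldsymbol{B})\to\mathrm{Adj}(\boldsymbol{A})$ has all components restrictions of a single linear map $G\colon\mathbb{Q}^{d'}\to\mathbb{Q}^{d}$, so that $\mathrm{Hom}_{\mathrm{Fil}(\mathbb{Z}^{*})}(\mathrm{Adj}(\boldsymbol{B}),\mathrm{Adj}(\boldsymbol{A}))$ is identified, again compatibly with composition, with $\{G: G(B^{(n_k)})\subseteq A^{(k)}$ for some increasing $(n_k)\}$.

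Under these identifications $\mathrm{Adj}$ sends $F$ to $F^{\mathrm{T}}$: its $k$-th component is by definition $f_{(k)}^{\mathrm{T}}=F^{\mathrm{T}}\upharpoonright(B_{(m_k)})_{\bot}=F^{\mathrm{T}}\upharpoonright B^{(m_k)}$. Contravariant functoriality and well-definedness on congruence classes then follow from $(HF)^{\mathrm{T}}=F^{\mathrm{T}}H^{\mathrm{T}}$, together with the facts that $\mathrm{Adj}$ carries identity (co)tower maps to identity (co)tower maps and, on objects, is legitimate by Lemma \ref{Lemma:dual-perp}(3); so it only remains to see that $F\mapsto F^{\mathrm{T}}$ is a bijection between the two sets of matrices above. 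The key point is the equivalence $F(A_{(k)})\subseteq B_{(j)}\iff F^{\mathrm{T}}(B^{(j)})\subseteq A^{(k)}$ for fixed indices $j,k$: the containment $f^{\mathrm{T}}(\Lambda_{\bot})\subseteq\Lambda'_{\bot}$ established just before Lemma \ref{Lemma:dual-perp} gives each direction when applied to $F$, respectively to $F^{\mathrm{T}}$, once one invokes $(A^{(k)})_{\bot}=A_{(k)}$ and $(B^{(j)})_{\bot}=B_{(j)}$ from Lemma \ref{Lemma:dual-perp}(2). Granting this, $F(\Lambda_{\boldsymbol{A}})\subseteq\Lambda_{\boldsymbol{B}}$ --- equivalently, for every $k$ some $B_{(j)}$ contains $F(A_{(k)})$ --- is equivalent to the existence of an increasing $(n_k)$ with $F^{\mathrm{T}}(B^{(n_k)})\subseteq A^{(k)}$, and symmetrically; combined with $(F^{\mathrm{T}})^{\mathrm{T}}=F$ this shows that transposition is the desired bijection, so $\mathrm{Adj}$ is fully faithful.

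Finally, essential surjectivity is immediate, and in fact $\mathrm{Adj}$ is surjective on objects: given a rank $d$ filtration $(C^{(m)})$ of $\mathbb{Z}^d$, Lemma \ref{Lemma:dual-perp}(4) makes $(C_{(m)})$, with $C_{(m)}:=(C^{(m)})_{\bot}$, a rank $d$ cofiltration, and $\mathrm{Adj}((C_{(m)}))$ has $m$-th term $((C^{(m)})_{\bot})_{\bot}=C^{(m)}$ by Lemma \ref{Lemma:dual-perp}(2). The only genuinely delicate part of the argument is the bookkeeping in the previous paragraph: one must make sure that the two one-sided inclusions actually available from the text combine, through the double-annihilator identity, into a true equivalence, and that the reindexing buried in the congruence relations on the cotower and tower sides is respected by transposition. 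Everything else is formal, resting on Lemma \ref{Lemma:dual-perp} and, on the cotower side, Lemma \ref{Lemma:limFunctorEquivalence}.
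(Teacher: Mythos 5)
Your proof is correct and follows essentially the same route as the paper: both reduce (co)tower maps to $\mathbb{Q}$-linear maps, observe that $\mathrm{Adj}$ becomes matrix transposition under this identification, use $(FF')^{\mathrm{T}}=(F')^{\mathrm{T}}F^{\mathrm{T}}$ for contravariant functoriality, and cite Lemma \ref{Lemma:dual-perp}(2)--(4) for surjectivity on objects and full faithfulness. Your treatment of the full-faithfulness step is in fact more careful than the paper's rather terse phrasing (the paper loosely says the matrices ``map $\mathbb{Z}^{d'}$ to $\mathbb{Z}^{d}$'', whereas the correct characterization is the one you give in terms of containments $F(A_{(k)})\subseteq B_{(j)}$), and you supply the two-sided annihilator argument needed to make the transpose a genuine bijection on hom-sets; this fills a small gap the paper elides.
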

\begin{proof}

Let $[m_k, f_{(k)}]\colon (A_{(m)})_m\to (B_{(m)})_m$ be a cotower map from a rank $d'$ cofiltration to a rank $d$ cofiltration. 
Notice that the sequence $(f_{(k)})_k$ is entirely determined by  $f_{(0)}$, since $\mathbb{Q}$-linear combinations of $A_{(0)}=\mathbb{Z}^{d'}$ span $\mathbb{Q}^{d'}$. This shows, on the one hand, that $(m_k, f^{(k)})$  is an inv-map from  $ (B^{(m)})_m$ to $(A^{(m)})_m$, and on the other hand, that  if  $[m'_k, f'_{(k)}]=[m_k, f_{(k)}]$ then $[m'_k, f'^{(k)}]=[m_k, f^{(k)}]$. 
Hence $[m_k, f_{(k)}]\mapsto \mathrm{Adj}([m_k, f_{(k)}])$ maps indeed elements of $\mathrm{coFil}(\mathbb{Z}^{*})$ to elements contravariantly $\mathrm{Fil}(\mathbb{Z}^{*})$. It is clearly a functor since $(F \cdot F')^{\mathrm{T}}=(F')^{\mathrm{T}}\cdot (F)^{\mathrm{T}}$ for every two composable matrices.

By (2), (3), and (4) of Lemma \ref{Lemma:dual-perp} we have that $\mathrm{Adj}$  is surjective on objects of $\mathrm{Fil}(\mathbb{Z}^{*})$. It is also fully faithful since every cotower map can be identified with a matrix $(d\times d')$-matrix which maps $\mathbb{Z}^{d'}$ to $\mathbb{Z}^{d}$ and in this category of matrices the transpose $F\mapsto F^{\mathrm{T}}$ is its own inverse.
\end{proof}

The following theorem is an immediate consequence of Lemmas \ref{Lemma:limFunctorEquivalence} and  \ref{Lemma:Adj}, Remark \ref{Remark:ChoiceOfInverse}, and Corollary \ref{Corollary:lim1-injective}. Notice that the inclusion of $\mathrm{Groups}_{+}(\mathbb{Z}^{*},\mathbb{Q}^{*})$ into the category of torsion-free finite rank abelian groups with no free summands is fully faithful and essentially surjective. In what follows we can therefore identify these categories.

\begin{theorem}\label{Theorem:MainProfinite}
The following composition of functors provides a fully faithful contravariant functor $\Lambda \mapsto \mathbb{\hat{Z}}_{\boldsymbol{\Lambda} }^{d}/\mathbb{Z}^{d}$:
\[\Lambda \overset{ (\underrightarrow{\mathrm{lim}})^{-1}}{\xmapsto{{\hspace*{1.2cm}}}} \boldsymbol{\Lambda}_{\mathrm{co}} \overset{ \mathrm{Adj}}{\xmapsto{{\hspace*{1.2cm}}}} \boldsymbol{\Lambda} \overset{\underleftarrow{\mathrm{lim}}^1}{\xmapsto{{\hspace*{1.2cm}}}} \mathbb{\hat{Z}}_{\boldsymbol{\Lambda}}^{d}/\mathbb{Z}^{d},\]
 from the category of finite-rank   torsion-free abelian groups with no free direct summands to the category of groups with a Polish cover. The first two functors  are equivalences between the categories: 
\[\mathrm{Groups}_{+}(\mathbb{Z}^{*},\mathbb{Q}^{*}), \quad \quad   \mathrm{coFil}(\mathbb{Z}^{*}), \quad \quad   \mathrm{Fil}(\mathbb{Z}^{*}).\]
\end{theorem}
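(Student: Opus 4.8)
The plan is to read off the conclusion by chasing the three functors in turn, since each constituent statement has already been isolated as a lemma. First I would record that the two leftmost arrows are equivalences of categories. By Lemma \ref{Lemma:limFunctorEquivalence} the functor $\underrightarrow{\mathrm{lim}}\colon \mathrm{coFil}(\mathbb{Z}^{*})\to\mathrm{Groups}_{+}(\mathbb{Z}^{*},\mathbb{Q}^{*})$ is fully faithful and essentially surjective, hence an equivalence of categories; therefore the explicit quasi-inverse $(\underrightarrow{\mathrm{lim}})^{-1}$ fixed in Remark \ref{Remark:ChoiceOfInverse} is itself a (covariant) equivalence $\mathrm{Groups}_{+}(\mathbb{Z}^{*},\mathbb{Q}^{*})\to\mathrm{coFil}(\mathbb{Z}^{*})$, a quasi-inverse of an equivalence being again an equivalence. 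Lemma \ref{Lemma:Adj} states directly that $\mathrm{Adj}\colon\mathrm{coFil}(\mathbb{Z}^{*})\to\mathrm{Fil}(\mathbb{Z}^{*})$ is a fully faithful, essentially surjective, contravariant functor, hence a contravariant equivalence. This already yields the final assertion of the theorem, namely that the first two functors are equivalences among the three categories listed.

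Next I would treat the rightmost arrow. The category $\mathrm{Fil}(\mathbb{Z}^{*})$ is a full subcategory of the category of filtrations of countable abelian groups appearing in Corollary \ref{Corollary:lim1-injective}: every subgroup $\Lambda^{(m)}\leq\mathbb{Z}^{d}$ is countable, and a morphism in $\mathrm{Fil}(\mathbb{Z}^{*})$ is, by definition, exactly a tower map, i.e. a morphism of the larger category. Since restricting a fully faithful functor to a full subcategory preserves full faithfulness, Corollary \ref{Corollary:lim1-injective} shows that $\underleftarrow{\mathrm{lim}}^{1}$ is fully faithful on $\mathrm{Fil}(\mathbb{Z}^{*})$. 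The composite $\underleftarrow{\mathrm{lim}}^{1}\circ\mathrm{Adj}\circ(\underrightarrow{\mathrm{lim}})^{-1}$ is then a composition of fully faithful functors, hence fully faithful, and it is contravariant because exactly one of its three factors, namely $\mathrm{Adj}$, is contravariant. To identify its value on an object $\Lambda$ with $\hat{\mathbb{Z}}^{d}_{\boldsymbol{\Lambda}}/\mathbb{Z}^{d}$, I would unwind the definitions: $\boldsymbol{\Lambda}=\mathrm{Adj}\big((\underrightarrow{\mathrm{lim}})^{-1}(\Lambda)\big)$ is a rank-$d$ filtration of $\mathbb{Z}^{d}$ whose associated completion is precisely $\hat{\mathbb{Z}}^{d}_{\boldsymbol{\Lambda}}=\underleftarrow{\mathrm{lim}}\,(\mathbb{Z}^{d}/\Lambda^{(m)})$, so Theorem \ref{Theorem:lim1-injective} furnishes a definable isomorphism $\underleftarrow{\mathrm{lim}}^{1}\boldsymbol{\Lambda}\cong\hat{\mathbb{Z}}^{d}_{\boldsymbol{\Lambda}}/\mathbb{Z}^{d}$.

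Finally I would justify replacing the domain $\mathrm{Groups}_{+}(\mathbb{Z}^{*},\mathbb{Q}^{*})$ by the category of finite-rank torsion-free abelian groups with no free summand. The inclusion of the former into the latter is full and faithful by construction, the morphisms being all group homomorphisms on both sides, and it is essentially surjective: for a torsion-free group $\Lambda$ of rank $d$ with no free summand, a maximal $\mathbb{Z}$-independent subset $x_{1},\dots,x_{d}$ identifies the divisible hull of $\Lambda$ with $\mathbb{Q}^{d}$ and places $\mathbb{Z}x_{1}\oplus\cdots\oplus\mathbb{Z}x_{d}\cong\mathbb{Z}^{d}$ inside $\Lambda\leq\mathbb{Q}^{d}$, so $\Lambda$ is isomorphic to an object of $\mathrm{Groups}_{+}(\mathbb{Z}^{*},\mathbb{Q}^{*})$; hence this inclusion is an equivalence. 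Composing it with the functor above gives the asserted fully faithful contravariant functor.

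I do not anticipate a substantive obstacle: the entire argument is bookkeeping of variances together with the elementary facts that equivalences and fully faithful functors are closed under composition and under restriction to full subcategories. The one point deserving genuine care is checking that the notion of morphism in $\mathrm{Fil}(\mathbb{Z}^{*})$ coincides exactly with that of the ambient category of filtrations of countable abelian groups in Corollary \ref{Corollary:lim1-injective} --- no morphisms lost and none gained --- since it is precisely this that allows full faithfulness of $\underleftarrow{\mathrm{lim}}^{1}$ to descend.
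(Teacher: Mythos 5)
Your proof is correct and follows essentially the same route as the paper, which presents the theorem as an ``immediate consequence'' of Lemma \ref{Lemma:limFunctorEquivalence}, Lemma \ref{Lemma:Adj}, Remark \ref{Remark:ChoiceOfInverse}, and Corollary \ref{Corollary:lim1-injective} together with the observation that $\mathrm{Groups}_{+}(\mathbb{Z}^{*},\mathbb{Q}^{*})$ includes essentially surjectively and fully faithfully into the category of finite-rank torsion-free abelian groups with no free summands; you have simply written out the bookkeeping (variances, closure of full faithfulness under composition and under restriction to a full subcategory, the role of Theorem \ref{Theorem:lim1-injective} in identifying the value $\underleftarrow{\mathrm{lim}}^{1}\boldsymbol{\Lambda}$ with $\hat{\mathbb{Z}}^{d}_{\boldsymbol{\Lambda}}/\mathbb{Z}^{d}$) that the paper leaves implicit.
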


The following immediate corollary provides a combinatorial criterion for the existence of a definable isomorphism between the quotients $\mathbb{\hat{Z}}_{\boldsymbol{A} }^{d}/\mathbb{Z}^{d}$, $\mathbb{\hat{Z}}_{\boldsymbol{B} }^{d}/\mathbb{Z}^{d}$, of any two profinite completions of $\mathbb{Z}^{d}$.

\begin{corollary}\label{Corollary:CombinatorialDescriptionOfDEfIso}
Let $\mathbb{\hat{Z}}_{\boldsymbol{A} }^{d}/\mathbb{Z}^{d}$ and $\mathbb{\hat{Z}}_{\boldsymbol{B} }^{d}/\mathbb{Z}^{d}$ be quotients of the profinite completions of $\mathbb{Z}^{d}$ with respect to the filtrations $\boldsymbol{A}$, $\boldsymbol{B}$. Let also $\Lambda_{\boldsymbol{A}}$, $\Lambda_{\boldsymbol{B}}$ be the countable groups $\underrightarrow{\mathrm{lim}}(\mathrm{Adj}(\boldsymbol{A}))$, $\underrightarrow{\mathrm{lim}}(\mathrm{Adj}(\boldsymbol{B}))$.
 The following are equivalent:
 \begin{enumerate}
 \item  $\mathbb{\hat{Z}}_{\boldsymbol{A} }^{d}/\mathbb{Z}^{d}$ and $\mathbb{\hat{Z}}_{\boldsymbol{B} }^{d}/\mathbb{Z}^{d}$ are definably isomorphic;
 \item  $\boldsymbol{A}$ and  $\boldsymbol{B}$ are isomorphic objects in the category of filtrations;
 \item $\Lambda_{\boldsymbol{A}}$ and $\Lambda_{\boldsymbol{B}}$ are isomorphic as discrete groups.
 \end{enumerate}
\end{corollary}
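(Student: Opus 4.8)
The plan is to read the corollary as an unwinding of Theorem~\ref{Theorem:MainProfinite}, using only the soft categorical fact that a fully faithful functor --- and \emph{a fortiori} an equivalence --- both preserves and reflects isomorphisms. No step should require any genuine computation; everything of substance is already packaged in Theorem~\ref{Theorem:MainProfinite} and Corollary~\ref{Corollary:lim1-injective}.

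\emph{First}, I would reduce (1) to a statement about $\underleftarrow{\mathrm{lim}}^1$. For a rank $d$ filtration $\boldsymbol{A}$ of $\mathbb{Z}^d$, the completion of $\mathbb{Z}^d$ with respect to $\boldsymbol{A}$ in the sense of Section~\ref{Section:towers} is precisely the inverse limit $\underleftarrow{\mathrm{lim}}\,(\mathbb{Z}^d/\Lambda^{(m)})=\hat{\mathbb{Z}}^d_{\boldsymbol{A}}$, so Theorem~\ref{Theorem:lim1-injective} supplies a definable isomorphism $\underleftarrow{\mathrm{lim}}^1\boldsymbol{A}\cong\hat{\mathbb{Z}}^d_{\boldsymbol{A}}/\mathbb{Z}^d$, and likewise for $\boldsymbol{B}$. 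Hence (1) is equivalent to $\underleftarrow{\mathrm{lim}}^1\boldsymbol{A}$ and $\underleftarrow{\mathrm{lim}}^1\boldsymbol{B}$ being definably isomorphic.

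\emph{Second}, I would deduce (1)$\iff$(2). By Corollary~\ref{Corollary:lim1-injective} the functor $\underleftarrow{\mathrm{lim}}^1$ restricted to the category of filtrations of countable abelian groups is fully faithful, and $\mathrm{Fil}(\mathbb{Z}^{*})$ is a full subcategory of it. Every functor preserves isomorphisms, and a fully faithful functor $F$ reflects them: if $g\colon F(X)\to F(Y)$ is invertible, write $g=F(f)$ and $g^{-1}=F(h)$ using fullness; then $F(h\circ f)=F(\mathrm{id}_X)$ and $F(f\circ h)=F(\mathrm{id}_Y)$, so faithfulness forces $f$ to be an isomorphism. Applying this to $F=\underleftarrow{\mathrm{lim}}^1$ and combining with the first step, (1) holds iff $\boldsymbol{A}$ and $\boldsymbol{B}$ are isomorphic objects of $\mathrm{Fil}(\mathbb{Z}^{*})$, which is (2). (This step is essentially Corollary~\ref{Corollary:ProfiniteDefinableIsomoTowers} transcribed into the present notation.)

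\emph{Third}, I would deduce (2)$\iff$(3). Here $\mathrm{Adj}(\boldsymbol{A})$ for a filtration $\boldsymbol{A}$ should be read as the cofiltration $\big((A^{(m)})_{\bot}\big)_{m}$, which is legitimate because $A\mapsto A_{\bot}$ is an involution interchanging rank $d$ filtrations and rank $d$ cofiltrations by parts (2)--(4) of Lemma~\ref{Lemma:dual-perp}; thus this operation is a quasi-inverse of the contravariant equivalence $\mathrm{Adj}\colon\mathrm{coFil}(\mathbb{Z}^{*})\to\mathrm{Fil}(\mathbb{Z}^{*})$ from Lemma~\ref{Lemma:Adj}. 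Composing with the equivalence $\underrightarrow{\mathrm{lim}}\colon\mathrm{coFil}(\mathbb{Z}^{*})\to\mathrm{Groups}_{+}(\mathbb{Z}^{*},\mathbb{Q}^{*})$ of Lemma~\ref{Lemma:limFunctorEquivalence}, the assignment $\boldsymbol{A}\mapsto\Lambda_{\boldsymbol{A}}$ is an equivalence of categories $\mathrm{Fil}(\mathbb{Z}^{*})\to\mathrm{Groups}_{+}(\mathbb{Z}^{*},\mathbb{Q}^{*})$, as recorded in Theorem~\ref{Theorem:MainProfinite}. Equivalences preserve and reflect isomorphisms, so $\boldsymbol{A}\cong\boldsymbol{B}$ in $\mathrm{Fil}(\mathbb{Z}^{*})$ iff $\Lambda_{\boldsymbol{A}}\cong\Lambda_{\boldsymbol{B}}$ in $\mathrm{Groups}_{+}(\mathbb{Z}^{*},\mathbb{Q}^{*})$; and since the morphisms of the latter category are arbitrary group homomorphisms, an isomorphism there is exactly a group isomorphism, which is (3). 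The only points requiring care --- rather than any real obstacle --- are the bookkeeping checks that $\mathrm{Adj}$ applied to a filtration is meaningful via the involutivity of the annihilator, that $\hat{\mathbb{Z}}^d_{\boldsymbol{A}}$ is literally the completion appearing in Theorem~\ref{Theorem:lim1-injective}, and that all the data $\boldsymbol{A},\boldsymbol{B},\Lambda_{\boldsymbol{A}},\Lambda_{\boldsymbol{B}}$ share the common rank $d$, so that each object lives in the category named for it.
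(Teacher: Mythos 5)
Your proposal is correct and follows exactly the route the paper intends: the paper simply calls the corollary ``immediate'' from Theorem~\ref{Theorem:MainProfinite}, and your three steps unpack that one-liner via Theorem~\ref{Theorem:lim1-injective}, Corollary~\ref{Corollary:lim1-injective}, and Lemmas~\ref{Lemma:limFunctorEquivalence} and~\ref{Lemma:Adj}, which is precisely the content packaged into Theorem~\ref{Theorem:MainProfinite}. Your aside noting that $\mathrm{Adj}$ in the statement is being applied to a filtration rather than a cofiltration --- and is meaningful only because $A\mapsto A_{\bot}$ is an involution, per Lemma~\ref{Lemma:dual-perp}~(2)--(4) --- is a legitimate bookkeeping point that the paper elides.
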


Notice that the collection $\mathrm{Obj}(\mathrm{Fil}(\mathbb{Z}^d))$ of all objects $\boldsymbol{A}$ in the full subcategory $\mathrm{Fil}(\mathbb{Z}^d)$  of $\mathrm{Fil}(\mathbb{Z}^{*})$, which consists of all filtrations on $\mathbb{Z}^d$, inherits from $(2^{\mathbb{N}})^{\mathbb{N}}$ a Polish topology. We write $\boldsymbol{A}\simeq_{\mathrm{pro}}\boldsymbol{B}$ to declare that $\boldsymbol{A}$ and $\boldsymbol{B}$ are isomorphic as objects in the category $\mathrm{Fil}(\mathbb{Z}^d)$. We have the following corollary:

\begin{corollary}\label{Corollary:proCategory}
Within the Borel reduction hierarchy, the classification problems $(\mathrm{Obj}(\mathrm{Fil}(\mathbb{Z}^d)),\simeq_{\mathrm{pro}})$ satisfy:
\[(\mathrm{Obj}(\mathrm{Fil}(\mathbb{Z}^1)),\simeq_{\mathrm{pro}})<_B (\mathrm{Obj}(\mathrm{Fil}(\mathbb{Z}^2)),\simeq_{\mathrm{pro}}) <_B (\mathrm{Obj}(\mathrm{Fil}(\mathbb{Z}^3)),\simeq_{\mathrm{pro}})<_B \cdots\]
\end{corollary}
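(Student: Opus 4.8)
The plan is to pull the known strict hierarchy $(\mathcal R(1),\simeq_{\mathrm{iso}})<_B(\mathcal R(2),\simeq_{\mathrm{iso}})<_B\cdots$ of Example \ref{Example:Rank_d} back along the combinatorial dictionary of Corollary \ref{Corollary:CombinatorialDescriptionOfDEfIso}. Write $\mathcal G_d$ for the standard Borel space of torsion-free abelian groups of rank exactly $d$ with no free summand, realized as the subgroups $\Lambda$ with $\mathbb Z^d\leq\Lambda\leq\mathbb Q^d$ and $\mathrm{Hom}(\Lambda,\mathbb Z)=0$; this is a Borel subset of the space underlying $\mathcal R(d)$, and it is precisely the ``free rank $0$'' part of $\mathcal R(d)$. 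The first step is to observe that the explicit functors $(\underrightarrow{\mathrm{lim}})^{-1}$ of Remark \ref{Remark:ChoiceOfInverse}, $\mathrm{Adj}$ of Lemma \ref{Lemma:Adj} and $\underrightarrow{\mathrm{lim}}$ of Lemma \ref{Lemma:limFunctorEquivalence} are each induced by Borel maps on the relevant parameter spaces, so that $\boldsymbol A\mapsto\Lambda_{\boldsymbol A}:=\underrightarrow{\mathrm{lim}}(\mathrm{Adj}(\boldsymbol A))$ and its Borel quasi-inverse witness, via Corollary \ref{Corollary:CombinatorialDescriptionOfDEfIso}, that $(\mathrm{Obj}(\mathrm{Fil}(\mathbb Z^d)),\simeq_{\mathrm{pro}})$ is classwise Borel isomorphic to $(\mathcal G_d,\cong)$. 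It therefore suffices to establish $(\mathcal G_1,\cong)<_B(\mathcal G_2,\cong)<_B\cdots$.

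For the reductions $(\mathcal G_d,\cong)\leq_B(\mathcal G_{d+1},\cong)$ I will use the Borel map $\Lambda\mapsto\Lambda\oplus\mathbb Q$: the image still has rank $d+1$, still contains $\mathbb Z^{d+1}$, and still has no free summand, and the map is a reduction because the maximal divisible subgroup of an abelian group is a canonical direct summand — so $\Lambda\oplus\mathbb Q\cong\Lambda'\oplus\mathbb Q$ forces the reduced parts of $\Lambda$ and $\Lambda'$ to be isomorphic and (comparing divisible ranks) their divisible parts as well, whence $\Lambda\cong\Lambda'$. In particular $(\mathcal G_j,\cong)\leq_B(\mathcal G_{j'},\cong)$ whenever $j\le j'$. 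Dually, writing a finite-rank torsion-free group $A$ as $A=\mathbb Z^{k}\oplus A'$ with $k=\mathrm{rank}\,\mathrm{Hom}(A,\mathbb Z)$ and $A'$ having no free summand, and selecting such a splitting in a Borel fashion, partitions $\mathcal R(d)$ into $\simeq_{\mathrm{iso}}$-invariant Borel pieces indexed by $k\in\{0,\dots,d\}$; the $k$-th piece is classwise Borel isomorphic to $(\mathcal G_{d-k},\cong)$ via $A\mapsto A'$, the inverse $A'\mapsto\mathbb Z^k\oplus A'$ being a genuine reduction because the maximal free summand is also canonical. Hence $(\mathcal R(d),\simeq_{\mathrm{iso}})\equiv_B\bigsqcup_{j=0}^{d}(\mathcal G_j,\cong)$, with $\mathcal G_0$ a single point.

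The hierarchy now follows formally. The inequality $(\mathcal G_d,\cong)\leq_B(\mathcal G_{d+1},\cong)$ is the first display. For strictness, suppose toward a contradiction that $(\mathcal G_{d+1},\cong)\leq_B(\mathcal G_d,\cong)$, so (with the previous paragraph) the two are Borel bireducible. Then
\[(\mathcal R(d+1),\simeq_{\mathrm{iso}})\equiv_B\Bigl(\bigsqcup_{j=0}^{d}(\mathcal G_j,\cong)\Bigr)\sqcup(\mathcal G_{d+1},\cong)\equiv_B(\mathcal R(d),\simeq_{\mathrm{iso}})\sqcup(\mathcal G_d,\cong)\leq_B(\mathcal R(d),\simeq_{\mathrm{iso}})\sqcup(\mathcal R(d),\simeq_{\mathrm{iso}}),\]
using $(\mathcal G_{d+1},\cong)\equiv_B(\mathcal G_d,\cong)$ and $(\mathcal G_d,\cong)\leq_B(\mathcal R(d),\simeq_{\mathrm{iso}})$. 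But $(\mathcal R(d),\simeq_{\mathrm{iso}})\sqcup(\mathcal R(d),\simeq_{\mathrm{iso}})\equiv_B(\mathcal R(d),\simeq_{\mathrm{iso}})$ — a routine self-similarity, since $(\mathcal R(d),\simeq_{\mathrm{iso}})\equiv_B(\mathcal R(d),\simeq_{\mathrm{iso}})\times\mathrm{id}_{2^{\omega}}$ (one can encode an extra real parameter Borel-ly in the $p$-heights along an independent infinite set of primes), so $\mathcal R(d)$ absorbs finitely many disjoint copies of itself. This would give $(\mathcal R(d+1),\simeq_{\mathrm{iso}})\leq_B(\mathcal R(d),\simeq_{\mathrm{iso}})$, contradicting Example \ref{Example:Rank_d}. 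Combining with the first step, $(\mathrm{Obj}(\mathrm{Fil}(\mathbb Z^d)),\simeq_{\mathrm{pro}})\equiv_B(\mathcal G_d,\cong)<_B(\mathcal G_{d+1},\cong)\equiv_B(\mathrm{Obj}(\mathrm{Fil}(\mathbb Z^{d+1})),\simeq_{\mathrm{pro}})$ for every $d\ge1$.

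The main obstacle is the bookkeeping in the first step — verifying that the abstract categorical equivalences of Theorem \ref{Theorem:MainProfinite} descend to Borel maps respecting the chosen Polish parametrizations of $\mathrm{Obj}(\mathrm{Fil}(\mathbb Z^d))$ and of $\mathcal G_d$ (in particular the Borel selection of a complement to the maximal free summand, and the analogous selection of the divisible part) — together with the ``absorption'' identity $(\mathcal R(d),\simeq_{\mathrm{iso}})\sqcup(\mathcal R(d),\simeq_{\mathrm{iso}})\equiv_B(\mathcal R(d),\simeq_{\mathrm{iso}})$; both are routine but a little delicate. Beyond Corollary \ref{Corollary:CombinatorialDescriptionOfDEfIso} and Example \ref{Example:Rank_d} no new substantive input is required.
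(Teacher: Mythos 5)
Your first step --- the classwise Borel isomorphism between $(\mathrm{Obj}(\mathrm{Fil}(\mathbb{Z}^d)),\simeq_{\mathrm{pro}})$ and $(\mathcal{G}_d,\cong)$ obtained from Corollary \ref{Corollary:CombinatorialDescriptionOfDEfIso} and the explicit Borel functor of Remark \ref{Remark:ChoiceOfInverse} --- is exactly what the paper does, and your observation that the relevant target space is $\mathcal{G}_d$ (the rank-$d$ groups with no free summand) rather than all of $\mathcal{R}(d)$ is a useful clarification of what the one-line proof in the text leaves implicit. Where you diverge is in how the strict hierarchy for $(\mathcal{G}_d,\cong)$ is then obtained: you try to derive it from the bare statement of Example \ref{Example:Rank_d} via the decomposition $\mathcal{R}(d)\equiv_B\bigsqcup_{j\leq d}(\mathcal{G}_j,\cong)$ together with an ``absorption'' identity $\mathcal{R}(d)\sqcup\mathcal{R}(d)\equiv_B\mathcal{R}(d)$, whereas the paper is relying on the sharper form of what the cited theorems of Hjorth and Thomas actually prove, namely that already the isomorphism relation restricted to $p$-local torsion-free groups of rank $d+1$ is $\not\leq_B(\mathcal{R}(d),\simeq_{\mathrm{iso}})$. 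Since a $p$-local group $\Lambda$ of positive rank has $\mathrm{Hom}(\Lambda,\mathbb{Z})=0$, these groups already lie in $\mathcal{G}_{d+1}$, and the strict hierarchy for $(\mathcal{G}_d,\cong)$ drops out directly, with no disjoint-union bookkeeping needed.

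The genuine gap in your version is the absorption identity. The one-line justification --- encoding an extra real parameter in $p$-heights along an independent infinite set of primes --- is plausible in spirit and can be carried out for $d=1$ (where $(\mathcal{R}(1),\simeq_{\mathrm{iso}})\equiv_B E_0$), but it is not at all routine for $d\geq 2$: the isomorphism type of $\Lambda\leq\mathbb{Q}^d$ is not a product of independent $p$-local invariants, because the identifications at the various primes are all mediated by the single group $\mathrm{GL}_d(\mathbb{Q})$ acting diagonally. A naive prime-relabeling map, pushing the $p$-local data of $\Lambda$ to $\tau(p)$-local data for an injection $\tau:\mathcal{P}\to\mathcal{P}$ and filling in trivial data off $\mathrm{ran}(\tau)$, is a Borel homomorphism but need not be a reduction, since it effectively replaces the acting group $\mathrm{GL}_d(\mathbb{Q})$ by the proper subgroup $\mathrm{GL}_d(\mathbb{Z}[1/\mathrm{ran}\,\tau])$ and may therefore identify strictly fewer pairs. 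This interdependence across primes is precisely the phenomenon that forces the cocycle superrigidity machinery into the original proofs. So either prove $\mathcal{R}(d)\sqcup\mathcal{R}(d)\leq_B\mathcal{R}(d)$ in detail, or --- simpler --- appeal to the $p$-local form of the Hjorth--Thomas non-reducibility theorem, which is what the paper does and closes the argument without any absorption at all.
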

\begin{proof}
This follows from Corollary \ref{Corollary:CombinatorialDescriptionOfDEfIso}, Remark \ref{Remark:ChoiceOfInverse}, and the results from \cite{adams_linear_2000},\cite{hjorth_1999_rank2},\cite{thomas_classification_2003}; see Example \ref{Example:Rank_d}.
\end{proof}

\subsection{Locally profinite completions of $\mathbb{Q}^d$} 
Given a definable homomorphism $f\colon \mathbb{\hat{Z}}_{\boldsymbol{A} }^{d}/\mathbb{Z}^{d}\to\mathbb{\hat{Z}}_{\boldsymbol{B} }^{d}/\mathbb{Z}^{d}$ it is not necessarily true that $f$ lifts to a continuous homomorphism from $ \mathbb{\hat{Z}}_{\boldsymbol{A} }^{d}$ to $\mathbb{\hat{Z}}_{\boldsymbol{B} }^{d}$. In particular, using the following example one can  construct a profinite completion $\mathbb{\hat{Z}}_{\boldsymbol{A}\times\boldsymbol{B} }^{2}$ of $\mathbb{Z}^2$ so that $\mathbb{\hat{Z}}_{\boldsymbol{A}\times\boldsymbol{B} }^{2}/\mathbb{Z}^2$ admits definable automorphisms which do not lift to a topological group isomorphism of $\mathbb{\hat{Z}}_{\boldsymbol{A}\times\boldsymbol{B} }^{2}$.

\begin{example}\label{Example:DefinablyIsomoNotIsomo}
Consider the filtrations $\boldsymbol{A}=(A^{(m)})_m$ and $\boldsymbol{B}=(B^{(m)})_m$ of $\mathbb{Z}$, where  $A^{(0)}=B^{(0)}=\mathbb{Z}$, $A^{(m)}=3\cdot 2^{m-1}\cdot\mathbb{Z}$, and $B^{(m)}= 2^{m-1}\cdot\mathbb{Z}$, for $m>0$. Then the  maps $f^{(k)}\colon A^{(k+1)}\to B^{(k)}$ and $g^{(k)}\colon B^{(k)}\to A^{(k)},$ with $f^{(k)}(x)=\frac{1}{3}\cdot x$  and $g^{(k)}(x)=3\cdot x$, provide an isomorphism between the objects  $\boldsymbol{A}$ and $\boldsymbol{B}$ in the category of filtrations. However,  $\hat{Z}_{\boldsymbol{A}}$ and $\hat{Z}_{\boldsymbol{B}}$ are not isomorphic since   $\hat{B}$ does not have any index $3$ subgroup.  
\end{example}

We now show how to replace each group with  Polish cover of the form $\mathbb{\hat{Z}}_{\boldsymbol{\Lambda}}^{d}/\mathbb{Z}^{d}$ with a definably isomorphic group with  Polish cover $\mathbb{\hat{Q}}_{\Lambda}^{d}/\Lambda$, which often enjoys stronger lifting properties.
 Fix $d\geq 1$ and let $\Lambda$ be a rank $d$ torsion-free abelian
group with no free summand. Let $\boldsymbol{\Lambda}_{\mathrm{co}}=(\Lambda_{(0)}\subseteq \Lambda_{(1)}\subseteq\cdots)$ be the cofiltration associated to $\Lambda$ via the  functor $(\underrightarrow{\mathrm{lim}})^{-1}$, and let $\boldsymbol{\Lambda}=(\Lambda^{(0)}\supseteq\Lambda^{(1)}\supseteq\cdots)$ be dual filtration. We denote by $\hat{\Lambda}_{(m)}$ the profinite completion of $\Lambda_{(m)}$ with respect to the filtration: 
\[(\Lambda_{(m)}\supseteq \Lambda^{(0)}\supseteq\Lambda^{(1)}\supseteq\cdots).\]
For each  $m\in \omega$, the inclusion map $\Lambda _{\left( m\right)
}\rightarrow \Lambda _{\left( m+1\right) }$ extends to a topological
embedding $\hat{\Lambda}_{\left( m\right) }\rightarrow \hat{\Lambda}_{\left(
m+1\right) }$.  We define $\mathbb{\hat{Q}}_{\Lambda }^{d}$ to be the inductive
limit of the sequence $(\hat{\Lambda}_{\left( m\right) })_{m\in \omega }$.
Notice that $\mathbb{\hat{Q}}_{\Lambda }^{d}$ is a locally profinite group that contains $\Lambda $ as a dense subgroup.
We say that   $\mathbb{\hat{Q}}_{\Lambda }^{d}$  is the \emph{locally profinite completion of $\mathbb{Q}^d$ with respect to $\Lambda$}.
Notice that $\mathbb{\hat{Z}}_{\Lambda }^{d}$  is an essential retract of
$\mathbb{\hat{Q}}_{\Lambda }^{d}$ with $\mathbb{\hat{Z}}_{\Lambda }^{d}\cap \Lambda =\Lambda^{(0)}=\Lambda_{(0)}=\mathbb{Z}
^{d}$. By Lemma \ref{L:RetractProperty}  the inclusion map $
\mathbb{\hat{Z}}_{\Lambda }^{d}\rightarrow \mathbb{\hat{Q}}_{\Lambda }^{d}$
induces a definable isomorphism between $\mathbb{\hat{Z}}_{\Lambda }^{d}/\mathbb{Z}^{d}$ and $\mathbb{\hat{Q}}_{\Lambda }^{d}/\Lambda$. The following example illustrates that the above procedure is a group-theoretic analogue of the construction of the field of fractions in the theory of rings.

\begin{example}
\label{Example:p-adic}
Let $p$ be a  prime number and let $\mathbb{Q}_p$ be the field of all $p$-adic numbers, i.e. the field of fractions of the ring $\mathbb{Z}_p$ of all $p$-adic integers --- see \cite[Chapter 1]{robert_course_2000} for a primer on the rings $\mathbb{Z}_{p}$ and $\mathbb{Q}_{p}$. Let also $\mathbb{Z}[1/p]$ be the subring of $\mathbb{Q}_p$ that is generated by $1/p$. If $\mathbb{Q}_p, \mathbb{Z}_p, \mathbb{Z}[1/p]$ are viewed as Polish abelian groups with respect to their additive structure then  $\mathbb{Q}_p$ is simply the locally profinite completion of $\mathbb{Q}$ with respect to $\Lambda:=\mathbb{Z}[1/p]$,  as above. More generally, if  $\Lambda:=\mathbb{Z}[1/p]^{d}\leq \mathbb{Q}^d$, then  $\mathbb{\hat{Z}}_{\Lambda }^{d}$ is isomorphic to $\mathbb{Z}_{p}^{d}$ and $\mathbb{\hat{Q}}_{\Lambda }^{d}$ is isomorphic to $\mathbb{Q}_{p}^{d}$.
\end{example}

Let  $\Lambda, \Lambda'\in\mathrm{Groups}_{+}(\mathbb{Z}^{*},\mathbb{Q}^{*})$ be finite rank torsion-free abelian groups with no free summands. A homomorphism $g\colon \Lambda'\to\Lambda$ is a  \emph{$\mathrm{T}$-homomorphism} if $g^{\mathrm{T}}$ maps  $\Lambda$ to $\Lambda'$. Here, $g^{\mathrm{T}}$ is the homomorphism from the divisible hull of $\Lambda $ to the divisible hull of $\Lambda '$ associate with the \emph{transpose} of the matrix with rational coefficients associated with $g$ with respect to some choice of maximal independent sets in $\Lambda '$ and $\Lambda $ respectively.  Let $\mathrm{Groups}_{+}^{\mathrm{T}}(\mathbb{Z}^{*},\mathbb{Q}^{*})$  
be the subcategory of $\mathrm{Groups}_{+}(\mathbb{Z}^{*},\mathbb{Q}^{*})$ which contains the same objects but whose arrows are precisely all $\mathrm{T}$-homomorphisms.  Notice that this new category contains strictly fewer isomorphisms than $\mathrm{Groups}_{+}(\mathbb{Z}^{*},\mathbb{Q}^{*})$.

\begin{example}\label{Examples:LocallyProfinite}
\begin{enumerate}
\item[]
\item Let $\Lambda=\Lambda'=\mathbb{Q}^d$ and notice that the    \emph{zero homomorphism} $(x,y)\mapsto (0,0)$ is a $\mathrm{T}$-homomorphism since its transpose is also $(x,y)\mapsto (0,0)$.
\item Let $\Lambda=\Lambda'=\mathbb{Z}[1/2]\oplus\mathbb{Z}[1/6]\leq \mathbb{Q}^2$ and let $\varphi$ be the assignment $(x,y)\mapsto (x,2x+y)$. Notice that $\varphi\colon \Lambda'\to \Lambda$ is an isomorphism. However,   the  assignment $\varphi^{\mathrm{T}}\colon (x,y)\mapsto (x+2y,y)$ will fail to map $(0,1/6)\in\Lambda$ to $\Lambda'$.
\end{enumerate}
\end{example}

Let now  $g\colon \Lambda'\to\Lambda$ be a  $\mathrm{T}$-homomorphism and notice that  $g^{\mathrm{T}}\colon \Lambda\to\Lambda'$ extends to a continuous homomorphism:
   \[\hat{g}^{\mathrm{T}}\colon\mathbb{\hat{Q}}_{\Lambda }^{d}\to \mathbb{\hat{Q}}_{\Lambda '}^{d}.\] 
To see this, notice that it will suffice to show that for each $m\in\omega$, there is some $n_m\in\omega$ so that  $\hat{g}^{\mathrm{T}}\upharpoonright\hat{\Lambda}_{(m)}$ is a continuous homomorphism from $\hat{\Lambda}_{(m)} $  to $\hat{\Lambda}'_{(n_m)}$. It is easy to see that any choice of $n_m$ with $g^{\mathrm{T}}(\Lambda_{(m)})\subseteq \Lambda'_{(n_m)}$ works.  Since  $\hat{g}^{\mathrm{T}}\colon\mathbb{\hat{Q}}_{\Lambda }^{d}\to \mathbb{\hat{Q}}_{\Lambda '}^{d}$ is a continuous homomorphism with $\hat{g}^{\mathrm{T}}(\Lambda)\subseteq \Lambda'$, it induces a definable homomorphism: 
 \[\bar{g}^{\mathrm{T}}\colon \mathbb{\hat{Q}}_{\Lambda }^{d}/\Lambda \rightarrow \mathbb{\hat{Q}}_{\Lambda'}^{d'}/\Lambda ^{\prime}.\]
It follows that the assignment  $\Lambda\mapsto \bar{\mathbb{Q}}^d_{\Lambda}/\Lambda$, $g\mapsto\bar{g}^{\mathrm{T}}$ is a contravariant functor from the category $\mathrm{Groups}_{+}^{\mathrm{T}}(\mathbb{Z}^{*},\mathbb{Q}^{*})$   to the category of groups with a Polish cover. Definable homomorphisms $\hat{\mathbb{Q}}^d_{\Lambda}/\Lambda\to \hat{\mathbb{Q}}^{d'}_{\Lambda}/\Lambda'$, which are of the form   $\bar{g}^{\mathrm{T}}$ for some $g\colon \Lambda'\to \Lambda$, admit by definition a lift $\hat{g}^{\mathrm{T}}\colon\hat{\mathbb{Q}}^d_{\Lambda}\to \hat{\mathbb{Q}}^{d'}_{\Lambda'}$ which has the additional property of being a continuous homomorphism. 
It turns out that every definable homomorphism which has this additional property is of the form $\bar{g}^{\mathrm{T}}$ for some $g\colon \Lambda'\to \Lambda$:

\begin{theorem}
\label{Theorem:trivial}The assignment $\Lambda \mapsto \mathbb{\hat{Q}}%
_{\Lambda }^{d}/\Lambda $, $g\mapsto \hat{g}^{\mathrm{T}}$ defines a fully
faithful contravariant functor from the category $\mathrm{Groups}_{+}^{\mathrm{T}}(\mathbb{Z}^{*},\mathbb{Q}^{*})$ to the category of groups with a Polish cover $G/N$, where morphisms $G/N\to G'/N'$ are continuous homomorphisms $G\to G'$ that map $N$ to $N'$.
\end{theorem}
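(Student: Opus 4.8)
\emph{Plan.} The functoriality of $\Lambda\mapsto\mathbb{\hat{Q}}_{\Lambda}^{d}/\Lambda$, $g\mapsto\hat g^{\mathrm T}$ into the stated target category (a continuous $\hat g^{\mathrm T}$ carrying $\Lambda$ into $\Lambda'$, with $(g_1g_2)^{\mathrm T}=g_2^{\mathrm T}g_1^{\mathrm T}$, hence contravariance) was already established in the discussion preceding the statement. I would first note that $\hat g^{\mathrm T}$ is the \emph{unique} continuous extension to $\mathbb{\hat{Q}}_{\Lambda}^{d}$ of the homomorphism $g^{\mathrm T}\colon\Lambda\to\Lambda'\subseteq\mathbb{\hat{Q}}_{\Lambda'}^{d'}$, since $\Lambda$ is dense and the target is Hausdorff. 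So it remains to show that for each pair of objects $\Lambda\leq\mathbb{Q}^d$, $\Lambda'\leq\mathbb{Q}^{d'}$ the map $g\mapsto\hat g^{\mathrm T}$ is a bijection from the set of $\mathrm T$-homomorphisms $\Lambda'\to\Lambda$ onto the set of continuous homomorphisms $h\colon\mathbb{\hat{Q}}_{\Lambda}^{d}\to\mathbb{\hat{Q}}_{\Lambda'}^{d'}$ with $h(\Lambda)\subseteq\Lambda'$.

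\emph{Faithfulness} is immediate: if $\hat g_1^{\mathrm T}=\hat g_2^{\mathrm T}$, then $g_1^{\mathrm T}$ and $g_2^{\mathrm T}$ agree on the dense subgroup $\Lambda$, hence the $\mathbb{Q}$-linear maps $\mathbb{Q}^d\to\mathbb{Q}^{d'}$ they determine agree (as $\Lambda$ spans $\mathbb{Q}^d$), hence so do their transposes, i.e.\ $g_1=g_2$.

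\emph{Fullness} is where the work lies. Given such an $h$, set $\phi:=h\upharpoonright\Lambda\colon\Lambda\to\Lambda'$ and let $\Phi\colon\mathbb{Q}^d\to\mathbb{Q}^{d'}$ be its unique $\mathbb{Q}$-linear extension (which exists since $\Lambda$ has rank $d$ and $\Lambda'$ is torsion-free). The key claim is $\Phi^{\mathrm T}(\Lambda')\subseteq\Lambda$. To prove it, write $\boldsymbol{\Lambda}=(\Lambda^{(m)})$ for the filtration attached to $\Lambda$ and $\boldsymbol{\Lambda}_{\mathrm{co}}=(\Lambda_{(m)})$ for the associated cofiltration, so $\Lambda^{(m)}=(\Lambda_{(m)})_\bot$ and $\Lambda=\bigcup_m\Lambda_{(m)}$; let $W_m\subseteq\mathbb{\hat{Z}}_{\Lambda}^{d}$ be the clopen subgroup obtained as the profinite completion of $\Lambda^{(m)}$, so that $\{W_m\}_m$ is a neighbourhood basis at $0$ in $\mathbb{\hat{Q}}_{\Lambda}^{d}$ (being one in the open subgroup $\mathbb{\hat{Z}}_{\Lambda}^{d}$) and $W_m\cap\Lambda=\Lambda^{(m)}$ (using $\mathbb{\hat{Z}}_{\Lambda}^{d}\cap\Lambda=\mathbb{Z}^d$). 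Adopt the analogous notation $W'_m,\Lambda'^{(m)},\Lambda'_{(m)}$ for $\Lambda'$. Continuity of $h$ gives, for each $m$, an $n$ with $h(W_n)\subseteq W'_m$; intersecting with $\Lambda$ and using $h(\Lambda)\subseteq\Lambda'$ yields $\Phi(\Lambda^{(n)})=\phi(\Lambda^{(n)})\subseteq\Lambda'^{(m)}$. The transpose identity $\langle\Phi^{\mathrm T}y,x\rangle=\langle y,\Phi x\rangle$ for the ordinary real pairing then forces $\Phi^{\mathrm T}\big((\Lambda'^{(m)})_\bot\big)\subseteq(\Lambda^{(n)})_\bot$, which by $\Lambda'^{(m)}=(\Lambda'_{(m)})_\bot$, $\Lambda^{(n)}=(\Lambda_{(n)})_\bot$ and Lemma~\ref{Lemma:dual-perp}(2) says $\Phi^{\mathrm T}(\Lambda'_{(m)})\subseteq\Lambda_{(n)}\subseteq\Lambda$; taking the union over $m$ and using $\Lambda'=\bigcup_m\Lambda'_{(m)}$ gives $\Phi^{\mathrm T}(\Lambda')\subseteq\Lambda$, proving the claim. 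Then $g:=\Phi^{\mathrm T}\upharpoonright\Lambda'\colon\Lambda'\to\Lambda$ is a homomorphism, and it is a $\mathrm T$-homomorphism because $g^{\mathrm T}=\Phi\upharpoonright\Lambda=\phi$ sends $\Lambda$ into $\Lambda'$. Finally $\hat g^{\mathrm T}$ is the continuous extension of $g^{\mathrm T}=h\upharpoonright\Lambda$, so $\hat g^{\mathrm T}=h$ by density and continuity. This gives fullness.

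\emph{Main obstacle.} The only non-formal point is the claim $\Phi^{\mathrm T}(\Lambda')\subseteq\Lambda$, i.e.\ converting the purely topological hypothesis that $h$ is continuous into an algebraic constraint on the transpose; this is exactly what the annihilator duality of Lemma~\ref{Lemma:dual-perp} is for, and the argument above is the route I would take. Note that, in contrast with Corollary~\ref{Corollary:lim1-injective}, no appeal to the Ulam-stability Theorem~\ref{Theorem:rigid} is needed here, precisely because continuity (rather than mere Borel-definability) is built into the target category.
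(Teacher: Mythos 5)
Your proposal is correct, and the fullness argument is essentially identical to the paper's: both restrict $h$ to $\Lambda$, use continuity to pass from $h(W_n)\subseteq W'_m$ (i.e.\ $h(\Lambda^{(n)})\subseteq\Lambda'^{(m)}$) to an inclusion of annihilators via $\left\langle\Phi^{\mathrm T}y,x\right\rangle=\left\langle y,\Phi x\right\rangle$, and then dualize with Lemma~\ref{Lemma:dual-perp}(2) to conclude $\Phi^{\mathrm T}(\Lambda')\subseteq\Lambda$.

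Your faithfulness argument, however, genuinely differs from the paper's and is worth comparing. You prove exactly what the theorem as stated requires: if $\hat g_1^{\mathrm T}=\hat g_2^{\mathrm T}$ as maps $\mathbb{\hat{Q}}_{\Lambda}^{d}\to\mathbb{\hat{Q}}_{\Lambda'}^{d'}$, then $g_1=g_2$, which follows immediately from density of $\Lambda$ and the fact that $\Lambda$ spans $\mathbb{Q}^d$. The paper instead proves the strictly stronger statement that $g=0$ whenever the \emph{induced map on the quotients} $\bar g^{\mathrm T}\colon\mathbb{\hat{Q}}_{\Lambda}^{d}/\Lambda\to\mathbb{\hat{Q}}_{\Lambda'}^{d'}/\Lambda'$ vanishes, i.e.\ whenever merely $\hat g^{\mathrm T}(\mathbb{\hat{Q}}_{\Lambda}^{d})\subseteq\Lambda'$; this requires a Baire-category argument (Pettis' Lemma applied to the non-meager level set $G(b)$) rather than simple density. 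The stronger form is what guarantees that the passage from continuous lifts down to definable homomorphisms on the quotients remains injective, which is the version one actually wants when combining Theorem~\ref{Theorem:trivial} with the Ulam-stability results to obtain Corollary~\ref{Corollary:trivial}. Your version suffices for the theorem as worded but buys less downstream. Your closing remark that no appeal to Theorem~\ref{Theorem:rigid} is needed here because continuity is built into the target category is correct and is the right way to situate this result relative to Corollary~\ref{Corollary:lim1-injective}.
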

\begin{proof}
We first show that the functor is faithful. Let $g\colon \Lambda'\to \Lambda$ be in  $\mathrm{Groups}_{+}^{\mathrm{T}}(\mathbb{Z}^{*},\mathbb{Q}^{*})$ and assume that $\bar{g}^{\mathrm{T}}\colon \mathbb{\hat{Q}}_{\Lambda }^{d}/\Lambda \rightarrow \mathbb{\hat{Q}}_{\Lambda'}^{d'}/\Lambda ^{\prime}$ is the zero homomorphism. Since $\hat{g}^{\mathrm{T}}$ is a continuous and $\Lambda'$ is countable, there exists $b\in\Lambda'$ so that the set
\[G(b):=\{x\in\mathbb{\hat{Q}}_{\Lambda }^{d} \colon \hat{g}^{\mathrm{T}}(x)=b\}\] 
is non-meager. By Pettis' Lemma  $G(b)-G(b)$ contains a clopen subgroup of $\mathbb{\hat{Q}}_{\Lambda }^{d}$. Thus, there is $m\in\omega$ so that 
  \[\mathrm{cl}(\Lambda^{(m)})\cap \hat{\mathbb{Z}}^d_{\boldsymbol{\Lambda}}=\mathrm{cl}(\Lambda^{(m)})\cap \hat{\Lambda}_{(0)}\subseteq G(b)-G(b).\]
 It follows that $g^{\mathrm{T}}(\Lambda^{(m)})=0$ and since $\Lambda^{(m)}$ is a rank $d$ subgroup of $\mathbb{Z}^d$, we have that $g^{\mathrm{T}}=0$. Hence $g=0$.
 
We now show that the functor is full.  Let $f\colon \mathbb{\hat{Q}}_{\Lambda }^{d} \rightarrow \mathbb{\hat{Q}}_{\Lambda'}^{d'}$ be a continuous homomorphism with $f(\Lambda)\subseteq \Lambda'$. 
In particular, $h:=f\upharpoonright\Lambda$ is a homomorphism from $\Lambda$ to $\Lambda'$. We set $g:=h^{\mathrm{T}}$ and we claim that $g$ is the desired $\mathrm{T}$-homomorphism. To this end, we only need to show that $g$ is a homomorphism from $\Lambda'$ to $\Lambda$. Notice that since $f\upharpoonright\hat{\Lambda}_{(0)}=\hat{\mathbb{Z}}^d_{\boldsymbol{\Lambda}}$ is continuous, there exists an increasing sequence $(m_k)_{k\in\omega}$ so that $h(\Lambda^{(m_k)})\subseteq (\Lambda')^{(k)}$. Hence,
\[g(\Lambda'_{(k)})=h^{\mathrm{T}}((\Lambda')^{(k)}_{\perp})\subseteq \Lambda^{(m_k)}_{\perp}= \Lambda_{(m_k)},\]
and therefore $g$ is a homomorphism from $\Lambda'$ to $\Lambda$.
\end{proof}

As a consequence we have the following corollary.

\begin{corollary}
\label{Corollary:trivial} Let $\Lambda$ be a subgroup of $\mathbb{Q}^d$ with the property that for all $\alpha\in\mathrm{Aut}(\Lambda)$ we have that $\alpha^{\mathrm{T}}\in\mathrm{Aut}(\Lambda)$. Then every definable automorphism of $\mathbb{\hat{Q}}
_{\Lambda }^{d}/\Lambda$ lifts to a topological group automorphism of $\mathbb{\hat{Q}}
_{\Lambda }^{d}$.
\end{corollary}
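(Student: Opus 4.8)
The plan is to combine the full faithfulness of the assignment $\Lambda\mapsto\mathbb{\hat{Q}}_{\Lambda}^{d}/\Lambda$ in two different guises. First, recall that $\mathbb{\hat{Z}}_{\Lambda}^{d}$ is an inessential retract of $\mathbb{\hat{Q}}_{\Lambda}^{d}$ with $\mathbb{\hat{Z}}_{\Lambda}^{d}\cap\Lambda=\mathbb{Z}^{d}$, so that the inclusion $\mathbb{\hat{Z}}_{\Lambda}^{d}\rightarrow\mathbb{\hat{Q}}_{\Lambda}^{d}$ induces (by Lemma \ref{L:RetractProperty}) a definable isomorphism $\mathbb{\hat{Z}}_{\boldsymbol{\Lambda}}^{d}/\mathbb{Z}^{d}\cong\mathbb{\hat{Q}}_{\Lambda}^{d}/\Lambda$, where $\boldsymbol{\Lambda}=\mathrm{Adj}\big((\underrightarrow{\mathrm{lim}})^{-1}(\Lambda)\big)$. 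Hence Theorem \ref{Theorem:MainProfinite} tells us that $\Lambda\mapsto\mathbb{\hat{Q}}_{\Lambda}^{d}/\Lambda$ is a fully faithful contravariant functor from the category of finite-rank torsion-free abelian groups with no free summand to the category of groups with a Polish cover. In particular, given a definable automorphism $f$ of $\mathbb{\hat{Q}}_{\Lambda}^{d}/\Lambda$, there is a unique $g\in\mathrm{Aut}(\Lambda)$ that is sent to $f$ by this functor.

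Next I would invoke the transpose hypothesis. By assumption $g^{\mathrm{T}}\in\mathrm{Aut}(\Lambda)$, and since $(g^{-1})^{\mathrm{T}}=(g^{\mathrm{T}})^{-1}$ we also have $(g^{-1})^{\mathrm{T}}\in\mathrm{Aut}(\Lambda)$; thus both $g$ and $g^{-1}$ are $\mathrm{T}$-homomorphisms, i.e.\ $g$ is an isomorphism in the subcategory $\mathrm{Groups}_{+}^{\mathrm{T}}(\mathbb{Z}^{*},\mathbb{Q}^{*})$. By Theorem \ref{Theorem:trivial}, the assignment $g\mapsto\bar{g}^{\mathrm{T}}$ defines a functor from $\mathrm{Groups}_{+}^{\mathrm{T}}(\mathbb{Z}^{*},\mathbb{Q}^{*})$ to the category of groups with a Polish cover whose morphisms are the continuous cover-homomorphisms; since functors preserve isomorphisms, $\bar{g}^{\mathrm{T}}$ is an isomorphism in that category. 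Concretely, this means that its continuous lift $\hat{g}^{\mathrm{T}}\colon\mathbb{\hat{Q}}_{\Lambda}^{d}\to\mathbb{\hat{Q}}_{\Lambda}^{d}$ is a topological group automorphism, with continuous inverse the lift of $\overline{(g^{-1})}^{\mathrm{T}}$.

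It remains to identify $f$ with $\bar{g}^{\mathrm{T}}$; by the full faithfulness recorded in the first paragraph it suffices to check that $f$ and $\bar{g}^{\mathrm{T}}$ are sent to the same element of $\mathrm{Aut}(\Lambda)$, and $f$ is sent to $g$ by construction. This reduces to the statement that, on $\mathrm{T}$-homomorphisms, the functor of Theorem \ref{Theorem:MainProfinite}, composed with the definable identification $\mathbb{\hat{Z}}_{\boldsymbol{\Lambda}}^{d}/\mathbb{Z}^{d}\cong\mathbb{\hat{Q}}_{\Lambda}^{d}/\Lambda$, agrees with the functor $g\mapsto\bar{g}^{\mathrm{T}}$ of Theorem \ref{Theorem:trivial}: unwinding the definitions of $(\underrightarrow{\mathrm{lim}})^{-1}$, of $\mathrm{Adj}$, of $\underleftarrow{\mathrm{lim}}^{1}$ on tower maps, and of the homomorphism $\sigma$ from Theorem \ref{Theorem:lim1-injective}, one sees that both functors send a $\mathrm{T}$-homomorphism $g$ to the definable homomorphism induced by the continuous extension of $g^{\mathrm{T}}$. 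Granting this, $f=\bar{g}^{\mathrm{T}}$ lifts to the topological group automorphism $\hat{g}^{\mathrm{T}}$ of $\mathbb{\hat{Q}}_{\Lambda}^{d}$, which is exactly what we want. The one step that requires genuine care is this last compatibility between the two functorial descriptions of $\mathbb{\hat{Q}}_{\Lambda}^{d}/\Lambda$ (via $\underleftarrow{\mathrm{lim}}^{1}$ of the dual filtration, versus via locally profinite completions); everything else is a formal consequence of Theorems \ref{Theorem:MainProfinite} and \ref{Theorem:trivial}.
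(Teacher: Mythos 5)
Your proof is correct and follows what is surely the intended route: use the ``definable'' full faithfulness of Theorem~\ref{Theorem:MainProfinite} (which rests on the Ulam--stability machinery) to extract the unique $g\in\mathrm{Aut}(\Lambda)$ corresponding to a given definable automorphism $f$ of $\hat{\mathbb{Q}}^d_{\Lambda}/\Lambda$, use the transpose hypothesis to conclude that $\hat{g}^{\mathrm{T}}$ is a topological group automorphism of $\hat{\mathbb{Q}}^d_{\Lambda}$ (its continuous inverse being $\widehat{(g^{-1})^{\mathrm{T}}}=\widehat{(g^{\mathrm{T}})^{-1}}$, which agrees with $(\hat{g}^{\mathrm{T}})^{-1}$ on the dense subgroup $\Lambda$), and then identify $f$ with $\bar{g}^{\mathrm{T}}$. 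You are also right that the one nontrivial verification is the compatibility of the two functorial incarnations of $\Lambda\mapsto\hat{\mathbb{Q}}^d_{\Lambda}/\Lambda$ (the composite $\underleftarrow{\mathrm{lim}}^1\circ\mathrm{Adj}\circ(\underrightarrow{\mathrm{lim}})^{-1}$ followed by the retract identification, versus $g\mapsto\bar{g}^{\mathrm{T}}$): unwinding the definitions, both send a $\mathrm{T}$-homomorphism $g$ to the definable homomorphism induced on an inessential retract by the continuous extension of the restriction of $g^{\mathrm{T}}$, and the paper leaves this implicit in the terse ``As a consequence.'' Your care in isolating and checking exactly that step is the right instinct.
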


\section{The definable content of the Ext functor}\label{S:Ext}

Let $F$ and $B$ be two countable abelian groups. By an \emph{extension $\mathcal{E}$ of $B$ by $F$} we mean any short exact sequence 
\begin{center}
\begin{tikzcd}[ampersand replacement=\&]
  0 \arrow[r]  \& F \arrow[r] 
\& 
E \arrow[r] 
\& 
B \arrow[r] 
\& 0
\end{tikzcd}
\end{center}
of homomorphisms of abelian groups. We call $F$ the \emph{fiber} of the extension $\mathcal{E}$ and we identify it with its image in $E$. We also call $B$ the \emph{base} of the extension $\mathcal{E}$. The extensions $\mathcal{E}$ and $\mathcal{E}'$ are isomorphic if there is a group isomorphism $E\to E'$ which makes the following diagram commute.
\begin{center}
\begin{tikzcd}[ampersand replacement=\&]
 \& 
\& E \arrow[dr] \arrow[dd, , "\simeq ", dotted] 
\& 
\& \\
0 \arrow[r]  \& F \arrow[ur] \arrow[dr]
\& 
\& 
B \arrow[r] 
\& 0 \\
\& 
\& E' \arrow[ur]  
\& 
\& \end{tikzcd}
\end{center}

We denote by  $\mathrm{Ext}(B,F)$ the collection of all isomorphism classes of extensions of $B$ by $F$. The fact that $\mathrm{Ext}(B,F)$ admits an abelian group structure goes back to Schreier and Baer \cite{Schreier1926,Baer1934}. The induced bifunctor $\mathrm{Ext}$  is the first derived functor of the bifunctor $\mathrm{Hom}$,  and  it has played a fundamental role in the development of homological algebra since the seminal  work \cite{eilenberg_group_1942}  of Eilenberg and MacLane, where it was shown that classical invariants from algebraic topology can  be defined, and related, in terms of $\mathrm{Ext}$. 

While $\mathrm{Ext}(B,F)$ carries a natural group topology, in standard treatments of  $\mathrm{Ext}(B,F)$ this topology is ignored, and $\mathrm{Ext}(B,F)$ is  treated as a discrete group. One of the reasons may be that this natural topology is not always Hausdorff. This fact was already understood by 
 Eilenberg and MacLane in \cite{eilenberg_group_1942}, where  $\mathrm{Ext}(B,F)$  was termed a ``generalized topological group" whenever the natural topology was not Hausdorff. In this section we show how to view $\mathrm{Ext}$  as a bifunctor from the category of countable abelian groups to the category of \emph{abelian groups with a Polish cover}. The resulting bifunctor \emph{definable} $\mathrm{Ext}$ is a much stronger invariant for classifying algebraic systems. For example, for every $d,d'\in\mathbb{N}$ we have that 
  we have that  $\mathrm{Ext}(\mathbb{Q}^d,\mathbb{Z})$ and $\mathrm{Ext}(\mathbb{Q}^{d'},\mathbb{Z})$ are isomorphic but not definably isomorphic. More generally, the following is the main theorem of this section, whose  comparison with  Corollaries \ref{Corollary:iso-classification-Ext}, \ref{Corollary:smooth-classification-Ext}, and  \ref{Corollary:nonisomorphic-Ext}, illustrates how much coarser the usual $\mathrm{Ext}(\cdot,\mathbb{Z})$ functor is as a group invariant.

\begin{theorem}\label{Theorem:ExtMain}
The definable  $\mathrm{Ext}(\cdot,\mathbb{Z})$ functor is fully faithful when restricted to the category of finite rank torsion-free abelian groups with no free summands.
\end{theorem}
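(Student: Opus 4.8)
The plan is to relate the definable $\mathrm{Ext}(-,\mathbb{Z})$ functor on finite-rank torsion-free abelian groups with no free summands to the definable $\underleftarrow{\mathrm{lim}}^1$-functor on filtrations, via a definable version of Jensen's theorem, and then invoke Theorem~\ref{Theorem:MainProfinite}. Concretely, fix $\Lambda$ in $\mathrm{Groups}_+(\mathbb{Z}^*,\mathbb{Q}^*)$ with $\mathbb{Z}^d\leq\Lambda\leq\mathbb{Q}^d$, and let $\boldsymbol{\Lambda}_{\mathrm{co}}=(\Lambda_{(m)})$ be the associated cofiltration so that $\Lambda=\bigcup_m\Lambda_{(m)}$ with each $\Lambda_{(m)}\cong\mathbb{Z}^d$ finitely generated. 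Applying $\mathrm{Hom}(-,\mathbb{Z})$ to the direct system $(\Lambda_{(m)})$ yields a tower $(\mathrm{Hom}(\Lambda_{(m)},\mathbb{Z}))$ of finitely generated free abelian groups, which by Lemma~\ref{Lemma:dual-perp}(1) is naturally the dual filtration $\boldsymbol{\Lambda}$. The first step is to establish a \emph{definable Jensen isomorphism}
\[
\mathrm{Ext}(\Lambda,\mathbb{Z})\;\cong\;\underleftarrow{\mathrm{lim}}^{1}\,\boldsymbol{\Lambda}
\]
as groups with a Polish cover, functorially in $\Lambda$. The classical statement is that for $\Lambda=\underrightarrow{\mathrm{lim}}\,\Lambda_{(m)}$ there is a short exact sequence $0\to\underleftarrow{\mathrm{lim}}^1\mathrm{Hom}(\Lambda_{(m)},\mathbb{Z})\to\mathrm{Ext}(\Lambda,\mathbb{Z})\to\underleftarrow{\mathrm{lim}}\,\mathrm{Ext}(\Lambda_{(m)},\mathbb{Z})\to 0$, and since each $\Lambda_{(m)}$ is free the right-hand term vanishes. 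What must be done here is to check that the presentation of $\mathrm{Ext}(\Lambda,\mathbb{Z})$ as $\mathrm{Z}(\Lambda,\mathbb{Z})/\mathrm{B}(\Lambda,\mathbb{Z})$ described in the introduction matches, up to definable isomorphism, the presentation $\mathrm{Z}(\boldsymbol{\Lambda})/\mathrm{B}(\boldsymbol{\Lambda})$ of $\underleftarrow{\mathrm{lim}}^1\boldsymbol{\Lambda}$ from Definition~\ref{Definition:lim1}: one reads off a $1$-cocycle representing an extension class from the inverse-limit data and verifies that the connecting maps are realized by continuous (indeed Borel) lifts, so that isomorphism of towers on one side corresponds to definable isomorphism on the other. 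This is essentially the content of a Mittag--Leffler/$\mathrm{Ext}$ bookkeeping argument carried out with attention to topology; the cleanest route is to exhibit explicit Borel lifts $\mathrm{Z}(\Lambda,\mathbb{Z})\to\mathrm{Z}(\boldsymbol{\Lambda})$ and back, each constant on cosets of the respective coboundary groups.

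Granting the definable Jensen isomorphism, the rest follows formally. By Lemma~\ref{Lemma:dual-perp} and Lemma~\ref{Lemma:Adj} the assignment $\Lambda\mapsto\boldsymbol{\Lambda}$ is (via $\mathrm{coFil}(\mathbb{Z}^*)$) the contravariant equivalence $\mathrm{Adj}\circ(\underrightarrow{\mathrm{lim}})^{-1}$ from $\mathrm{Groups}_+(\mathbb{Z}^*,\mathbb{Q}^*)$ onto $\mathrm{Fil}(\mathbb{Z}^*)$; by Corollary~\ref{Corollary:lim1-injective} the functor $\boldsymbol{\Lambda}\mapsto\underleftarrow{\mathrm{lim}}^1\boldsymbol{\Lambda}$ from $\mathrm{Fil}(\mathbb{Z}^*)$ to groups with a Polish cover is fully faithful; composing, the functor $\Lambda\mapsto\mathrm{Ext}(\Lambda,\mathbb{Z})$ is a fully faithful contravariant functor, which is exactly Theorem~\ref{Theorem:ExtMain} (and matches the composite displayed in Theorem~\ref{Theorem:MainProfinite}). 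Here I use the remark in the excerpt that the inclusion of $\mathrm{Groups}_+(\mathbb{Z}^*,\mathbb{Q}^*)$ into the category of finite-rank torsion-free abelian groups with no free summands is itself fully faithful and essentially surjective, so it suffices to prove the statement for the subcategory of subgroups of $\mathbb{Q}^d$.

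The main obstacle I expect is the first step: producing the definable (not merely abstract) Jensen isomorphism with full naturality. Two technical points deserve care. First, one must check that the natural topology on $\mathrm{Ext}(\Lambda,\mathbb{Z})$ coming from its cocycle presentation genuinely coincides, as a group with a Polish cover, with the one transported from $\underleftarrow{\mathrm{lim}}^1\boldsymbol{\Lambda}$; the $\mathrm{Hom}$-tower $(\mathrm{Hom}(\Lambda_{(m)},\mathbb{Z}))$ depends on a choice of cofiltration, so one must verify independence up to definable isomorphism using congruence of inv-maps. Second, one must confirm that a morphism $\Lambda'\to\Lambda$ induces, after passing to $\mathrm{Hom}(-,\mathbb{Z})$-towers, precisely the transpose tower map $f^{\mathrm{T}}$ of Lemma~\ref{Lemma:Adj}, so that faithfulness and fullness transfer correctly along the chain of functors — in particular that no extra morphisms appear and none are lost. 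Once these compatibilities are nailed down, everything else is an application of results already proved above, so I would organize the section as: (i) recall the cocycle presentation of $\mathrm{Ext}(\Lambda,\mathbb{Z})$; (ii) prove the definable Jensen lemma identifying it with $\underleftarrow{\mathrm{lim}}^1\boldsymbol{\Lambda}$ naturally; (iii) deduce Theorem~\ref{Theorem:ExtMain} by composing with Theorem~\ref{Theorem:MainProfinite}; (iv) record the corollaries on classification of extensions.
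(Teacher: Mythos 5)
Your proposal is correct and follows essentially the same route as the paper: reduce to the subcategory $\mathrm{Groups}_{+}(\mathbb{Z}^{*},\mathbb{Q}^{*})$, establish a definable Jensen isomorphism $\mathrm{Ext}(\Lambda,\mathbb{Z})\cong\underleftarrow{\mathrm{lim}}^{1}\boldsymbol{\Lambda}$ natural in $\Lambda$, and conclude via Lemma~\ref{Lemma:dual-perp}, Lemma~\ref{Lemma:Adj}, Corollary~\ref{Corollary:lim1-injective}, and Theorem~\ref{Theorem:MainProfinite}. The only small difference is that the paper establishes the definable Jensen step (Theorem~\ref{Theorem:Jensen}) via Lemma~\ref{Lemma:EML} and the pure short exact sequence $0\to\oplus_{n}B_{(n)}\overset{\delta}{\to}\oplus_{n}B_{(n)}\to B\to 0$ rather than by appealing to the Milnor exact sequence, but these amount to the same construction.
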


Let $F$ and $B$ be countable abelian groups. One may describe any extension $E$ of $B$ by $F$ directly using $F$ and $B$ alone. Indeed, if $s\colon B\to E$ is a section of the epimorphism $E\to B$ the multiplication table of $E$ is entirely determined by the unique function $c_s\colon B\times B\to F$ which satisfies
\[s(x)+s(y)=s(x+y)+c_s(x,y).\]
If $t\colon B\to E$ is a another section of the same epimorphism, then the 
function $h\colon B\to F$ with $h(x)=s(x)-t(x)$ is a witness to the fact that $c_s$ and $c_t$ represent the same extension, which is  entirely defined in terms of $F$and $B$. These observations give rise to the following concrete description of $\mathrm{Ext}(B,F)$ and related group $\mathrm{PExt}(B,F)$; see  \cite{eilenberg_group_1942,fuchs_infinite_1970,schochet_pext_2003}.\begin{itemize}
\item $\mathrm{Z}(B,F)$ is the closed subgroup of the abelian Polish group $F^{B\times B}$, consisting of all cocycles on $B$ with coefficients in $F$.  The topology on $F^{B\times B}$ is the product topology and $F$ is the discrete group. By a \emph{cocycle on $B$ with coefficients in $F$} we mean any function $c:B\times B\rightarrow F$ so  that  for all $x,y,z \in B$ we have: 
\begin{enumerate}
\item $c( x,0) =c( 0,y) =0$;
\item $c( x,y) +c( x+y,z) =c( x,y+z)+c( y,z)$;
\item $c( x,y) =c( y,x)$, for all $x,y \in B$.
\end{enumerate}
\item  $C(B,F)$ is the abelian Polish group $F^{B}$, where $F$ is endowed with the discrete topology and $F^{B}$ with the product topology. We have a continuous group homomorphism  $\delta\colon C(B,F) \to \mathrm{Z}(B,F)$, given by:
\[\delta(h)(x,y):= h(x)+h(y)-h(x+y).\]
\item $\mathrm{B}(B,F)$ is the Polishable Borel subgroup $\delta(C(B,F))$ of  $\mathrm{Z}(B,F)$; see Lemma \ref{Lemma:Borel-inverse}. Explicitly:
\[\mathrm{B}(B,F):=\big\{ c\in \mathrm{Z}(B,F) \colon c(x,y)=h(x)+h(y)-h(x+y),  \text{ for some } h\in C(B,F)\big\}.\]
A \emph{coboundary on $B$ with coefficients
in $F$} is a cocycle $c:B\times B\rightarrow F$, which lies in $\mathrm{B}(B,F)$.
\item $\mathrm{B}_{\mathrm{w}}(B,F)$ is the closed subgroup of  $\mathrm{Z}(B,F)$ consisting of all  $c\in 
\mathrm{Z}(C,A)$ with  $(c\upharpoonright S\times S)\in \mathrm{B}(S,F)$, for every finite subgroup $S$ of $B$.  A \emph{weak coboundary on $B$ with coefficients in $F$} is any element of  $\mathrm{B}_{\mathrm{w}}(B,F)$.
\end{itemize}

\begin{definition}
Let $F$ and $B$ be countable abelian groups. 
\begin{enumerate}
\item Let  $\mathrm{Ext}(B,F):= \mathrm{Z}(B,F) /\mathrm{B}(B,F)$ be  the group with a Polish cover  of all \emph{extensions of $B$ by $F$};
\item Let $\mathrm{PExt}(B,F):= \mathrm{B}_{\mathrm{w}}(B,F) /\mathrm{B}(B,F)$ be the group with a Polish cover of all \emph{pure extensions of $B$ by $F$};
\item Let $\mathrm{Ext}_{\mathrm{w}}(B,F):= \mathrm{Z}(B,F) /\mathrm{B}_{\mathrm{w}}(B,F)$ be the Polish group of all \emph{weak extension classes of $B$ by $F$}.
\end{enumerate}
\end{definition}
To justify the terminology in point (2) above,  recall that a subgroup $F$ of $E$ is called \emph{pure} if for all $\ell\in\mathbb{Z}$ we have that $F\cap (\ell E)=\ell F$. A well known property about extensions $0\to F\to E \to S\to 0$ of finite abelian groups $S$ is that they are trivial (that is, they correspond to coboundaries), if and only if $F$ is a pure subgroup of $E$; see \cite[Chapter V]{fuchs_infinite_1970}. Since $\mathrm{Ext}_{\mathrm{w}}(B,F)$ is a Polish group, the non-trivial definable content of $\mathrm{Ext}(B,F)$ concentrates in $\mathrm{PExt}(B,F)$. In fact, notice that under the assumption of Theorem \ref{Theorem:ExtMain}  that $B$ is  torsion-free, we have that every extension of $B$ by $F$ is pure. That is,  $\mathrm{Ext}(B,F)=\mathrm{PExt}(B,F).$ We can therefore concentrate on analyzing the definable content of $\mathrm{PExt}(B,F)$.

 It is not difficult
to see that $\mathrm{Ext}( -,-) $ and $\mathrm{PExt}(
-,-) $ are additive bifunctors from the category of countable abelian
groups to the category of abelian groups with a Polish cover,
which are contravariant in the first coordinate and covariant in the second
coordinate. Similarly, $\mathrm{Ext}_{\mathrm{w}}( -,-) $ is a
bifunctor to the category of Polish groups which is contravariant in the first
argument and covariant in the second argument. The next  lemma  expresses the groups with a Polish cover associated to  $\mathrm{PExt}(
-,-)$ in terms of the bifunctor $\mathrm{Hom}(
-,-)$.

\begin{lemma}
\label{Lemma:EML} Let $F$, $B$, and $G$ be countable abelian groups. Given any pure extension $E$ of $B$ by $F$:
\begin{equation*}
0\longrightarrow F\overset{f}{\longrightarrow }E\longrightarrow
B\longrightarrow 0,
\end{equation*}
with  $\mathrm{Pext}(E,G) =0$, the group with a Polish cover  $\mathrm{PExt}(
B,G)$ is definably isomorphic to
\begin{equation*}
\frac{\mathrm{Hom}(F,G) }{f^{\ast }( \mathrm{Hom}(
F,G)) }\text{,}
\end{equation*}
where $f^{*}:\mathrm{Hom}(E,G)\to\mathrm{Hom}(F,G)$ is the image of $f\colon F\to E$ under the contravariant functor $\mathrm{Hom}(-,G)$.
\end{lemma}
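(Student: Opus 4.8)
The plan is to deduce Lemma \ref{Lemma:EML} from the long exact sequence in $\mathrm{Hom}$ and $\mathrm{Ext}$ associated to the short exact sequence $0\to F\to E\to B\to 0$, upgraded to the category of groups with a Polish cover. First I would recall that applying the contravariant functor $\mathrm{Hom}(-,G)$ to the extension $\mathcal E$ yields the standard six-term exact sequence
\begin{equation*}
0\to \mathrm{Hom}(B,G)\to \mathrm{Hom}(E,G)\xrightarrow{f^{*}} \mathrm{Hom}(F,G)\xrightarrow{\partial} \mathrm{Ext}(B,G)\to \mathrm{Ext}(E,G)\to \mathrm{Ext}(F,G)\to 0,
\end{equation*}
where $\partial$ is the connecting homomorphism. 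I would observe that all the maps here are not merely group homomorphisms but \emph{definable} homomorphisms between the groups with a Polish cover introduced above: the maps induced by $f$ and by $E\to B$ are induced by precomposition of cocycles/cochains, hence lift to continuous homomorphisms on the $\mathrm{Z}(-,-)$ and $C(-,-)$ level, and the connecting map $\partial$ admits a Borel lift built from any Borel section $B\to E$ (which exists since $E\to B$ is a surjection of countable discrete groups, so it even has an everywhere-defined section). So the six-term sequence is an exact sequence in the category of groups with a Polish cover, and in particular is exact at the level of the underlying abstract groups.

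Next I would restrict to the subgroups of pure extensions. The key point is that, since $B$ is torsion-free — wait, actually the lemma only assumes $E$ is a pure extension of $B$ by $F$ and $\mathrm{Pext}(E,G)=0$; I would not assume $B$ torsion-free here. The right move is: the connecting map $\partial\colon \mathrm{Hom}(F,G)\to \mathrm{Ext}(B,G)$ actually lands inside $\mathrm{PExt}(B,G)=\mathrm{B}_{\mathrm w}(B,G)/\mathrm{B}(B,G)$. This is the classical fact (Eilenberg–MacLane) that the image of $\partial$ consists exactly of the pure extension classes — equivalently, a cocycle obtained by transporting a homomorphism $F\to G$ along a section of $E\to B$ restricts to a coboundary on every finite subgroup of $B$. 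I would verify this directly on cocycle representatives: given $\varphi\in\mathrm{Hom}(F,G)$ and a section $s\colon B\to E$, the cocycle $(x,y)\mapsto \varphi(c_s(x,y))$, restricted to a finite subgroup $S\le B$, is a coboundary because the pullback extension of $S$ by $F$ along $\varphi$ is again pure (purity is inherited by restriction to subgroups and pushout), hence split on $S$. Thus $\partial$ factors as a definable surjection $\mathrm{Hom}(F,G)\twoheadrightarrow \mathrm{PExt}(B,G)$ with kernel $\operatorname{ran}(f^{*})$.

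Finally I would conclude: from exactness at $\mathrm{Hom}(F,G)$ we get $\ker(\partial)=\operatorname{ran}(f^{*})$, and the hypothesis $\mathrm{Pext}(E,G)=0$ together with the exactness just after $\mathrm{PExt}(B,G)$ in the refined sequence (the map $\mathrm{PExt}(B,G)\to \mathrm{PExt}(E,G)$ being zero onto the trivial group) shows that $\partial$ restricted to the pure part is surjective onto $\mathrm{PExt}(B,G)$. Hence the definable homomorphism
\begin{equation*}
\mathrm{Hom}(F,G)/f^{*}(\mathrm{Hom}(E,G))\longrightarrow \mathrm{PExt}(B,G)
\end{equation*}
induced by $\partial$ is a definable bijection; by the remark following Definition \ref{Def:definable} (an application of Lemma \ref{Lemma:Borel-inverse2}), a definable bijective homomorphism between groups with a Polish cover is a definable isomorphism, which finishes the proof. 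The main obstacle I anticipate is the bookkeeping to confirm that the connecting homomorphism $\partial$ is genuinely definable, i.e. that one can choose a \emph{Borel} (in fact, since everything is countable discrete, an arbitrary) lift $B\to E$ of the quotient map and that the resulting cocycle-valued map $\mathrm{Hom}(F,G)\to \mathrm{Z}(B,G)$ is Borel with image landing in $\mathrm{B}_{\mathrm w}(B,G)$ — checking this image claim is the one genuinely non-formal step, and it is exactly the purity computation sketched above. Everything else is a transcription of the classical long exact sequence into the definable category, using that the category of abelian groups with a Polish cover is additive and that all the structural maps lift either continuously or Borel-measurably.
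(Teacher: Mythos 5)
Your proposal is correct and follows the same overall route as the paper's proof, which also hinges on the six-term exact sequence of $\mathrm{Hom}$ and $\mathrm{PExt}$ and on the vanishing of $\mathrm{PExt}(E,G)$ to turn the connecting map into a surjection with kernel $\operatorname{ran}(f^{*})$. The difference is one of packaging. You begin with the ordinary $\mathrm{Ext}$ six-term sequence and then argue, via facts about purity being preserved under pullback along the quotient $E\to B$ and pushout along $\varphi\colon F\to G$, that the connecting map lands in $\mathrm{B}_{\mathrm w}(B,G)/\mathrm{B}(B,G)$ and that the restricted sequence is still exact; this amounts to re-deriving the relative-$\mathrm{Ext}$ long exact sequence for the proper class of pure exact sequences. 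The paper simply cites Fuchs (Theorem 53.7 of \cite{fuchs_infinite_1970}) for the $\mathrm{PExt}$ six-term sequence and thereby avoids the intermediate purity bookkeeping. The paper also gives a slightly cleaner handle on definability: fixing once and for all a weak coboundary $c_{E}\in\mathrm{B}_{\mathrm w}(B,F)$ representing the pure extension $E$, the connecting map lifts to the \emph{continuous} group homomorphism $\eta\mapsto\eta\circ c_{E}$ from $\mathrm{Hom}(F,G)$ to $\mathrm{B}_{\mathrm w}(B,G)$, rather than to a merely Borel map built from a set-theoretic section. Your closing appeal to Lemma \ref{Lemma:Borel-inverse2} to promote the definable bijection to a definable isomorphism matches the paper's remark after Definition \ref{Def:definable}. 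One small inaccuracy: your phrase ``the image of $\partial$ consists exactly of the pure extension classes'' is too strong in general — without the hypothesis $\mathrm{PExt}(E,G)=0$ the image can be a proper subgroup of $\mathrm{PExt}(B,G)$ — but this does not affect your argument, since you correctly invoke $\mathrm{PExt}(E,G)=0$ for surjectivity later.
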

\begin{proof}
Let  $c_{E}\in \mathrm{B}_{\mathrm{w}}(B,F)$ be a weak coboundary representing the pure extension $E$. Notice that  $c_{E}$ induces a continuous group homomorphism $
\left( c_{E}\right) ^{\ast }:\mathrm{Hom}(F,G) \rightarrow 
\mathrm{B}_{\mathrm{w}}(B,G) $ defined by
\begin{equation*}
( c_{E}) ^{\ast }( \eta ) =\eta \circ c_{E}\text{.}
\end{equation*}
This induces a the group homomorphism $E^{\ast }:\mathrm{Hom}( F,G)
\rightarrow \mathrm{PExt}( B,G) $ given by%
\begin{equation*}
\left( E^{\ast }\right)( \eta ) =\eta \circ c_{E}+\mathrm{B}%
(B,F).
\end{equation*}
As shown in \cite[Theorem 53.7]{fuchs_infinite_1970}, $E^{\ast }$ defines the connecting morphism in the exact sequence%
\begin{equation*}
0\rightarrow \mathrm{Hom}\left(B,G\right)\rightarrow \mathrm{Hom}\left(
E,G\right) \rightarrow \mathrm{Hom}\left( F,G\right) \overset{E^{\ast }}{\rightarrow }\mathrm{PExt}\left( B,G\right) \rightarrow \mathrm{PExt}\left(
E,G\right) \rightarrow \mathrm{PExt}\left( F,G\right) \rightarrow 0.
\end{equation*}
The rest follows by the assumption that $\mathrm{PExt}(E,G) =0$.
\end{proof}

By an \emph{inductive sequence of countable abelian groups} we mean a collection  $(B_{(n)},\eta _{\left( n+1,n\right)
})_{n\in\omega}$ of homomorphisms  $\eta _{\left( n+1,n\right)
}:B_{(n)}\rightarrow B_{(n+1)}$ between countable abelian groups. As in the case of towers in Section \ref{Section:towers}, we also consider the induced maps $\eta _{\left( m,n\right)
}:B_{(n)}\rightarrow B_{(m)}$ for all $n\leq m$, where $\eta _{\left( n,n\right)}=\mathrm{id}_{B_{(n)}}$, and $\eta_{\left( m,n\right)}:=\eta_{\left( m,m-1\right)} \circ \cdots\circ \eta_{\left( n+1,n\right)}$ if $m>n$. Notice that cotowers are special types of inductive sequences. \emph{Morphisms of inductive sequences} are defined similarly to the cotower maps; see Section   \ref{Section:locally-profinite}. Notice that when $B,G$ are countable abelian groups then $\mathrm{Hom}(B,G) $ is a Polish abelian group. 
Moreover, since $\mathrm{Hom}(
-,-) $ is contravariant in the first argument, any inductive sequence 
\[B_0\rightarrow B_1\rightarrow B_2 \rightarrow\cdots\]
 of countable abelian groups, induces a tower of  Polish abelian groups:  
\[\mathrm{Hom}(B_{0},G) \leftarrow \mathrm{Hom}(B_{1},G) \leftarrow \mathrm{Hom}(B_{2},G) \leftarrow\cdots\]
The following theorem is essentially due to  C.U. Jensen; see \cite%
[Theorem 6.1]{schochet_pext_2003} and \cite{jensen_foncteurs_1972}. 

\begin{theorem}[Jensen]
\label{Theorem:Jensen} Let $G$ be a countable abelian group and let $\left(B_{(n)},\eta _{( n+1,n)}\right)$ be an inductive sequence of countable abelian groups with $\mathrm{PExt}(B_{(n)},G) =0$ for all $n\in\omega$. Then the groups with a Polish cover $\underleftarrow{\mathrm{lim}}^{1}( \mathrm{Hom}(B_{(n)},G)) $ and $\mathrm{PExt}(\underrightarrow{\mathrm{colim}}_{n}B_{(n)},G)$ are naturally definably isomorphic.
\end{theorem}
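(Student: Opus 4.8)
The plan is to reduce Jensen's theorem to the already-established Lemma \ref{Lemma:EML} together with the functoriality of $\underleftarrow{\mathrm{lim}}^1$ on towers of abelian Polish groups (Proposition \ref{Prop:lim^1PolishCover}). First I would set $B := \underrightarrow{\mathrm{colim}}_n B_{(n)}$ and record the standard algebraic fact that, since $\mathrm{Hom}(-,-)$ is contravariant and continuous in the first variable in the appropriate sense, we have a natural identification $\mathrm{Hom}(B,G) = \underleftarrow{\mathrm{lim}}_n \mathrm{Hom}(B_{(n)},G)$ as Polish abelian groups. The key is then to compare two presentations of $\mathrm{PExt}(B,G)$: on the one hand the presentation $\mathrm{B}_{\mathrm{w}}(B,G)/\mathrm{B}(B,G)$ coming from its definition, and on the other hand the presentation $\mathrm{Z}(\boldsymbol{H})/\mathrm{B}(\boldsymbol{H})$ where $\boldsymbol{H}$ is the tower $\mathrm{Hom}(B_{(0)},G) \leftarrow \mathrm{Hom}(B_{(1)},G) \leftarrow \cdots$.

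The main engine is Lemma \ref{Lemma:EML}. Choose, for each $n$, a pure extension $E_{(n)}$ of $B_{(n)}$ by some fiber $F_{(n)}$ with $\mathrm{PExt}(E_{(n)},G)=0$ — for instance a projective presentation will do since projective (= free) abelian groups $E_{(n)}$ satisfy $\mathrm{PExt}(E_{(n)},-)=0$, though one must check purity; alternatively use that $B_{(n)}$ torsion-free forces every extension to be pure. Then Lemma \ref{Lemma:EML} identifies $\mathrm{PExt}(B_{(n)},G)$ definably with $\mathrm{Hom}(F_{(n)},G)/f_{(n)}^*(\mathrm{Hom}(E_{(n)},G))$, i.e. with the cokernel of $f_{(n)}^*\colon \mathrm{Hom}(E_{(n)},G)\to \mathrm{Hom}(F_{(n)},G)$. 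Since each $\mathrm{PExt}(B_{(n)},G)=0$ by hypothesis, this cokernel vanishes, so $f_{(n)}^*$ is surjective; feeding the short exact sequences $0\to\mathrm{Hom}(B_{(n)},G)\to\mathrm{Hom}(E_{(n)},G)\to\mathrm{Hom}(F_{(n)},G)\to 0$ of Polish groups into the snake-lemma computation of $\underleftarrow{\mathrm{lim}}^1$ of towers (using that $\underleftarrow{\mathrm{lim}}^2$ vanishes on towers, as recorded before Definition \ref{Definition:lim1}) yields a long exact sequence relating $\underleftarrow{\mathrm{lim}}^1(\mathrm{Hom}(B_{(n)},G))$ to $\mathrm{PExt}(B,G)$.

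Concretely, I would organize it as follows. First, express $\mathrm{PExt}(B,G)$ as the cokernel of a map of Polish groups $\mathrm{Hom}(E,G)\to\mathrm{Hom}(F,G)$ using a pure presentation $0\to F\to E\to B\to 0$ with $\mathrm{PExt}(E,G)=0$, which exists since $B$ is a colimit of groups with that property (take a compatible system of presentations, or a free presentation of $B$ itself, noting $B$ inherits the relevant purity/torsion-freeness from the $B_{(n)}$). Then observe that $\mathrm{Hom}(F,G)=\underleftarrow{\mathrm{lim}}\,\mathrm{Hom}(F_{(n)},G)$ and $\mathrm{Hom}(E,G)=\underleftarrow{\mathrm{lim}}\,\mathrm{Hom}(E_{(n)},G)$ compatibly, so that $\mathrm{PExt}(B,G)=\mathrm{coker}\big(\underleftarrow{\mathrm{lim}}\,f_{(n)}^* \big)$. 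Since each $f_{(n)}^*$ is onto with Polishable kernel $\mathrm{Hom}(B_{(n)},G)$, the cokernel of the limit map is precisely $\underleftarrow{\mathrm{lim}}^1$ of the tower of kernels $\mathrm{Hom}(B_{(n)},G)$ — this is the defining property of $\underleftarrow{\mathrm{lim}}^1$ as the obstruction to surjectivity of $\underleftarrow{\mathrm{lim}}$ applied to a short exact sequence of towers. Crucially, each identification in this chain is not merely an abstract isomorphism but a \emph{definable} one: the maps $f_{(n)}^*$ are continuous, the section of $\underleftarrow{\mathrm{lim}}\,f_{(n)}^*$ onto its range is Borel by Lemma \ref{Lemma:Borel-inverse} (or Lemma \ref{Lemma:Borel-inverse2}), and Lemma \ref{Lemma:EML} is already stated as a definable isomorphism; naturality in $G$ and in the inductive sequence follows because every construction used (the connecting map $E^*$, the $\mathrm{Hom}$-dualization, the $\underleftarrow{\mathrm{lim}}^1$ functor) is natural, by the respective lemmas and the remarks accompanying them.

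The hard part will be the bookkeeping that makes every step \emph{definable} rather than merely algebraic, and in particular verifying that the isomorphism $\mathrm{Z}(\boldsymbol{H})/\mathrm{B}(\boldsymbol{H}) \cong \mathrm{coker}(\underleftarrow{\mathrm{lim}}\,f_{(n)}^*)$ is witnessed by a Borel lift — this is where Lemma \ref{Lemma:Borel-inverse2} must be invoked carefully, with $A_0 = \mathrm{B}(\boldsymbol{H})$, $B_0 = \mathrm{B}(B,G)$ (or $\mathrm{B}_{\mathrm{w}}$), to transport cocycle representatives back and forth while preserving the coboundary relation up to the subgroups in question. A secondary subtlety is choosing the presentations $E_{(n)}$ compatibly across $n$ so that they assemble into a single presentation of the colimit $B$; one clean way around this is to avoid choosing them compatibly and instead argue directly at the level of towers, invoking Lemma \ref{Lemma:EML} levelwise and then applying $\underleftarrow{\mathrm{lim}}^1$ to the resulting short exact sequence of towers of Polish groups, using that $\underleftarrow{\mathrm{lim}}^1$ of the (levelwise exact, Mittag-Leffler) tower $\mathrm{Hom}(E_{(n)},G)$ vanishes by Lemma \ref{Lemma:ML} since the transition maps there are the restrictions $\eta_{(n+1,n)}^*$ which need not be surjective — so in fact one should instead run the long exact sequence of $\underleftarrow{\mathrm{lim}}^n$ and use the vanishing of $\mathrm{PExt}(B_{(n)},G)$ to collapse it, which is the cleanest route and is exactly the content of Jensen's original argument adapted to the definable setting.
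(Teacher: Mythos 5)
Your plan correctly identifies Lemma~\ref{Lemma:EML} as the engine, but the specific presentations you reach for do not satisfy its hypotheses, and the alternative route you sketch has obstructions you flag but do not resolve. Concretely: if you take a free presentation $0\to F\to E\to B\to 0$ of the colimit $B$, the subgroup $F$ need not be pure in $E$ (already $0\to 2\mathbb{Z}\to\mathbb{Z}\to\mathbb{Z}/2\to 0$ fails), and the theorem is stated for arbitrary countable abelian $B_{(n)}$, not torsion-free ones, so you cannot assume $B$ inherits torsion-freeness. Your second route --- choosing levelwise presentations $0\to F_{(n)}\to E_{(n)}\to B_{(n)}\to 0$, dualizing, and running a six-term $\underleftarrow{\mathrm{lim}}$/$\underleftarrow{\mathrm{lim}}^1$ sequence --- needs (i) the $E_{(n)}$ to assemble into a tower compatibly with the $\eta_{(n+1,n)}$, which does not come for free, and (ii) $\underleftarrow{\mathrm{lim}}^1\mathrm{Hom}(E_{(n)},G)=0$, which you yourself observe is not automatic since the bonding maps $\eta_{(n+1,n)}^*$ need not be surjective. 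You never close either of these holes, ending instead with a gesture at ``Jensen's original argument.''

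The missing idea is the telescope (mapping cone) presentation: the single short exact sequence
\begin{equation*}
0\longrightarrow \oplus_n B_{(n)} \overset{\delta}{\longrightarrow} \oplus_n B_{(n)} \longrightarrow B \longrightarrow 0,\qquad
\delta(b e_n)=b e_n-\eta_{(n+1,n)}(b)e_{n+1}.
\end{equation*}
This is what the paper uses. Its virtues are exactly what your plan lacks: injectivity and purity are checked directly by an elementary coordinate-by-coordinate argument (so no torsion-freeness is needed), and the fiber and total group are both $\oplus_n B_{(n)}$, so $\mathrm{PExt}(\oplus_n B_{(n)},G)\cong\prod_n\mathrm{PExt}(B_{(n)},G)=0$ follows at once from the hypothesis, putting you squarely in the scope of Lemma~\ref{Lemma:EML}. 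One application of that lemma gives $\mathrm{PExt}(B,G)\cong\mathrm{Hom}(\oplus_n B_{(n)},G)/\delta^*(\mathrm{Hom}(\oplus_n B_{(n)},G))$, and the final step is a direct Borel (in fact continuous, using Remark~\ref{Remark:limConsecutive}) identification of this quotient with $\mathrm{Z}(\boldsymbol{A})/\mathrm{B}(\boldsymbol{A})$ where $\boldsymbol{A}$ is the tower $\mathrm{Hom}(B_{(n)},G)$. This sidesteps both your compatibility problem and the $\underleftarrow{\mathrm{lim}}^1$ vanishing you needed, and it also makes the definability claim immediate, so Lemma~\ref{Lemma:Borel-inverse2} is not actually required.
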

\begin{proof}
 Set $B:=\underrightarrow{\mathrm{colim}}%
_{n}B_{(n)}$ and let $\oplus_{n}B_{(n)}$ denote the direct sum of the groups $B_{(n)}$, indexed by $n\in\omega$:
\begin{equation*}
\bigoplus_{n \in \omega }B_{(n)}\text{.}
\end{equation*}
For every $n \in \omega $ and $b\in B_{(n)}$, let $be_{n }$ 
be the element of $\oplus_{n}B_{(n)}$ with all the coordinates equal to $0$ apart
from the $n $th coordinate, which is equal to $b$.
Notice that the homomorphism $\delta\colon \oplus_{n}B_{(n)}\to\oplus_{n}B_{(n)}$ which is defined on the generators $be_{n }$ of $\oplus_{n}B_{(n)}$ by $\delta(be_{n })=be_{n }-\eta_{(n+1,n)}(b)e_{n+1}$ is injective. Indeed, every  $b\in\oplus_{n}B_{(n)}$ is of the form  
\begin{equation}\label{eq:2}
b=b_0e_{n_0}+b_1e_{n_1}+\cdots+ b_ke_{n_k},  \text{ with } n_0<n_1<\cdots<n_k.
\end{equation}
But then, $\delta(b)=0$ implies that $b_0=0$; and inductively for each $i<k$,  $\delta(b)=0$, $b_0=0,\ldots, b_i=0$, imply $b_{i+1}=0$. It is also immediate that the cokernel of $\oplus_{n}B_{(n)}$ is $B$. Hence $\delta$ defines an extension of $B$ by $\oplus_{n}B_{(n)}$:
\begin{equation*}
0\longrightarrow \oplus_{n}B_{(n)} \overset{\delta}{\longrightarrow }\oplus_{n}B_{(n)} \longrightarrow B\longrightarrow 0\text{.}
\end{equation*}
This extension is also pure. Indeed,  using (\ref{eq:2}) above and by inducting on $i$  ($0\leq i\leq l$), one shows that if $\delta(b)$ is divisible by $\ell\in\mathbb{Z}$ in  $\oplus_{n}B_{(n)}$, then  each $b_i$  is divisible by $\ell$ in $B_{(n)}$, which implies that $b$ is divisible by $\ell$ in $\oplus_{n}B_{(n)}$.  Moreover, by standard computations (see, \cite[Proposition 3.3.4]{weibel_introduction_1994}) and since $\mathrm{PExt}(B_{(n)},G) =0$, for all $n$,  we have
 \begin{equation*}
\mathrm{PExt}\left( \oplus _{n}B_{(n)},G\right) \cong \prod_{n\in \omega }
\mathrm{PExt}( B_{(n)},G)=0. 
\end{equation*}
 By Lemma \ref{Lemma:EML} we
have that  $\mathrm{PExt}(B,G)$ is definably isomorphic to 
\begin{equation*}
\frac{\mathrm{Hom}(\oplus _{n}B_{(n)},G) }{\delta^{\ast }( 
\mathrm{Hom}( \oplus _{n}B_{(n)},G)) }\text{.}
\end{equation*}
We will show that the latter is definable isomorphic to  $\underleftarrow{\mathrm{lim}}^{1}\,\boldsymbol{A}:= \mathrm{Z}(\boldsymbol{A})/ \mathrm{B}(\boldsymbol{A})$, where
$\boldsymbol{A}$ is the tower of Polish groups  $A^{(n)}:=\mathrm{Hom}(B_{(n)},G)$ and bonding maps $p^{(n,n+1)}:=\mathrm{Hom}(\eta_{(n+1,n)},G)$. Consider the  map
\[F\colon \mathrm{Z}(\boldsymbol{A})\to \mathrm{Hom}(\oplus _{n}B_{(n)},G),\]
which sends $(a_{m_0,m_1})_{m_0\leq m_1}$ to the homomorphism $ ce_n\mapsto a_{n,n+1}(c)$. Clearly $F$ is a continuous and surjective homomorphism with $F(\mathrm{B}(\boldsymbol{A}))=\delta^{\ast }( 
\mathrm{Hom}( \oplus _{n}B_{(n)},G))$. The fact that it is injective follows from  Remark \ref{Remark:limConsecutive}. 
\end{proof}

Notice that every countable abelian group $B$ can be written as an increasing union of finitely-generated abelian groups $B_{(n)}$. By the classification theorem of finitely-generated abelian groups we have that $\mathrm{PExt}(B_{(n)},F)$ vanishes for each $n$. Hence, the above theorem can be used to calculate $\mathrm{PExt}(B,F)$ for every pair of countable abelian groups $B$ and  $F$. In the rest of this section we calculate  $\mathrm{Ext}( \Lambda ,\mathbb{Z}) $
for each finite-rank torsion-free abelian group $\Lambda $. 

Let  $\Lambda $ be a  finite-rank torsion-free abelian group. Notice that $\Lambda$ can be written as a direct sum $\Lambda _{0}\oplus \Lambda _{1}$ where $\Lambda _{0}$ has no free summand and $\Lambda _{1}$ is finitely-generated free abelian group $\mathbb{Z}^m$. We have that 
\begin{equation*}
\mathrm{Ext}( \Lambda ,\mathbb{Z}) =\mathrm{Ext}( \Lambda
_{0},\mathbb{Z}) \oplus \mathrm{Ext}( \Lambda _{1},\mathbb{Z}) =\mathrm{Ext}( \Lambda _{0},\mathbb{Z}) \text{.}
\end{equation*}
 Thus, without any loss of generality, we may  restrict our attention to the case where  $\Lambda$ has no free summand. Recall from Section \ref{Section:locally-profinite} that we have a fully faithful contravariant functor from the category  $\mathrm{Groups}_{+}(\mathbb{Z}^{*},\mathbb{Q}^{*})$ of finite rank torsion-free abelian groups with no free summands to the category of groups with a Polish cover which assigns to each group $\Lambda$ in $\mathrm{Groups}_{+}(\mathbb{Z}^{*},\mathbb{Q}^{*})$ the quotient $\mathbb{\hat{Z}}_{\boldsymbol{\Lambda}}^{d}/\mathbb{Z}^{d}$ of a profinite  completion $\mathbb{\hat{Z}}_{\boldsymbol{\Lambda}}^{d}$ of  $\mathbb{Z}^d$ by  $\mathbb{Z}^{d}$; see Theorem \ref{Theorem:MainProfinite}.
 The following theorem, without the definability claim, is proved for the special case of rank $1$ abelian groups in \cite[Appendix B]{eilenberg_group_1942}

 \begin{theorem}\label{Theorem:TF-Ext}
The functors implementing $\Lambda\mapsto \mathbb{\hat{Z}}_{\boldsymbol{\Lambda}}^{d}/\mathbb{Z}^{d}$ and $\Lambda\mapsto \mathrm{Ext}(\Lambda,\mathbb{Z})$ from the category 
$\mathrm{Groups}_{+}(\mathbb{Z}^{*},\mathbb{Q}^{*})$ to the category of groups with a Polish cover are naturally isomorphic. 
 \end{theorem}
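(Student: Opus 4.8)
The plan is to exhibit the desired natural isomorphism as a composite of natural isomorphisms already established in the text, so that the substantive content is essentially bookkeeping. Recall from Theorem~\ref{Theorem:MainProfinite} that the functor $\Lambda\mapsto\mathbb{\hat{Z}}_{\boldsymbol{\Lambda}}^{d}/\mathbb{Z}^{d}$ is by definition the composite $\underleftarrow{\mathrm{lim}}^{1}\circ\mathrm{Adj}\circ(\underrightarrow{\mathrm{lim}})^{-1}$, so it suffices to produce a natural isomorphism $\mathrm{Ext}(-,\mathbb{Z})\cong\underleftarrow{\mathrm{lim}}^{1}\circ\mathrm{Adj}\circ(\underrightarrow{\mathrm{lim}})^{-1}$ on $\mathrm{Groups}_{+}(\mathbb{Z}^{*},\mathbb{Q}^{*})$.

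First I would reduce to $\mathrm{PExt}$. Every object $\Lambda$ of $\mathrm{Groups}_{+}(\mathbb{Z}^{*},\mathbb{Q}^{*})$ is torsion-free, so as noted after Theorem~\ref{Theorem:ExtMain} every extension of $\Lambda$ by $\mathbb{Z}$ is pure; hence $\mathrm{Ext}(\Lambda,\mathbb{Z})=\mathrm{PExt}(\Lambda,\mathbb{Z})$ as groups with a Polish cover, and this identity of subgroups of $\mathrm{Z}(\Lambda,\mathbb{Z})/\mathrm{B}(\Lambda,\mathbb{Z})$ is visibly natural in $\Lambda$. Next, apply the functor $(\underrightarrow{\mathrm{lim}})^{-1}$ of Remark~\ref{Remark:ChoiceOfInverse} to write $\Lambda=\underrightarrow{\mathrm{lim}}_{m}\Lambda_{(m)}$ for the cofiltration $\boldsymbol{\Lambda}_{\mathrm{co}}=(\Lambda_{(m)})$, each $\Lambda_{(m)}$ being finitely generated free of rank $d$. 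Since $\mathrm{PExt}(\Lambda_{(m)},\mathbb{Z})\subseteq\mathrm{Ext}(\mathbb{Z}^{d},\mathbb{Z})=0$, Jensen's Theorem~\ref{Theorem:Jensen} (with $G=\mathbb{Z}$ and $B_{(n)}=\Lambda_{(n)}$) furnishes a natural definable isomorphism
\[
\mathrm{PExt}(\Lambda,\mathbb{Z})\ \cong\ \underleftarrow{\mathrm{lim}}^{1}\big(\mathrm{Hom}(\Lambda_{(m)},\mathbb{Z}),\ \mathrm{Hom}(\iota_{m},\mathbb{Z})\big),
\]
where $\iota_{m}\colon\Lambda_{(m)}\hookrightarrow\Lambda_{(m+1)}$ denotes the inclusion.

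It then remains to identify the tower on the right with $\mathrm{Adj}(\boldsymbol{\Lambda}_{\mathrm{co}})=\boldsymbol{\Lambda}=\big((\Lambda_{(m)})_{\bot}\big)$. For this I would invoke the naturality clause of Lemma~\ref{Lemma:dual-perp}(1): the isomorphisms $\Phi_{\Lambda_{(m)}}\colon(\Lambda_{(m)})_{\bot}\overset{\sim}{\longrightarrow}\mathrm{Hom}(\Lambda_{(m)},\mathbb{Z})$ satisfy $\mathrm{Hom}(\iota_{m},\mathbb{Z})\circ\Phi_{\Lambda_{(m+1)}}=\Phi_{\Lambda_{(m)}}\circ\iota_{m}^{\mathrm{T}}$, and $\iota_{m}^{\mathrm{T}}$ is precisely the bonding inclusion $(\Lambda_{(m+1)})_{\bot}\hookrightarrow(\Lambda_{(m)})_{\bot}$ of $\boldsymbol{\Lambda}$ (its extension to $\mathbb{Q}^{d}$ being the identity matrix). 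Thus $(\Phi_{\Lambda_{(m)}})_{m}$ is a levelwise isomorphism of towers $\boldsymbol{\Lambda}\to(\mathrm{Hom}(\Lambda_{(m)},\mathbb{Z}))$, natural in $\boldsymbol{\Lambda}_{\mathrm{co}}$ again by Lemma~\ref{Lemma:dual-perp}(1). Applying the additive functor $\underleftarrow{\mathrm{lim}}^{1}$ (Proposition~\ref{Prop:lim^1PolishCover}) transports this into a natural definable isomorphism $\underleftarrow{\mathrm{lim}}^{1}(\mathrm{Hom}(\Lambda_{(m)},\mathbb{Z}))\cong\underleftarrow{\mathrm{lim}}^{1}\boldsymbol{\Lambda}$; chaining it with the previous isomorphisms, with $\underrightarrow{\mathrm{lim}}\circ(\underrightarrow{\mathrm{lim}})^{-1}\cong\mathrm{id}$, and with Theorem~\ref{Theorem:lim1-injective} (which underlies the last leg of the composite in Theorem~\ref{Theorem:MainProfinite}) yields $\mathrm{Ext}(\Lambda,\mathbb{Z})\cong\mathbb{\hat{Z}}_{\boldsymbol{\Lambda}}^{d}/\mathbb{Z}^{d}$, naturally in $\Lambda$.

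The main obstacle will be the naturality bookkeeping rather than any single hard step: one must verify that a homomorphism $\varphi\colon\Lambda'\to\Lambda$, pushed through $(\underrightarrow{\mathrm{lim}})^{-1}$ to a morphism of inductive sequences, then through $\mathrm{Hom}(-,\mathbb{Z})$ to an inv-map, and through $\Phi$ and $\mathrm{Adj}$, lands on the congruence class represented by $\mathrm{Adj}((\underrightarrow{\mathrm{lim}})^{-1}(\varphi))$, and that Jensen's isomorphism intertwines the map induced on $\mathrm{PExt}(-,\mathbb{Z})$ by $\varphi$ with the map on $\underleftarrow{\mathrm{lim}}^{1}$ coming from the explicit continuous homomorphism $F$ built in the proof of Theorem~\ref{Theorem:Jensen}. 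The hypotheses needed along the way — vanishing of $\mathrm{PExt}(\Lambda_{(m)},\mathbb{Z})$ and purity of extensions of $\Lambda$ by $\mathbb{Z}$ — hold uniformly because $\Lambda_{(m)}$ is always finitely generated free and $\Lambda$ always torsion-free, so no further case analysis is required; everything else is immediate from the functoriality already recorded for each intermediate construction.
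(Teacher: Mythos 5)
Your proposal is correct and follows essentially the same route as the paper's proof: decompose the profinite-completion functor as $\underleftarrow{\mathrm{lim}}^{1}\circ\mathrm{Adj}\circ(\underrightarrow{\mathrm{lim}})^{-1}$, use the naturality clause of Lemma~\ref{Lemma:dual-perp}(1) to match the tower $\mathrm{Adj}(\boldsymbol{\Lambda}_{\mathrm{co}})$ with $(\mathrm{Hom}(\Lambda_{(m)},\mathbb{Z}))$, invoke the natural definable isomorphism from Theorem~\ref{Theorem:Jensen} (together with a diagram chase for naturality) to pass to $\mathrm{PExt}(\Lambda,\mathbb{Z})$, and close with $\mathrm{Ext}=\mathrm{PExt}$ on torsion-free base groups. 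The only differences are presentational: you run the chain from $\mathrm{Ext}$ towards $\hat{\mathbb{Z}}^{d}_{\boldsymbol{\Lambda}}/\mathbb{Z}^{d}$ rather than the reverse, and you spell out a few points the paper leaves implicit (vanishing of $\mathrm{PExt}$ on the finitely generated free stages $\Lambda_{(m)}$, the role of $\underrightarrow{\mathrm{lim}}\circ(\underrightarrow{\mathrm{lim}})^{-1}\cong\mathrm{id}$, and the invocation of Theorem~\ref{Theorem:lim1-injective} in the last leg).
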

\begin{proof}
Recall  that the functor implementing $\Lambda\mapsto \mathbb{\hat{Z}}_{\boldsymbol{\Lambda}}^{d}/\mathbb{Z}^{d}$ in  Section \ref{Section:locally-profinite}   is the composition of the functors $(\lim)^{-1}$, $\mathrm{Adj}$, and $\underleftarrow{\mathrm{lim}}^{1}$.  By Lemma \ref{Lemma:dual-perp}(1) we have that the functor  implementing $\Lambda\mapsto \mathbb{\hat{Z}}_{\boldsymbol{\Lambda}}^{d}/\mathbb{Z}^{d}$  is naturally isomorphic to the functor implementing   $\Lambda\mapsto \underleftarrow{\mathrm{lim}}^{1}( \mathrm{Hom}( \Lambda_{(n)} ,G) )$ in Theorem \ref{Theorem:Jensen}, where $(\Lambda_{(n)})=(\lim)^{-1}(\Lambda)$. But Theorem \ref{Theorem:Jensen} exhibits a definable isomorphism from $\underleftarrow{\mathrm{lim}}^{1} ( \mathrm{Hom}( \Lambda_{(n)} ,G) )$ to  $\mathrm{PExt}(\Lambda,G)$. By a routine diagram chasing of the proof of Theorem \ref{Theorem:Jensen} we see that the latter definable isomorphisms are components of a natural transformation from the functor implementing  $\Lambda\mapsto  \underleftarrow{\mathrm{lim}}^{1} ( \mathrm{Hom}( \Lambda_{(n)} ,G) )$  to the functor implementing $\Lambda\mapsto \mathrm{PExt}(\Lambda,G)$.
Finally, notice that since each $\Lambda$ is torsion-free we have that $\mathrm{Ext}(\Lambda,\mathbb{Z})=\mathrm{PExt}(\Lambda,\mathbb{Z})$.  
\end{proof} 

We have the following immediate corollary.

\begin{corollary}
\label{Corollary:TF-Ext}The  functor $\Lambda \mapsto 
\mathrm{Ext}( \Lambda ,\mathbb{Z}) $  from the category of 
finite-rank torsion-free abelian groups with no free summands to the category of groups with
a Polish cover is fully faithful.
In particular, two such groups $\Lambda $
and $\Lambda ^{\prime }$ are isomorphic, if and only if $\mathrm{Ext}(
\Lambda ,\mathbb{Z}) $ and $\mathrm{Ext}( \Lambda ^{\prime },%
\mathbb{Z}) $ are definably isomorphic.
\end{corollary}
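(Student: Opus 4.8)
The plan is to obtain Corollary \ref{Corollary:TF-Ext} by combining the two functorial identifications already in hand. By Theorem \ref{Theorem:MainProfinite}, the assignment $\Lambda\mapsto\mathbb{\hat{Z}}_{\boldsymbol{\Lambda}}^{d}/\mathbb{Z}^{d}$ is a fully faithful contravariant functor on the category of finite-rank torsion-free abelian groups with no free summands (after identifying that category with $\mathrm{Groups}_{+}(\mathbb{Z}^{*},\mathbb{Q}^{*})$, as the excerpt notes the inclusion is fully faithful and essentially surjective). By Theorem \ref{Theorem:TF-Ext}, the functor $\Lambda\mapsto\mathbb{\hat{Z}}_{\boldsymbol{\Lambda}}^{d}/\mathbb{Z}^{d}$ is \emph{naturally isomorphic} to $\Lambda\mapsto\mathrm{Ext}(\Lambda,\mathbb{Z})$ on that same category. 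Since full faithfulness is invariant under natural isomorphism of functors — a natural isomorphism induces, for each pair $\Lambda,\Lambda'$, a bijection $\mathrm{Hom}(\mathrm{Ext}(\Lambda',\mathbb{Z}),\mathrm{Ext}(\Lambda,\mathbb{Z}))\cong\mathrm{Hom}(\mathbb{\hat{Z}}_{\boldsymbol{\Lambda'}}^{d'}/\mathbb{Z}^{d'},\mathbb{\hat{Z}}_{\boldsymbol{\Lambda}}^{d}/\mathbb{Z}^{d})$ compatible with the two functors' actions on morphisms — it follows that $\Lambda\mapsto\mathrm{Ext}(\Lambda,\mathbb{Z})$ is fully faithful as well.

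For the ``in particular" clause, I would argue as follows. A fully faithful functor $F$ reflects isomorphisms: if $F(\Lambda)$ and $F(\Lambda')$ are isomorphic via some $\phi$, then since $F$ is full there is $f\colon\Lambda\to\Lambda'$ (or the appropriate direction, given contravariance) with $F(f)=\phi$, and since $F$ is full again there is $g$ with $F(g)=\phi^{-1}$; then $F(g\circ f)=F(\mathrm{id})$ and $F(f\circ g)=F(\mathrm{id})$, so faithfulness forces $g\circ f=\mathrm{id}$ and $f\circ g=\mathrm{id}$, whence $\Lambda\cong\Lambda'$. Conversely any functor sends isomorphic objects to isomorphic objects. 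So $\Lambda\cong\Lambda'$ as abstract groups if and only if $\mathrm{Ext}(\Lambda,\mathbb{Z})$ and $\mathrm{Ext}(\Lambda',\mathbb{Z})$ are definably isomorphic. One small point to handle cleanly: the reduction from arbitrary finite-rank torsion-free $\Lambda$ to the no-free-summand case is already recorded in the preceding paragraph of the excerpt via $\Lambda=\Lambda_0\oplus\Lambda_1$ and $\mathrm{Ext}(\Lambda,\mathbb{Z})=\mathrm{Ext}(\Lambda_0,\mathbb{Z})$, so the corollary as stated is restricted to the no-free-summand subcategory and no further bookkeeping is needed here.

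The only genuinely delicate point is already discharged inside Theorem \ref{Theorem:TF-Ext}, not in this corollary: namely, checking that the isomorphisms supplied by Jensen's theorem (Theorem \ref{Theorem:Jensen}), together with Lemma \ref{Lemma:dual-perp}(1) and the identification of the cofiltration functors, assemble into an honest \emph{natural} transformation rather than merely a pointwise family of definable isomorphisms. Given Theorem \ref{Theorem:TF-Ext} as stated, the present corollary is a formal consequence, so I expect no real obstacle here — the proof is a two-line invocation of the two theorems plus the standard categorical fact that full faithfulness transfers across natural isomorphisms and reflects isomorphisms. The ``main obstacle," such as it is, is purely expository: making sure the contravariance is tracked consistently when composing the bijections on hom-sets, which amounts to no more than reversing arrows in the right places.
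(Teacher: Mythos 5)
Your proof is correct and matches the paper's own (one-line) argument: the paper states Corollary \ref{Corollary:TF-Ext} as an immediate consequence of Theorem \ref{Theorem:TF-Ext}, which exhibits $\Lambda\mapsto\mathrm{Ext}(\Lambda,\mathbb{Z})$ as naturally isomorphic to the fully faithful functor $\Lambda\mapsto\mathbb{\hat{Z}}_{\boldsymbol{\Lambda}}^{d}/\mathbb{Z}^{d}$ of Theorem \ref{Theorem:MainProfinite}. Your supplementary observations — that full faithfulness transfers along natural isomorphisms and that fully faithful functors reflect isomorphisms — are exactly the standard facts the paper is implicitly invoking.
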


The following corollaries demonstrate how forgetting the definable content of $\mathrm{Ext}$ results in much weaker invariants. We denote by $\mathcal{P}$ the set of prime
numbers. The following theorem is an adaptation of \cite[Theorem 2]{warfield_extensions_1972}.

\begin{theorem}
\label{Corollary:iso-classification-Ext}Let $\Lambda $ be a rank $d$
torsion-free abelian group with no free summands. For $p\in \mathcal{P}$ let $n_{p}\in \left\{
0,1,\ldots ,d\right\} $ be such that the \emph{$p$-corank of $\Lambda$}, $\left[ \Lambda :p\Lambda \right]$, is $p^{n_{p}}$. Then $\mathrm{Ext}( \Lambda ,\mathbb{Z}) $ is
isomorphic as a discrete group to%
\begin{equation*}
\mathbb{Q}^{\left( 2^{\aleph _{0}}\right) }\oplus \bigoplus_{p\in \mathcal{P}%
}\mathbb{Z}( p^{\infty }) ^{n_{p}}\text{.}
\end{equation*}%
In particular, if $\Lambda $ and $\Lambda ^{\prime }$ are finite rank
torsion-free abelian groups with no free summands, then $\mathrm{Ext}( \Lambda ,\mathbb{Z}) $ and $\mathrm{Ext}( \Lambda ^{\prime },\mathbb{Z}) $ are
isomorphic as discrete groups if and only if $\left[ \Lambda :p\Lambda %
\right] =\left[ \Lambda ^{\prime }:p\Lambda ^{\prime }\right] $ for every $%
p\in \mathcal{P}$.
\end{theorem}

\begin{corollary}
\label{Corollary:smooth-classification-Ext}The relation $E$ for finite-rank
torsion-free abelian groups defined by $\left( \Lambda ,\Lambda ^{\prime
}\right) \in E$ if and only if $\mathrm{Ext}( \Lambda ,\mathbb{Z}) $ and $\mathrm{Ext}( \Lambda ^{\prime },\mathbb{Z}) $ are
isomorphic as discrete groups, is smooth.
\end{corollary}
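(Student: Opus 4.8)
The plan is to extract from Corollary \ref{Corollary:iso-classification-Ext} an explicit Borel complete invariant for $E$ valued in a Polish space; this yields smoothness at once. For a finite-rank torsion-free abelian group $\Lambda$ and a prime $p$, let $n_p(\Lambda)$ denote the unique natural number with $[\Lambda:p\Lambda]=p^{n_p(\Lambda)}$, which is finite since $\Lambda/p\Lambda$ is an $\mathbb{F}_p$-vector space of dimension at most the rank of $\Lambda$. By Corollary \ref{Corollary:iso-classification-Ext}, $\mathrm{Ext}(\Lambda,\mathbb{Z})\cong\mathrm{Ext}(\Lambda',\mathbb{Z})$ as discrete groups exactly when $n_p(\Lambda)=n_p(\Lambda')$ for every $p\in\mathcal{P}$; note that this characterization makes no reference to the common rank, consistent with the rank not appearing in the formula of Corollary \ref{Corollary:iso-classification-Ext}. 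Hence the assignment $\Lambda\mapsto\bigl(n_p(\Lambda)\bigr)_{p\in\mathcal{P}}\in\omega^{\mathcal{P}}$ is a complete invariant for $E$.

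I would realize the space of all finite-rank torsion-free abelian groups as the standard Borel space $X=\bigsqcup_{d\in\omega}\mathcal{R}(d)$, where $\mathcal{R}(d)\subseteq\{0,1\}^{\mathbb{Q}^d}$ is as in Example \ref{Example:Rank_d} and a point is a rank-$d$ subgroup $\Lambda$ of $\mathbb{Q}^d$, and define $f\colon X\to\omega^{\mathcal{P}}$ by $f(\Lambda)=(n_p(\Lambda))_p$. By the previous paragraph, $(\Lambda,\Lambda')\in E$ iff $f(\Lambda)=f(\Lambda')$, so $f$ induces an injection $X/E\to\omega^{\mathcal{P}}$. It then remains only to check that $f$ is Borel, and this is the sole point needing (routine) care. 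Fix $d$, a prime $p$, and $k\in\omega$, and work inside $\mathcal{R}(d)$: for each $v\in\mathbb{Q}^d$ the condition ``$v\in\Lambda$'' reads off a single coordinate of $\Lambda\in\{0,1\}^{\mathbb{Q}^d}$ and is clopen, and ``$v\in p\Lambda$'' is equivalent to ``$p^{-1}v\in\Lambda$'', again clopen. Therefore $\{\Lambda\in\mathcal{R}(d):n_p(\Lambda)\geq k\}$ — the set of $\Lambda$ for which there exist $v_1,\dots,v_k\in\Lambda$ with $a_1v_1+\cdots+a_kv_k\notin p\Lambda$ for all $(a_1,\dots,a_k)\in\{0,\dots,p-1\}^k\setminus\{0\}$ — is a countable union (over the countably many choices $v_1,\dots,v_k\in\mathbb{Q}^d$) of finite intersections of clopen sets, hence Borel. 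Consequently $\{n_p(\Lambda)=k\}$ is Borel, so $\Lambda\mapsto n_p(\Lambda)$ is Borel, and gluing over $d$ shows $f$ is Borel on $X$.

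Thus $f$ is a Borel reduction of $E$ to equality on the uncountable Polish space $\omega^{\mathcal{P}}\cong\omega^{\omega}$; since equality on any uncountable Polish space is Borel bireducible with $(\mathbb{R},=_{\mathbb{R}})$, we conclude that $E$ is smooth. I expect the only genuine obstacle to be the Borelness bookkeeping for $\Lambda\mapsto n_p(\Lambda)$; an alternative, if one prefers, is to use a Borel selection of a $\mathbb{Z}$-basis of $\Lambda\in\mathcal{R}(d)$ and read off $n_p(\Lambda)$ as the $\mathbb{F}_p$-rank of the mod-$p$ reduction of the associated matrix, which is visibly Borel.
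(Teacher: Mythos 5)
Your strategy — converting Corollary \ref{Corollary:iso-classification-Ext} into an explicit $\omega^{\mathcal P}$-valued Borel complete invariant and concluding smoothness by reduction to equality — is exactly the paper's implicit intent (no separate proof is supplied there), and the $F_\sigma$ argument for Borelness of $\Lambda\mapsto n_p(\Lambda)$ on $\mathcal{R}(d)$ is correct. However, you have inherited a genuine gap from the paper's own imprecise statement of Corollary \ref{Corollary:iso-classification-Ext}: the characterization $\mathrm{Ext}(\Lambda,\mathbb Z)\cong\mathrm{Ext}(\Lambda',\mathbb Z)$ iff $n_p(\Lambda)=n_p(\Lambda')$ for all $p$ fails once free direct summands are allowed. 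For example $\mathrm{Ext}(\mathbb Z,\mathbb Z)=0=\mathrm{Ext}(\mathbb Z^2,\mathbb Z)$ even though $n_p(\mathbb Z)=1\neq 2=n_p(\mathbb Z^2)$ for every $p$, and likewise $\mathrm{Ext}(\mathbb Z\oplus\mathbb Q,\mathbb Z)\cong\mathrm{Ext}(\mathbb Q,\mathbb Z)$ while the $n_p$-profiles differ. Thus your $f(\Lambda)=(n_p(\Lambda))_p$ is not constant on $E$-classes, so it is not a Borel reduction of $E$ to equality as written.

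The fix is small and preserves the structure of your argument. Write $\Lambda\cong\Lambda_0\oplus\mathbb Z^{m(\Lambda)}$ with $\Lambda_0$ having no free summand and $m(\Lambda)=\mathrm{rank}\,\mathrm{Hom}(\Lambda,\mathbb Z)$. Since $\mathrm{Ext}(\Lambda,\mathbb Z)=\mathrm{Ext}(\Lambda_0,\mathbb Z)$ and $n_p(\Lambda)=n_p(\Lambda_0)+m(\Lambda)$, the corrected assignment $\Lambda\mapsto\bigl(n_p(\Lambda)-m(\Lambda)\bigr)_{p\in\mathcal P}=\bigl(n_p(\Lambda_0)\bigr)_{p\in\mathcal P}$ is a genuine complete invariant for $E$ (this is what Corollary \ref{Corollary:iso-classification-Ext} actually establishes, restricted to the free-summand-free case via Theorem \ref{Theorem:compute-homology}). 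It remains only to see that $m$ is Borel: identifying $\mathrm{Hom}(\Lambda,\mathbb Z)$ with $\{w\in\mathbb Q^d:\langle w,v\rangle\in\mathbb Z\text{ for all }v\in\Lambda\}$ (every homomorphism extends uniquely to a $\mathbb Q$-linear functional since $\Lambda$ has full rank), the condition $m(\Lambda)\geq k$ asserts the existence of $\mathbb Q$-linearly independent $w_1,\dots,w_k\in\mathbb Q^d$ each satisfying a countable conjunction of clopen conditions in $\mathcal{R}(d)$, hence $\{m\geq k\}$ is $F_\sigma$. With this correction the rest of your proof goes through verbatim.
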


Now adopt the following notations. For every
sequence $\boldsymbol{m}=\left( m_{p}\right) _{p\in \mathcal{P}}\in \mathbb{N%
}^{\mathcal{P}}$, where $\mathbb{N}$ is the set of strictly positive
integers, define $\mathbb{Z}[\frac{1}{\mathcal{P}^{\boldsymbol{m}}}]$ to be
the set of rational numbers of the form $a/b$ where $a\in \mathbb{Z}$, $b\in 
\mathbb{N}$, and for every $p\in \mathcal{P}$ and $k\in \mathbb{N}$, if $%
p^{k}$ divides $b$ then $k\leq m_{p}$. Write $\boldsymbol{m}=^{\ast }%
\boldsymbol{n}$ if and only if $\{ p\in \mathcal{P}%
:m_{p}\neq m_{p}^{\prime }\} $ is finite. From Theorem \ref{Theorem:TF-Ext} we
obtain the following.

\begin{corollary}
\label{Corollary:nonisomorphic-Ext}Fix $d\geq 1$. For every $%
\boldsymbol{m},\boldsymbol{n}\in \mathbb{N}^{\mathcal{P}}$, $%
\mathrm{Ext}(\mathbb{Z}[\frac{1}{\mathcal{P}^{\boldsymbol{m}}}]^{d},\mathbb{Z%
})$ and $\mathrm{Ext}(\mathbb{Z}[\frac{1}{\mathcal{P}^{\boldsymbol{n}}}]^{d},%
\mathbb{Z})$ are isomorphic as discrete groups, and are Borel isomorphic if
and only if $\boldsymbol{m}=^{\ast }\boldsymbol{n}$.
In particular, the collection
\begin{equation*}
\left\{ \mathrm{Ext}(\mathbb{Z}[\frac{1}{\mathcal{P}^{\boldsymbol{m}}}]^{d},%
\mathbb{Z}):\boldsymbol{m}\in \mathbb{N}^{\mathcal{P}}\right\}
\end{equation*}%
contains a continuum of groups with a Polish cover that are pairwise
isomorphic as discrete groups but not definably isomorphic.
\end{corollary}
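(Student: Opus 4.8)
The plan is to deduce the corollary by combining Corollary~\ref{Corollary:TF-Ext} (the full faithfulness of $\Lambda\mapsto\mathrm{Ext}(\Lambda,\mathbb{Z})$) with the discrete computation of Corollary~\ref{Corollary:iso-classification-Ext}; the role of Theorem~\ref{Theorem:TF-Ext} is to let one, if one prefers, transfer everything to the profinite-quotient side $\hat{\mathbb{Z}}^{d}_{\boldsymbol{\Lambda}}/\mathbb{Z}^{d}$ and argue there (this is the packaging of the appendix's Proposition~\ref{Proposition:nonisomorphic-homology}; cf.\ Corollary~\ref{Corollary:CombinatorialDescriptionOfDEfIso}). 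Write $\Lambda_{\boldsymbol{m}}:=\mathbb{Z}[\tfrac{1}{\mathcal{P}^{\boldsymbol{m}}}]^{d}$, so $\mathbb{Z}^{d}\leq\Lambda_{\boldsymbol{m}}\leq\mathbb{Q}^{d}$. The first thing I would check is that $\Lambda_{\boldsymbol{m}}$ is a finite-rank torsion-free abelian group with no free direct summand, so that Corollaries~\ref{Corollary:TF-Ext} and~\ref{Corollary:iso-classification-Ext} and Theorem~\ref{Theorem:TF-Ext} all apply to it. Since $m_{p}\geq 1$ for every prime, the rank one group $\mathbb{Z}[\tfrac{1}{\mathcal{P}^{\boldsymbol{m}}}]$ contains $1/p$ for every $p$, so any $\phi\colon\mathbb{Z}[\tfrac{1}{\mathcal{P}^{\boldsymbol{m}}}]\to\mathbb{Z}$ has $\phi(1)=p\,\phi(1/p)$ for all $p$, forcing $\phi(1)=0$ and, arguing identically at every element, $\phi=0$; hence $\mathrm{Hom}(\Lambda_{\boldsymbol{m}},\mathbb{Z})=\mathrm{Hom}(\mathbb{Z}[\tfrac{1}{\mathcal{P}^{\boldsymbol{m}}}],\mathbb{Z})^{d}=0$, and the Hofmann--Morris decomposition \cite[Theorem 8.47]{hofmann_structure_2013} used in Section~\ref{Section:locally-profinite} gives that $\Lambda_{\boldsymbol{m}}$ has no free summand.

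For the claim that all the groups $\mathrm{Ext}(\Lambda_{\boldsymbol{m}},\mathbb{Z})$ are isomorphic as discrete groups, I would apply Corollary~\ref{Corollary:iso-classification-Ext}: for $\Lambda$ of rank $d$ (with no free summand) the discrete isomorphism type of $\mathrm{Ext}(\Lambda,\mathbb{Z})$ is $\mathbb{Q}^{(2^{\aleph_{0}})}\oplus\bigoplus_{p\in\mathcal{P}}\mathbb{Z}(p^{\infty})^{n_{p}}$, where $p^{n_{p}}=[\Lambda:p\Lambda]$. For $\Lambda=\Lambda_{\boldsymbol{m}}$ one has $[\mathbb{Z}[\tfrac{1}{\mathcal{P}^{\boldsymbol{m}}}]:p\,\mathbb{Z}[\tfrac{1}{\mathcal{P}^{\boldsymbol{m}}}]]=p$ for every prime $p$: the quotient of a torsion-free rank one group by $p$ is cyclic of order $1$ or $p$, and here it is nonzero because $1/p^{m_{p}}$ lies in the group while $1/p^{m_{p}+1}$ does not, so $1/p^{m_{p}}$ is not divisible by $p$. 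Hence $n_{p}=d$ for all $p$, independently of $\boldsymbol{m}$, so $\mathrm{Ext}(\Lambda_{\boldsymbol{m}},\mathbb{Z})\cong\mathbb{Q}^{(2^{\aleph_{0}})}\oplus\bigoplus_{p\in\mathcal{P}}\mathbb{Z}(p^{\infty})^{d}$ for every $\boldsymbol{m}$, and the groups are pairwise isomorphic as discrete groups.

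For the classification up to Borel (equivalently, definable) isomorphism, I would use Corollary~\ref{Corollary:TF-Ext}: $\mathrm{Ext}(\Lambda_{\boldsymbol{m}},\mathbb{Z})$ and $\mathrm{Ext}(\Lambda_{\boldsymbol{m}^{\prime}},\mathbb{Z})$ are definably isomorphic iff $\Lambda_{\boldsymbol{m}}\cong\Lambda_{\boldsymbol{m}^{\prime}}$ as abstract groups. Each $\Lambda_{\boldsymbol{m}}$ is homogeneous completely decomposable of rank $d$, so by Baer's uniqueness theorem for completely decomposable finite-rank torsion-free abelian groups (see \cite{fuchs_abelian_2015}) we have $\Lambda_{\boldsymbol{m}}\cong\Lambda_{\boldsymbol{m}^{\prime}}$ iff the common type of the rank one summands agrees, i.e.\ iff $\mathbb{Z}[\tfrac{1}{\mathcal{P}^{\boldsymbol{m}}}]\cong\mathbb{Z}[\tfrac{1}{\mathcal{P}^{\boldsymbol{m}^{\prime}}}]$. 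The characteristic of the element $1$ in $\mathbb{Z}[\tfrac{1}{\mathcal{P}^{\boldsymbol{m}}}]$ is exactly $(m_{p})_{p\in\mathcal{P}}$, all of whose coordinates are finite, so two such characteristics determine the same type precisely when they differ in only finitely many coordinates; that is, precisely when $\boldsymbol{m}=^{\ast}\boldsymbol{m}^{\prime}$. This gives the stated equivalence. (Alternatively, after transferring along Theorem~\ref{Theorem:TF-Ext} one can read off the same equivalence from Corollary~\ref{Corollary:CombinatorialDescriptionOfDEfIso}.)

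Finally, the ``in particular'' follows at once: $\mathbb{N}^{\mathcal{P}}/{=^{\ast}}$ has cardinality continuum (for instance, sending $S\subseteq\mathcal{P}$ to the sequence $\boldsymbol{m}^{S}$ with $m^{S}_{p}=2$ for $p\in S$ and $m^{S}_{p}=1$ otherwise embeds $\mathcal{P}(\mathcal{P})/\mathrm{Fin}$ into it), so picking $2^{\aleph_{0}}$ pairwise $=^{\ast}$-inequivalent tuples $\boldsymbol{m}$ yields $2^{\aleph_{0}}$ groups $\mathrm{Ext}(\Lambda_{\boldsymbol{m}},\mathbb{Z})$ that are pairwise isomorphic as discrete groups by the second paragraph but pairwise non-definably-isomorphic by the third. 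I do not expect a genuine obstacle: the argument is an assembly of Theorem~\ref{Theorem:main1} (through Corollary~\ref{Corollary:TF-Ext}), Corollary~\ref{Corollary:iso-classification-Ext} and Theorem~\ref{Theorem:TF-Ext} with standard facts about rank one and completely decomposable groups; the only points needing care are the verification that $\Lambda_{\boldsymbol{m}}$ has no free summand and the precise matching of the invariant ``type of $\mathbb{Z}[\tfrac{1}{\mathcal{P}^{\boldsymbol{m}}}]$'' with the relation $=^{\ast}$.
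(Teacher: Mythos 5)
Your proposal is correct and follows essentially the same route as the paper's own (one-line) proof, which deduces the corollary from Theorem~\ref{Theorem:TF-Ext} and Proposition~\ref{Proposition:nonisomorphic-homology}. You work directly on the Ext side via Corollaries~\ref{Corollary:TF-Ext} and~\ref{Corollary:iso-classification-Ext}, but since those are themselves repackagings of Theorem~\ref{Theorem:TF-Ext} together with the appendix computations (Theorem~\ref{Theorem:compute-homology}, Proposition~\ref{Proposition:nonisomorphic-homology}), this is the same argument with the detour through the profinite quotients unwound; your preliminary check that $\Lambda_{\boldsymbol{m}}$ has no free summand, your computation $n_p=d$ for all $p$, and your use of Baer's classification plus uniqueness of completely decomposable decompositions to match the isomorphism type of $\Lambda_{\boldsymbol{m}}$ with $=^*$ all mirror what the paper's references carry.
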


\begin{proof}
Fix $\boldsymbol{m}\in \mathbb{N}^{\mathcal{P}}$. For $\boldsymbol{m},%
\boldsymbol{n}\in \mathbb{N}^{\mathcal{P}}$ it follows from Baer's
classification of countable rank-$1$ torsion-free abelian groups \cite[%
Chapter 12, Theorem 1.1]{fuchs_abelian_2015} that $\mathbb{Z}[\frac{1}{%
\mathcal{P}^{\boldsymbol{m}}}]$ and $\mathbb{Z}[\frac{1}{\mathcal{P}^{%
\boldsymbol{n}}}]$ are isomorphic if and only if $\boldsymbol{m}%
=^{\ast }\boldsymbol{n}$. It follows from this and \cite[Chapter
12, Theorem 3.5]{fuchs_abelian_2015} that $\mathbb{Z}[\frac{1}{\mathcal{P}^{%
\boldsymbol{m}}}]^{d}$ and $\mathbb{Z}[\frac{1}{\mathcal{P}^{\boldsymbol{n}}}]^{d}$ are isomorphic if and only if $\boldsymbol{m}=^{\ast }%
\boldsymbol{n}$. Together with Corollary \ref{Corollary:CombinatorialDescriptionOfDEfIso}, this concludes the proof of the first assertion.
Observe now that for every $p\in \mathcal{P}$, the $p$-corank of $\mathbb{Z}[\frac{1}{%
\mathcal{P}^{\boldsymbol{m}}}]^{d}$ is equal to $d$. Thus the second
assertion is an immediate consequence of Theorem \ref{Corollary:iso-classification-Ext}.
\end{proof}

\begin{remark}
For Baer invariants $\boldsymbol{m}$ that have a prime with infinite multiplicity, it is no longer true that the groups $\mathrm{Ext}(\mathbb{Z}[\frac{1}{\mathcal{P}^{\boldsymbol{m}}}]^{d},\mathbb{Z})$ are all isomorphic as discrete groups.
\end{remark}

\section{Actions by definable automorphisms and Borel reduction complexity}

\label{S:ActionsCocycleRig} Let $\mathcal{G}$ denote the group with a Polish
cover $0\to N \to G \to G/N\to 0$. The group $\mathrm{Aut}(\mathcal{G})$ of
all \emph{definable automorphisms} of $\mathcal{G}$ is the automorphism
group of $\mathcal{G}$ in the category of groups with a Polish cover.
Explicitly, $\mathrm{Aut}(\mathcal{G})$ consists of those group
automorphisms $\varphi\colon G/N\to G/N$ which admit a Borel map $\hat{%
\varphi}:G\to G$ as a lift. This defines a canonical action $\mathrm{Aut}( 
\mathcal{G}) \curvearrowright G/N$ of $\mathrm{Aut}(\mathcal{G})$ on the
quotient $G/N$.

\begin{definition}
\label{Definition:Borel action} By a \emph{definable action} of a discrete
group $\Gamma$ on a group with a Polish cover $\mathcal{G}$ we mean a group
homomorphism $\varphi :\Gamma \rightarrow \mathrm{\mathrm{Aut}}(\mathcal{G})$%
. The assignment $\gamma \mapsto \varphi _{\gamma }$ induces an action $%
(\varphi_{\gamma})_{\gamma} : \Gamma \curvearrowright G/N$ on the quotient $%
G/N$. If $\Gamma \curvearrowright G/N$ is definable, we let $\mathcal{R}%
(\Gamma \curvearrowright G/N)$ be the equivalence relation on $G$ so that
for $x,y\in G$ we have: 
\begin{equation*}
x,y\in \mathcal{R}(\Gamma \curvearrowright G/N) \iff \exists \gamma\in
\Gamma \; \big(\varphi_{\gamma}(Nx)=Ny\big). 
\end{equation*}
\end{definition}

Notice that when both $\Gamma $ and $N$ above are countable, then $\mathcal{R%
}(\Gamma \curvearrowright G/N)$ is a countable Borel equivalence relation.
Hence a natural question is that of where these relations sit within the
Borel reduction hierarchy. The main goal of this section is to address this
question for certain definable actions of the form: 
\begin{equation}
\mathcal{R}(\mathrm{Aut}(\mathrm{Ext}(\Lambda ,\mathbb{Z}))\curvearrowright 
\mathrm{Ext}(\Lambda ,\mathbb{Z})),\quad \mathcal{R}(\mathrm{Aut}(\mathbb{%
\hat{Z}}_{\boldsymbol{\Lambda }}^{d}/\mathbb{Z}^{d})\curvearrowright \mathbb{%
\hat{Z}}_{\boldsymbol{\Lambda }}^{d}/\mathbb{Z}^{d}),\quad \mathcal{R}(%
\mathrm{Aut}(\mathbb{\hat{Q}}_{\Lambda }^{d}/\Lambda )\curvearrowright 
\mathbb{\hat{Q}}_{\Lambda }^{d}/\Lambda ),  \label{Eq:3relations}
\end{equation}%
for the groups with a Polish cover $\mathrm{Ext}(\Lambda ,\mathbb{Z})$, $%
\mathbb{\hat{Z}}_{\boldsymbol{\Lambda }}^{d}/\mathbb{Z}^{d},$ $\mathbb{\hat{Q%
}}_{\Lambda }^{d}/\Lambda $, as in Sections \ref{Section:locally-profinite}
and \ref{S:Ext}. Notice that $\mathrm{Aut}(\mathrm{Ext}(\Lambda ,\mathbb{Z}%
))\cong \mathrm{Aut}\left( \Lambda \right) $ by Theorem \ref{Theorem:ExtMain}%
, and hence it is countable being $\Lambda $ finite-rank.\ Likewise, we have
that $\mathrm{\mathrm{Aut}}(\mathbb{\hat{Q}}_{\Lambda }^{d}/\Lambda )\cong 
\mathrm{\mathrm{Aut}}\left( \Lambda \right) $ by Theorem \ref%
{Theorem:trivial}, and $\mathrm{\mathrm{Aut}}(\mathbb{\hat{Z}}_{\boldsymbol{%
\Lambda }}^{d}/\mathbb{Z}^{d})\cong \mathrm{\mathrm{Aut}}(\mathbb{\hat{Q}}%
_{\Lambda }^{d}/\Lambda )$ where $\Lambda =\underrightarrow{\mathrm{lim}}(%
\mathrm{Adj}(\boldsymbol{\Lambda }))$ by Theorem \ref{Theorem:TF-Ext}.

The first of these problems is particularly natural and it can be thought of
as the \textquotedblleft base-free" analogue of the extension problem we
show in Section \ref{S:Ext}. Indeed, for any countable abelian groups $B,F$
the Polish space $\mathrm{Z}(B,F)$ parametrizes all ways of describing an
extension $E$ of the base $B$ by $F$, and the coset equivalence relation
induced by $\mathrm{B}(B,F)$ was used to identify which such descriptions
were isomorphic by an isomorphism which preserves $B$ up to identity. In
contrast, $\mathcal{R}(\mathrm{Aut}(\mathrm{Ext}(B,F))\curvearrowright 
\mathrm{Ext}(B,F))$, represents the problem of classifying all ways of
describing extensions of $B$ by $F$ up to \emph{base-free isomorphisms}.
That is, isomorphisms which preserve the base $B$ only up to an automorphism
of $B$.

In the context of Theorem \ref{Theorem:TF-Ext}, and since $\mathbb{\hat{Z}}_{%
\boldsymbol{\Lambda} }^{d}/\mathbb{Z}^d$ and $\mathbb{\hat{Q}}_{\Lambda
}^{d}/\Lambda$ are definably isomorphic when $\Lambda=\underrightarrow{%
\mathrm{lim}}(\mathrm{Adj}(\boldsymbol{\Lambda}))$, the next lemma implies
that the above three equivalence relations are of the exact same Borel
reduction complexity when $\Lambda$ is fixed.

\begin{definition}
Two definable actions $(\varphi_{\gamma})_{\gamma} : \Gamma \curvearrowright
G/N$ and $(\varphi^{\prime }_{\gamma^{\prime }})_{\gamma^{\prime }} :
\Gamma^{\prime }\curvearrowright G^{\prime }/N^{\prime }$ are \emph{%
definably isomorphic} if there is a group isomorphism $\alpha :\Gamma
\rightarrow \Gamma ^{\prime }$ and a definable isomorphism $\psi
:G/N\rightarrow G^{\prime }/N^{\prime }$ so that for all $\gamma \in \Gamma $
we have: 
\begin{equation*}
\psi \circ \varphi _{\gamma }=\varphi _{\alpha \left( \gamma \right)
}^{\prime }\circ \psi.
\end{equation*}
\end{definition}

The following lemma implies that isomorphic definable actions generate
equivalence relations of equal complexity in the Borel reduction hierarchy.
The proof follows directly from the fact that, for countable Borel
equivalence relations, being classwise Borel isomorphic is the same as being
Borel bireducible; see \cite[Theorem 2.5]{motto_ros_complexity_2012}.

\begin{lemma}
\label{Lemma:BorelClasswiseIsomorphic} Let $\Gamma \curvearrowright G/N$ and 
$\Gamma^{\prime }\curvearrowright G^{\prime }/N^{\prime }$ be definably
isomorphic actions on the groups with a Polish cover $\mathcal{G}=(N,G,G/N)$
and $\mathcal{G}^{\prime }=(N^{\prime },G^{\prime },G^{\prime }/N^{\prime })$%
, where $\Gamma,\Gamma^{\prime },N,N^{\prime }$ are countable. Then, $%
\mathcal{R}(\Gamma \curvearrowright G/N)$ and $\mathcal{R}(\Gamma^{\prime
}\curvearrowright G^{\prime }/N^{\prime })$ are classwise Borel isomorphic.
In particular, if $G/N$ is definably isomorphic to $G^{\prime }/N^{\prime }$%
, then $\mathcal{R}(\mathrm{Aut}(\mathcal{G}) \curvearrowright G/N)$ and $%
\mathcal{R}(\mathrm{Aut}(\mathcal{G})^{\prime }\curvearrowright G^{\prime
}/N^{\prime })$ are classwise Borel isomorphic.
\end{lemma}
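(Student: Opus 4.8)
The plan is to unwind the two sentences of the statement and verify that the hypotheses feed directly into the cited black box, namely the equivalence (for countable Borel equivalence relations) of ``classwise Borel isomorphic'' and ``Borel bireducible'' from \cite[Theorem 2.5]{motto_ros_complexity_2012}. First I would observe that under the standing hypotheses the two relations $\mathcal{R}(\Gamma\curvearrowright G/N)$ and $\mathcal{R}(\Gamma'\curvearrowright G'/N')$ are genuinely \emph{countable} Borel equivalence relations: each is an orbit equivalence relation of a Borel action of the countable group $\Gamma\ltimes N$ (resp.\ $\Gamma'\ltimes N'$) on the standard Borel space $G$ (resp.\ $G'$). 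Here one uses that a definable action $\varphi\colon\Gamma\to\mathrm{Aut}(\mathcal{G})$ provides for each $\gamma$ a Borel lift $\hat\varphi_\gamma\colon G\to G$, and that translation by $N$ is Borel; a class is then a countable union of countable $N$-cosets, hence countable. Thus it suffices to produce a Borel bireduction, and in fact I will produce a classwise Borel isomorphism outright, since that is the stronger statement and is immediate from the data.

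The core computation is this: let $\alpha\colon\Gamma\to\Gamma'$ be the group isomorphism and $\psi\colon G/N\to G'/N'$ the definable isomorphism intertwining the actions, as in the definition of ``definably isomorphic.'' Fix a Borel lift $\hat\psi\colon G\to G'$ of $\psi$; by Lemma~\ref{Lemma:Borel-inverse2} (or the Remark following Definition~\ref{Def:definable}), $\psi$ being a definable isomorphism means it is invertible in the category of groups with a Polish cover, so $\psi^{-1}$ is also definable and admits a Borel lift $\widehat{\psi^{-1}}\colon G'\to G$. I claim the map on quotients induced by $\hat\psi$ is a classwise Borel isomorphism from $\mathcal{R}(\Gamma\curvearrowright G/N)$ to $\mathcal{R}(\Gamma'\curvearrowright G'/N')$: if $x,y\in G$ satisfy $\varphi_\gamma(Nx)=Ny$ for some $\gamma\in\Gamma$, then applying $\psi$ and using $\psi\circ\varphi_\gamma=\varphi'_{\alpha(\gamma)}\circ\psi$ gives $\varphi'_{\alpha(\gamma)}(\psi(Nx))=\psi(Ny)$, i.e.\ $\varphi'_{\alpha(\gamma)}(N'\hat\psi(x))=N'\hat\psi(y)$, so $\hat\psi(x)\mathrel{\mathcal{R}(\Gamma'\curvearrowright G'/N')}\hat\psi(y)$. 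The converse implication, needed to see that $\hat\psi$ descends to a well-defined injection on $\mathcal{R}$-classes, runs identically through $\psi^{-1}$ (and $\alpha^{-1}$), using that $\psi$ is a bijection on quotients; and surjectivity on classes is likewise witnessed by $\widehat{\psi^{-1}}$. So the induced map $G/\mathcal{R}\to G'/\mathcal{R}'$ is a bijection induced by the Borel map $\hat\psi$, with inverse induced by the Borel map $\widehat{\psi^{-1}}$; that is exactly the definition of a classwise Borel isomorphism.

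For the ``in particular'' clause I would just specialize: take $\Gamma=\mathrm{Aut}(\mathcal{G})$, $\Gamma'=\mathrm{Aut}(\mathcal{G}')$, and let $\psi\colon G/N\to G'/N'$ be the given definable isomorphism; then $\alpha\colon\mathrm{Aut}(\mathcal{G})\to\mathrm{Aut}(\mathcal{G}')$ defined by $\alpha(\varphi)=\psi\circ\varphi\circ\psi^{-1}$ is a group isomorphism (conjugation by an isomorphism in the category), and by construction $\psi\circ\varphi=\alpha(\varphi)\circ\psi$ for every $\varphi$, so the canonical actions are definably isomorphic and the first part applies. One should note that $\mathrm{Aut}(\mathcal{G})$ need not be countable in general; but in the intended applications—where $N$ is a countable group and the covers are the non-Archimedean groups of Sections~\ref{Section:locally-profinite} and~\ref{S:Ext}—rigidity (Corollary~\ref{Corollary:rigid}, Theorem~\ref{Theorem:MainProfinite}) pins $\mathrm{Aut}(\mathcal{G})$ down to a countable group, which is the case actually used. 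I do not expect a genuine obstacle here: the only point requiring a moment's care is checking that $\hat\psi$ induces a \emph{well-defined} and \emph{injective} map on $\mathcal{R}$-classes, which is where one must invoke that $\psi$ is an \emph{isomorphism} (not merely a homomorphism) of groups with a Polish cover, so that $\psi^{-1}$ is again definable with a Borel lift—and that is precisely guaranteed by Lemma~\ref{Lemma:Borel-inverse2}.
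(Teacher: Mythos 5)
Your proof is correct, but it takes a genuinely different and more self-contained route than the paper. The paper's argument (stated tersely in the preamble to the lemma) is to establish Borel bi\emph{reducibility} between the two orbit equivalence relations — which is essentially immediate from the Borel lifts $\hat\psi$ and $\widehat{\psi^{-1}}$ — and then appeal to the theorem of Motto Ros (\cite[Theorem 2.5]{motto_ros_complexity_2012}) that for countable Borel equivalence relations, Borel bireducibility implies classwise Borel isomorphism; this is where the countability of $\Gamma,\Gamma',N,N'$ is essential, since that black box is a Feldman--Moore / Schr\"oder--Bernstein style result specific to countable relations. You instead bypass the black box entirely by showing that the single Borel map $\hat\psi$ already induces a bijection $G/\mathcal{R}\to G'/\mathcal{R}'$ whose inverse is induced by $\widehat{\psi^{-1}}$, which is precisely the definition of classwise Borel isomorphism. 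This is a cleaner argument that, as you implicitly exploit, uses the fact that a definable isomorphism of actions gives \emph{both} directions of the reduction ``through the same map'' rather than merely two unrelated reductions; as a side effect your construction does not actually need the countability hypotheses at all (they serve only to ensure the relations are countable Borel, which is what matters downstream). Your computation intertwining $\psi$ with the actions, and the verification of well-definedness, injectivity, and surjectivity via $\widehat{\psi^{-1}}$, are all correct, and your remark about $\mathrm{Aut}(\mathcal{G})$ not being countable in general — but being so in the intended applications — is an accurate and worthwhile caveat that the paper leaves implicit.
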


Many results in the complexity theory of Borel reductions between
equivalence relations $R$ depend on expressing the pertinent equivalence
relation as an \emph{orbit equivalence relation} of a continuous (or measure
preserving) action of a Polish group on the underlying space of $R$. In
general, there is no natural continuous or (Haar) measure preserving action
on $\mathrm{Z}(\Lambda ,\mathbb{Z}),\hat{\mathbb{Z}}_{\boldsymbol{\Lambda }%
}^{d}$, or $\hat{\mathbb{Q}}_{\Lambda }^{d}$ which induces the equivalence
relations from (\ref{Eq:3relations}) above. However, when $\Lambda $ has the
additional property that $\alpha \in \mathrm{Aut}(\Lambda )\iff \alpha ^{%
\mathrm{T}}\in \mathrm{Aut}(\Lambda )$ for every $\alpha \in \mathrm{GL}_{d}(%
\mathbb{Q})$, then Corollary \ref{Corollary:trivial} implies that $\mathcal{R%
}(\mathrm{Aut}(\mathbb{\hat{Q}}_{\Lambda }^{d}/\Lambda )\curvearrowright 
\mathbb{\hat{Q}}_{\Lambda }^{d}/\Lambda )$ is simply the orbit equivalence
relation 
\begin{equation*}
\mathcal{R}(\mathrm{Aut}\left( \Lambda \right) \ltimes \Lambda
\curvearrowright \mathbb{\hat{Q}}_{\Lambda }^{d})
\end{equation*}%
of a continuous action of the countable group $\mathrm{Aut}\left( \Lambda
\right) \ltimes \Lambda $ on the locally profinite space $\mathbb{\hat{Q}}%
_{\Lambda }^{d}$. In particular, if we set $\Lambda :=\mathbb{Z}%
[1/p]^{d}\leq \mathbb{Q}^{d}$ to be as in Example \ref{Example:p-adic}, we
have that 
\begin{equation*}
\mathcal{R}(\mathrm{Aut}(\mathbb{Q}_{p}^{d}/\mathbb{Z}[1/p]^{d})%
\curvearrowright \mathbb{Q}_{p}^{d}/\mathbb{Z}[1/p]^{d})
\end{equation*}%
is classwise Borel isomorphic to the orbit equivalence relation 
\begin{equation*}
\mathcal{R}(\mathrm{GL}_{d}(\mathbb{Z}[1/p])\ltimes \mathbb{Z}%
[1/p]^{d}\curvearrowright \mathbb{Q}_{p}^{d}),
\end{equation*}%
associated with the \emph{affine action} \textrm{GL}$_{d}(\mathbb{Z}%
[1/p])\ltimes \mathbb{Z}[1/p]^{d}\curvearrowright \mathbb{Q}_{p}^{d}$. The
following is the main result of this section. The proof will rely on ideas
and results from \cite{coskey_borel_2010,adams_linear_2000,
thomas_classification_2003,thomas_classification_2011,
hjorth_classification_2006,ioana_cocycle_2011,ioana_orbit_2016} which we
review below.

\begin{theorem}
\label{Theorem:affine-action-intro}Fix $m,d\geq 1$, and prime numbers $%
p,q\geq 2$. Let $\Gamma $ be a subgroup of $\mathrm{GL}_{m}(\mathbb{Z}[1/q]%
\mathbb{)}$ containing a finite index subgroup of $\mathrm{SL}_{m}( \mathbb{Z%
}) $, and $\Delta $ be a subgroup of $\mathrm{GL}_{d}(\mathbb{Z}[1/p]\mathbb{%
)}$.

\begin{enumerate}
\item If $m>d$, then 
\begin{equation*}
\mathcal{R}(\Gamma \ltimes \mathbb{Z}[1/q]^{m}\curvearrowright \mathbb{Q}%
_{q}^{m})\nleq _{B}{}\mathcal{R}(\Delta \ltimes \mathbb{Z}%
[1/p]^{d}\curvearrowright \mathbb{Q}_{p}^{d})\text{.}
\end{equation*}

\item If $p,q$ are \emph{distinct}, $m\geq 3$, and $d\geq 2$ then%
\begin{equation*}
\mathcal{R}(\Gamma \ltimes \mathbb{Z}[1/q]^{m}\curvearrowright \mathbb{Q}%
_{q}^{m})\nleq _{B}{}\mathcal{R}(\Delta \ltimes \mathbb{Z}%
[1/p]^{d}\curvearrowright \mathbb{Q}_{p}^{d})\text{.}
\end{equation*}

\item If $\Delta $ is abelian then $\mathcal{R}(\Delta \ltimes \mathbb{Z}%
[1/p]^{d}\curvearrowright \mathbb{Q}_{p}^{d})$ is hyperfinite and not smooth.

\item If $d\geq 2$ then $\mathcal{R}(\Gamma \ltimes \mathbb{Z}%
[1/q]^{d}\curvearrowright \mathbb{Q}_{q}^{d})$ is not treeable.
\end{enumerate}
\end{theorem}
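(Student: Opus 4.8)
The plan is to reduce all four statements to known rigidity and complexity facts about the affine actions $\Gamma \ltimes \mathbb{Z}[1/q]^m \curvearrowright \mathbb{Q}_q^m$ and $\Delta \ltimes \mathbb{Z}[1/p]^d \curvearrowright \mathbb{Q}_p^d$, all of which are continuous (indeed, real-analytic) actions of countable groups on locally compact Polish spaces, and each of which preserves the Haar measure on $\mathbb{Q}_q^m$ (resp. $\mathbb{Q}_p^d$) that is, moreover, ergodic for the $\mathbb{Z}[1/q]^m$-translation part alone. This last point is the entry ticket for Ioana's cocycle superrigidity theorem \cite{ioana_cocycle_2011}: the translation action $\mathbb{Z}[1/q]^m \curvearrowright \mathbb{Q}_q^m$ is a free, ergodic, profinite, measure-preserving action of a group containing (a finite-index subgroup of) the property-(T) group $\mathrm{SL}_m(\mathbb{Z})$ when $m \geq 3$, so every Borel cocycle into a countable (indeed, into any Polish) group is, on an invariant conull set, cohomologous to one that factors through a finite quotient of $\mathbb{Z}[1/q]^m$.

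First, for (1) and (2), I would argue by contradiction: a Borel reduction from $\mathcal{R}(\Gamma \ltimes \mathbb{Z}[1/q]^m \curvearrowright \mathbb{Q}_q^m)$ to $\mathcal{R}(\Delta \ltimes \mathbb{Z}[1/p]^d \curvearrowright \mathbb{Q}_p^d)$ gives, after restricting to an invariant conull set and invoking ergodicity, a Borel cocycle $\alpha\colon (\Gamma\ltimes\mathbb{Z}[1/q]^m)\times \mathbb{Q}_q^m \to \Delta\ltimes\mathbb{Z}[1/p]^d$. Applying Ioana cocycle superrigidity to the restriction of $\alpha$ to $\mathbb{Z}^m$ (a property-(T) subgroup, via the hypothesis on $\Gamma$) untwists $\alpha$ to a group homomorphism $\Gamma \to \Delta \ltimes \mathbb{Z}[1/p]^d$ up to an inner perturbation and a finite-index correction; composing with the projection to $\mathrm{GL}_d(\mathbb{Z}[1/p])$ and analysing the resulting finite-dimensional linear representation over $\mathbb{Q}_p$ then forces a contradiction on dimension grounds in case (1) ($m>d$ makes the representation degenerate, so a finite-index subgroup of $\mathrm{SL}_m(\mathbb{Z})$ would act trivially, contradicting the freeness that survives the reduction), and in case (2) one additionally uses that a finite-index subgroup of $\mathrm{SL}_m(\mathbb{Z})$ ($m\geq 3$) has no nontrivial finite-dimensional representation over $\mathbb{Q}_p$ with $p\neq q$ by the congruence subgroup property / superrigidity, exactly as in Coskey--Thomas \cite{coskey_borel_2010,thomas_classification_2003,thomas_classification_2011,hjorth_classification_2006}. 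This step --- carrying out the cocycle untwisting and then extracting the linear-algebraic contradiction in a way that respects the semidirect-product structure and the $\mathbb{Z}[1/p]^d$-translation part --- is the technical heart and the main obstacle; one must be careful that the cocycle reduction to a finite quotient interacts correctly with the translation subgroup so that the affine action cannot be "absorbed" into the translations.

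For (3): when $\Delta$ is abelian, $\Gamma\ltimes\mathbb{Z}[1/p]^d := \Delta\ltimes\mathbb{Z}[1/p]^d$ is a countable amenable group (extension of abelian by abelian), so by the Connes--Feldman--Weiss / Ornstein--Weiss theorem any orbit equivalence relation of a Borel action of it is hyperfinite; non-smoothness follows since the translation subaction $\mathbb{Z}[1/p]^d\curvearrowright \mathbb{Q}_p^d$ already has dense orbits (hence is not smooth). For (4): when $m\geq 2$, the subgroup $\Gamma$ contains a finite-index copy of a group containing $\mathrm{SL}_2(\mathbb{Z})$, which contains a nonabelian free group; the affine action of $\Gamma\ltimes\mathbb{Z}[1/q]^m$ on $\mathbb{Q}_q^m$ with Haar measure has, I claim, nontrivial cost / positive first $\ell^2$-Betti number inherited from the free subgroup acting essentially freely, and hence is not treeable by the cost-theoretic obstruction of Gaboriau (a treeable measure-preserving equivalence relation on a non-atomic space that contains the orbit relation of such an action would force the wrong cost); alternatively, one can quote the non-treeability of the analogous action already established in \cite{thomas_classification_2003,thomas_classification_2011} and transport it along the (measure-class-preserving) identification of the restricted action. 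Assembling (1)--(4), together with Lemma \ref{Lemma:BorelClasswiseIsomorphic} and Corollary \ref{Corollary:trivial} to translate between $\mathcal{R}(\mathrm{Aut}(\mathbb{\hat{Q}}^d_\Lambda/\Lambda)\curvearrowright \mathbb{\hat{Q}}^d_\Lambda/\Lambda)$ and the affine-action orbit equivalence relation, then yields Theorem \ref{Theorem:main3} in the stated form.
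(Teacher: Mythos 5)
Your high-level plan --- Ioana cocycle superrigidity for (1)--(2), amenability for (3), a rigidity obstruction for (4) --- is the right general territory, but several of the steps as written have genuine gaps, and two are arguably incorrect.

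For (3), you invoke Connes--Feldman--Weiss / Ornstein--Weiss to conclude hyperfiniteness. Those are \emph{measure-theoretic} statements (hyperfiniteness a.e.\ for pmp actions of amenable groups); they do not give \emph{Borel} hyperfiniteness, which is what the theorem asserts and what is needed for the Borel reducibility statements. Indeed, Borel hyperfiniteness of actions of amenable groups is the open Weiss problem. The paper instead uses the result of \cite{schneider_locally_2013}, which establishes Borel hyperfiniteness of orbit equivalence relations for a class of countable group actions on locally compact Polish spaces, applicable here to $\Delta\ltimes\mathbb{Z}[1/p]^d$. Your measure-theoretic route does not close this case.

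For (4), your proposed obstruction --- that the affine action contains a free subgroup acting essentially freely, hence inherits positive $\ell^2$-Betti number / nontrivial cost, so cannot be treeable --- is not correct. Free pmp actions of $F_2$ \emph{are} treeable; the presence of a free subgroup is not an obstruction to treeability, and positive first $\ell^2$-Betti number is compatible with treeability (in fact characteristic of it for free actions of free groups). The actual obstruction the paper uses is that for $m\ge 3$ the group $\Gamma\cap\mathrm{SL}_m(\mathbb{Z})\ltimes\mathbb{Z}^m$ has property (T), and for $m=2$ the pair $\mathbb{Z}^2\le\mathrm{SL}_2(\mathbb{Z})\ltimes\mathbb{Z}^2$ has relative property (T); either way the group fails the Haagerup property, and treeable pmp equivalence relations cannot arise from a.e.\ free actions of non-Haagerup groups \cite{ueda_notes_2006}. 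This is an entirely different and in fact opposite-direction argument to yours.

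For (1), you need to be careful about the measure-theoretic set-up. The Haar measure on $\mathbb{Q}_q^m$ is infinite, and the linear part of $\Gamma\ltimes\mathbb{Z}[1/q]^m$ need not preserve it (determinants in $\mathbb{Z}[1/q]^\times$ scale it by powers of $q$). To run Ioana's superrigidity one must restrict to the compact piece $\mathbb{Z}_q^m$ with the subgroup $(\Gamma\cap\mathrm{SL}_m(\mathbb{Z}))\ltimes\mathbb{Z}^m$, obtaining an honest pmp profinite action, and then feed back through the inclusion $\mathbb{Z}_q^m\hookrightarrow\mathbb{Q}_q^m$. The paper does this explicitly (proof of Theorem~\ref{Theorem:increase-dimension-general}). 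Also, for $m=2$, $d=1$ the property-(T) hypothesis fails, so the superrigidity machinery is unavailable; the paper handles this subcase separately via items (3) and (4). Your sketch does not address either of these points.

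For (2), your appeal to superrigidity / the congruence subgroup property is in the right neighborhood, but the paper's mechanism is different and cleaner: it passes to profinite closures $G_0$ inside $\mathrm{SL}_m(\mathbb{Z}_p)$ (virtually pro-$p$) and $G_1$ inside $\mathrm{SL}_d(\mathbb{Z}_q)\ltimes\mathbb{Z}_q^d$ (virtually pro-$q$) and uses Coskey's Lemma~\ref{Lemma:homog2} to extract a continuous homomorphism $G_0\to G_1/H$, which must be trivial since a continuous homomorphism from a pro-$p$ group to a pro-$q$ group is trivial when $p\ne q$. This avoids invoking Margulis superrigidity or CSP in any precise form.
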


The proof of Theorem \ref{Theorem:affine-action-intro} will be concluded at
the very end of this section. We recall now some definitions regarding
measure preserving dynamics which will be used throughout the rest of this
section.

A \emph{standard atomless probability space $X$} is a standard Borel space
endowed with an atomless probability measure on its Borel $\sigma$-algebra.
A \emph{probability-measure-preserving }(pmp) \emph{equivalence relation} on 
$X$ is the orbit equivalence relation $\mathcal{R}\left( \Gamma
\curvearrowright X\right) $ associated with a measure-preserving action of a
countable group $\Gamma$ on such an $X$. We say that $\mathcal{R}( \Gamma
\curvearrowright X) $ is \emph{ergodic} if the action $\Gamma
\curvearrowright X$ is ergodic, i.e., if the only invariant sets under the
action are of measure 0 or 1. If $E$ is a pmp equivalence relation on $X$
and $F$ is a Borel equivalence relation on a standard Borel space $Y,$ then
an \emph{almost-everywhere} (a.e.) \emph{homomorphism} from $E$ to $F$ is a
Borel function $f:X\rightarrow Y$ such that, for some conull subset $X_{0}$
of $X$, $f\upharpoonright X_0$ is a Borel homomorphism from $%
E\upharpoonright X_0$ to $F$.

\begin{definition}
Let $E$ be a pmp equivalence relation on $X$, and $F$ be a Borel equivalence
relation. Two a.e.\ homomorphisms $f_{0},f_{1}$ from $E$ to $F$ are \emph{%
almost everywhere} (a.e.) \emph{$F$-homotopic} if there exists a conull
subset $X_{0}$ of $X$ such that, for every $x\in X_{0}$, $f_{0}(x)$ and $%
f_{1}(x)$ belong to the same $F$-equivalence class. The pmp equivalence
relation $E$ is $F$-\emph{ergodic} if it is ergodic and every a.e.\
homomorphism from $E$ to $F$ is a.e.\ $F$-homotopic to a constant map.
\end{definition}

Let $\Gamma \curvearrowright X$ and $\Delta \curvearrowright Y$ be two
actions. If $\varphi\colon \Gamma\to \Delta$ is a group homomorphism and $%
f\colon X\to Y$ is a function so that for all $\gamma\in \Gamma$ and $x\in X$
we have $f\left( \gamma \cdot x\right) =\phi \left( \gamma \right) \cdot
f\left( x\right)$, then we say that $\left( \phi ,f\right) $ is a \emph{%
homomorphism of permutation groups}. We similarly define the notion of an 
\emph{a.e.\ homomorphism of permutation groups}, whenever $X$ is endowed
with a probability measure.

\subsection{Hyperfiniteness and treeability}

We now prove items (3) and (4) of Theorem \ref{Theorem:affine-action-intro}
in a slightly more general setup. Let $S$ be a nonempty set of primes and
let $\mathbb{Z}[1/S]$ be the subring of $\mathbb{Q }$ generated by $1/p$
where $p$ ranges in $S$ and let $\mathbb{Z}_{S}$ be the product of $\mathbb{Z%
}_{p}$ where $p$ ranges in $S$. Let also 
\begin{equation*}
\mathbb{Q}_{S}:=\big\{ (x_p)_p\in\prod_{p\in S}\mathbb{Q}_{p} \colon x_p\in 
\mathbb{Z}_{p} \text{ for all but finitely many } p\in S \big\}.
\end{equation*}
be the \emph{restricted product} of the $p$-adic numbers $\mathbb{Q}_{p}$
with respect to the subrings $\mathbb{Z}_{p}$ of the $p$-adic integers,
where $p$ ranges in $S$. Since $\mathbb{Q}$ is a subring of $\mathbb{Q}_{S}$%
, $\mathbb{Q}_{S}$ may be viewed as a $\mathbb{Q}$-vector space. This
determines an action of $\mathrm{GL}_{d}(\mathbb{Q})$ on $\mathbb{Q}_{S}$.
We also have an action of $\mathbb{Z}[1/S]^{d}$ on $\mathbb{Q}_{S}^{d}$ by
translation. Together these actions induce an action of the \emph{affine
group} $\mathrm{GL}_{d}(\mathbb{Z}[1/S])\ltimes \mathbb{Z}[1/S]^{d}$ on $%
\mathbb{Q}_{S}^{d}$. The following propositions generalize (3) and (4) of
Theorem \ref{Theorem:affine-action-intro}

\begin{proposition}
\label{Proposition:hyperfinite}Fix $d\geq 1$. Let $S$ be a nonempty set of
primes, and $\Gamma $ be an abelian subgroup of $\mathrm{GL}_{d}(\mathbb{Z}%
[1/S])$. Then $\mathcal{R}(\Gamma \ltimes \mathbb{Z}[1/S]^{d}%
\curvearrowright \mathbb{Q}_{S}^{d})$ is hyperfinite and not smooth.
\end{proposition}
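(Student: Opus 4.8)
The plan is to establish the two assertions of the proposition---non-smoothness and hyperfiniteness---separately, with non-smoothness being routine and hyperfiniteness carrying all the weight.

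For non-smoothness, I would first observe that the translation subgroup $\mathbb{Z}[1/S]^{d}$ embeds diagonally into $\mathbb{Q}_{S}^{d}$ as a \emph{dense} subgroup: a basic clopen subset of the restricted product $\mathbb{Q}_{S}^{d}$ imposes constraints on only finitely many $p$-adic coordinates (with the remaining coordinates confined to $\mathbb{Z}_{p}$), and a Chinese-Remainder-type simultaneous approximation produces an element of $\mathbb{Z}[1/S]^{d}$ meeting it. Consequently every $\mathbb{Z}[1/S]^{d}$-orbit, and hence every $(\Gamma\ltimes\mathbb{Z}[1/S]^{d})$-orbit, is a countable dense---thus meager---subset of the Polish space $\mathbb{Q}_{S}^{d}$, which has no isolated points. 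Since the action is continuous and all orbits are meager, the orbit equivalence relation is generically ergodic, so by the topological Glimm--Effros dichotomy it Borel reduces $E_{0}$; see \cite[Theorem 6.2.1]{gao_invariant_2009}, already invoked above. In particular it is not smooth.

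For hyperfiniteness I would exploit that $\Gamma$ is abelian, so $G:=\Gamma\ltimes\mathbb{Z}[1/S]^{d}$ is an extension of the countable abelian group $\Gamma$ by the countable abelian group $\mathbb{Z}[1/S]^{d}$. First, the relation $E:=\mathcal{R}(\mathbb{Z}[1/S]^{d}\curvearrowright\mathbb{Q}_{S}^{d})$ is hyperfinite, since $\mathbb{Z}[1/S]^{d}$ is a countable abelian group and Borel actions of such groups generate hyperfinite equivalence relations (Gao--Jackson). Next, because $\Gamma$ normalises $\mathbb{Z}[1/S]^{d}$ and acts $\mathbb{Q}$-linearly on $\mathbb{Q}_{S}^{d}$, each $\gamma\in\Gamma$ induces a Borel automorphism of $\mathbb{Q}_{S}^{d}$ carrying $E$-classes to $E$-classes, and these automorphisms pairwise commute since $\Gamma$ is abelian. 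As $G=\langle\Gamma,\mathbb{Z}[1/S]^{d}\rangle$, we get $\mathcal{R}(G\curvearrowright\mathbb{Q}_{S}^{d})=E\vee\mathcal{R}(\Gamma\curvearrowright\mathbb{Q}_{S}^{d})$: the $G$-relation is obtained from the hyperfinite relation $E$ by adjoining an $E$-preserving action of the countable abelian group $\Gamma$. I would then invoke the relativised form of the Gao--Jackson theorem---equivalently, the analysis of precisely these affine actions in the cited work of Coskey and Thomas \cite{coskey_borel_2010,thomas_classification_2003}---to the effect that adjoining an $E$-preserving countable abelian group of Borel automorphisms to a hyperfinite $E$ yields again a hyperfinite relation.

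The hard part is this last step, and I would want to be careful not to shortcut it. One cannot simply adjoin generators of $\Gamma$ one at a time and take an increasing union, because increasing unions of hyperfinite equivalence relations need not be hyperfinite; this is exactly the obstruction that the Gao--Jackson marker machinery was designed to overcome, so an honest argument must appeal to that machinery (or to a reference carrying out the relevant affine-action case) rather than merely iterating the elementary fact that a single $E$-preserving Borel automorphism leaves a hyperfinite $E$ hyperfinite. Granting hyperfiniteness together with the $E_{0}$-reduction, the proposition follows at once.
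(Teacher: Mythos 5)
Your non-smoothness argument matches the paper's --- density of the $\mathbb{Z}[1/S]^{d}$-orbits forces generic ergodicity, whence an $E_{0}$-reduction by topological Glimm--Effros --- and is correct. For hyperfiniteness the two proofs diverge completely: the paper's is one line, asserting that $\Gamma\ltimes\mathbb{Z}[1/S]^{d}$ is nilpotent because $\Gamma$ is abelian and citing Schneider's theorem that Borel actions of countable locally nilpotent groups generate hyperfinite equivalence relations. Your decomposition into $E:=\mathcal{R}(\mathbb{Z}[1/S]^{d}\curvearrowright\mathbb{Q}_{S}^{d})$, hyperfinite by Gao--Jackson, followed by adjoining an $E$-preserving $\Gamma$-action, is a genuinely different plan, and you correctly identify where it is delicate.

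The gap, however, is precisely the step you flag, and the escape you propose does not close it. There is no ``relativized Gao--Jackson theorem'' asserting that the join of a hyperfinite $E$ with a countable abelian group of $E$-preserving Borel automorphisms is hyperfinite; that statement is not known even for $\Gamma=\mathbb{Z}$ --- contrary to what your final paragraph treats as elementary, a single $E$-preserving Borel automorphism is not known in general to preserve hyperfiniteness of the join. The obstruction is that the marker machinery lives on a standard Borel space and $X/E$ is not one; this is essentially the union problem in disguise. Nor do the Coskey--Thomas references supply such a relative lemma: they study Borel complexity of these affine actions via superrigidity, not hyperfiniteness of the abelian case. So your hyperfiniteness argument is incomplete as written. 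For calibration you should also note that the paper's one-liner is itself shaky: with $d=1$, $S=\{2\}$, $\Gamma=\langle 2\rangle$, the group $\Gamma\ltimes\mathbb{Z}[1/2]\cong BS(1,2)$ is metabelian but not locally nilpotent, so Schneider's theorem does not literally apply. The group $\Gamma\ltimes\mathbb{Z}[1/S]^{d}$ is metabelian of finite Hirsch length, and a complete proof needs a hyperfiniteness result covering that class rather than the nilpotent one.
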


\begin{proof}
Notice first that $\mathbb{Q}_{S}^{d}$ is Polish, as a locally compact,
metrizable $K_{\sigma}$; see \cite[Theorem 5.3]{kechris_classical_1995}.
Since $\Gamma $ is abelian, $\Gamma \ltimes \mathbb{Z}[1/S]$ is nilpotent.
Therefore $\mathcal{R}(\Gamma \ltimes \mathbb{Z}[1/S]^{d}\curvearrowright 
\mathbb{Q}_{S}^{d})$ is hyperfinite by the main result of \cite%
{schneider_locally_2013}. As the action $\Gamma \ltimes \mathbb{Z}%
[1/S]\curvearrowright \mathbb{Q}_{S}\ $has dense orbits, $\mathcal{R}(\Gamma
\ltimes \mathbb{Z}[1/S]^{d}\curvearrowright \mathbb{Q}_{S}^{d})$ is not
smooth.
\end{proof}

\begin{proposition}
\label{Proposition:treeable}Fix $d\geq 2$, a nonempty set of primes $S$, and
a subgroup $\Gamma $ of $\mathrm{GL}_{d}(\mathbb{Z}[1/S])$ containing a
finite index subgroup of $\mathrm{SL}_{d}(\mathbb{Z})$. Then $\mathcal{R}%
(\Gamma \ltimes \mathbb{Z}[1/S]^{d}\curvearrowright \mathbb{Q}_{S}^{d})$ is
not treeable.
\end{proposition}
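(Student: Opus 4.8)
The plan is to reduce to a known non-treeability result for actions of $\mathrm{SL}_d(\mathbb{Z})$-type groups by exhibiting inside $\mathcal{R}(\Gamma\ltimes\mathbb{Z}[1/S]^d\curvearrowright\mathbb{Q}_S^d)$ a sub-equivalence relation that is already non-treeable, and then invoking the fact that a Borel sub-equivalence relation of a treeable countable Borel equivalence relation is treeable \cite{jackson_countable_2002}. Concretely, first I would replace $\Gamma$ by the finite index subgroup $\Gamma_0\leq\mathrm{SL}_d(\mathbb{Z})$ it contains; since $d\geq2$, this $\Gamma_0$ is non-amenable (it contains a finite index subgroup of $\mathrm{SL}_d(\mathbb{Z})$, which for $d\geq 2$ contains nonabelian free subgroups). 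Restricting the action to $\Gamma_0\curvearrowright\mathbb{Q}_S^d$ and looking at a single $p$-adic coordinate $\mathbb{Q}_p^d$ (for some fixed $p\in S$) together with its invariant compact subset $\mathbb{Z}_p^d$ carrying Haar probability measure, we obtain a probability-measure-preserving action $\Gamma_0\curvearrowright(\mathbb{Z}_p^d,\mathrm{Haar})$. This is a profinite action (it is the inverse limit of the actions on $(\mathbb{Z}/p^k\mathbb{Z})^d$), hence free modulo a null set after passing to a suitable finite index subgroup, and essentially free in any case for $\Gamma_0$ acting faithfully.

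The key input is then the following: the orbit equivalence relation of an essentially free p.m.p.\ action of a non-amenable group with the relative property (T), or more simply, any ergodic p.m.p.\ equivalence relation generated by a group containing $\mathbb{F}_2$ that is \emph{not} treeable. Here the cleanest route is to use that $\mathrm{SL}_d(\mathbb{Z})$ for $d\geq3$ has property (T), so its profinite actions generate non-treeable (indeed strongly ergodic, anti-treeable) equivalence relations; but since the statement only demands $d\geq2$, I would instead appeal to the Gaboriau cost/$\ell^2$-Betti number obstruction, or to Hjorth's theorem that treeable p.m.p.\ equivalence relations are exactly those generated by free actions of free groups together with the fact that for $d\geq 2$ the relevant actions of $\mathrm{SL}_d(\mathbb{Z})$ have cost $1$ and are not treeable because $\mathrm{SL}_d(\mathbb{Z})$ for $d\geq 2$ is not a free group and the action is essentially free and ergodic — more precisely I would cite the relevant result among \cite{hjorth_classification_2006, thomas_classification_2011, ioana_orbit_2016} guaranteeing that $\mathcal{R}(\Gamma_0\curvearrowright\mathbb{Z}_p^d)$ is not treeable. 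Then, since $\mathcal{R}(\Gamma_0\curvearrowright\mathbb{Z}_p^d)$ is (the restriction to an invariant Borel subset of) a sub-equivalence relation of $\mathcal{R}(\Gamma\ltimes\mathbb{Z}[1/S]^d\curvearrowright\mathbb{Q}_S^d)$, and treeability passes to Borel sub-equivalence relations and to restrictions to invariant Borel sets, the ambient relation cannot be treeable either.

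The main obstacle I anticipate is bookkeeping the reduction carefully: the ambient equivalence relation lives on $\mathbb{Q}_S^d$, which is only locally compact and carries no canonical finite invariant measure for the full affine group, so I cannot directly talk about p.m.p.\ structure at the top level. The fix is exactly the two-step restriction above — first discard the translation part $\mathbb{Z}[1/S]^d$ and the primes other than a fixed $p$ by restricting to $\mathbb{Z}_p^d\subseteq\mathbb{Q}_p^d\subseteq\mathbb{Q}_S^d$, which \emph{is} $\Gamma_0$-invariant and compact, and only then invoke measure-theoretic rigidity. One must check that $\mathbb{Z}_p^d$ is a genuinely invariant Borel subset of $\mathbb{Q}_S^d$ under $\Gamma_0$ (it is, since $\mathrm{SL}_d(\mathbb{Z})$ preserves $\mathbb{Z}_p^d$ and acts trivially on the other coordinates after the restriction is set up correctly), and that the restricted relation is non-treeable via the cited p.m.p.\ rigidity — this citation, and verifying its hypotheses (essential freeness, ergodicity of the profinite action, non-amenability of $\Gamma_0$ for $d\geq2$), is where the real content lies, but it is a direct appeal rather than a new argument.
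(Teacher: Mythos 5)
Your restriction strategy (pass to a sub-equivalence relation on an invariant compact set carrying Haar measure, and use that treeability is inherited by sub-equivalence relations) is the right opening move and matches the paper's. But there is a genuine gap in the $d=2$ case, which is precisely where you discard the translation part $\mathbb{Z}[1/S]^d$. A finite-index subgroup $\Gamma_0$ of $\mathrm{SL}_2(\mathbb{Z})$ is virtually free, hence has the Haagerup property, and its free p.m.p.\ actions generate \emph{treeable} equivalence relations; so $\mathcal{R}(\Gamma_0\curvearrowright\mathbb{Z}_p^2)$ is in fact treeable, and your plan to show otherwise cannot succeed. Non-amenability of $\Gamma_0$ is not an obstruction to treeability (free p.m.p.\ actions of nonabelian free groups are non-amenable and treeable), and the cost-1 remark is off: for an essentially free ergodic p.m.p.\ action of a virtually free group the cost is determined by the virtual free rank and generally exceeds $1$, and the relation is treeable regardless.

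The paper keeps the semidirect product in play: it restricts $\mathcal{R}(\Gamma\ltimes\mathbb{Z}[1/S]^d\curvearrowright\mathbb{Q}_S^d)$ to the compact invariant set $\mathbb{Z}_S^d$ and looks at the sub-relation generated by $\Delta\ltimes\mathbb{Z}^d$ with $\Delta=\Gamma\cap\mathrm{SL}_d(\mathbb{Z})$, acting by Haar-preserving transformations. Essential freeness is checked via the infinite-index-fixed-point criterion (Ioana's Lemma 1.7 in \cite{ioana_cocycle_2011}). The non-treeability then comes not from $\Delta$ alone but from the whole group $\Delta\ltimes\mathbb{Z}^d$: for $d\geq 3$ it has property (T), and for $d=2$ the pair $\mathbb{Z}^2\leq\Delta\ltimes\mathbb{Z}^2$ has relative property (T), so in either case $\Delta\ltimes\mathbb{Z}^d$ fails the Haagerup property; one then invokes Ueda's result \cite[Proposition 6]{ueda_notes_2006} that p.m.p.\ actions of groups without the Haagerup property are never treeable. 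Your proposal recovers the $d\geq 3$ case (via property (T) of $\mathrm{SL}_d(\mathbb{Z})$), but the $d=2$ case genuinely requires carrying along the $\mathbb{Z}^d$ translation part so that relative property (T) can be used.
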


\begin{proof}
Let $\Delta :=\Gamma \cap \mathrm{SL}_{d}\left( \mathbb{Z}\right) $. As $%
\mathcal{R}(\Delta \ltimes \mathbb{Z}^{d}\curvearrowright \mathbb{Z}%
_{S}^{d}) $ is a subequivalence relation of the restriction of $\mathcal{R}%
(\Gamma \ltimes \mathbb{Z}[1/S]^{d}\curvearrowright \mathbb{Q}_{S}^{d})$ to $%
\mathbb{Z}_{S}^{d}$, it suffices by \cite[Proposition 3.3]%
{jackson_countable_2002} to show that $\mathcal{R}(\Delta \ltimes \mathbb{Z}%
^{d}\curvearrowright \mathbb{Z}_{S}^{d})$ is not treeable. Let $\mu_s$ be
the Haar measure on $\mathbb{Z}_{S}^{d}$ and $\mu_p$ be the Haar measure on $%
\mathbb{Z}_{p}^{d}$ for all $p\in S$. Notice that for every nontrivial
element $g$ of $\Delta$ the subgroup of $\mathbb{Z}^d$ consisting of fixed
points for $g$ is of infinite index in $\mathbb{Z}^d$. Hence, for all $p\in
S $, the $\mu_p$-measure preserving action $\Delta \ltimes \mathbb{Z}^d
\curvearrowright \mathbb{Z}_{p}^{d}$ is free almost everywhere \cite[Lemma
1.7]{ioana_cocycle_2011}, and therefore the $\mu_S$-measure preserving
action $\Delta \ltimes \mathbb{Z}^{d}\curvearrowright \mathbb{Z}_{S}^{d}$ is
free almost everywhere. Furthermore, $\Delta \ltimes \mathbb{Z}^{d}$ has
property (T) for $d\geq 3$, and $\mathbb{Z}^{2}\leq \Delta \ltimes \mathbb{Z}%
^{2}$ has the relative property (T). Hence, $\Delta \ltimes \mathbb{Z}^{d}$
does \emph{not }have the Haagerup property \cite[Chapter 1]%
{cherix_groups_2001}. Hence, the relation $\mathcal{R}(\Delta \ltimes 
\mathbb{Z}^{d}\curvearrowright \mathbb{Z}_{S}^{d})$ is not treeable \cite[%
Proposition 6]{ueda_notes_2006}.
\end{proof}

\begin{remark}
\label{Remark:treeable}The same conclusions as in Proposition \ref%
{Proposition:treeable} holds for every subgroup $\Gamma $ of $\mathrm{GL}%
_{d}(\mathbb{Z}[1/S])$ such that $\mathbb{Z}^{d}\leq \left( \Gamma \cap 
\mathrm{SL}_{d}(\mathbb{Z})\right) \ltimes \mathbb{Z}^{d}$ has relative
property (T). When $d=2$, this is equivalent to the assertion that $\Gamma
\cap \mathrm{SL}_{2}\left( \mathbb{Z}\right) $ is not virtually cyclic by 
\cite[Section 5, Example 2]{burger_kazhdan_1991}.
\end{remark}

\subsection{Comparing affine actions of different dimension}

Let $\Gamma\curvearrowright A$ be an action of a countable group $\Gamma$ on
a countable abelian group $A$ be automorphisms, and let $\boldsymbol{A}%
=(A^{(n)})$ be a filtration on $A$, consisting of $\Gamma$-invariant
finite-index subgroups of a $A$. This induces an action $\Gamma
\curvearrowright \hat{A}$ of $\Gamma$ on the associated profinite completion 
$\hat{A}$ of $A$ by (Haar measure-preserving) topological group automorphism
of $\hat{A}$ . We additionally have the translation action of $A$ on $\hat{A}
$. Together these actions induce a (Haar) measure-preserving action of the
semidirect product $\Gamma \ltimes A$ on $\hat{A}$. Since $A$ is dense in $%
\hat{A}$, the action $\Gamma \ltimes A\curvearrowright\hat{A}$ is ergodic.
Additionally, if for every nontrivial element $g$ of $\Gamma $ the subgroup
of $A$ consisting of fixed points for $g$ is of infinite index, then the
action $\Gamma \ltimes A\curvearrowright \hat{A}$ is a.e.\ free \cite[Lemma
1.7]{ioana_cocycle_2011}.

The following is an immediate consequence of \cite[Theorem B]%
{ioana_cocycle_2011}, where: the pair $\Gamma_0\leq \Gamma$ in \cite%
{ioana_cocycle_2011} corresponds to the pair $A\leq \Gamma \ltimes A$; the 
\emph{profinite action} $\Gamma\curvearrowright X$ in \cite%
{ioana_cocycle_2011} corresponds to the action $\Gamma \ltimes
A\curvearrowright \hat{A}$; and the cocycle $w\colon \Gamma \times X \to
\Lambda$ in \cite{ioana_cocycle_2011} corresponds to the unique map $%
(g,x)\mapsto h$ below, with values in $\Delta$, so that $h \cdot f(x)=f(g
\cdot x)$.

\begin{proposition}[Ioana]
\label{Corollary:profinite-Ioana} Let $\Gamma$ be a finitely-generated group
acting on a countable abelian group $A$ by automorphisms so that, for every
non-trivial $g\in\Gamma$, the group of elements of $A$ fixed by $g$ has
infinite index in $A$. Let also $\hat{A}$ be the profinite completion of $A$
with respect to some filtration $\boldsymbol{A}=(A^{(n)})$ on $A$,
consisting of $\Gamma$-invariant finite-index subgroups. Let finally $f:\hat{%
A} \rightarrow Y$ be an a.e.\ homomorphism from $\mathcal{R}(\Gamma \ltimes
A\curvearrowright \hat{A})$ to $\mathcal{R}( \Delta \curvearrowright Y) $,
where $\Delta \curvearrowright Y$ is some free Borel action of a countable
group $\Delta$ on a standard Borel space $X$. We have the following:

\begin{enumerate}
\item If the pair $A\leq \Gamma \ltimes A$ has the relative property (T),
then there exist an $n\in \omega$, a group homomorphism $\phi
:A^{(n)}\rightarrow \Delta $, and a Borel function $f^{\prime }:\hat{A}%
^{(n)}\rightarrow Y$, where $\hat{A}^{(n)}$ is the closure of $A^{(n)}$
inside $\hat{A}$, such that:

\begin{itemize}
\item $f^{\prime }$ is a.e.\ $\mathcal{R}\left( \Delta \curvearrowright
Y\right) $-homotopic to $f\upharpoonright\hat{A}^{(n)}$, and

\item $\left( \phi ,f^{\prime }\right) $ is an a.e.\ homomorphism of
permutation groups from $A^{(n)}\curvearrowright \hat{A}^{(n)}$ to $\Delta
\curvearrowright Y$.
\end{itemize}

\item If the group $\Gamma \ltimes A$ has property (T), then there exist an $%
n\in \omega $, a group homomorphism $\phi :\Gamma \ltimes A^{(n)}\rightarrow
\Delta $, and a Borel function $f^{\prime }:\hat{A}^{(n)}\rightarrow Y$,
where $\hat{A}^{(n)}$ is the closure of $A^{(n)}$ inside $\hat{A}$, such
that:

\begin{itemize}
\item $f^{\prime }$ is a.e.\ $\mathcal{R}\left( \Delta \curvearrowright
Y\right) $-homotopic to $f\upharpoonright\hat{A}^{(n)}$, and

\item $\left( \phi ,f^{\prime }\right) $ is an a.e.\ homomorphism of
permutation groups from $\Gamma \ltimes A^{(n)}\curvearrowright \hat{A}%
^{(n)} $ to $\Delta \curvearrowright Y$.
\end{itemize}
\end{enumerate}
\end{proposition}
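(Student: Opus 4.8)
The plan is to derive this directly from Ioana's cocycle superrigidity theorem for profinite actions \cite{ioana_cocycle_2011} via the dictionary indicated above, so the task reduces to setting up the cocycle correctly and then unwinding Ioana's conclusion.

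First I would fix the measure‑dynamical data. The filtration $\boldsymbol{A}=(A^{(n)})$ by $\Gamma$‑invariant finite‑index subgroups presents $\hat{A}$ as the inverse limit of the finite quotients $A/A^{(n)}$, so $\Gamma\ltimes A\curvearrowright\hat{A}$ is an ergodic \emph{profinite} probability‑measure‑preserving action with respect to Haar measure $\mu$ on $\hat{A}$ (ergodic since $A$, hence already $A$, is dense). The hypothesis that each nontrivial $g\in\Gamma$ fixes only an infinite‑index subgroup of $A$, together with the semidirect‑product structure, gives by \cite[Lemma 1.7]{ioana_cocycle_2011} that $\Gamma\ltimes A\curvearrowright\hat{A}$ is almost everywhere free. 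Next, since $\Delta\curvearrowright Y$ is \emph{free} and $f$ is an a.e.\ homomorphism from $\mathcal{R}(\Gamma\ltimes A\curvearrowright\hat{A})$ to $\mathcal{R}(\Delta\curvearrowright Y)$, there is a $(\Gamma\ltimes A)$‑invariant conull set on which the relation $w(g,x)\cdot f(x)=f(g\cdot x)$ determines a unique $w(g,x)\in\Delta$; one checks that $w\colon(\Gamma\ltimes A)\times\hat{A}\to\Delta$ is a Borel cocycle over the action. This is exactly the cocycle appearing in the dictionary.

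Then I would invoke \cite[Theorem B]{ioana_cocycle_2011}. In case (1), the pair $A\le\Gamma\ltimes A$ has the relative property (T), so its relative form yields $n\in\omega$, a Borel map $\beta\colon\hat{A}\to\Delta$, and a cocycle over the finite action $A\curvearrowright A/A^{(n)}$ whose inflation $w'$ to $\hat{A}$ satisfies $w'(g,x)=\beta(g\cdot x)\,w(g,x)\,\beta(x)^{-1}$ for $g\in A$ and a.e.\ $x$. Since every $g\in A^{(n)}$ acts trivially on $A/A^{(n)}$ and $\hat{A}^{(n)}$ is precisely the fiber over $0\in A/A^{(n)}$, the value $w'(g,x)$ for $g\in A^{(n)}$ and $x\in\hat{A}^{(n)}$ is independent of $x$; writing $\phi(g)$ for this common value, the cocycle identity forces $\phi\colon A^{(n)}\to\Delta$ to be a group homomorphism. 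In case (2), $\Gamma\ltimes A$ has property (T), so the \emph{full} cocycle $w$ is cohomologous (via some $\beta$) to one inflated from the transitive action $(\Gamma\ltimes A)\curvearrowright A/A^{(n)}$; the stabilizer of the base point $0+A^{(n)}$ in $\Gamma\ltimes A$ is exactly $\Gamma\ltimes A^{(n)}$, and restricting the cohomologous cocycle to this subgroup and to the fiber $\hat{A}^{(n)}$ again produces, by independence of the space variable on that fiber, a homomorphism $\phi\colon\Gamma\ltimes A^{(n)}\to\Delta$. Finally, setting $f'(x):=\beta(x)\cdot f(x)$ for $x\in\hat{A}^{(n)}$ gives a Borel map lying pointwise in the $\Delta$‑orbit of $f(x)$, hence a.e.\ $\mathcal{R}(\Delta\curvearrowright Y)$‑homotopic to $f\upharpoonright\hat{A}^{(n)}$; and for $g$ in $A^{(n)}$ (case 1), resp.\ in $\Gamma\ltimes A^{(n)}$ (case 2), and a.e.\ $x\in\hat{A}^{(n)}$ one computes $f'(g\cdot x)=\beta(g\cdot x)\,w(g,x)\,f(x)=w'(g,x)\,\beta(x)\,f(x)=\phi(g)\cdot f'(x)$, so $(\phi,f')$ is an a.e.\ homomorphism of permutation groups, as claimed. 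One should record in passing that $\hat{A}^{(n)}$ is the closure of $A^{(n)}$ in $\hat{A}$ and is a clopen subgroup invariant under the relevant group, which is immediate from the construction of $\hat{A}$.

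The main obstacle will be the translation itself: Ioana's theorem is phrased for cocycles on ergodic profinite p.m.p.\ actions valued in countable groups, and one must verify that, transported through the dictionary, its ``cohomologous to a cocycle factoring through a finite quotient'' conclusion genuinely produces a group homomorphism on the correct clopen subgroup, together with the homotopy statement for $f$. This rests on two points to be made precise: that the point stabilizer of $0+A^{(n)}$ in the transitive action $(\Gamma\ltimes A)\curvearrowright A/A^{(n)}$ is exactly $\Gamma\ltimes A^{(n)}$, so that evaluating a finite‑quotient cocycle at the base point on this stabilizer yields a homomorphism; and that freeness of $\Delta\curvearrowright Y$ is what both legitimizes the definition of $w$ and allows one to read the permutation‑group‑homomorphism property of $(\phi,f')$ back off the cocycle computation. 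Everything else is bookkeeping with conull $(\Gamma\ltimes A)$‑invariant sets.
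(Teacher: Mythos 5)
Your proposal is correct and coincides with the paper's own (very brief) treatment: the paper simply states the dictionary (the pair $A\leq\Gamma\ltimes A$ plays the role of $\Gamma_0\leq\Gamma$ in Ioana, the profinite action is $\Gamma\ltimes A\curvearrowright\hat A$, and the cocycle is the one determined by $f$ via freeness of $\Delta\curvearrowright Y$) and declares the proposition an immediate consequence of Ioana's Theorem B. You have unwound that consequence explicitly — the construction of the cocycle $w$, the identification of $\Gamma\ltimes A^{(n)}$ (resp.\ $A^{(n)}$) as the stabilizer of the base point at level $n$, the derivation of the homomorphism $\phi$ on $A^{(n)}$ resp.\ $\Gamma\ltimes A^{(n)}$ from the cohomologous cocycle's constancy on the fiber $\hat A^{(n)}$, and the definition $f'(x):=\beta(x)\cdot f(x)$ with the closing computation $f'(g\cdot x)=\phi(g)\cdot f'(x)$ — and all of these steps are sound. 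This is the intended argument, carried out at a greater level of detail than the paper itself records.
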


Recall that $E_{0}$ stands for the relation of eventual equality of binary
sequences. One can regard $E_{0}$ as the orbit equivalence relation
associated with a continuous free action of a countable group on a Polish
space as follows. Set 
\begin{equation*}
B_{0}:=\bigoplus_{n\in \omega }\mathbb{Z}/2\mathbb{Z}\quad \text{ and }\quad
B:=\prod_{n\in \omega }\mathbb{Z}/2\mathbb{Z}.
\end{equation*}%
Then $B_{0}$ is a countable subgroup of the Polish group $B$ and $E_{0}$ is
the coset relation $\mathcal{R}\left( B_{0}\curvearrowright B\right) $.
Notice that $B_{0}$ is a \emph{Boolean group}, i.e., for all $b\in B_{0}$ we
have that $2b=0$.

The following is a consequence of Proposition \ref{Corollary:profinite-Ioana}%
. Notice that if $X$ is a compact abelian group, and $A\subseteq X$ is a
countable dense subgroup, then the translation action $A\curvearrowright X$
is ergodic.

\begin{lemma}
\label{Lemma:E0-ergodic} Let $\Gamma \curvearrowright A$, $\boldsymbol{A}%
=(A^{(n)})$, and $\hat{A}$ be as in the statement of Proposition \ref%
{Corollary:profinite-Ioana} and assume that for every $n\in \omega $, $%
A^{(n)}$ does not have an infinite Boolean group as quotient. If the pair $%
A\leq \Gamma \ltimes A$ has relative property (T), then $\mathcal{R}(\Gamma
\ltimes A\curvearrowright \hat{A})$ is $E_{0}$-ergodic.
\end{lemma}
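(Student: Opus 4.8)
Here is the plan.

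The plan is to deduce $E_0$-ergodicity of $\mathcal{R}(\Gamma \ltimes A \curvearrowright \hat{A})$ directly from Proposition \ref{Corollary:profinite-Ioana}(1), applied to the free Borel action $B_0 \curvearrowright B$ whose orbit equivalence relation is $E_0$. Suppose $f \colon \hat{A} \to B$ is an a.e.\ homomorphism from $\mathcal{R}(\Gamma \ltimes A \curvearrowright \hat{A})$ to $E_0 = \mathcal{R}(B_0 \curvearrowright B)$. Since the pair $A \leq \Gamma \ltimes A$ has the relative property (T), Proposition \ref{Corollary:profinite-Ioana}(1) produces an $n \in \omega$, a group homomorphism $\phi \colon A^{(n)} \to B_0$, and a Borel $f' \colon \hat{A}^{(n)} \to B$ which is a.e.\ $E_0$-homotopic to $f \upharpoonright \hat{A}^{(n)}$, such that $(\phi, f')$ is an a.e.\ homomorphism of permutation groups from $A^{(n)} \curvearrowright \hat{A}^{(n)}$ to $B_0 \curvearrowright B$.

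First I would observe that $\phi$ is trivial. The image $\phi(A^{(n)})$ is a subgroup of the Boolean group $B_0$, hence is itself a Boolean group, and it is a quotient of $A^{(n)}$; by hypothesis $A^{(n)}$ has no infinite Boolean group as a quotient, so $\phi(A^{(n)})$ is a finite Boolean group. Now I would use that $\phi(A^{(n)})$ is also a quotient of the infinite group $A^{(n)}$ — but more to the point, I would argue that a nontrivial finite image is impossible because it would contradict ergodicity. Concretely: the kernel of $\phi$ is a finite-index subgroup $A'$ of $A^{(n)}$, and the set of elements $a \in A^{(n)}$ acting trivially under $\phi$ is exactly $A'$; passing to the closure $\hat{A}'$ inside $\hat{A}^{(n)}$, the function $f'$ restricted to a single $\hat{A}'$-translate-orbit structure would have to be $A'$-invariant in the permutation-group sense, i.e.\ $f'(a \cdot x) = f'(x)$ for $a \in A'$. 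Since $A'$ is dense in $\hat{A}'$ and $\hat{A}'$ is a clopen finite-index subgroup of $\hat{A}$, and since $\mathcal{R}(\Gamma \ltimes A \curvearrowright \hat{A})$ is ergodic (because $A$ is dense in $\hat{A}$), the restriction $\mathcal{R}(A' \curvearrowright \hat{A}')$ is ergodic; an $A'$-invariant Borel map into the standard Borel space $B$ must then be a.e.\ constant. Hence $f'$ is a.e.\ constant on $\hat{A}^{(n)}$, so $f \upharpoonright \hat{A}^{(n)}$ is a.e.\ $E_0$-homotopic to a constant, i.e.\ takes values in a single $E_0$-class a.e.\ on $\hat{A}^{(n)}$.

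Finally I would bootstrap from $\hat{A}^{(n)}$ to all of $\hat{A}$. Since $\hat{A}^{(n)}$ is a finite-index clopen subgroup of $\hat{A}$ and $A$ acts on $\hat{A}$ by translations, the $\Gamma \ltimes A$-orbits meet $\hat{A}^{(n)}$ on a set of positive measure in a way that connects the finitely many cosets of $\hat{A}^{(n)}$: for any two cosets $\hat{A}^{(n)} + v, \hat{A}^{(n)} + w$ there is an element of $A$ sending one into the other (as $A$ maps onto $\hat{A}/\hat{A}^{(n)}$, a finite quotient). Because $f$ is an a.e.\ homomorphism of $E_0$, the constant $E_0$-class value on $\hat{A}^{(n)}$ is transported to the same $E_0$-class on each translate, so $f$ is a.e.\ $E_0$-homotopic to a constant on all of $\hat{A}$. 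Together with ergodicity this gives $E_0$-ergodicity. The main obstacle is the middle step — ruling out a nontrivial finite image of $\phi$ — which requires carefully invoking ergodicity of the restricted action on the clopen subgroup $\hat{A}^{(n)}$ and the standard fact that an invariant Borel map into a standard Borel space under an ergodic action is a.e.\ constant; the bookkeeping with cosets of $\hat{A}^{(n)}$ in the last step is routine once one tracks the $E_0$-homomorphism property carefully.
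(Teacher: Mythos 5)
Your argument is correct and follows essentially the same route as the paper's: apply Proposition \ref{Corollary:profinite-Ioana}(1), deduce from the Boolean-quotient hypothesis that $\phi$ has finite image and hence $\ker(\phi)$ has finite index, use density/ergodicity to get that $f'$ is a.e.\ constant up to $E_0$ on $\hat{A}^{(n)}$, and bootstrap by ergodicity of the ambient action. Two small slips in the writeup, neither fatal: (i) the opening claim that ``$\phi$ is trivial'' is neither what you prove at that point nor what the argument needs --- the Boolean-quotient hypothesis gives only that $\phi$ has finite image, which is exactly what the ensuing kernel argument uses (that $\phi$ is in fact trivial does follow afterward, from freeness of $B_0 \curvearrowright B$, once $f'$ is known to be a.e.\ constant, but this is an unused byproduct); (ii) ergodicity of $\mathcal{R}(A' \curvearrowright \hat{A}')$ follows from density of $A'$ in $\hat{A}'$ (which you do note), not from ergodicity of $\mathcal{R}(\Gamma \ltimes A \curvearrowright \hat{A})$ as your ``since'' clause suggests.
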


\begin{proof}
Let $f:\hat{A}\rightarrow B$ be an a.e.\ homomorphism from $\mathcal{R}%
(\Gamma \ltimes A\curvearrowright \hat{A})$ to $E_{0}=\mathcal{R}\left(
B_{0}\curvearrowright B\right) $. By Proposition \ref%
{Corollary:profinite-Ioana}, there exist $n\in \omega $, a group
homomorphism $\phi :A^{(n)}\rightarrow B_{0}$, and a Borel function $g:\hat{A%
}^{(n)}\rightarrow B$ such that $g$ is a.e.\ $E_{0}$-homotopic to $%
f\upharpoonright \hat{A}^{(n)}$, and $\left( \phi ,g\right) $ is an a.e.\
homomorphism of permutation groups from $A^{(n)}\curvearrowright \hat{A}%
^{(n)}$ to $B_{0}\curvearrowright B$. By assumption, $K:=\mathrm{\mathrm{Ker}%
}\left( \phi \right) $ has finite index in $A^{(n)}$. Since $\left( \phi
,g\right) $ is an a.e.\ homomorphism of permutation groups, we have that $%
\phi \left( a\right) \cdot g\left( x\right) =g\left( a\cdot x\right) $ for
every $a\in A^{\left( n\right) }$ and $x\in \hat{A}^{(n)}$. Let $\hat{K}$ be
the closure of $K$ in $\hat{A}^{\left( n\right) }$, which is an open
subgroup of $\hat{A}^{\left( n\right) }$ since $K$ is a finite-index
subgroup of $A^{\left( n\right) }$. Thus, for a.e.\ $x\in \hat{K}$ and for
every $a\in K$ we have that $g\left( a\cdot x\right) =g\left( x\right) $.
Since $K$ is dense in $\hat{K}$, the translation $K\curvearrowright \hat{K}$
is ergodic. By the above, the function $g|_{\hat{K}}:\hat{K}\rightarrow B$
is a.e.\ $K$-invariant. By ergodicity, this implies that $g|_{\hat{K}}$ is
constant on a conull subset $W$ of $\hat{K}$. Since $\hat{K}$ is an open
subgroup of $\hat{A}^{\left( n\right) }$, $W$ is not null in $\hat{A}%
^{\left( n\right) }$. Since $g$ and $f\upharpoonright \hat{A}^{(n)}$ are $%
E_{0}$-homotopic, this implies that there exists a $B_{0}$-orbit $%
B_{0}+b\subseteq B$ such that $f\left( W\right) \subseteq B_{0}$. Since $f:%
\hat{A}\rightarrow B$ is an a.e.\ homomorphism from $\mathcal{R}(\Gamma
\ltimes A\curvearrowright \hat{A})$ to $E_{0}=\mathcal{R}\left(
B_{0}\curvearrowright B\right) $, $f^{-1}\left( B_{0}+b\right) $ is (up to
null sets) a $\Gamma \ltimes A$-invariant subset of $\hat{A}$ containing $W$%
, and hence not null. Since the action $\Gamma \ltimes A\curvearrowright 
\hat{A}$ is ergodic, we must have that $f^{-1}\left( B_{0}+b\right) $ is
conull, concluding the proof.
\end{proof}

\begin{theorem}
\label{Theorem:increase-dimension-general} Let $\Gamma \curvearrowright A$, $%
\boldsymbol{A}=(A^{(n)})$, and $\hat{A}$ be as in the statement of
Proposition \ref{Corollary:profinite-Ioana} so that moreover:

\begin{enumerate}
\item $A^{(n)}$ does not have an infinite Boolean group as quotient;

\item $\Gamma $ is a subgroup of $\mathrm{SL}_{m}\left( \mathbb{Z}\right) $,
with $m\geq 3$, so that $\Gamma \ltimes A$ has property (T).
\end{enumerate}

If $q$ is a prime number and $1\leq d<m$, then $\mathcal{R}(\Gamma \ltimes
A\curvearrowright \hat{A})$ is $\mathcal{R}(\mathrm{GL}_{d}(\mathbb{Q}%
)\ltimes \mathbb{Q}^{d}\curvearrowright \mathbb{Q}_{q}^{d})$-ergodic.
\end{theorem}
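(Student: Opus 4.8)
The plan is to invoke Ioana's cocycle superrigidity (Proposition \ref{Corollary:profinite-Ioana}(2)) to reduce an arbitrary a.e.\ homomorphism $f$ from $\mathcal{R}(\Gamma\ltimes A\curvearrowright\hat A)$ to $\mathcal{R}(\mathrm{GL}_d(\mathbb{Q})\ltimes\mathbb{Q}^d\curvearrowright\mathbb{Q}_q^d)$ to an honest homomorphism of permutation groups, and then argue that no such homomorphism of the lattice $\Gamma\ltimes A^{(n)}$ into $\mathrm{GL}_d(\mathbb{Q})\ltimes\mathbb{Q}^d$ can be interesting, because of the dimension gap $d<m$ together with property (T). First I would fix a conull $\Gamma\ltimes A$-invariant set on which $f$ is a genuine homomorphism; since $\Gamma\ltimes A$ has property (T) by hypothesis (2) and the action $\Gamma\ltimes A\curvearrowright\hat A$ is ergodic and a.e.\ free (the fixed-point subgroups of nontrivial elements of $\Gamma\le\mathrm{SL}_m(\mathbb{Z})$ have infinite index, as $m\ge 3$), Proposition \ref{Corollary:profinite-Ioana}(2) applies and yields $n\in\omega$, a group homomorphism $\phi\colon\Gamma\ltimes A^{(n)}\to\mathrm{GL}_d(\mathbb{Q})\ltimes\mathbb{Q}^d$, and a Borel $f'\colon\hat A^{(n)}\to\mathbb{Q}_q^d$ that is a.e.\ $\mathcal{R}(\mathrm{GL}_d(\mathbb{Q})\ltimes\mathbb{Q}^d\curvearrowright\mathbb{Q}_q^d)$-homotopic to $f\!\upharpoonright\!\hat A^{(n)}$ and such that $(\phi,f')$ is an a.e.\ homomorphism of permutation groups from $\Gamma\ltimes A^{(n)}\curvearrowright\hat A^{(n)}$ to $\mathrm{GL}_d(\mathbb{Q})\ltimes\mathbb{Q}^d\curvearrowright\mathbb{Q}_q^d$. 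By ergodicity of the restricted action $\Gamma\ltimes A^{(n)}\curvearrowright\hat A^{(n)}$, it suffices to show $f'$ is a.e.\ constant, for then $f$ itself is a.e.\ $\mathcal{R}(\mathrm{GL}_d(\mathbb{Q})\ltimes\mathbb{Q}^d\curvearrowright\mathbb{Q}_q^d)$-homotopic to a constant.

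Next I would analyze $\phi$. Restricting to $\Gamma\cong\Gamma\ltimes\{0\}\le\Gamma\ltimes A^{(n)}$ gives a homomorphism $\Gamma\to\mathrm{GL}_d(\mathbb{Q})\ltimes\mathbb{Q}^d$. Composing with the projection to $\mathrm{GL}_d(\mathbb{Q})$, and using that $\Gamma$ contains a finite-index subgroup of $\mathrm{SL}_m(\mathbb{Z})$ with $m\ge 3$ — so $\Gamma$ has property (T), is not virtually abelian, and by Margulis superrigidity / the structure of linear representations of higher-rank lattices has no nontrivial homomorphism into $\mathrm{GL}_d(\mathbb{Q})$ when $d<m$ — the image of $\Gamma$ in $\mathrm{GL}_d(\mathbb{Q})$ is finite. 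Passing to a further finite-index subgroup $\Gamma_0\le\Gamma$ (and correspondingly shrinking to a clopen finite-index $\hat A^{(n')}$, using that $\Gamma$-invariant finite-index subgroups are cofinal), we may assume $\Gamma_0$ maps into the translation part $\mathbb{Q}^d$; but $\Gamma_0$ is perfect (a finite-index subgroup of $\mathrm{SL}_m(\mathbb{Z})$, $m\ge 3$, has finite abelianization, or one passes to the commutator subgroup), so its image in the abelian group $\mathbb{Q}^d$ is trivial. Thus $\phi\!\upharpoonright\!\Gamma_0$ is trivial. Likewise, since $A^{(n)}$ does not have an infinite Boolean quotient — more to the point it is a finitely generated (finite-index) subgroup of $A$, hence finitely generated abelian — and is normalized by $\Gamma_0$ with the only $\Gamma_0$-invariant finite-index behavior forcing $\phi(A^{(n)})$ to lie in a $\phi(\Gamma_0)$-invariant subgroup, i.e.\ all of $\mathbb{Q}^d$ after killing $\Gamma_0$; and then, because $\hat A^{(n)}$ is connected as a profinite... no: here I would instead argue directly that $(\phi,f')$ being a homomorphism of permutation groups with $\phi\!\upharpoonright\!\Gamma_0$ trivial forces $f'$ to be $\Gamma_0$-invariant a.e., hence a.e.\ constant by ergodicity of $\Gamma_0\ltimes A^{(n')}\curvearrowright\hat A^{(n')}$ (which holds since $A^{(n')}$ is still dense in $\hat A^{(n')}$). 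Assembling these, $f'$ is a.e.\ constant, completing the proof.

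The main obstacle will be the middle step: controlling the homomorphism $\phi\colon\Gamma\ltimes A^{(n)}\to\mathrm{GL}_d(\mathbb{Q})\ltimes\mathbb{Q}^d$ and extracting that its restriction to a finite-index subgroup is trivial. This is where the hypotheses $m\ge 3$, $d<m$, and property (T) all get used: one needs that higher-rank lattices $\Gamma$ admit no faithful (or even no infinite-image) linear representation in dimension $< m$ over $\mathbb{Q}$, which is a form of Margulis superrigidity together with the Tits alternative / the fact that infinite linear groups satisfying (T) must be "large." I expect the cleanest route is to follow the Coskey–Thomas template (\cite{thomas_classification_2003,coskey_borel_2010}): project $\phi$ to $\mathrm{GL}_d$, observe the image is an amenable-by-finite or virtually solvable subgroup forced by dimension count to have finite image on the (T) group $\Gamma$, then handle the translation part by perfectness. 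A secondary technical point is bookkeeping the successive passages to finite-index subgroups of $\Gamma$ and the corresponding cofinal clopen subgroups $\hat A^{(n')}\subseteq\hat A^{(n)}$, so that ergodicity and a.e.\ freeness survive each reduction — this is routine but must be done carefully, invoking \cite[Lemma 1.7]{ioana_cocycle_2011} once more at the end to re-certify a.e.\ freeness on $\hat A^{(n')}$.
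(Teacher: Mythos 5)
Your opening step already fails: Proposition \ref{Corollary:profinite-Ioana} demands that the target action $\Delta\curvearrowright Y$ be \emph{free}, and the action $\mathrm{GL}_d(\mathbb{Q})\ltimes\mathbb{Q}^d\curvearrowright\mathbb{Q}_q^d$ is badly non-free (every nonzero scalar fixes $0$, every linear involution fixes a subspace, etc.). You cannot invoke Ioana's superrigidity directly on it. This is precisely the obstacle the paper's proof is built around, and it is why the bulk of the argument is a preliminary reduction in the style of Coskey--Thomas: one introduces the countable lattice of affine $\mathbb{Q}$-varieties of $\mathbb{Q}_q^d$, assigns to each $y$ the smallest $\mathbb{Q}$-variety $V_y$ containing it, shows (via ergodicity of $\Gamma\ltimes A\curvearrowright\hat A$) that $f$ may be assumed to land in some single $Y:=\{y:V_y=V\}$, and then proves that the setwise stabilizer $\Delta$ of $V$ acts \emph{freely} on $Y$. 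Only for this restricted free action $\Delta\curvearrowright Y$ does Ioana's theorem apply; that is the content of Lemma \ref{Lemma:increase-dimension-general1}. Your proposal skips this reduction entirely, and ``the main obstacle'' you flag is not the actual main obstacle.

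Your closing step is also wrong. Granting that $\phi$ restricted to a finite-index $\Gamma_0\le\Gamma$ is trivial, what you get is that $f'$ is $\Gamma_0$-invariant a.e., not $(\Gamma_0\ltimes A^{(n')})$-invariant; ergodicity of the semidirect product action does not give constancy for a merely $\Gamma_0$-invariant function, and in fact $\Gamma_0$ alone does \emph{not} act ergodically on $\hat A^{(n')}$ (an automorphism action fixes $0$ and preserves the valuation filtration, so there are many invariant sets of intermediate measure). The paper closes the argument differently: after killing the $\mathrm{GL}_d$-part of $\phi(\Gamma)$ via Lemma \ref{Lemma:thomas-nonembed-Q} (a dimension count on algebraic $\mathbb{Q}$-groups, rather than an appeal to Margulis superrigidity), one observes that $\Lambda:=(\pi\circ\phi)(\Gamma\ltimes A)=(\pi\circ\phi)(A)$ is abelian, hence $\Lambda\ltimes\mathbb{Q}^d$ is nilpotent, hence the associated orbit relation on $\mathbb{Q}_q^d$ is hyperfinite and so Borel reducible to $E_0$; combining this with the $E_0$-ergodicity furnished by the Boolean-quotient hypothesis (Lemma \ref{Lemma:E0-ergodic}) yields that $g$, and hence $f$, is a.e.\ homotopic to a constant. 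Note also a minor slip: the $A^{(n)}$ are finite-index in $A$ but need not be finitely generated (e.g.\ $A=\mathbb{Z}[1/q]^m$ in the intended application has $A^{(n)}=q^n\mathbb{Z}^m$ finitely generated, but the theorem does not assume this), so you cannot lean on finite generation of $A^{(n)}$ in general; the paper avoids this by appealing only to the no-infinite-Boolean-quotient hypothesis.
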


\begin{remark}
Notice that if $m\geq 3$, then $\mathrm{SL}_{m}(\mathbb{R})\ltimes \mathbb{R}%
^{m}$ has property (T) \cite[Corollary 1.4.16]{bekka_kazhdans_2008}. Since $%
\mathrm{SL}_{m}(\mathbb{Z})\ltimes \mathbb{Z}^{m}$ is a lattice in $\mathrm{%
SL}_{m}(\mathbb{R})\ltimes \mathbb{R}^{m}$, it also has property (T).
\end{remark}

The proof of Theorem \ref{Theorem:increase-dimension-general} is modeled
after the proofs of \cite[Theorem 1.1]{thomas_classification_2011} and \cite[%
Theorem 3.6]{coskey_borel_2010}. We will use the following lemmas. The first
lemma is an instance of \cite[Theorem 4.4]{thomas_classification_2011}; see
also the proof of \cite[Theorem 4.3]{thomas_classification_2011}.

\begin{lemma}[Thomas]
\label{Lemma:thomas-nonembed-Q}Suppose that $m\geq 2$ and $G$ is an
algebraic $\mathbb{Q}$-group of dimension less than $m^{2}-1$. If $\Gamma$
is a finite-index subgroup of $\mathrm{SL}_{m}(\mathbb{Z})$ and $\psi:\Gamma
\rightarrow G(\mathbb{Q})$ is a homomorphism, then $\mathrm{\mathrm{ker}}(
\psi ) $ has finite index in $\Gamma$.
\end{lemma}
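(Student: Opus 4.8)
The plan is to invoke Margulis' superrigidity theorem for the higher-rank lattice $\mathrm{SL}_{m}(\mathbb{Z})$, together with the almost-simplicity of $\mathrm{SL}_{m}$ as an algebraic group. First I would reduce to the case of Zariski-dense image: let $H\leq G$ be the Zariski closure of $\psi(\Gamma)$, which is an algebraic $\mathbb{Q}$-subgroup since $\psi(\Gamma)\subseteq G(\mathbb{Q})$, and note $\dim H\leq\dim G<m^{2}-1$. As $\ker\psi$ is exactly the kernel of the induced map $\psi\colon\Gamma\to H(\mathbb{Q})$, it suffices to prove that $\psi(\Gamma)$ is finite. One may also replace $\Gamma$ by any finite-index subgroup, so I would assume $\Gamma\leq\mathrm{SL}_{m}(\mathbb{Z})$ is torsion-free.

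Next, assuming $m\geq 3$, the group $\Gamma$ is an irreducible lattice in the connected almost-simple Lie group of higher real rank $\mathrm{SL}_{m}(\mathbb{R})$. Since $H(\mathbb{Q})\subseteq H(\mathbb{R})$, Margulis superrigidity applied to $\psi\colon\Gamma\to H(\mathbb{R})$ (which has Zariski-dense image in $H$) yields the dichotomy: either the real closure $\overline{\psi(\Gamma)}$ is compact --- in which case, being a $\mathbb{Q}$-group with $\psi(\Gamma)\subseteq H(\mathbb{Q})$ and $\Gamma$ finitely generated, $\psi(\Gamma)$ is finite and we are done --- or $\psi$ agrees, on a finite-index subgroup of $\Gamma$ and up to a bounded correction, with a nonconstant $\mathbb{Q}$-rational homomorphism of algebraic groups $\alpha\colon\mathrm{SL}_{m}\to H$.

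Finally I would run a dimension count in the second alternative. Because $\mathrm{SL}_{m}$ is almost $\mathbb{Q}$-simple, every proper normal algebraic subgroup is contained in the finite center $\mu_{m}$; since $\alpha$ is nonconstant, $\ker\alpha$ is finite, whence $\dim\alpha(\mathrm{SL}_{m})=\dim\mathrm{SL}_{m}=m^{2}-1$. But $\alpha(\mathrm{SL}_{m})\subseteq H$ forces $m^{2}-1\leq\dim H<m^{2}-1$, a contradiction. Hence $\psi(\Gamma)$ is finite and $\ker\psi$ has finite index in $\Gamma$. I expect the main obstacle to be extracting superrigidity in precisely the algebro-geometric packaging used above --- passing from the ``virtual extension up to bounded/compact error'' statement to an honest $\mathbb{Q}$-rational homomorphism of algebraic groups, and controlling the relevant commensurability --- which is exactly the content isolated in \cite[Theorem 4.4]{thomas_classification_2011}; the remaining low-rank case $m=2$ (where $\dim G\leq 2$) lies outside the scope of superrigidity and, where it is needed, would be handled separately using the explicit structure of finite-index subgroups of $\mathrm{SL}_{2}(\mathbb{Z})$.
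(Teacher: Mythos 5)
The paper does not prove this lemma; it simply cites \cite[Theorem 4.4]{thomas_classification_2011} as the source. Your superrigidity sketch is indeed the natural route and matches the content of Thomas's result, and the Zariski-closure reduction and the dimension count via almost-simplicity of $\mathrm{SL}_m$ are both correct. The gap is in the ``compact closure'' branch of the dichotomy: a finitely generated subgroup of $H(\mathbb{Q})$ with precompact image in $H(\mathbb{R})$ need \emph{not} be finite. For a concrete counterexample to the reasoning you give, take $H$ to be a compact $\mathbb{Q}$-form of $\mathrm{SL}_2$ (e.g., norm-one quaternions in a definite quaternion algebra over $\mathbb{Q}$): then $H(\mathbb{R})\cong SU(2)$ is compact while $H(\mathbb{Q})$ contains plenty of infinite finitely generated subgroups. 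Finiteness does not follow from ``finitely generated + rational points + compact real closure''; it requires running superrigidity at the nonarchimedean places as well. At each prime $p$ the extension alternative is vacuous (a continuous homomorphism from the connected group $\mathrm{SL}_m(\mathbb{R})$ into the totally disconnected $H(\mathbb{Q}_p)$ is trivial), so $\psi(\Gamma)$ has compact closure in $H(\mathbb{Q}_p)$ for every $p$. Since $\psi(\Gamma)$ is finitely generated it lies in $H(\mathbb{Z}[1/N])$ for some $N$, which sits discretely in $H(\mathbb{R})\times\prod_{p\mid N}H(\mathbb{Q}_p)$; boundedness at every factor then forces $\psi(\Gamma)$ to be finite. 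This multi-place argument is what the citation is really encoding, and it has to replace the step you wrote.

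On the case $m=2$: you defer it to ``the explicit structure of finite-index subgroups of $\mathrm{SL}_2(\mathbb{Z})$,'' but no such argument can succeed, because the statement is false for $m=2$ as written. A finite-index subgroup of $\mathrm{SL}_2(\mathbb{Z})$ contains a finite-index free subgroup of rank $\geq 2$, which surjects onto $\mathbb{Z}\hookrightarrow\mathbb{G}_a(\mathbb{Q})$, and $\dim\mathbb{G}_a=1<3=m^2-1$. The hypothesis $m\geq 2$ appears to be an overstatement in the paper; the lemma is only invoked with $m\geq 3$, where your (repaired) superrigidity argument applies.
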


\begin{lemma}
\label{Lemma:increase-dimension-general1} Let $\Delta $ be a subgroup of $%
\mathrm{GL}_{d}(\mathbb{Q})\ltimes \mathbb{Q}^{d}$, and let $Y$ be a $\Delta 
$-invariant Borel subset of $\mathbb{Q}_{q}^{d}$, so that the action $\Delta
\curvearrowright Y$ is free. Under the assumptions of Theorem \ref%
{Theorem:increase-dimension-general}, we have that every a.e.\ homomorphism
from $\mathcal{R}(\Gamma \ltimes A\curvearrowright \hat{A})$ to $\mathcal{R}%
(\Delta \curvearrowright Y)$ is $\mathcal{R}(\mathrm{GL}_{d}(\mathbb{Q}%
)\ltimes \mathbb{Q}^{d}\curvearrowright \mathbb{Q}_{q}^{d})$-homotopic to a
constant map.
\end{lemma}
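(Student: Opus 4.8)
The plan is to run the standard cocycle–superrigidity argument: feed $f$ into Ioana's theorem (Proposition~\ref{Corollary:profinite-Ioana}(2)) to replace it, on a clopen finite–index subgroup, by an intertwiner for an honest group homomorphism $\phi$, then use the low dimensionality hypothesis $d<m$ together with representation theory of $\mathrm{SL}_m(\mathbb Z)$ to force $\phi$ to be trivial on a further finite–index subgroup, and finally invoke ergodicity. Concretely: let $f\colon\hat A\to Y$ be an a.e.\ homomorphism from $\mathcal R(\Gamma\ltimes A\curvearrowright\hat A)$ to $\mathcal R(\Delta\curvearrowright Y)$. Since $\Gamma\ltimes A$ has property (T), Proposition~\ref{Corollary:profinite-Ioana}(2) gives $n\in\omega$, a group homomorphism $\phi\colon\Gamma\ltimes A^{(n)}\to\Delta$ and a Borel map $f'\colon\hat A^{(n)}\to Y$ with $(\phi,f')$ an a.e.\ homomorphism of permutation groups from $\Gamma\ltimes A^{(n)}\curvearrowright\hat A^{(n)}$ to $\Delta\curvearrowright Y$ and $f'$ a.e.\ $\mathcal R(\Delta\curvearrowright Y)$-homotopic to $f\upharpoonright\hat A^{(n)}$. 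Write elements of $\mathrm{GL}_d(\mathbb Q)\ltimes\mathbb Q^d$ as pairs, let $q$ be the projection onto the linear part $\mathrm{GL}_d(\mathbb Q)$, and $T=\ker q$ its group of translations.

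Next I would untwist $\phi$ in two steps. First, $q\circ\phi$ restricted to $\Gamma$ is a homomorphism from a finite–index subgroup of $\mathrm{SL}_m(\mathbb Z)$ into the algebraic $\mathbb Q$-group $\mathrm{GL}_d$, whose dimension $d^2\le (m-1)^2<m^2-1$; hence by Lemma~\ref{Lemma:thomas-nonembed-Q} its kernel is of finite index. Fix a finite–index subgroup $\Gamma_1\le\Gamma$ with $\phi(\Gamma_1)\subseteq T$. Second, write $\phi\upharpoonright A^{(n)}=(L,v)$ with $L\colon A^{(n)}\to\mathrm{GL}_d(\mathbb Q)$; conjugating inside $\Gamma_1\ltimes A^{(n)}$ and using that $\phi(\Gamma_1)$ consists of translations yields $L(\gamma\cdot a)=L(a)$ for all $\gamma\in\Gamma_1$, $a\in A^{(n)}$, so $L$ factors through the $\Gamma_1$-coinvariants $(A^{(n)})_{\Gamma_1}$. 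In the case of interest $A^{(n)}$ is a finite–index subgroup of $\mathbb Z^m$ and $\Gamma_1$ is Zariski dense in $\mathrm{SL}_m$ by Borel density, so $A^{(n)}\otimes\mathbb Q\cong\mathbb Q^m$ is an irreducible nontrivial $\Gamma_1$-module, the coinvariants are finite, and $L$ has finite image. Put $A':=\ker L$, a $\Gamma_1$-invariant finite–index subgroup of $A^{(n)}$; then $\phi(A')\subseteq T$, and repeating the conjugation computation with $L\equiv I$ on $A'$ now gives $v(\gamma\cdot a)=v(a)$ for $\gamma\in\Gamma_1$, $a\in A'$, so $v\upharpoonright A'$ factors through the (again finite) coinvariants $A'_{\Gamma_1}$ and hence vanishes. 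Thus $\phi\upharpoonright A'\equiv 0$. (This is where hypotheses (1)--(2) of Theorem~\ref{Theorem:increase-dimension-general} enter: $d<m$ powers Thomas's lemma, and the module–theoretic constraints on $A$ force the relevant coinvariants to be finite.)

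With $\phi\upharpoonright A'=0$ the argument finishes by ergodicity. Let $\overline{A'}$ be the closure of $A'$ in $\hat A$; it is a clopen, hence compact and finite–index, subgroup, with normalized Haar measure, in which $A'$ is dense, so $\mathcal R(A'\curvearrowright\overline{A'})$ is ergodic. The intertwining identity $f'(a\cdot x)=\phi(a)\cdot f'(x)=f'(x)$ for $a\in A'$ shows $f'\upharpoonright\overline{A'}$ is a.e.\ $\mathcal R(A'\curvearrowright\overline{A'})$-invariant, hence a.e.\ equal to a constant $c\in Y$. Since $f\upharpoonright\hat A^{(n)}$ is a.e.\ $\mathcal R(\Delta\curvearrowright Y)$-homotopic to $f'$ and $\mathcal R(\Delta\curvearrowright Y)\subseteq E:=\mathcal R(\mathrm{GL}_d(\mathbb Q)\ltimes\mathbb Q^d\curvearrowright\mathbb Q_q^d)$, it follows that $f(x)\mathrel E c$ for a.e.\ $x\in\overline{A'}$. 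Finally $\hat A$ is covered by finitely many cosets $a+\overline{A'}$ with $a\in A$ (as $A$ is dense), translation by $a$ preserves Haar measure and $(x,a+x)\in\mathcal R(\Gamma\ltimes A\curvearrowright\hat A)$, so $f(a+x)\mathrel E f(x)\mathrel E c$ for a.e.\ $x$; hence $f(y)\mathrel E c$ for a.e.\ $y\in\hat A$, i.e.\ $f$ is $E$-homotopic to the constant map $c$.

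The main obstacle is the middle paragraph — converting the abstract conclusion of Ioana's theorem into an honest triviality statement for $\phi$. The two inputs needed there are the non-embedding Lemma~\ref{Lemma:thomas-nonembed-Q} (which consumes $d<m$) and the vanishing/finiteness of the $\Gamma_1$-coinvariants of $A^{(n)}$ and $A'$; the bookkeeping to move between $\hat A$, $\hat A^{(n)}$ and $\overline{A'}$, and to propagate a.e.\ constancy off $\overline{A'}$ using the dense translation action of $A$, is routine but must be done with care about null sets.
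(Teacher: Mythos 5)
Your proof follows a genuinely different route from the paper's. Both arguments begin identically: feed $f$ into Proposition~\ref{Corollary:profinite-Ioana}(2), getting a homomorphism $\phi\colon\Gamma\ltimes A^{(n)}\to\Delta$ and a $\phi$-equivariant $f'$ that is $\mathcal{R}(\Delta\curvearrowright Y)$-homotopic to $f$, and then pass to a finite-index $\Gamma_1\le\Gamma$ with $\pi\circ\phi(\Gamma_1)$ trivial using Lemma~\ref{Lemma:thomas-nonembed-Q} (this is where $d<m$ is spent). After that the two arguments part ways. You press on to make $\phi$ itself vanish on a further finite-index subgroup $A'\le A^{(n)}$ and finish by ordinary ergodicity of $A'\curvearrowright\overline{A'}$. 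The paper stops at the much weaker observation that, once $\Gamma_1\le\ker(\pi\circ\phi)$, the image of $\phi$ lies in $\Lambda\ltimes\mathbb{Q}^d$ with $\Lambda=(\pi\circ\phi)(A)$ abelian; the orbit relation of this abelian-by-abelian group on $\mathbb{Q}_q^d$ is Borel reducible to $E_0$ by the cited result of \cite{schneider_locally_2013}, and $E_0$-ergodicity of $\mathcal{R}(\Gamma\ltimes A\curvearrowright\hat A)$, supplied by Lemma~\ref{Lemma:E0-ergodic} (which is exactly where hypothesis~(1) of Theorem~\ref{Theorem:increase-dimension-general} is used), then closes the argument.

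The gap in your version is the claim that the $\Gamma_1$-coinvariants of $A^{(n)}$ and of $A'$ are finite. You derive this from irreducibility and nontriviality of $A^{(n)}\otimes\mathbb{Q}$ as a $\Gamma_1$-module (via Borel density and Zariski density of $\Gamma_1$). That is correct in the motivating case $A=\mathbb{Z}[1/q]^m$, but it is \emph{not} among hypotheses~(1)--(2) of Theorem~\ref{Theorem:increase-dimension-general}. Hypothesis~(1) only rules out infinite Boolean quotients of $A^{(n)}$ --- the input for Lemma~\ref{Lemma:E0-ergodic}, which your argument does not invoke at all --- and hypothesis~(2) is a property~(T) statement with no module-theoretic content forcing finite coinvariants. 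Your parenthetical attributing the finiteness of the coinvariants to "the module-theoretic constraints on $A$" in hypotheses~(1)--(2) is therefore inaccurate: for a general $A$ admitted by the theorem, $(A^{(n)})_{\Gamma_1}$ can have positive rank, in which case $L$ need not have finite image, $A'=\ker L$ need not have finite index, and the cancellation of $v$ on $A'$ via torsion-freeness of $\mathbb{Q}^d$ does not go through. The paper's softer "land in an $E_0$-reducible target and invoke $E_0$-ergodicity" argument is precisely designed to avoid needing triviality of $\phi$, and that is why hypothesis~(1) appears in the theorem statement while no irreducibility hypothesis does. If you want to salvage your route, you must either add an irreducibility hypothesis to the lemma (and thereby prove a narrower statement sufficient for the application) or replace the coinvariant step by the paper's $E_0$ argument.
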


\begin{proof}
Suppose that $f:\hat{A}\rightarrow X$ is an a.e.\ homomorphism from $%
\mathcal{R}(\Gamma \ltimes A\curvearrowright \hat{A})$ to $\mathcal{R}%
(\Delta \curvearrowright Y)$. By Proposition \ref{Corollary:profinite-Ioana}%
, after replacing $A$ with $A_{n}$ for some $n\in \omega $, we can assume
that there exist a group homomorphism $\phi :\Gamma \ltimes A\rightarrow
\Delta $, and a Borel function $g:\hat{A}\rightarrow Y$, such that $g$ is $%
\mathcal{R}(\Delta \curvearrowright Y)$-homotopic to $f$, and $\left( \phi
,g\right) $ is an a.e.\ homomorphism of permutation groups from $\Gamma
\ltimes A\curvearrowright \hat{A}$ to $\Delta \curvearrowright Y$. Let $\pi :%
\mathrm{GL}_{d}(\mathbb{Q)}\ltimes \mathbb{Q}^{d}\rightarrow \mathrm{GL}_{d}(%
\mathbb{Q})$ be the canonical quotient map. By Lemma \ref%
{Lemma:thomas-nonembed-Q}, after replacing $\Gamma $ with a finite-index
subgroup, we can assume without loss of generality that $\Gamma $ is
entirely contained in the kernel of $\left( \pi \circ \phi \right) $. By
Lemma \ref{Lemma:E0-ergodic}, $\mathcal{R}(\Gamma \ltimes A\curvearrowright 
\hat{A})$ is $E_{0}$-ergodic. But if $\Lambda :=\left( \pi \circ \phi
\right) \left( \Gamma \ltimes A\right) =\left( \pi \circ \phi \right) \left(
A\right) $, then $g:\hat{A}\rightarrow Y$ is an a.e.\ homomorphism from $%
\mathcal{R}(\Gamma \ltimes A\curvearrowright \hat{A})$ to $\mathcal{R}%
(\Lambda \ltimes \mathbb{Q}^{d}\curvearrowright \mathbb{Q}_{q}^{d})$. As $%
\Lambda \ltimes \mathbb{Q}^{d} $ is nilpotent, we have that $\mathcal{R}%
(\Lambda \ltimes \mathbb{Q}^{d}\curvearrowright \mathbb{Q}_{q}^{d})$ is
Borel reducible to $E_{0}$ by the main result of \cite%
{schneider_locally_2013}. As $\mathcal{R}(\Gamma \ltimes A\curvearrowright 
\hat{A})$ is $E_{0}$-ergodic, it follows that $g$, and hence $f$, are $%
\mathcal{R}(\Lambda \ltimes \mathbb{Q}^{d}\curvearrowright \mathbb{Q}%
_{q}^{d})$-homotopic to a constant map.
\end{proof}

\begin{proof}[Proof of Theorem \protect\ref%
{Theorem:increase-dimension-general}]
Suppose that $f:\hat{A}\rightarrow \mathbb{Q}_q^d$ is an a.e.\ homomorphism
from $\mathcal{R}(\Gamma \ltimes A\curvearrowright \hat{A})$ to $\mathcal{R}(%
\mathrm{GL}_{d}(\mathbb{Q})\ltimes \mathbb{Q}^{d}\curvearrowright \mathbb{Q}%
_{q}^{d})$. We will show that $f$ is a.e.\ $\mathcal{R}(\mathrm{GL}_{d}(%
\mathbb{Q})\ltimes \mathbb{Q}^{d}\curvearrowright \mathbb{Q}_{q}^{d})$%
-homotopic to a constant map. We identify $\mathbb{Q}_{q}^{d}$ with the
tensor product $\mathbb{Q} _{q}\otimes _{\mathbb{Q}}\mathbb{Q}^{d}$. By a $%
\mathbb{Q}$-subspace of $\mathbb{Q}_{q}^{d}$ we mean any $\mathbb{Q}_{q}$%
-linear subspace of $\mathbb{Q}_{q}^{d}$ that is of the form $\mathbb{Q}%
_{q}\otimes _{\mathbb{Q}}V$, where $V\subseteq \mathbb{Q} ^{d}\subseteq 
\mathbb{Q}_{q}^{d}$ is a $\mathbb{Q}$-linear subspace of $\mathbb{Q}^{d}$.
An affine $\mathbb{Q}$-variety is a subset of $\mathbb{Q}_{q}^{d}$ of the
form $a+V$, where $V$ is a $\mathbb{Q} $-subspace and $a\in \mathbb{Q}^{d}$.

\begin{clm}
Suppose that $M\in M_{d}(\mathbb{Q)}$ is a $d\times d$ matrix with rational
coefficients and set 
\begin{equation*}
\mathrm{\mathrm{Ker}}_{\mathbb{Q}_{q}}\left( M\right) :=\left\{ x\in \mathbb{%
Q}_{q}^{d}:Mx=0\right\} \quad\text{ and }\quad \mathrm{\mathrm{Ker}}_{%
\mathbb{Q}}\left( M\right) :=\left\{ x\in \mathbb{Q} ^{d}:Mx=0\right\} \text{%
.}
\end{equation*}%
Then $\mathrm{\mathrm{Ker}}_{\mathbb{Q}_{q}}\left( M\right) =\mathbb{Q}%
_{p}\otimes \mathrm{Ker}_{\mathbb{Q}}\left( M\right) $. In particular, $%
\mathrm{\mathrm{Ker}}_{\mathbb{Q}_{q}}\left( M\right) $ is $\mathbb{Q}$%
-subspace of $\mathbb{Q}_{q}^{d}$.
\end{clm}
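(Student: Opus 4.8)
The plan is to deduce the claim from the fact that $\mathbb{Q}_q$, being a $\mathbb{Q}$-vector space, is a flat (indeed free) $\mathbb{Q}$-module. First I would identify $\mathbb{Q}_q^d$ with $\mathbb{Q}_q\otimes_{\mathbb{Q}}\mathbb{Q}^d$ in such a way that the endomorphism $x\mapsto Mx$ of $\mathbb{Q}_q^d$ corresponds to $\mathrm{id}_{\mathbb{Q}_q}\otimes M$; this is legitimate precisely because $M$ has rational entries. Applying the exact functor $\mathbb{Q}_q\otimes_{\mathbb{Q}}(-)$ to the short exact sequence of $\mathbb{Q}$-vector spaces
\[
0\longrightarrow \mathrm{Ker}_{\mathbb{Q}}(M)\longrightarrow \mathbb{Q}^{d}\overset{M}{\longrightarrow}\mathbb{Q}^{d}
\]
then yields that the natural map $\mathbb{Q}_q\otimes_{\mathbb{Q}}\mathrm{Ker}_{\mathbb{Q}}(M)\to\mathbb{Q}_q^{d}$ is injective and has image exactly $\mathrm{Ker}_{\mathbb{Q}_q}(M)$.

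Next I would note that the image of $\mathbb{Q}_q\otimes_{\mathbb{Q}}\mathrm{Ker}_{\mathbb{Q}}(M)$ under this map is precisely the $\mathbb{Q}_q$-linear span inside $\mathbb{Q}_q^{d}$ of the $\mathbb{Q}$-subspace $V:=\mathrm{Ker}_{\mathbb{Q}}(M)\subseteq\mathbb{Q}^{d}\subseteq\mathbb{Q}_q^{d}$, which is exactly the object denoted $\mathbb{Q}_q\otimes_{\mathbb{Q}}V$ in the definition of a $\mathbb{Q}$-subspace of $\mathbb{Q}_q^{d}$ given just before the claim. Hence $\mathrm{Ker}_{\mathbb{Q}_q}(M)=\mathbb{Q}_q\otimes_{\mathbb{Q}}\mathrm{Ker}_{\mathbb{Q}}(M)$ is a $\mathbb{Q}$-subspace of $\mathbb{Q}_q^{d}$, as asserted. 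If one prefers to avoid tensor products altogether, the same conclusion follows by Gaussian elimination: row-reduce $M$ over $\mathbb{Q}$ to reduced row echelon form $R=EM$ with $E\in\mathrm{GL}_d(\mathbb{Q})$; then $\mathrm{Ker}_{\mathbb{Q}_q}(M)=\mathrm{Ker}_{\mathbb{Q}_q}(R)$ and $\mathrm{Ker}_{\mathbb{Q}}(M)=\mathrm{Ker}_{\mathbb{Q}}(R)$, and the standard formulas produce a $\mathbb{Q}$-basis of $\mathrm{Ker}_{\mathbb{Q}}(R)$ consisting of rational vectors; these same vectors remain $\mathbb{Q}_q$-linearly independent (extension of scalars of a field preserves linear independence) and span $\mathrm{Ker}_{\mathbb{Q}_q}(R)$ by a dimension count, since the rank of the rational matrix $R$ is the same whether computed over $\mathbb{Q}$ or over $\mathbb{Q}_q$.

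I do not expect a genuine obstacle here, as the statement is a textbook instance of base change for kernels of matrices over a field. The only point demanding a line of care is the identification of the abstract tensor product $\mathbb{Q}_q\otimes_{\mathbb{Q}}\mathrm{Ker}_{\mathbb{Q}}(M)$ with the internal $\mathbb{Q}_q$-span living inside $\mathbb{Q}_q^{d}$, which is exactly the injectivity supplied by flatness (equivalently, the preservation of linear independence in the elementary argument), together with the bookkeeping that this internal span is what the paper means by a $\mathbb{Q}$-subspace of $\mathbb{Q}_q^{d}$.
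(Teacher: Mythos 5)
Your proposal is correct. Your secondary, ``avoid tensor products'' argument --- row-reduce $M$ over $\mathbb{Q}$, observe that the rank (and hence the kernel dimension) is the same whether computed over $\mathbb{Q}$ or over $\mathbb{Q}_q$, and conclude by a dimension count that the inclusion $\mathbb{Q}_q\otimes_{\mathbb{Q}}\mathrm{Ker}_{\mathbb{Q}}(M)\subseteq\mathrm{Ker}_{\mathbb{Q}_q}(M)$ is an equality --- is essentially the paper's own proof. Your primary argument is a genuinely different, more abstract route: you invoke the exactness of $\mathbb{Q}_q\otimes_{\mathbb{Q}}(-)$ (flatness of a field extension) applied to the left-exact sequence $0\to\mathrm{Ker}_{\mathbb{Q}}(M)\to\mathbb{Q}^d\xrightarrow{\,M\,}\mathbb{Q}^d$, which gives the identification of $\mathrm{Ker}_{\mathbb{Q}_q}(M)$ with the internal $\mathbb{Q}_q$-span of $\mathrm{Ker}_{\mathbb{Q}}(M)$ in one stroke, without any row reduction. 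What the flatness route buys is conceptual generality and immunity to bookkeeping errors (it would work verbatim for any field extension, any $d'\times d$ matrix, or in the categorical setting of base change); what the Gaussian elimination route buys is elementariness and the fact that it produces an explicit rational basis of $\mathrm{Ker}_{\mathbb{Q}_q}(M)$, which is closer in spirit to the hands-on affine-variety manipulations used throughout the rest of the proof of Theorem \ref{Theorem:increase-dimension-general}. Both are fully rigorous; there is no gap.
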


\begin{proof}[Proof of Claim]
By the Gaussian elimination procedure, we can assume that $M$ is in reduced
row echelon form. It is then clear from the reduced row echelon form of $M$
that the dimension of $\mathrm{\mathrm{Ker}}_{\mathbb{Q}}\left( M\right) $
as a $\mathbb{Q}$-vector space is equal to the dimension of $\mathrm{\mathrm{%
Ker}}_{\mathbb{Q}_{q}}\left( M\right) $ as a $\mathbb{Q}_{q}$ -vector space.
Thus, $\mathbb{Q}_{q}\otimes \mathrm{Ker}_{\mathbb{Q}}\left( M\right) $ and $%
\mathrm{Ker}_{\mathbb{Q}_{q}}\left( M\right) $ are $\mathbb{Q}_{q}$-vector
spaces of the same dimension. Since $\mathbb{Q}_{q}\otimes \mathrm{Ker}_{%
\mathbb{Q}}\left( M\right) \subseteq \mathrm{Ker}_{\mathbb{Q} _{q}}\left(
M\right) $, we must have $\mathbb{Q}_{q}\otimes \mathrm{Ker}_{\mathbb{Q}%
}\left( M\right) =\mathrm{Ker}_{\mathbb{Q}_{q}}\left( M\right) $.
\end{proof}

\begin{clm}
Suppose that $M\in M_{d}(\mathbb{Q)}$ is a $d\times d$ matrix with rational
coefficients, and $t\in \mathbb{Q}^{d}$. Set 
\begin{equation*}
W_{\mathbb{Q}_{q}}=\left\{ x\in \mathbb{Q}_{q}^{d}:Mx=t\right\} \quad \text{
and } \quad W_{\mathbb{Q}}=\left\{ x\in \mathbb{Q}^{d}:Mx=t\right\} \text{.}
\end{equation*}
Then $W_{\mathbb{Q}}$ is nonempty if and only if $W_{\mathbb{Q}_{q}}$ is
nonempty. In this case, $W_{\mathbb{Q}_{q}}$ is an affine $\mathbb{Q}$%
-variety of $\mathbb{Q}_{q}^{d}$.
\end{clm}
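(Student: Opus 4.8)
The plan is to reduce the whole statement to the insensitivity of ranks of rational matrices under the field extension $\mathbb{Q}\subseteq\mathbb{Q}_q$, exactly in the spirit of the preceding claim. First I would dispose of one direction for free: since $\mathbb{Q}\subseteq\mathbb{Q}_q$, every element of $W_{\mathbb{Q}}$ is an element of $W_{\mathbb{Q}_q}$, so if $W_{\mathbb{Q}}\neq\emptyset$ then $W_{\mathbb{Q}_q}\neq\emptyset$.

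For the converse I would invoke the rank criterion for solvability of linear systems (Rouch\'e--Capelli): for a field $k\supseteq\mathbb{Q}$ and a system $Mx=t$ with $M$ and $t$ having entries in $\mathbb{Q}$, the set $\{x\in k^d : Mx=t\}$ is nonempty if and only if $\mathrm{rank}_k(M)=\mathrm{rank}_k([M\mid t])$, where $[M\mid t]$ denotes the augmented matrix. The key observation is that for a matrix with rational entries the rank does not depend on whether it is computed over $\mathbb{Q}$ or over $\mathbb{Q}_q$: the rank is the largest size of a nonvanishing minor, each such minor is a fixed rational number, and a rational number vanishes in $\mathbb{Q}_q$ precisely when it vanishes in $\mathbb{Q}$. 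Applying this to both $M$ and $[M\mid t]$ yields $\mathrm{rank}_{\mathbb{Q}_q}(M)=\mathrm{rank}_{\mathbb{Q}}(M)$ and $\mathrm{rank}_{\mathbb{Q}_q}([M\mid t])=\mathrm{rank}_{\mathbb{Q}}([M\mid t])$, hence $W_{\mathbb{Q}_q}\neq\emptyset \iff W_{\mathbb{Q}}\neq\emptyset$. (Equivalently, and more in keeping with the phrasing of the previous claim, one can put $[M\mid t]$ in reduced row echelon form over $\mathbb{Q}$; the same reduction is valid over $\mathbb{Q}_q$, and solvability over either field is equivalent to the absence of a pivot in the last column, a condition manifestly independent of which of the two fields one works over.)

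Finally, assuming $W_{\mathbb{Q}}$, hence $W_{\mathbb{Q}_q}$, nonempty, I would fix $a\in W_{\mathbb{Q}}\subseteq\mathbb{Q}^d$ and note that $W_{\mathbb{Q}_q}=a+\mathrm{Ker}_{\mathbb{Q}_q}(M)$. By the previous claim, $\mathrm{Ker}_{\mathbb{Q}_q}(M)=\mathbb{Q}_q\otimes_{\mathbb{Q}}\mathrm{Ker}_{\mathbb{Q}}(M)$ is a $\mathbb{Q}$-subspace of $\mathbb{Q}_q^d$; together with $a\in\mathbb{Q}^d$ this exhibits $W_{\mathbb{Q}_q}=a+\mathrm{Ker}_{\mathbb{Q}_q}(M)$ as an affine $\mathbb{Q}$-variety, which is exactly the claim. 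I do not expect any genuine obstacle here: the argument is elementary linear algebra, and the only point requiring a word of care is the rank-insensitivity step, which is precisely the phenomenon already used (via Gaussian elimination) in the proof of the preceding claim, so I would cite that reasoning rather than repeat it.
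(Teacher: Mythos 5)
Your proof is correct and takes essentially the same route as the paper: the paper likewise uses Gaussian elimination (reduced row echelon form) to get the solvability equivalence, then picks $x_0\in W_{\mathbb{Q}}$, writes $W_{\mathbb{Q}_q}=x_0+\mathrm{Ker}_{\mathbb{Q}_q}(M)$, and invokes the preceding claim. Your Rouch\'e--Capelli phrasing via rank-invariance of rational matrices under field extension is an equivalent repackaging of the same observation, which you yourself note.
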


\begin{proof}[Proof of Claim]
Again, by the Gaussian elimination procedure, we can assume that $M$ is in
reduced row echelon form. It is then clear that $W_{\mathbb{Q}}$ is nonempty
if and only if $W_{\mathbb{Q}_{q}}$ is nonempty. Suppose thus that $W_{%
\mathbb{Q}_{q}}$ (or, equivalently, $W_{\mathbb{Q}}$) is nonempty. Pick $%
x_{0}\in W_{\mathbb{Q}}$. Then we have that $W_{\mathbb{Q}_{q}}=\mathrm{%
\mathrm{Ker}}_{\mathbb{Q}_{q}}\left( M\right) +x_{0}$. By the previous
claim, $\mathrm{\mathrm{Ker}}_{\mathbb{Q}_{q}}\left( M\right) $ is a $%
\mathbb{Q}$-subspace of $\mathbb{Q}_{q}^{d}$. Since $x_{0}\in \mathbb{Q}
^{d} $, we have that $W_{\mathbb{Q}_{q}}=\mathrm{\mathrm{Ker}}_{\mathbb{Q}%
_{q}}\left( M\right) +x_{0}$ is an affine $\mathbb{Q}$-variety of $\mathbb{Q}%
_{q}^{d}$.
\end{proof}

\begin{clm}
$\mathrm{Fix}\left( \gamma \right) :=\left\{ x\in \mathbb{Q} _{q}^{d}:\gamma
\cdot x=x\right\} $ is an affine $\mathbb{Q}$-variety, for all $\gamma \in 
\mathrm{GL}_{d}(\mathbb{Q})\ltimes \mathbb{Q}^{d}$.
\end{clm}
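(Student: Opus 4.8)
The plan is to reduce this claim directly to the second of the two preceding claims. First I would unpack the action: an element $\gamma$ of $\mathrm{GL}_{d}(\mathbb{Q})\ltimes \mathbb{Q}^{d}$ is a pair $(A,t)$ with $A\in \mathrm{GL}_{d}(\mathbb{Q})$ and $t\in \mathbb{Q}^{d}$, and under the identification $\mathbb{Q}_{q}^{d}=\mathbb{Q}_{q}\otimes _{\mathbb{Q}}\mathbb{Q}^{d}$ it acts by $\gamma \cdot x=Ax+t$, where $A$ acts $\mathbb{Q}_{q}$-linearly as the extension of scalars of its $\mathbb{Q}$-linear action on $\mathbb{Q}^{d}$, and $t$ is viewed inside $\mathbb{Q}^{d}\subseteq \mathbb{Q}_{q}^{d}$. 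The only content of this first step is to record that the data defining the affine transformation, namely the matrix $A$ and the translation vector $t$, are defined over $\mathbb{Q}$.

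Next I would rewrite the fixed-point set as the solution set of an inhomogeneous $\mathbb{Q}$-linear system: $x\in \mathrm{Fix}(\gamma)$ if and only if $Ax+t=x$, i.e. if and only if $Mx=t$, where $M:=I-A\in M_{d}(\mathbb{Q})$. Thus $\mathrm{Fix}(\gamma)=\{x\in \mathbb{Q}_{q}^{d}:Mx=t\}$, which is precisely the set $W_{\mathbb{Q}_{q}}$ appearing in the previous claim for this choice of $M$ and this $t\in \mathbb{Q}^{d}$.

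Finally I would invoke that previous claim. It gives that $W_{\mathbb{Q}}:=\{x\in \mathbb{Q}^{d}:Mx=t\}$ is nonempty precisely when $W_{\mathbb{Q}_{q}}$ is, and that in the nonempty case $W_{\mathbb{Q}_{q}}$ is an affine $\mathbb{Q}$-variety of $\mathbb{Q}_{q}^{d}$. Hence whenever $\mathrm{Fix}(\gamma)\neq \emptyset$ it is an affine $\mathbb{Q}$-variety, which is all that is needed downstream (if $\mathrm{Fix}(\gamma)=\emptyset$ the assertion is vacuous, or one adopts the convention that $\emptyset$ is a degenerate affine $\mathbb{Q}$-variety). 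There is no real obstacle in this argument; the one point requiring care is the first step, namely checking that the affine $\mathbb{Q}_{q}$-action really is the base change of the $\mathbb{Q}$-defined affine transformation $(A,t)$, so that $M=I-A$ has rational entries and $t\in \mathbb{Q}^{d}$, which is exactly what licenses applying the previous claim verbatim.
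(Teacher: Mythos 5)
Your proof is correct and takes essentially the same route as the paper: write $\gamma\cdot x = Mx + t$ with $M\in M_d(\mathbb{Q})$ and $t\in\mathbb{Q}^d$, observe $\mathrm{Fix}(\gamma)=\{x\in\mathbb{Q}_q^d:(I-M)x=t\}$, and invoke the preceding claim on solution sets of $\mathbb{Q}$-defined inhomogeneous linear systems. Your remark about the possibly empty case is a genuine (if minor) gap in the paper's own statement --- under the paper's definition an affine $\mathbb{Q}$-variety is of the form $a+V$ and hence nonempty, so the claim as written is slightly too strong; but as you note, the only downstream use is when $\mathrm{Fix}(\gamma)$ is already known to contain a point, so the nonempty case is all that is needed.
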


\begin{proof}[Proof of Claim]
Notice that there exist $M\in M_{d}(\mathbb{Q)}$ and $t\in \mathbb{Q}^{d}$
such that $\gamma \cdot x=Mx+t$ for $x\in \mathbb{Q}_{q}^{d}$. Thus, 
\begin{equation*}
\mathrm{Fix}\left( \gamma \right) =\left\{ x\in \mathbb{Q}_{q}^{d}:\left(
I-M\right) x=t\right\} \text{.}
\end{equation*}%
This is an affine $\mathbb{Q}$-variety by the previous claim.
\end{proof}

Let $\mathcal{A}_{\mathbb{Q}}$ be the set of all affine $\mathbb{Q}$%
-varieties of $\mathbb{Q}_{q}^{d}$, ordered by inclusion. As the
intersection of affine $\mathbb{Q}$-varieties is an affine $\mathbb{Q}$%
-variety, for every $y\in Y$ there is a (unique) smallest $\mathbb{Q}$%
-variety $V_{y}\in \mathcal{A}_{\mathbb{Q}}$ containing $y$. If $\gamma \in 
\mathrm{GL}_{d}(\mathbb{Q)}\ltimes \mathbb{Q}^{d}$ and $W\in \mathcal{A}_{%
\mathbb{Q}}$, then $\gamma \cdot W:=\left\{ \gamma \cdot x:x\in W\right\} $
is also an affine $\mathbb{Q}$-variety. Thus, $V_{\gamma \cdot y}=\gamma
\cdot V_{y}$, for every $y\in \mathbb{Q}_{q}^{d}$.

\begin{clm}
Fix $V\in \mathcal{A}_{\mathbb{Q}}$ and let $Y:=\left\{ y\in \mathbb{Q}%
_{q}^{d}:V_{y}=V\right\} \subseteq V$. If $\Delta $ is the group of affine
transformations of $V$ obtained as setwise stabilizers of $V$ in $\mathrm{GL}%
_{d}(\mathbb{Q)}\ltimes \mathbb{Q}^{d}$. Then the action $\Delta
\curvearrowright Y$ is free.
\end{clm}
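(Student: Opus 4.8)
The plan is to read off freeness directly from the structural facts already established, the key point being that fixed-point sets of affine transformations are themselves affine $\mathbb{Q}$-varieties, so minimality of $V_{y}$ forces any such transformation that fixes a point of $Y$ to fix all of $V$.

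First I would check that the action $\Delta\curvearrowright Y$ is well-defined. If $\gamma\in\mathrm{GL}_{d}(\mathbb{Q})\ltimes\mathbb{Q}^{d}$ stabilizes $V$ setwise and $y\in Y$, then $V_{\gamma\cdot y}=\gamma\cdot V_{y}=\gamma\cdot V=V$, so $\gamma\cdot y\in Y$; hence $Y$ is $\Delta$-invariant, and the action in question is the restriction to $Y$ of the affine action of the setwise stabilizer of $V$. Recall also that an element of $\Delta$ is, by definition, the affine transformation of $V$ obtained by restricting such a $\gamma$ to $V$, so the identity of $\Delta$ is $\mathrm{id}_{V}$, and two representatives that agree on $V$ give the same element of $\Delta$.

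Now suppose $\gamma\in\Delta$ and $\gamma\cdot y=y$ for some $y\in Y$; I must show $\gamma=\mathrm{id}_{V}$. Fix a representative $\gamma\in\mathrm{GL}_{d}(\mathbb{Q})\ltimes\mathbb{Q}^{d}$ with $\gamma\cdot V=V$. Since $\gamma\cdot y=y$, we have $y\in\mathrm{Fix}(\gamma)$, which by the preceding claim is an affine $\mathbb{Q}$-variety. Consequently $\mathrm{Fix}(\gamma)\cap V$ is an intersection of affine $\mathbb{Q}$-varieties, hence again an affine $\mathbb{Q}$-variety, and it contains $y$ while being contained in $V$. By the minimality defining $V_{y}$, this forces $V=V_{y}\subseteq\mathrm{Fix}(\gamma)\cap V$, so $V\subseteq\mathrm{Fix}(\gamma)$; that is, $\gamma$ fixes every point of $V$. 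Therefore $\gamma$ restricts to $\mathrm{id}_{V}$, i.e. $\gamma=\mathrm{id}_{V}$ in $\Delta$, proving that $\Delta\curvearrowright Y$ is free.

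There is no genuine obstacle here: the statement is essentially a formal consequence of the three preceding claims — that $\mathrm{Fix}(\gamma)$ is an affine $\mathbb{Q}$-variety, that affine $\mathbb{Q}$-varieties are closed under intersection, and that $V_{y}$ is the smallest affine $\mathbb{Q}$-variety through $y$. The only points requiring any care are the verification that $Y$ is $\Delta$-invariant (so that the action is well-posed) and the bookkeeping that elements of $\Delta$ are determined by their restrictions to $V$, both of which are immediate.
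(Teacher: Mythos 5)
Your proof is correct and takes essentially the same route as the paper: apply the preceding claim that $\mathrm{Fix}(\gamma)$ is an affine $\mathbb{Q}$-variety, then use the minimality defining $V_{y}$ to conclude $V = V_{y}\subseteq\mathrm{Fix}(\gamma)$. The extra step of intersecting $\mathrm{Fix}(\gamma)$ with $V$ is harmless but unnecessary, since $V_{y}$ is already the smallest affine $\mathbb{Q}$-variety containing $y$, so $V_{y}\subseteq\mathrm{Fix}(\gamma)$ follows directly; the paper omits it, while you also usefully make explicit the well-definedness of the action and the identification of elements of $\Delta$ with their restrictions to $V$.
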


\begin{proof}[Proof of Claim]
Suppose that $\delta \left( e\right) =e$, for some $\delta \in \Delta $ and $%
e\in Y$. We have that $e\in \mathrm{Fix}\left( \delta \right) =\left\{ x\in 
\mathbb{Q}_{q}^{d}:\gamma \cdot x=x\right\} $. By the previous claim, $%
\mathrm{Fix}\left( \delta \right) $ is an affine $\mathbb{Q}$-variety of $%
\mathbb{Q}_{q}^{d}$. Therefore, $V=V_{y}\subseteq \mathrm{Fix}\left( \delta
\right) $. This shows that $\gamma \cdot y=y$ for every $y\in V $.\ Thus $%
\delta $ is the trivial element of $\Delta $.
\end{proof}

Since $\mathcal{A}_{\mathbb{Q}}$ is countable, we can assume without loss of
generality that there exists $V\in \mathcal{A}_{\mathbb{Q}}$ such that $%
V_{f\left( x\right) }=V$ for every $x\in \hat{A}$. To see this, as in the
proof of \cite[Lemma 5.1]{thomas_classification_2002}, pick $V\in \mathcal{A}%
_{\mathbb{Q}}$ such that $X_{0}:=\{x\in \hat{A}:V_{f\left( x\right) }=V\}$
is nonnull. By ergodicity of the action $\Gamma \ltimes A\curvearrowright 
\hat{A}$ we have that%
\begin{equation*}
X_{1}:=\bigcup_{\alpha \in \Gamma \ltimes A}\alpha \cdot X_{0}
\end{equation*}%
has full measure. Let $c:X_{1}\rightarrow X_{0}$ be a Borel function such
that $c\left( x\right) \in \left( \Gamma \ltimes A\right) \cdot x\cap X_{0}$
for every $x\in X_{1}$. Let also $x_{0}$ be a point in $X_{0}$. Without loss
of generality replace $f$ with the Borel function $g$ defined by%
\begin{equation*}
g:x\mapsto \left\{ 
\begin{array}{ll}
\left( f\circ c\right) \left( x\right) & x\in X_{1} \\ 
f( x_{0} ) & x\in \hat{A}\setminus X_{1}\text{.}%
\end{array}%
\right.
\end{equation*}%
This function satisfies $V_{g\left( x\right) }=V$ for every $x\in \hat{A}$.

So let us thus assume that $V_{f\left( x\right) }=V\in \mathcal{A}_{\mathbb{Q%
}}$ for every $x\in \hat{A}$. Let $\left( v_{1},\ldots ,v_{d}\right) $ be
the canonical basis of $\mathbb{Q}_{q}^{d}$ over $\mathbb{Q}_{q}$. 
Define $Y:=\left\{ y\in V:V_{y}=V\right\} $. Let also $\Delta $ to be the
group of affine transformations of $V$ obtained as restrictions to $V$ of
elements of the setwise stabilizer of $V$ inside $\mathrm{GL}_{d}(\mathbb{Q}%
)\ltimes \mathbb{Q}^{d}$. 

\begin{clm}
The action $\Delta \curvearrowright Y$ is free, and $f:\hat{A}\rightarrow Y$
is an a.e.\ homomorphism from $\mathcal{R}(\Gamma \ltimes A\curvearrowright 
\hat{A})$ to $\mathcal{R}(\Delta \curvearrowright Y)$.
\end{clm}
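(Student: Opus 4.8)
The plan is to derive both halves of the claim from material already in hand: the freeness of $\Delta\curvearrowright Y$ is literally the content of the earlier claim (same $V$, same $Y:=\{y\in V:V_y=V\}$, same $\Delta$), so I would simply cite that, and only the statement that $f$ is an a.e.\ homomorphism from $\mathcal{R}(\Gamma\ltimes A\curvearrowright\hat A)$ to $\mathcal{R}(\Delta\curvearrowright Y)$ needs a fresh argument. Before that I would record two bookkeeping facts. First, $Y$ is invariant under $\Delta$: since $V_{\gamma\cdot y}=\gamma\cdot V_y$ for all $\gamma\in\mathrm{GL}_d(\mathbb{Q})\ltimes\mathbb{Q}^d$ (noted above, as $\gamma$ permutes $\mathcal{A}_{\mathbb{Q}}$ preserving inclusion), any $\beta$ in the setwise stabilizer $H$ of $V$ inside $\mathrm{GL}_d(\mathbb{Q})\ltimes\mathbb{Q}^d$ sends $\{y:V_y=V\}$ to itself, and $\Delta$ consists of the restrictions $\beta\upharpoonright V$; hence $\Delta\curvearrowright Y$ is a genuine Borel action of a countable group on a Borel subset of $\mathbb{Q}_q^d$. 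Second, after the replacement carried out in the paragraph preceding the claim, $f$ satisfies $V_{f(x)}=V$ for every $x\in\hat A$ — so $f$ takes values in $Y$ — and it remains an a.e.\ homomorphism into $\mathcal{R}(\mathrm{GL}_d(\mathbb{Q})\ltimes\mathbb{Q}^d\curvearrowright\mathbb{Q}_q^d)$ (composition with the orbit-selecting Borel map $c$ and a constant on a null set does not destroy this property); so fix a conull $X_0\subseteq\hat A$ on which $f\upharpoonright X_0$ is an honest Borel homomorphism into that relation.

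The one real step is then a short \emph{upgrade} argument. I would take $x,x'\in X_0$ with $x'\in(\Gamma\ltimes A)\cdot x$; by the homomorphism property there is $\beta\in\mathrm{GL}_d(\mathbb{Q})\ltimes\mathbb{Q}^d$ with $f(x')=\beta\cdot f(x)$. Since $f(x),f(x')\in Y$, applying $V_{(-)}$ and the identity $V_{\beta\cdot y}=\beta\cdot V_y$ gives $V=V_{f(x')}=\beta\cdot V_{f(x)}=\beta\cdot V$, so $\beta\in H$ and $\delta:=\beta\upharpoonright V\in\Delta$; as $f(x)\in V$ this yields $\delta\cdot f(x)=f(x')$ with both points in $Y$, i.e.\ $(f(x),f(x'))\in\mathcal{R}(\Delta\curvearrowright Y)$. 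Thus $f\upharpoonright X_0$ is a Borel homomorphism from $\mathcal{R}(\Gamma\ltimes A\curvearrowright\hat A)\upharpoonright X_0$ to $\mathcal{R}(\Delta\curvearrowright Y)$, which is exactly what ``a.e.\ homomorphism'' means, and combined with the cited freeness this finishes the claim.

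I do not expect any genuine obstacle here: the whole argument is bookkeeping around facts already available — the formula $V_{\gamma\cdot y}=\gamma\cdot V_y$, the freeness established in the previous claim, and the stability of a.e.\ homomorphisms under modification on null sets and along orbits. The only points I would be careful to spell out are that the replacement of $f$ by $g$ in the paragraph above genuinely preserves the a.e.\ homomorphism property and that $Y$ is honestly a $\Delta$-invariant Borel set, but both are routine verifications rather than new ideas.
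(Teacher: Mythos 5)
Your argument is correct and follows the paper's proof essentially step for step: freeness is cited from the preceding claim, and the upgrade from an a.e.\ homomorphism into $\mathcal{R}(\mathrm{GL}_{d}(\mathbb{Q})\ltimes\mathbb{Q}^{d}\curvearrowright\mathbb{Q}_{q}^{d})$ to one into $\mathcal{R}(\Delta\curvearrowright Y)$ is obtained exactly as in the paper, by observing that $V_{f(x')}=V_{f(x)}=V$ forces the connecting element $\beta$ to lie in the setwise stabilizer of $V$, so its restriction to $V$ belongs to $\Delta$. The extra bookkeeping you include (that $Y$ is $\Delta$-invariant and that the earlier replacement of $f$ preserves the a.e.\ homomorphism property) is left tacit in the paper but is correct and does not change the substance of the argument.
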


\begin{proof}[Proof of Claim]
It follows from the previous claim that the action $\Delta \curvearrowright
Y $ is free. We now show that the function $f:\hat{A}\rightarrow Y$ is a
Borel homomorphism from $\mathcal{R}(\Gamma \ltimes A\curvearrowright \hat{A}%
)$ to $\mathcal{R}(\Delta \curvearrowright Y)$. Since $f$ is an a.e.\
homomorphism from $\mathcal{R}(\Gamma \ltimes A\curvearrowright \hat{A})$ to 
$\mathcal{R}(\mathrm{GL}_{d}(\mathbb{Q)}\ltimes \mathbb{Q}%
^{d}\curvearrowright \mathbb{Q}_{q}^{d})$, there is a conull subset $X_{0}$
of $\hat{A}$ such that, whenever $x_{0},x_{1}\in X_{0}$ and $\gamma \in
\Gamma \ltimes A$ are such that $\gamma \cdot x_{0}=x_{1}$, there exists $%
\delta \in \mathrm{GL}_{d}(\mathbb{Q})\ltimes \mathbb{Q}^{d}$ such that $%
\delta \cdot f\left( x_{0}\right) =f(x_{1})$. Suppose now that $%
x_{0},x_{1}\in X_{0}$ and $\gamma \in \Gamma \ltimes A$ are such that $%
\gamma \cdot x_{0}=x_{1}$. Then there exists $\delta \in \mathrm{GL}_{d}(%
\mathbb{Q})\ltimes \mathbb{Q}^{d}$ such that $\delta \cdot f(x_{0})=f(x_{1})$%
. Since $f(x_{0}),f(x_{1})\in Y=\left\{ y\in V:V_{y}=V\right\} $, we have
that $V=V_{f\left( x_{1}\right) }=V_{\delta \cdot f(x_{0})}=\delta \cdot
V_{f(x_{0})}$. Therefore, $\delta $ belongs to the setwise stabilizer of $V$
inside $\text{ GL}_{d}(\mathbb{Q})\ltimes \mathbb{Q}^{d}$. Thus, $\delta
|_{V}\in \Delta $ and $\delta |_{V}\cdot f(x_{0})=f(x_{1})$. This shows that 
$f:\hat{A}\rightarrow Y$ is an a.e.\ homomorphism from $\mathcal{R}(\Gamma
\ltimes A\curvearrowright \hat{A})$ to $\mathcal{R}(\Delta \curvearrowright
Y)$.
\end{proof}

The conclusion now follows from Lemma \ref{Lemma:increase-dimension-general1}%
.
\end{proof}

\subsection{Comparing affine actions associated to different primes}

Let $L$ be a closed subgroup of a compact group $K$. The space $K/L$ is then
endowed with a normalized Haar measure, and the $K$-action is
measure-preserving. If $\Gamma$ is a countable subgroup of $K$, one may
consider the induced action of $\Gamma $ on $K/L$. Such an action is ergodic
if and only if $\Gamma $ is dense in $K$ by \cite[Lemma 4.1.1]%
{margulis_discrete_1991}. We will need the following consequence of \cite[
Lemma 2.3]{coskey_borel_2010}, which is suggested in \cite[Remark 2.5]%
{coskey_borel_2010} as it plays a crucial role in the proof of \cite[Theorem
3.6]{coskey_borel_2010}.

\begin{lemma}[Coskey]
\label{Lemma:homog2}For $i\in \left\{ 0,1\right\} $, let $L_i$ be a closed
subgroup of a compact group $K_i$. If $\Gamma _{0}$ is a countable dense
subgroup of $K_{0}$ and $\left( \phi ,f\right) $ is an a.e.\ homomorphism of
permutation groups from $\Gamma _{0}\curvearrowright K_{0}/L_{0}$ to $%
K_{1}\curvearrowright K_{1}/L_{1}$, then there exist a closed subgroup $%
\tilde{K}_{1}$ of $K_{1}$, a transitive $\tilde{K} _{1}$-space $\tilde{X}%
_{1}\subseteq K_{1}/L_{1}$, a closed normal subgroup $H$ of $\tilde{K}_{1}$,
and a continuous surjective homomorphism $\Phi:K_{0}\rightarrow \tilde{K}%
_{1}/H$ such that:

\begin{enumerate}
\item $\Phi |_{\Gamma _{0}}=\pi \circ \phi $, where $\pi :\tilde{K}%
_{1}\rightarrow \tilde{K}_{1}/H$ is the quotient map;

\item $H$ is the kernel of the action $\tilde{K}_{1}\curvearrowright \tilde{X%
}_{1}$;

\item $\left( \Phi ,f\right) $ is an a.e.\ homomorphism of permutation
groups from $K_{0}\curvearrowright K_{0}/L_{0}$ to $\tilde{K}_{1}/H
\curvearrowright \tilde{X}_{1}$.
\end{enumerate}
\end{lemma}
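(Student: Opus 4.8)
The plan is to deduce the statement from \cite[Lemma 2.3]{coskey_borel_2010}, together with the observation of \cite[Remark 2.5]{coskey_borel_2010} that the argument there, stated for profinite groups, goes through verbatim for arbitrary compact groups; the only work on our side is a preliminary reduction to the case in which $\phi$ has dense image and $f$ takes values in a single orbit.

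First I would pass to the closure: set $\tilde{K}_{1}:=\overline{\phi(\Gamma_{0})}$, a closed, hence compact, subgroup of $K_{1}$, so that $\phi$ is a homomorphism $\Gamma_{0}\rightarrow\tilde{K}_{1}$ with dense range. Since $\Gamma_{0}$ is dense in the compact group $K_{0}$, the translation action $\Gamma_{0}\curvearrowright K_{0}/L_{0}$ is ergodic with respect to the normalized Haar measure $\mu$ on $K_{0}/L_{0}$; see \cite[Lemma 4.1.1]{margulis_discrete_1991}. The pushforward $f_{\ast}\mu$ is then a $\phi(\Gamma_{0})$-invariant Borel probability measure on $K_{1}/L_{1}$, hence, by density of $\phi(\Gamma_{0})$ in $\tilde{K}_{1}$ and weak-$\ast$ continuity of the $\tilde{K}_{1}$-action on probability measures, a $\tilde{K}_{1}$-invariant and ergodic measure. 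An ergodic invariant probability measure for a continuous action of a compact group on a standard Borel space is concentrated on a single orbit, and orbits of compact group actions are closed; thus $f_{\ast}\mu$ is concentrated on a closed transitive $\tilde{K}_{1}$-space $\tilde{X}_{1}=\tilde{K}_{1}\cdot y_{0}\subseteq K_{1}/L_{1}$. Discarding a $\mu$-null set and redefining $f$ there if necessary, we may assume that $f$ maps $K_{0}/L_{0}$ into $\tilde{X}_{1}$, so that $(\phi,f)$ is an a.e.\ homomorphism of permutation groups from $\Gamma_{0}\curvearrowright K_{0}/L_{0}$ to $\tilde{K}_{1}\curvearrowright\tilde{X}_{1}$.

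Now I would apply \cite[Lemma 2.3]{coskey_borel_2010}, in the compact-group form of \cite[Remark 2.5]{coskey_borel_2010}, to this transitive situation. Let $H\trianglelefteq\tilde{K}_{1}$ be the kernel of the action $\tilde{K}_{1}\curvearrowright\tilde{X}_{1}$, a closed normal subgroup since it is the intersection of the stabilizers of the points of $\tilde{X}_{1}$, and let $\pi\colon\tilde{K}_{1}\rightarrow\tilde{K}_{1}/H$ be the quotient map. That lemma yields a continuous homomorphism $\Phi\colon K_{0}\rightarrow\tilde{K}_{1}/H$ with $\Phi|_{\Gamma_{0}}=\pi\circ\phi$ and with $(\Phi,f)$ an a.e.\ homomorphism of permutation groups from $K_{0}\curvearrowright K_{0}/L_{0}$ to $\tilde{K}_{1}/H\curvearrowright\tilde{X}_{1}$; this is precisely conditions (1)--(3). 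Surjectivity of $\Phi$ is then automatic: $\Phi(K_{0})$ is a compact, hence closed, subgroup of $\tilde{K}_{1}/H$ containing the dense subgroup $\pi(\phi(\Gamma_{0}))$.

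The substantive step is the last one, the extraction of the continuous homomorphism $\Phi$, whose proof is exactly that of \cite[Lemma 2.3]{coskey_borel_2010}: one uses that $\tilde{K}_{1}/H$ acts \emph{faithfully} on $\tilde{X}_{1}$, so that the action of $\pi(\phi(\gamma))$ on $\tilde{X}_{1}$ is completely determined, through $f$ and the equivariance relation, by the translation action of $\gamma$ on $K_{0}/L_{0}$; a measure-theoretic argument then upgrades $\pi\circ\phi$ to a genuinely continuous map on all of $K_{0}$. The only point requiring care on our side is the reduction: one must verify that $f_{\ast}\mu$ is genuinely $\tilde{K}_{1}$-ergodic and supported on one orbit, which is where the density of $\Gamma_{0}$ in $K_{0}$ (equivalently, ergodicity of $\Gamma_{0}\curvearrowright K_{0}/L_{0}$) and the compactness of $\tilde{K}_{1}$ both enter.
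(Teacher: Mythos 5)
Your argument is correct and follows essentially the same route as the paper's: pass to $\tilde K_{1}=\overline{\phi(\Gamma_{0})}$, use ergodicity of $\Gamma_{0}\curvearrowright K_{0}/L_{0}$ to locate a single orbit $\tilde X_{1}$ carrying $f_{*}\mu$, form $H$ as the kernel, and invoke \cite[Lemma 2.3]{coskey_borel_2010} as noted in \cite[Remark 2.5]{coskey_borel_2010}. The only addition beyond the paper's proof is that you explicitly verify surjectivity of $\Phi$ (via compactness of $\Phi(K_{0})$ and density of $\pi(\phi(\Gamma_{0}))$ in $\tilde K_{1}/H$), a detail the paper leaves implicit; this is a worthwhile clarification.
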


\begin{proof}
Define $\tilde{K}_{1}$ to be the closure of $\phi( \Gamma _{0}) $ inside $%
K_{1}$. Since $\Gamma _{0}$ is dense in $K_{0}$, the action $\Gamma
_{0}\curvearrowright K_{0}/L_{0}$ is ergodic. Hence there exists $z\in
K_{1}/L_{1}$ such that, $\{x\in K_{0}/L_{0}:f\left( x\right) \in \tilde{K}%
_{1}\cdot z\}$ is conull. Define $\tilde{X}_{1}=\tilde{K}_{1}\cdot z$ and $H$
to be the kernel of the action $\tilde{K}_{1}\curvearrowright \tilde{X}_{1}$%
. Let $\pi :\tilde{K}_{1}\rightarrow \tilde{K}_{1}/H$ be the quotient map.
By \cite[Lemma 2.3]{coskey_borel_2010} applied to $f:K_{0}/L_{0}\rightarrow 
\tilde{X}_{1}$ and $\pi \circ \phi :\Gamma _{0}\rightarrow \tilde{K}_{1}$,
we get the desired homomorphism $\Phi :K_{0}\rightarrow \tilde{K}_{1}/H$.
\end{proof}

Recall that, if $p$ is a prime number, then a pro-$p$ group is a profinite
group $G$ such that every open subgroup has index equal to a power of $p$.
Notice that closed subgroups and quotients of pro-$p$ groups are pro-$p$
groups. A profinite group $G$ is a virtually pro-$p$ group if it contains a
clopen pro-$p$ subgroup.

Fix $m\geq 2$, $k\geq 1$, and a prime $p\geq 3$, and let $\mathrm{SL}_{m}(
p^{k}\mathbb{Z}_{p}) $ be the kernel of the canonical surjective
homomorphism $\mathrm{SL}_{m}( \mathbb{Z}_{p}) \rightarrow \mathrm{SL}_{m}( 
\mathbb{Z}/p^{k}\mathbb{Z}) $. Then $\mathrm{SL}_{m}( p^{k}\mathbb{Z}_{p}) $
is a clopen torsion-free pro-$p$ subgroup of $\mathrm{SL}_{m}( \mathbb{Z}%
_{p}) $ when $k\geq 1$ and $p\geq 3$, or $k\geq 2$ and $p=2$; see \cite[%
Theorem 5.2]{sautoy_horizons_2000}. In particular, \textrm{SL}$_{m}( \mathbb{%
Z}_{p}) $ is a virtually pro-$p$ group.

\begin{lemma}
\label{Lemma:distinguish-pair-primes1} Let $p,q$ be distinct prime numbers,
let $m\geq 3$, and let $n,d\geq 1$. If $\Gamma $ is a subgroup of $\mathrm{SL%
}_{m}(\mathbb{Z})$ with property (T) such that $\Gamma \ltimes \mathbb{Z}%
^{m} $ has property (T), $\Delta $ is a subgroup of $\mathrm{GL}_{d}(\mathbb{%
Q)}\ltimes \mathbb{Q}^{d}$, and $Y\subseteq \mathbb{Q}_{q}^{d}$ is a Borel $%
\Delta $-invariant set such that the action $\Delta \curvearrowright Y$ is
free, then every a.e.\ homomorphism $f$ from $\mathcal{R}(\Gamma \ltimes
p^{n}\mathbb{Z}^{m}\curvearrowright p^{n}\mathbb{Z}_{p}^{m})$ to $\mathcal{R}%
(\Delta \curvearrowright Y)$ is $\mathcal{R}(\mathrm{GL}_{d}(\mathbb{Q)}%
\ltimes \mathbb{Q}^{d}\curvearrowright \mathbb{Q}_{q}^{d})$-homotopic to a
constant map.
\end{lemma}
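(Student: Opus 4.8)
The statement concerns an a.e.\ homomorphism $f$ from $\mathcal{R}(\Gamma \ltimes p^{n}\mathbb{Z}^{m}\curvearrowright p^{n}\mathbb{Z}_{p}^{m})$ to a free action $\mathcal{R}(\Delta \curvearrowright Y)$, where $\Delta \leq \mathrm{GL}_{d}(\mathbb{Q})\ltimes \mathbb{Q}^{d}$ and $Y\subseteq \mathbb{Q}_{q}^{d}$, with $p\neq q$. The plan is to run Ioana cocycle superrigidity (Proposition \ref{Corollary:profinite-Ioana}) to straighten $f$ into an a.e.\ homomorphism of permutation groups, and then exploit the mismatch between the pro-$p$ structure on the source side and the $q$-adic structure on the target side to force triviality. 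First I would apply part (2) of Proposition \ref{Corollary:profinite-Ioana} (using that $\Gamma \ltimes \mathbb{Z}^{m}$, hence $\Gamma \ltimes p^{n}\mathbb{Z}^{m}$, has property (T)): after passing to some finite-index subgroup $A^{(n')}\leq p^{n}\mathbb{Z}^{m}$ and replacing $f$ by an $\mathcal{R}(\Delta\curvearrowright Y)$-homotopic $g$, I obtain a group homomorphism $\phi:\Gamma\ltimes A^{(n')}\to\Delta$ and a Borel $g:\widehat{A}^{(n')}\to Y$ so that $(\phi,g)$ is an a.e.\ homomorphism of permutation groups from $\Gamma\ltimes A^{(n')}\curvearrowright \widehat{A}^{(n')}$ to $\Delta\curvearrowright Y$. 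Here $\widehat{A}^{(n')}$ is a clopen subgroup of $p^{n}\mathbb{Z}_p^{m}$, in particular a pro-$p$ group, and the relevant translation action restricts to it.

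Next I would reduce to the ``homogeneous space'' picture needed for Coskey's lemma. Composing $\phi$ with the projection $\pi:\mathrm{GL}_{d}(\mathbb{Q})\ltimes\mathbb{Q}^{d}\to\mathrm{GL}_{d}(\mathbb{Q})$, Lemma \ref{Lemma:thomas-nonembed-Q} forces $\pi\circ\phi$ to have finite-index kernel on $\Gamma$ (since $\mathrm{GL}_{d}$ has dimension $d^{2}<m^{2}-1$ when $d<m$; note that when $p\neq q$ and $d\geq 2$ the dimension bound is again available for $m\geq 3$). So after shrinking $\Gamma$ to a finite-index subgroup I may assume $\pi\circ\phi$ kills $\Gamma$, i.e.\ $\phi(\Gamma)$ lies in $\mathbb{Q}^{d}$ and $\phi(\Gamma\ltimes A^{(n')})$ generates a subgroup $\Lambda'\leq\mathbb{Q}^{d}$ acting by translations. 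The point is now that the source profinite space $\widehat{A}^{(n')}$ carries a transitive action by the \emph{compact} pro-$p$ group (its own closure of $A^{(n')}$), while the target side, after restricting to the orbit closure $\tilde{X}_1$ and quotienting by the kernel $H$ of the action, produces via Lemma \ref{Lemma:homog2} a continuous surjection $\Phi:\widehat{A}^{(n')}\to \tilde{K}_1/H$ of compact groups where $\tilde{K}_1$ is the closure of $\phi(A^{(n')})$ inside the $q$-adic completion. Since $\widehat{A}^{(n')}$ is pro-$p$, its continuous image $\tilde{K}_1/H$ is a pro-$p$ group; but $\tilde{K}_1/H$ acts faithfully and transitively on $\tilde{X}_1\subseteq \mathbb{Q}_q^{d}/L_1$, so it embeds in a virtually pro-$q$ group (the $q$-adic translation/stabilizer structure). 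A group that is simultaneously pro-$p$ and (topologically) a subquotient of a virtually pro-$q$ group, for $p\neq q$, must be finite; combined with the surjectivity of $\Phi$ this forces $\tilde{K}_1/H$ to be finite, hence $\tilde{X}_1$ finite, hence by ergodicity of $\Gamma\ltimes A^{(n')}\curvearrowright\widehat{A}^{(n')}$ the map $f$ is a.e.\ constant up to the relation, i.e.\ $\mathcal{R}(\mathrm{GL}_{d}(\mathbb{Q})\ltimes\mathbb{Q}^{d}\curvearrowright\mathbb{Q}_q^{d})$-homotopic to a constant.

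\textbf{Main obstacle.} The delicate point is the interface between Lemma \ref{Lemma:homog2} and the pro-$p$/pro-$q$ dichotomy: one must be careful that the ambient compact group $K_1$ produced when one realizes $\mathcal{R}(\Delta\curvearrowright Y)\subseteq \mathcal{R}(\mathrm{GL}_d(\mathbb{Q})\ltimes\mathbb{Q}^d\curvearrowright\mathbb{Q}_q^d)$ as (part of) a homogeneous-space action really is virtually pro-$q$, so that its closed subquotients cannot be infinite pro-$p$ groups. This requires picking the right invariant affine $\mathbb{Q}$-variety $V\subseteq\mathbb{Q}_q^d$ and writing the stabilizer action of $\Delta$ on $V$ inside a suitable compact model — essentially replaying the affine-$\mathbb{Q}$-variety machinery of the proof of Theorem \ref{Theorem:increase-dimension-general} (the three Claims about kernels, affine varieties, and fixed sets) but keeping track of the $q$-adic topology and the measure. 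Once that compact model is in place and its pro-$q$-up-to-finite-index nature is verified, the arithmetic clash $p\neq q$ does the rest, and the conclusion follows exactly as in Lemma \ref{Lemma:increase-dimension-general1}: reduce along $\Phi$ to a nilpotent (translation) action, which is Borel-reducible to $E_0$ by \cite{schneider_locally_2013}, and invoke $E_0$-ergodicity of $\mathcal{R}(\Gamma\ltimes A^{(n')}\curvearrowright\widehat{A}^{(n')})$ from Lemma \ref{Lemma:E0-ergodic} (whose hypothesis, that no $A^{(n')}$ has an infinite Boolean quotient, holds automatically since $A^{(n')}\leq\mathbb{Z}^m$ has finite index, so its finite quotients are abelian groups of bounded exponent only if $p=2$, and in that case the $p^n\mathbb{Z}$-shift still avoids $(\mathbb{Z}/2)^\infty$ quotients — a routine check).
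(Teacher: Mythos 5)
You have the right skeleton — Ioana's cocycle superrigidity to straighten $f$ into a permutation-group homomorphism $(\phi,g)$, then play the pro-$p$ closure on the source against the virtually pro-$q$ closure on the target via Coskey's lemma — but the step where you constrain $\phi(\Gamma)$ is wrong, and it matters. You invoke Lemma~\ref{Lemma:thomas-nonembed-Q} to force $\pi\circ\phi$ to kill a finite-index subgroup of $\Gamma$, but that lemma requires the target group to be a $\mathbb{Q}$-algebraic group of dimension less than $m^2-1$, while $\mathrm{GL}_d$ has dimension $d^2$. This only works when $d<m$. Your parenthetical claim that ``when $p\neq q$ and $d\geq 2$ the dimension bound is again available for $m\geq 3$'' is simply false: for $d=m=3$, $d^2=9>8=m^2-1$. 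And allowing $d\geq m$ is exactly the point of this lemma — Lemma~\ref{Lemma:increase-dimension-general1} already covers $d<m$, so a prime-distinguishing statement that only worked in small dimension would be redundant.

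The missing ingredient is the Margulis superrigidity step in the paper's proof. Rather than trying to kill $\phi(\Gamma)$ outright, the paper shows (passing to a finite-index subgroup of $\Gamma$) that $\phi(\Gamma)\subseteq\mathrm{SL}_d(\mathbb{Z})\ltimes\mathbb{Z}^d$. The argument: $\phi(\Gamma)$ lands in $\mathrm{SL}_d(\mathbb{Z}[1/F])\ltimes\mathbb{Z}[1/F]^d$ for a finite set of primes $F$ (finite generation), and for each $q_\ell\in F$ the Zariski closure of $\tau_\ell\circ\phi(\Gamma)$ in $\mathrm{SL}_d(\mathbb{Q}_{q_\ell})$ is semisimple by Margulis's theorem, hence $\tau_\ell\circ\phi(\Gamma)$ has compact closure and so (after finite index) lies in $\mathrm{SL}_d(\mathbb{Z}_{q_\ell})$. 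This $q$-adic integrality is what makes $G_1=\overline{\phi(\Gamma)}\subseteq\mathrm{SL}_d(\mathbb{Z}_q)\ltimes\mathbb{Z}_q^d$ a compact virtually pro-$q$ group, so that Coskey's lemma produces a continuous homomorphism from a pro-$p$ group to a pro-$q$ group, forcing it to be trivial. Without that integrality, the closure $G_1$ need not even be compact and your pro-$p$/pro-$q$ clash never gets off the ground. You sensed the need to verify ``the ambient compact group $K_1$ really is virtually pro-$q$'' but misdiagnosed it as an affine-variety bookkeeping issue; in fact, the affine $\mathbb{Q}$-variety machinery from the proof of Theorem~\ref{Theorem:increase-dimension-general} is not needed here at all, since the hypothesis already hands you a free action $\Delta\curvearrowright Y$ — the variety reduction is only there to manufacture freeness in Theorems~\ref{Theorem:increase-dimension-general} and~\ref{Theorem:distinguish-pair-primes}.

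One smaller point: your aside about the hypothesis of Lemma~\ref{Lemma:E0-ergodic} and the case $p=2$ is off base. The hypothesis is that $A^{(n)}$ has no infinite Boolean quotient, and this holds for any finite-index subgroup of $\mathbb{Z}^m$ simply because such groups are finitely generated and infinite Boolean groups are not; the value of $p$ is irrelevant.
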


\begin{proof}
By Lemma \ref{Lemma:E0-ergodic}, it suffices to consider the case when $%
d\geq 2$.

Suppose that $f:p^{n}\mathbb{Z}_{p}^{m}\rightarrow Y$ is an a.e.
homomorphism from $\mathcal{R}(\Gamma \ltimes p^{n}\mathbb{Z}%
^{m}\curvearrowright p^{n}\mathbb{Z}_{p}^{m})$ to $\mathcal{R}(\Delta
\curvearrowright Y)$. Then by Proposition \ref{Corollary:profinite-Ioana},
after replacing $n$ with a larger integer, we can assume that there exist a
group homomorphism $\phi :\Gamma \ltimes p^{n}\mathbb{Z}^{m}\rightarrow
\Delta $, and a Borel function $g:p^{n}\mathbb{Z}_{p}^{m}\rightarrow Y$,
such that $g$ is $\mathcal{R}(\Delta \curvearrowright Y)$-homotopic to $f$,
and $\left( \phi ,g\right) $ is an a.e.\ homomorphism of permutation groups
from $\Gamma \ltimes p^{n}\mathbb{Z}^{m}\curvearrowright p^{n}\mathbb{Z}%
_{p}^{m}$ to $\Delta \curvearrowright Y$. It suffices to show that $g$ is
a.e.\ $\mathcal{R}(\mathrm{GL}_{d}(\mathbb{Q)}\ltimes \mathbb{Q}%
^{d}\curvearrowright \mathbb{Q}_{q}^{d})$-homotopic to a constant map.

\begin{clm}
We may assume without loss of generality that $\phi \left( \Gamma \right)
\subseteq \mathrm{SL}_{d}( \mathbb{Z}) \ltimes \mathbb{Z}^{d}$
\end{clm}

\begin{proof}[Proof of Claim]
Let $\pi :\mathrm{GL}_{d}(\mathbb{Q}_{q})\ltimes \mathbb{Q}%
_{q}^{d}\rightarrow \mathrm{GL}_{d}(\mathbb{Q}_{q})$ be the canonical
quotient map. As $\Gamma $ has property (T), after replacing $\Gamma $ with
a finite index subgroup we can assume that $\left( \pi \circ \phi \right)
\left( \Gamma \right) \subseteq \mathrm{SL}_{d}(\mathbb{Q})$. As $\Gamma $
is finitely-generated, there exists a finite set $F=\left\{ q_{1},\ldots
,q_{t}\right\} $ of prime numbers such that 
\begin{equation*}
\phi \left( \Gamma \right) \subseteq \mathrm{SL}_{d}(\mathbb{Z}[1/F])\ltimes 
\mathbb{Z}[1/F]^{d}\subseteq \mathrm{SL}_{d}(\mathbb{Z}_{F})\ltimes \mathbb{Q%
}_{F}^{d}\text{.}
\end{equation*}%
For $1\leq \ell \leq t$, let $\tau _{\ell }:\mathrm{SL}_{d}(\mathbb{Q}%
_{F})\ltimes \mathbb{Q}_{F}^{d}\rightarrow \mathrm{SL}_{d}(\mathbb{Q}%
_{q_{\ell }})$ be the canonical quotient map. Consider the homomorphism $%
\tau _{\ell }\circ \phi :\Gamma \rightarrow \mathrm{SL}_{d}(\mathbb{Q}%
_{q_{\ell }})$. We have that by \cite[Theorem VIII.3.10]%
{margulis_discrete_1991} the Zariski closure of $\left( \tau _{\ell }\circ
\phi \right) \left( \Gamma \right) $ inside $\mathrm{SL}_{d}(\mathbb{Q}%
_{q_{\ell }})$ is semisimple. Hence, $\left( \tau _{\ell }\circ \phi \right)
\left( \Gamma \right) $ has compact closure inside $\mathrm{SL}_{d}(\mathbb{Q%
}_{q_{\ell }}\mathbb{)}$ by \cite[VII.5.16]{margulis_discrete_1991}. Since 
\textrm{SL}$_{d}\left( \mathbb{Z}_{q_{\ell }}\right) $ is an open subgroup
of \textrm{SL}$_{d}(\mathbb{Q}_{q_{\ell }})$, we have that \textrm{SL}$%
_{d}\left( \mathbb{Z}_{q_{\ell }}\right) \cap \left( \tau _{\ell }\circ \phi
\right) \left( \Gamma \right) $ is a finite index subgroup inside $\left(
\tau _{\ell }\circ \phi \right) \left( \Gamma \right) $. Hence, after
replacing $\Gamma $ with a finite index subgroup, we can assume that $\left(
\tau _{\ell }\circ \phi \right) \left( \Gamma \right) \subseteq \mathrm{SL}%
_{d}(\mathbb{Z}_{q_{\ell }})$ for every $\ell \in \left\{ 1,2,\ldots
,t\right\} $. This implies that $\left( \pi \circ \phi \right) \left( \Gamma
\right) \subseteq \mathrm{SL}_{d}(\mathbb{Z})$ and hence $\phi \left( \Gamma
\right) \subseteq \mathrm{SL}_{d}(\mathbb{Z})\ltimes \mathbb{Z}^{d}[1/F]$.
Since $\Gamma $ is finitely-generated, we have that $\phi \left( \Gamma
\right) \subseteq \mathrm{SL}_{d}(\mathbb{Z})\ltimes \frac{1}{N}\mathbb{Z}%
^{d}$ for some $N\geq 1$. Since $\mathrm{SL}_{d}(\mathbb{Z})\ltimes \mathbb{Z%
}^{d}$ is a finite index subgroup of $\mathrm{SL}_{d}(\mathbb{Z})\ltimes 
\frac{1}{N}\mathbb{Z}^{d}$, after replacing $\Gamma $ with a finite index
subgroup we can assume that $\phi \left( \Gamma \right) \subseteq \mathrm{SL}%
_{d}(\mathbb{Z})\ltimes \mathbb{Z}^{d}$.
\end{proof}

Let $G_{0}$ be the closure of $\Gamma $ inside $\mathrm{SL}_{m}\left( 
\mathbb{Z}_{p}\right) $, and let $G_{1}$ be the closure of $\phi \left(
\Gamma \right) $ inside $\mathrm{SL}_{d}\left( \mathbb{Z}_{q}\right) \ltimes 
\mathbb{Z}_{q}^{d}$. Since $\mathrm{SL}_{m}\left( \mathbb{Z}_{p}\right) $ is
virtually pro-$p$ and $\mathrm{SL}_{d}\left( \mathbb{Z}_{q}\right) \ltimes 
\mathbb{Z}_{q}^{d}$ is virtually pro-$q$, after replacing $\Gamma $ with a
finite index subgroup we can assume that $G_{0}$ is a pro-$p$ group and $%
G_{1}$ is a pro-$q$ group.

By Lemma \ref{Lemma:homog2} there exists a $G_{1}$-invariant closed subset $%
\tilde{Y}$ of $Y$ such that for a.e.\ $x\in p^{n}\mathbb{Z}_{p}^{m}$, $%
g(x)\in \tilde{Y}$, and if $H$ is the kernel of the action $%
G_{1}\curvearrowright \tilde{Y}$ and $p:G_{1}\rightarrow G_{1}/H$ is the
projection map, then $p\circ \phi :\Gamma \rightarrow G_{1}/H$ extends to a
continuous homomorphism $\Phi :G_{0}\rightarrow G/H$ such that $\left( \Phi
,g\right) $ is a homomorphism of permutation groups from $%
G_{0}\curvearrowright p^{n}\mathbb{Z}_{p}^{m}$ to $G_{1}/H\curvearrowright 
\mathbb{Z}_{q}^{d}$. Since $G_{0}$ is a pro-$p$ group and $G_{1}/H$ is a pro-%
$q$ group, we have that $\Phi $ is trivial. Hence, $p\circ \phi $ is trivial
and $\phi (\Gamma )\subseteq H$.

Therefore, if $\Lambda :=\left( \pi \circ \phi \right) \left( \Gamma \right)
\subseteq \mathrm{SL}_{d}\left( \mathbb{Z}\right) $, then we have that $g$
is an a.e.\ homomorphism from $\mathcal{R}\left( \Gamma \ltimes p^{n}\mathbb{%
Z}^{m}\curvearrowright p^{n}\mathbb{Z}_{p}^{m}\right) $ to $\mathcal{R}%
(\Lambda \ltimes \mathbb{Q}^{d}\curvearrowright \mathbb{Q}_{q}^{d})$. As $%
\Lambda \ltimes \mathbb{Q}^{d}$ is nilpotent we have that $\mathcal{R}%
(\Lambda \ltimes \mathbb{Q}^{d}\curvearrowright \mathbb{Q}_{q}^{d})$ Borel
reducible to $E_{0}$ by the main result of \cite{schneider_locally_2013}. As 
$\mathcal{R}\left( \Gamma \ltimes p^{n}\mathbb{Z}^{m}\curvearrowright p^{n}%
\mathbb{Z}_{p}^{m}\right) $ is $E_{0}$-ergodic, it follows that $g$, and
hence $f$, are $\mathcal{R}(\mathrm{GL}_{d}(\mathbb{Q)}\ltimes \mathbb{Q}%
^{d}\curvearrowright \mathbb{Q}_{q}^{d})$-homotopic to the constant map.
\end{proof}

\begin{theorem}
\label{Theorem:distinguish-pair-primes} Let $p,q$ be distinct prime numbers,
let $m\geq 3$, and let $n,d\geq 1$. If $\Gamma$ is subgroup of $\mathrm{SL}%
_{m}( \mathbb{Z}) $ with property (T), then $\mathcal{R}( \Gamma \ltimes
p^{n}\mathbb{Z}\curvearrowright p^{n}\mathbb{Z}_{p}) $ is $\mathcal{R}(%
\mathrm{GL}_{d}(\mathbb{Q)}\ltimes \mathbb{Q}^{d}\curvearrowright \mathbb{Q}%
_{q}^{d})$-ergodic.
\end{theorem}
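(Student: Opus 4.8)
The plan is to run, essentially verbatim, the argument in the proof of Theorem~\ref{Theorem:increase-dimension-general}, with the single change that at the last step one appeals to Lemma~\ref{Lemma:distinguish-pair-primes1} in place of Lemma~\ref{Lemma:increase-dimension-general1}. Write $E := \mathcal{R}(\Gamma \ltimes p^{n}\mathbb{Z}^{m}\curvearrowright p^{n}\mathbb{Z}_{p}^{m})$, with $p^{n}\mathbb{Z}_{p}^{m}$ carrying its normalized Haar measure, and $F := \mathcal{R}(\mathrm{GL}_{d}(\mathbb{Q})\ltimes \mathbb{Q}^{d}\curvearrowright \mathbb{Q}_{q}^{d})$; one must show that $E$ is ergodic and that every a.e.\ homomorphism from $E$ to $F$ is a.e.\ $F$-homotopic to a constant map. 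Ergodicity is immediate: $p^{n}\mathbb{Z}^{m}$ is dense in the compact group $p^{n}\mathbb{Z}_{p}^{m}$ and acts on it by translation, so already $\mathcal{R}(p^{n}\mathbb{Z}^{m}\curvearrowright p^{n}\mathbb{Z}_{p}^{m})\subseteq E$ is ergodic (cf.\ \cite[Lemma 4.1.1]{margulis_discrete_1991}), hence so is $E$. One also checks that the standing hypotheses place $\Gamma$ in the scope of Lemma~\ref{Lemma:distinguish-pair-primes1}: $\Gamma$ has property (T) by assumption, and since $m\geq 3$ the pair $(\Gamma\ltimes \mathbb{Z}^{m},\mathbb{Z}^{m})$ has the relative property (T), so $\Gamma\ltimes \mathbb{Z}^{m}$ has property (T) as well.

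Now fix an a.e.\ homomorphism $f\colon p^{n}\mathbb{Z}_{p}^{m}\to \mathbb{Q}_{q}^{d}$ from $E$ to $F$ and carry out the reduction to a free target action exactly as in the proof of Theorem~\ref{Theorem:increase-dimension-general}. Identify $\mathbb{Q}_{q}^{d}$ with $\mathbb{Q}_{q}\otimes_{\mathbb{Q}}\mathbb{Q}^{d}$ and call $a+V$ an \emph{affine $\mathbb{Q}$-variety} when $a\in \mathbb{Q}^{d}$ and $V=\mathbb{Q}_{q}\otimes_{\mathbb{Q}}W$ for a $\mathbb{Q}$-linear subspace $W\subseteq \mathbb{Q}^{d}$. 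The three Claims established there apply unchanged: kernels $\mathrm{Ker}_{\mathbb{Q}_{q}}(M)$ of rational matrices, solution sets of rational linear systems, and fixed-point sets $\mathrm{Fix}(\gamma)$ of elements $\gamma\in \mathrm{GL}_{d}(\mathbb{Q})\ltimes \mathbb{Q}^{d}$ are all affine $\mathbb{Q}$-varieties; an intersection of affine $\mathbb{Q}$-varieties is again one, so each $y$ lies in a unique smallest affine $\mathbb{Q}$-variety $V_{y}$, and $V_{\gamma\cdot y}=\gamma\cdot V_{y}$. As the affine $\mathbb{Q}$-varieties form a countable family and $E$ is ergodic, the measurable-selection argument of \cite[Lemma 5.1]{thomas_classification_2002} lets one replace $f$ by an $F$-homotopic Borel map with $V_{f(x)}=V$ for a single fixed affine $\mathbb{Q}$-variety $V$ and every $x$. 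Setting $Y:=\{y\in V:V_{y}=V\}$ and letting $\Delta$ be the group of affine transformations of $V$ obtained by restricting to $V$ the setwise stabilizer of $V$ in $\mathrm{GL}_{d}(\mathbb{Q})\ltimes \mathbb{Q}^{d}$, the $\mathrm{Fix}$-Claim shows the action $\Delta\curvearrowright Y$ is free, and $f\colon p^{n}\mathbb{Z}_{p}^{m}\to Y$ is an a.e.\ homomorphism from $E$ to $\mathcal{R}(\Delta\curvearrowright Y)$.

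Finally, since $\Delta$ is a subgroup of $\mathrm{GL}_{d}(\mathbb{Q})\ltimes \mathbb{Q}^{d}$, $Y\subseteq \mathbb{Q}_{q}^{d}$ is a Borel $\Delta$-invariant set, and $\Delta\curvearrowright Y$ is free, Lemma~\ref{Lemma:distinguish-pair-primes1} yields that $f$ is $\mathcal{R}(\mathrm{GL}_{d}(\mathbb{Q})\ltimes \mathbb{Q}^{d}\curvearrowright \mathbb{Q}_{q}^{d})=F$-homotopic to a constant map. Together with the ergodicity of $E$, this is precisely the statement that $E$ is $F$-ergodic. The only substantive input is Lemma~\ref{Lemma:distinguish-pair-primes1} itself, which packages Ioana's cocycle superrigidity (Proposition~\ref{Corollary:profinite-Ioana}) with the pro-$p$/pro-$q$ rigidity of Lemma~\ref{Lemma:homog2}; the present argument is bookkeeping, so the point requiring care is simply that the affine-$\mathbb{Q}$-variety reduction produces data $(\Delta,Y)$ to which Lemma~\ref{Lemma:distinguish-pair-primes1} literally applies — in particular that freeness of $\Delta\curvearrowright Y$ and the property (T) hypotheses on $\Gamma$ are both in force.
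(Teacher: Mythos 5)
Your structural plan is exactly what the paper intends: rerun the proof of Theorem~\ref{Theorem:increase-dimension-general} verbatim, but conclude via Lemma~\ref{Lemma:distinguish-pair-primes1} rather than Lemma~\ref{Lemma:increase-dimension-general1}. The $\mathbb{Q}$-variety reduction, the freeness of the derived action $\Delta\curvearrowright Y$, and the final appeal to the lemma all faithfully track the paper's one-line proof.

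There is, however, a genuine gap in your verification that Lemma~\ref{Lemma:distinguish-pair-primes1} applies. You assert that $m\geq 3$ already forces the pair $\mathbb{Z}^m\leq\Gamma\ltimes\mathbb{Z}^m$ to have relative property (T), and hence that $\Gamma\ltimes\mathbb{Z}^m$ has property (T). The second implication is fine (a quotient with (T) plus relative (T) of the kernel gives (T)), but the first is false: relative property (T) of that pair depends on the particular $\Gamma\leq\mathrm{SL}_m(\mathbb{Z})$, not merely on $m$. If $\Gamma$ is finite (so trivially Kazhdan), $\Gamma\ltimes\mathbb{Z}^m$ is infinite amenable and the pair cannot have relative (T); indeed for such $\Gamma$ the source relation is hyperfinite, and it Borel reduces nontrivially to the target, so the theorem's conclusion actually fails. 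For an infinite example with $m=6$, embed $\mathrm{SL}_3(\mathbb{Z})$ as the upper-left $3\times 3$ block acting trivially on the last three coordinates; then $\Gamma$ has (T) but $\Gamma\ltimes\mathbb{Z}^6\cong(\mathrm{SL}_3(\mathbb{Z})\ltimes\mathbb{Z}^3)\times\mathbb{Z}^3$ has an infinite amenable quotient and hence does not. This is not an idiosyncrasy of your writeup: the stated hypotheses of Theorem~\ref{Theorem:distinguish-pair-primes} do not by themselves put you in the scope of Lemma~\ref{Lemma:distinguish-pair-primes1}, which explicitly requires $\Gamma\ltimes\mathbb{Z}^m$ to have property (T). The right fix is to read the theorem with that extra hypothesis (it is carried over implicitly from Theorem~\ref{Theorem:increase-dimension-general}, and in the paper's only application, Theorem~\ref{Theorem:affine-action-intro}(2), $\Gamma$ is a finite-index subgroup of $\mathrm{SL}_m(\mathbb{Z})$, for which $\mathrm{SL}_m(\mathbb{Z})\ltimes\mathbb{Z}^m$ is a lattice in $\mathrm{SL}_m(\mathbb{R})\ltimes\mathbb{R}^m$ and hence Kazhdan). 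With that hypothesis added, or with $\Gamma$ restricted to be of finite index in $\mathrm{SL}_m(\mathbb{Z})$, the rest of your argument goes through.
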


\begin{proof}
One may proceed as in the proof of Theorem \ref%
{Theorem:increase-dimension-general}, using Lemma \ref%
{Lemma:distinguish-pair-primes1} in the place of Lemma \ref%
{Lemma:increase-dimension-general1}.
\end{proof}

\subsection{Conclusion}

We may now conclude with the proof of Theorem \ref%
{Theorem:affine-action-intro}.

\begin{proof}[Proof of Theorem \protect\ref{Theorem:affine-action-intro}]
Define $E:=\mathcal{R}(\Gamma \ltimes \mathbb{Z}[1/q]^{m}\curvearrowright 
\mathbb{Q}_{q}^{m})$ and $F:=\mathcal{R}(\Delta \ltimes \mathbb{Z}%
[1/p]^{d}\curvearrowright \mathbb{Q}_{p}^{d})$.

(1): When $d=1$ and $m=2$, then the conclusion follows from items (3) and
(4). When $m\geq 3$, Theorem \ref{Theorem:increase-dimension-general} in the
case when $A=\mathbb{Z}[1/q]^m$ and $A^{(n)}=q^{n}\mathbb{Z}^m$ implies that 
$\mathcal{R}( (\Gamma \cap \mathrm{SL}_{m}( \mathbb{Z}) )\ltimes \mathbb{Z}%
^m \curvearrowright \mathbb{Z}_{q}^{m}) $ is $F$-ergodic. The rest follows
from the fact that the inclusion map $\mathbb{Z}_{q}^{m}\subseteq \mathbb{Q}%
_{q}^{m}$ is a Borel homomorphism from $\mathcal{R}(( \Gamma \cap \mathrm{SL}%
_{m}( \mathbb{Z})) \ltimes \mathbb{Z}\curvearrowright \mathbb{Z}_{q}^{m}) $
to $E$ which is not $E$-homotopic to the constant map.

(2): By Theorem \ref{Theorem:distinguish-pair-primes}, $\mathcal{R}\left(
\left( \Gamma \cap \mathrm{SL}_{m}\left( \mathbb{Z}\right) \right) \ltimes 
\mathbb{Z}\curvearrowright \mathbb{Z}_{q}^{m}\right) $ is $F$-ergodic. The
rest follows as in (1) above.

(3): This is a consequence of Proposition \ref{Proposition:hyperfinite}.

(4):\ This is a consequence of Proposition \ref{Proposition:treeable}.
\end{proof}



\providecommand{\bysame}{\leavevmode\hbox to3em{\hrulefill}\thinspace}
\providecommand{\MR}{\relax\ifhmode\unskip\space\fi MR }
\providecommand{\MRhref}[2]{%
  \href{http://www.ams.org/mathscinet-getitem?mr=#1}{#2}
}
\providecommand{\href}[2]{#2}

\end{document}